\renewcommand{\algocf@captiontext}[2]{#1\algocf@typo. \AlCapFnt{}#2} 
\def\@algocf@capt@plain{top}
\renewcommand{\algocf@makecaption}[2]{%
  \addtolength{\hsize}{\algomargin}%
  \sbox\@tempboxa{\algocf@captiontext{#1}{#2}}%
  \ifdim\wd\@tempboxa >\hsize
    \hskip .5\algomargin%
    \parbox[t]{\hsize}{\algocf@captiontext{#1}{#2}}
  \else%
    \global\@minipagefalse%
    \hbox to\hsize{\box\@tempboxa}
  \fi%
  \addtolength{\hsize}{-\algomargin}%
}
\patchcmd{\@algocf@start}{-1.5em}{0pt}{}{}
\newcommand{\AlgoStrut}{\rule[-0.5ex]{0pt}{0.5ex}}
\theoremstyle{plain}
\newtheorem{conjecture}{Conjecture}
\newtoks\remarkwrap@saveeverypar
\newenvironment{remarkwrap}[1][]{%
  \par\addvspace{\theorempreskipamount}%
  \ifcsname c@remark\endcsname
    \refstepcounter{remark}%
    \def\remarkwrap@num{\theremark}%
  \else
    \refstepcounter{theorem}%
    \def\remarkwrap@num{\thetheorem}%
  \fi
  \def\remarkwrap@note{#1}%
  \remarkwrap@saveeverypar=\everypar
  \def\remarkwrap@printhead{%
    \noindent\hspace*{\theoremindent}%
    {\itshape Remark \remarkwrap@num%
      \if\relax\detokenize{\remarkwrap@note}\relax.\else\ (\remarkwrap@note).\fi}%
    \normalfont\space
    \let\remarkwrap@printhead\relax
  }%
  \everypar{\remarkwrap@printhead\the\remarkwrap@saveeverypar}%
  \ignorespaces
}{%
  \par
  \everypar=\remarkwrap@saveeverypar
  \addvspace{\theorempostskipamount}%
}
\newcommand\newtarget[2]{\Hy@raisedlink{\hypertarget{#1}{}}#2}
\newcommand{\E}{\mathbb E}								
\newcommand{\V}{\mathrm{Var}}							
\renewcommand{\P}{\mathbb{P}}							
\newcommand{\R}{\mathbb{R}}								
\newcommand{\N}{\mathbb{N}}								
\newcommand{\indicator}{\mathbbm 1}						
\newcommand{\independent}{{\ \perp \! \! \! \perp\ }}	
\newcommand{\iidsim}{\stackrel{\mathrm{i.i.d.}}{\sim}} 	
\newcommand{\indsim}{\stackrel{\mathrm{ind}}{\sim}}		
\newcommand{\convp}{\overset p \rightarrow}             
\newcommand{\convd}{\overset d \rightarrow}             
\newcommand{\cF}{\mathcal F}						
\newcommand{\TPR}{\operatorname{TPR}}
\newcommand{\TPP}{\operatorname{TPP}}
\newcommand{\spl}{{\operatorname{split}}}
\newcommand{\mean}{{\operatorname{mean}}}
\newcommand{\opt}{{\operatorname{opt}}}
\newcommand{\aug}{{\textnormal{aug}}}
\newcommand{\bonus}{{\operatorname{bonus}}}
\newcommand{\insamp}{{\operatorname{in-samp}}}
\newcommand{\learn}{{\operatorname{learn}}}
\newcommand{\score}{{\operatorname{score}}}
\newcommand{\cH}{\mathcal H}
\newcommand{\cR}{\mathcal R}
\newcommand{\Var}{\operatorname{Var}}
\DeclareMathOperator{\Cov}{Cov}
\begin{document}

\title{Power of masking methods for adaptive testing \\ in a multivariate normal means problem}

\author{A. Chakraborty*}
\affil{Department of Statistics, Columbia University,\\ New York, NY 10027, U.S.A.
\email{ac4662@columbia.edu}}

\author{J. Lee*}
\affil{Department of Statistics and Data Science, University of Pennsylvania,\\ Philadelphia, PA 19104, U.S.A. \email{junulee@wharton.upenn.edu}}

\author{\and E. Katsevich}
\affil{Department of Statistics and Data Science, University of Pennsylvania,\\ Philadelphia, PA 19104, U.S.A.
\email{ekatsevi@wharton.upenn.edu}}

\maketitle

\renewcommand{\thefootnote}{}\footnotetext{*Equal contribution}\renewcommand{\thefootnote}{\arabic{footnote}}

\begin{abstract}

Many large-scale testing procedures learn signal structure from the data to boost power. Direct data reuse can inflate Type-I error (“double dipping”), so a common remedy is masking: withholding some information during learning and using it for testing. Sample splitting masks by withholding observations for testing, while null augmentation (e.g., knockoffs or full-conformal outlier detection) masks by appending null samples or variables and withholding their identities until testing. In many settings, little is known about how the power of masking methods compares across mechanisms, across tuning choices, or against more data-efficient non-masking alternatives. We study these questions in a stylized two-groups multivariate normal means model with an unknown signal direction learned from the data. Within this testbed, we develop a transparent, unified set of asymptotic power expressions for three parallel methods differing in masking choices: a sample splitting method, a full-conformal-style null augmentation method, and an oracle in-sample benchmark. Our main findings are: (1) the augmentation method is more powerful than the splitting method with matched tuning; (2) the power-optimal number of null samples for the augmentation method is a vanishing fraction of the number of tests, in which case its power approaches that of the in-sample benchmark; and (3) for a tractable approximation to the augmentation method, the optimal number of null samples scales as the square root of the number of tests, with empirical evidence suggesting a similar scaling for the method itself. These results characterize masking-induced power trade-offs in a tractable model and suggest qualitative lessons for other settings.

\end{abstract}

\begin{keywords}
Empirical Bayes; False discovery rate; Learn-then-test; Multiple testing; Multivariate normal means; Power analysis
\end{keywords}

\section{Introduction}

\subsection{Masking: a versatile remedy for double dipping} \label{sec:masking}

Consider testing $m$ hypotheses, with the goal of controlling some notion of Type-I error. Often, the alternatives of interest are highly composite, e.g. multivariate or nonparametric. Examples include testing $m$ genetic variants for association with a multivariate vector of phenotypes \citep{Urbut2019, Urbut2023}, testing $m$ images for being outliers with respect to an inlier distribution accessed through samples \citep{Bates2021,Marandon2024,Lee2025}, and testing $m$ predictor variables for conditional association with a response variable, given the other predictors \citep{CetL16}. In all of these cases, omnibus tests have low power due to the high complexity of the alternative space. To address this issue, a common strategy is the following two-step procedure: 1) \textit{learn} some aspect of the alternative distributions, often by leveraging structure and/or borrowing strength across hypotheses, and 2) \textit{test} the hypotheses using this learned information. Except in special cases like nuisance-only learning \citep{Shah2018}, however, direct reuse of the data for learning and testing can inflate Type-I error due to double dipping.

One of the most prevalent approaches to avoiding double dipping in this context is \textit{masking}: withholding part of the information in the data at the learning step and then leveraging this information for valid testing. For example, consider the holdout randomization test (HRT; \cite{Tansey2018}) of conditional association between variables $X_1, \dots, X_m$ and a response $Y$. HRT \textit{masks} the data by withholding a subset of observations, \textit{learns} a predictive model $\hat f$ on the masked data, and then \textit{tests} each variable $j$ based on the contribution of $X_j$ to the predictive accuracy of $\hat f$ on the withheld observations. Another example is full-conformal outlier detection \citep{Lee2025}. This approach \textit{masks} the data by appending $\tilde m$ null (inlier) samples and withholding their identities, \textit{learns} a scoring function to distinguish inliers from outliers, and \textit{tests} based on the ranks of the original samples' scores relative to those of the null samples.

HRT and full-conformal outlier detection exemplify two common masking mechanisms: \textit{sample splitting} and \textit{null augmentation}, respectively. Sample splitting partitions the observations into two groups, one for learning and one for testing. It has been employed for variable selection \citep{Wasserman2009,Tansey2018}, adaptive multiple testing \citep{Wasserman2006a,Rubin2006b} and causal inference \citep{Bekerman2025}. Null augmentation appends $\tilde m$ real or synthetic null observations or variables to the data prior to learning, withholding which observations or variables are null until testing. Nulls can be added in a common pool shared among hypotheses (\textit{one-to-many} augmentation), or individually for each hypothesis (\textit{one-to-one} augmentation), relying on different exchangeability relationships for calibration. Conformal outlier detection \citep{Marandon2024,Lee2025} and variable selection with knockoffs \citep{BC15,CetL16} exemplify one-to-many and one-to-one null augmentation, respectively. Other masking mechanisms exist, such as hypothesis-wise splitting \citep{Ignatiadis2016,Liu2020}. Likewise, there are other data modification schemes \citep{Xing2019, Dai2023a} not falling within our definition of masking.

\subsection{Questions about masking power and existing progress} \label{sec:questions}

While the Type-I error control of masking methods is well understood, fundamental theoretical questions about their power properties have received less attention:
\begin{itemize}[labelwidth=0pt,leftmargin=*,align=left]
\item[\textbf{Q1: Which masking mechanism is more powerful?}] How do sample splitting and null augmentation (one-to-many and one-to-one) compare?
\item[\textbf{Q2: How much masking gives optimal power?}] Sample splitting and one-to-many null augmentation both have parameters controlling the effective amount of information withheld during learning: the splitting proportion and the number of null data points appended, respectively. What are power-optimal settings for these parameters?
\item[\textbf{Q3: How much power does masking sacrifice?}] Masking obviates the need to characterize the statistical properties of the learning procedure, making it compatible with arbitrary black-box procedures. However, it also uses only partial information for learning and/or testing, which can impact power. Compared to more tailored solutions using the data more efficiently, what is the price in power of the versatility of masking? 
\end{itemize}
These theoretical questions are motivated by practical considerations: masking mechanism choice, tuning, and whether to mask at all. These choices can substantially affect power, yet are often made heuristically. Answering Q1--Q3 does not by itself yield implementable decision rules, e.g., due to dependence on unknown parameters. Rather, theory can clarify the trade-offs governing these choices and identify the quantities that would need to be estimated for practical application. There has been some progress on addressing Q1--Q3 in specific contexts, which we review next.

For Q1, we are aware of no work comparing sample splitting to null augmentation. \citet{Weinstein2023} empirically compared knockoffs (one-to-one augmentation) to counting knockoffs \citep{Weinstein2017} (one-to-many augmentation), finding the latter to be more powerful. Limiting power expressions for both were obtained in \citet{Weinstein2017, Weinstein2023}, but not compared, potentially due to their complicated form.

For Q2, we are aware of one work shedding light on the optimal sample splitting proportion. In a causal inference context, \citet{Bekerman2025} derive an explicit asymptotic power formula for their sample splitting method, from which the optimal splitting proportion can be derived. For one-to-many null augmentation, \citet{Yang2021,Marandon2024} discuss heuristics for selecting the number of null samples $\tilde m$. 
Finally, \citet{Weinstein2017} derive the asymptotic power of counting knockoffs. The dependence on the limiting ratio $\tilde m / m$ is complex, and the optimal choice was not analyzed. None of these works explore the trade-off between the quality of the learning step (decreasing with $\tilde m$) and the granularity of the $p$-values (increasing with $\tilde m$).


For Q3, the cost of masking has been studied for knockoffs. Several works have compared the power of knockoffs in the linear model, mostly with lasso-based test statistics, to that of non-masking benchmarks \citep{Weinstein2017, Wang2020b, Weinstein2023, Ke2021a}. The cost of masking was found to range from negligible to substantial, depending on factors including the knockoff statistic and the data-generating model. By contrast, to our knowledge, the cost of masking has not been quantified in other contexts, including one-to-many null augmentation or sample splitting.


Some of the above questions have been studied in the context of single testing, which is not the focus of this work, including Q2 \citep{Moran1973a,Cox1975a,Kim2020, Kim2024} and Q3 \citep{Wasserman2020b}. Masking can be viewed as a form of conditional inference (masking methods are usually valid conditionally on the learned information), and is therefore related to conditional post-selection inference \citep{Kuchibhotla2022}. However, the latter class of problems is beyond the scope of this work.

\subsection{Our contributions}

Despite the above progress, the theoretical understanding of masking power remains fragmented and incomplete. To address this gap, we study Q1--Q3 in a stylized testbed that isolates the power effects of masking while remaining tractable enough to yield explicit and interpretable power expressions.

We work in a two-groups multivariate normal means problem where alternative means are drawn from a one-dimensional subspace whose direction $\bm v$ is unknown \citep{Yang2021}, a simplified variant of the multi-outcome genetic association testing setting \citep{Urbut2019}. This setting captures the core problem structure from Section~\ref{sec:masking} in a canonical multiple-testing testbed \citep{Genovese2002} with a simple structured alternative. In this setting, we analyze three methods that \textit{learn} an alternative direction $\hat{\bm v}$ on a portion of the data (based on either a scaled sample mean or top eigenvector), \textit{score} each hypothesis by projecting a potentially different portion of the data in the direction of $\hat{\bm v}$, \textit{calibrate} these scores against a null distribution to obtain $p$-values, and then \textit{adjust} these $p$-values for multiplicity by applying the Benjamini-Hochberg (BH) procedure. In particular, we consider \textbf{Split BH} (sample splitting), \textbf{BONuS} (a non-iterative variant of the one-to-many null augmentation method proposed by \citet{Yang2021}), and \textbf{In-sample BH} (an oracle non-masking benchmark that learns and tests on the full data). The only differences among the methods are those dictated by the choice of whether and how to mask, setting up direct comparisons that facilitate addressing Q1 and Q3. 

We \textbf{derive the asymptotic powers of all three methods in a unified framework} that mirrors their common structure (Section~3). We then \textbf{analyze a tractable approximation to BONuS power} in the regime $\tilde m \ll m$, identified in Section 3 as most powerful, yielding finer-grained insight into the optimal number of null samples $\tilde m$ (Section~4). Finally, we \textbf{empirically compare a one-to-one null augmentation method to Split BH and BONuS} (Appendix~\ref{sec:knockoffs}). Our findings are as follows.

\paragraph{Q1 (The choice of masking mechanism).} We find that asymptotic power is driven by the effective proportions of the data allocated for learning and scoring, but not calibration. While Split BH uses partial information for learning and scoring, BONuS uses partial information for learning but full information for scoring. As a result, BONuS is asymptotically more powerful than Split BH when both are tuned to equalize the quality of the learned direction $\hat{\bm v}$. In an empirical comparison with a one-to-one null-augmentation analog, we find that BONuS can be more powerful, consistent with \citet{Weinstein2023}, and that the observed gap depends on the choice of knockoff statistic.

\paragraph{Q2 (The amount of masking).} For Split BH, we find that the optimal splitting proportion depends on unknown parameters only through the ``learning curve'' mapping the splitting proportion to the limiting alignment between $\hat{\bm v}$ and $\bm v$. This identifies a quantity whose estimation may facilitate a future data-driven method for choosing the splitting proportion. For BONuS, we find that the optimal number of null samples satisfies $\tilde m_{\text{opt}} \ll m$. To quantify the rate, we find for a tractable approximation to BONuS power that $\tilde m_{\text{opt}} \asymp \sqrt{m}$, and conjecture with empirical support that the same is true for BONuS. We derive the leading constant in this approximation as well, but leave the estimation of unknown parameters in this expression to future work. These results complement and refine the heuristic guidance of \citet{Yang2021,Marandon2024}.

\paragraph{Q3 (The cost of masking).} We quantify the cost of masking at the learning and scoring stages, and how these contribute to the overall power loss relative to In-sample BH. Since Split BH incurs losses at both stages, it remains less powerful than In-sample BH even when optimally tuned. By contrast, BONuS incurs a loss only at the learning stage, and this loss vanishes asymptotically when $\tilde m \ll m$. Thus, within our testbed, one-to-many null augmentation can asymptotically approach the power of the oracle non-masking benchmark while retaining the convenience of masking. These results complement existing work on the cost of masking for one-to-one null augmentation \citep{Weinstein2017, Wang2020b, Weinstein2023, Ke2021a}.

\paragraph{} The conclusions above are derived within our testbed, but this setting bears similarities to many of those that motivate our work (Section~\ref{sec:masking}). We therefore hypothesize that several of the above conclusions extend---at least qualitatively---beyond this paper's setting; we formulate three such conjectures in the Discussion (Section~\ref{sec:discussion}). Furthermore, our proofs suggest a route to establishing results for the methods we analyze in other settings. We prove ``master theorems'' (Appendix~\ref{sec:master-theorems}) for the power of Split BH, BONuS, and In-sample BH under conditions on their limiting score distributions, which are induced by the data-generating model, learning procedure, and test statistic. Extending the present analysis to other choices of these three elements can therefore be achieved by verifying the conditions of our master theorems. Taken together, these results provide a framework for broader analyses beyond our testbed, and we hope that the resulting insights will ultimately inform the application of masking methods in other settings.

\section{Problem setup and methods compared}
\label{sec:overview}

\subsection{Problem setup} \label{sec:problem-setup} 


For each hypothesis $j \in [m]$, we observe a $d$-dimensional summary statistic $\bm X_j$, equal to the scaled average of $n$ underlying observations (Appendix~\ref{sec:multi-observation-model}), and assume
\begin{equation}
\bm X_{j} \indsim N(\bm \theta_j, \bm I_d), \quad \bm \theta_j \in \R^d, \quad j=1,\ldots,m.
\label{eq:data-generating}
\end{equation}
Our goal is to test $H_{0j}: \bm \theta_j = \bm 0$ for each $j \in [m]$. We compare procedures returning rejection sets $\widehat{\mathcal R} \subseteq [m]$ that target FDR control at level $q$ for each fixed setting of $(\bm \theta_j)_{j=1}^m$. We evaluate the power of these procedures with respect to a two-groups model \citep{Efron2008,Yang2021} where $\bm \theta_j$ are drawn from a mixture distribution
\begin{equation}
\bm \theta_j \iidsim (1-\gamma) \cdot \delta_{\bm 0} + \gamma \cdot \Lambda, \quad j = 1, \ldots, m,
\label{eq:two-groups}
\end{equation}
where $\gamma \in (0,1]$ is the non-null proportion and $\Lambda$ is the alternative prior. Denoting by $\mathcal H_0 \equiv \{j \in [m]: \bm \theta_j = \bm 0\}$ and $\mathcal H_1 \equiv \{j \in [m]: \bm \theta_j \neq \bm 0\}$ the sets of null and alternative hypotheses, respectively, we define the true positive proportion as $\TPP(\widehat{\mathcal R}) \equiv |\widehat{\mathcal R} \cap \mathcal H_1|/|\mathcal H_1|$. We quantify the asymptotic power of a method as the in-probability limit of its TPP in a suitable scaling regime with respect to the distributions~\eqref{eq:data-generating} and~\eqref{eq:two-groups}.

The alternative prior $\Lambda$ represents a structure that can be learned from the data to boost power (recall Section~\ref{sec:masking}). We consider two choices for $\Lambda$, corresponding to cases where $\Lambda$ is a point mass or a Gaussian distribution supported on a one-dimensional subspace \citep{Yang2021}:
\begin{equation}
\Lambda \equiv \delta_{h\bm v} \quad \text{or} \quad \Lambda \equiv N(\bm 0, h^2 \bm v \bm v^T), \quad \text{where} \quad h \geq 0, \ \bm v \in \R^d, \ \|\bm v\|_2 = 1.
\label{eq:priors}
\end{equation}
In both cases, the quantities $\bm v$ and $h$ capture the direction and magnitude of the alternative departure from the null, respectively. Given $\Lambda$, the optimal test of $H_{0j}$ is based on the log likelihood ratio statistic
\begin{equation}
T_{\Lambda}(\bm X_{j}) \equiv \log \frac{\int_{\R^d}\phi_d(\bm X_{j} - \bm \theta)d\Lambda(\bm \theta)}{\phi_d(\bm X_{j})},
\label{eq:likelihood-ratio-statistic}
\end{equation}
where $\phi_d$ is the PDF of $N(\bm 0, \bm I_d)$ \citep{Yang2021}. An empirical Bayes strategy for testing in the presence of the unknown distribution $\Lambda$ is to pool the data across $j$ and learn an approximation $\hat \Lambda$. If $d$ grows with $m$, however, $\hat \Lambda$ may be inconsistent, and reusing the data to learn $\hat \Lambda$ and test with $T_{\hat \Lambda}$ may create a double dipping issue. This makes our setting a natural testbed for masking methods, which we introduce next.

\subsection{Methods compared}
\label{sec:methods}

We compare Split BH (Algorithm~\ref{alg:split-bh}), BONuS (Algorithm~\ref{alg:bonus}), and In-sample BH (Algorithm~\ref{alg:in-sample-bh}). All three methods fit into a common framework (Figure~\ref{fig:common-paradigm}), where the data $\bm X \in \R^{m \times d}$ are manipulated into datasets $\bm X^\learn$ and $\bm X^\score$ (Steps 1-2) to learn $\hat \Lambda$ via an arbitrary learning procedure $L$ (Step 3) and to construct a score based on $T_{\hat \Lambda}$~\eqref{eq:likelihood-ratio-statistic} for each hypothesis (Step 4), respectively. A null distribution is then constructed for the resulting scores (Step 5), based on which finite-sample-valid $p$-values are computed (Step 6), which are passed through BH correction to obtain the rejection set $\widehat{\mathcal R}$ (Step 7). The methods differ only in how they define $\bm X^\learn$ and $\bm X^\score$ (Steps 1-2), and how they construct the null distribution for calibration (Step 5); Split BH and BONuS employ masking while In-sample BH does not. We elaborate on these method-specific details below. 

\begin{figure}[h!]
\centering
\includegraphics[width = 0.7\textwidth]{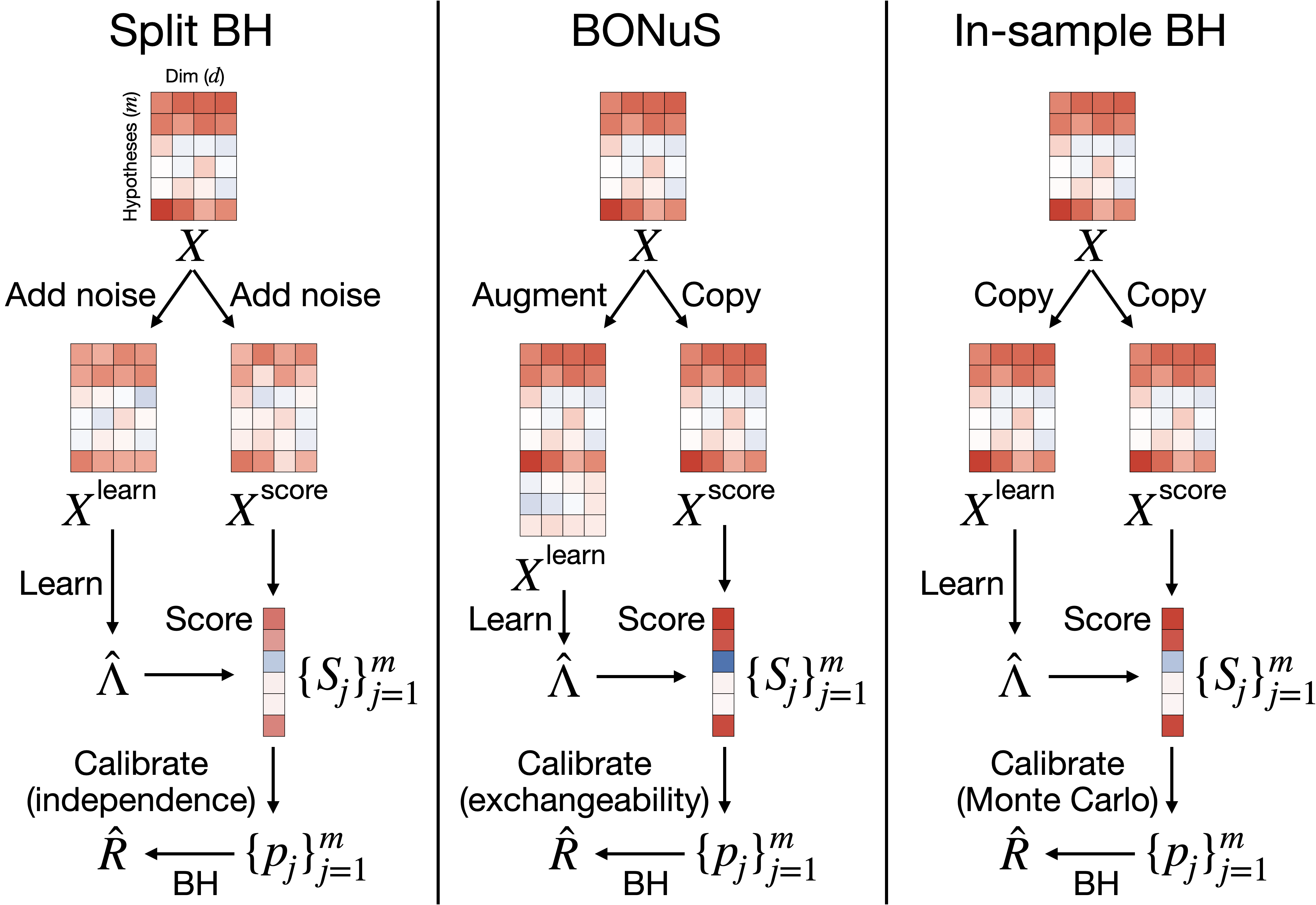}
\caption{Three methods compared in a two-groups multivariate normal means setting: Split BH (sample splitting), BONuS (null augmentation), and In-sample BH (a non-masking oracle benchmark). The colored squares represent example numeric values.}
\label{fig:common-paradigm}
\end{figure}

\begin{figure}[h]
\begin{minipage}[t]{0.49\textwidth}
\begin{algorithm}[H]
\LinesNumbered
\setlength{\algomargin}{0em}
\setlength{\algoheightrule}{0pt}
\setlength{\algotitleheightrule}{0pt}
\setlength{\interspacetitleruled}{0pt}
\setlength{\interspacealgoruled}{0pt}
\SetAlgoSkip{}
\SetInd{0em}{0em}
\caption{Split BH}
\label{alg:split-bh}
\KwIn{Splitting proportion $\smash{\pi_\spl} \in (0,1)$}
\setcounter{AlgoLine}{-1}
\AlgoStrut$\tilde{\bm X}_{k} \sim N(\bm 0, \bm I_d)$, $k = 1, \ldots, m$\;
\AlgoStrut$\bm X^\learn \gets \sqrt{\smash[b]{\pi_\spl}}\bm X + \sqrt{\smash[b]{1-\pi_\spl}}\bm {\tilde X}$\;
\AlgoStrut$\bm X^\score \gets \sqrt{\smash[b]{1-\pi_\spl}}\bm X - \sqrt{\smash[b]{\pi_\spl}}\bm {\tilde X}$\;
\AlgoStrut\(\hat \Lambda \gets L(\bm X^\learn)\)\;
\AlgoStrut$S_j \gets T_{\hat \Lambda}(\bm X_j^\score)$\;
\AlgoStrut$\bar G_{0m}(t) \gets \mathbb P[T_{\hat \Lambda}(\bm X_j^\score) \geq t \mid \hat \Lambda, \bm \theta_j = \bm 0]$\;
\AlgoStrut$p_j \gets \bar G_{0m}(S_j)$\;
\AlgoStrut\KwRet{\(\widehat{\mathcal R} = \textnormal{BH}(\{p_j\}_{j = 1}^m)\)}
\end{algorithm}
\end{minipage}
\hfill
\begin{minipage}[t]{0.51\textwidth}
\begin{algorithm}[H]
\LinesNumbered
\setlength{\algomargin}{0em}
\setlength{\algoheightrule}{0pt}
\setlength{\algotitleheightrule}{0pt}
\setlength{\interspacetitleruled}{0pt}
\setlength{\interspacealgoruled}{0pt}
\SetAlgoSkip{}
\SetInd{0em}{0em}
\caption{BONuS}
\label{alg:bonus}
\KwIn{Number of null samples $\tilde m \in \N$}
\setcounter{AlgoLine}{-1}
\AlgoStrut$\tilde{\bm X}_{k} \sim N(\bm 0, \bm I_d)$, $k = 1, \ldots, \tilde m$\;
\AlgoStrut$\bm X^\learn \gets [\bm X; \tilde{\bm X}]_\Pi$, \ \ $\Pi \sim \text{Sym}([m + \tilde m])$\;
\AlgoStrut$\bm X^\score \gets \bm X$\;
\AlgoStrut\(\hat \Lambda \gets L(\bm X^\learn)\)\;
\AlgoStrut$S_j \gets T_{\hat \Lambda}(\bm X_j^\score)$\;
\AlgoStrut$\bar G_{0m}(t) \gets \tfrac{1}{1 + \tilde m}(1 + \sum_{k = 1}^{\tilde m}\indicator(T_{\hat \Lambda}(\tilde{\bm X}_k) \geq t))$\;
\AlgoStrut$p_j \gets \bar G_{0m}(S_j)$\;
\AlgoStrut\KwRet{\(\widehat{\mathcal R} = \textnormal{BH}(\{p_j\}_{j = 1}^m)\)}
\end{algorithm}
\end{minipage}
\end{figure}

\paragraph{Split BH.} Split BH uses a fraction $\pi_\spl \in (0,1)$ of the observations for learning and the remainder for scoring. Recall that the observations are left implicit in the data-generating model~\eqref{eq:data-generating}. At the level of the scaled averages $\bm X_j$, sample splitting can be equivalently represented via a \textit{data fission} operation \citep{Neufeld2024b, Leiner2025}, which adds Gaussian noise to the data to create two datasets $\bm X^\learn, \bm X^\score \in \R^{m \times d}$ (Steps 0-2), each with weaker signal (Appendix~\ref{sec:multi-observation-model}). Conditionally on $(\bm \theta_j)_{j \in [m]}$, $\bm X^\learn$ and $\bm X^\score$ are independent, each drawn from the distribution~\eqref{eq:data-generating}, but with means scaled by $\sqrt{\pi_\spl}$ and $\sqrt{1-\pi_\spl}$, respectively. Given the (conditional) independence between $\bm X^\learn$ and $\bm X^\score$, the null distribution for $T_{\hat \Lambda}(\bm X_j^\score)$ is constructed by conditioning on $\hat \Lambda$ (Step 5).

\paragraph{BONuS.}
BONuS augments the original data $\bm X$ with $\tilde m \in \N$ null samples $\tilde{\bm X}_k$ and then scrambles the rows with a random permutation $\Pi$ (Steps 0-1) to obtain $\bm X^\learn = [\bm X; \tilde{\bm X}]_\Pi \in \R^{(m + \tilde m) \times d}$ ($[\bm A; \bm B]$ denotes vertically stacking matrices $\bm A$ and $\bm B$ and $\bm C_\Pi$ denotes permuting the rows of $\bm C$ by $\Pi$). On the other hand, the scoring data $\bm X^\score$ are simply the original data $\bm X$ (Step 2). The null distribution for $T_{\hat \Lambda}(\bm X_j^\score)$ is constructed based on the empirical distribution of the scores of the null samples $T_{\hat \Lambda}(\tilde{\bm X}_k)$, which are exchangeable with the null scores of the original data (Step 5). The resulting empirical $p$-values are closely related to conformal $p$-values \citep{Bates2021,Mary2021a,Marandon2024}. We note that, even though this method differs from that of \citet{Yang2021} in that it is non-iterative, we still call it ``BONuS'' for simplicity. Furthermore, \citet{Yang2021} did not represent their method in terms of BH applied to empirical $p$-values, but this representation is equivalent \citep[Proposition 8]{Lee2024}.

\paragraph{In-sample BH.} 

\begin{wrapfigure}[10]{r}{0.44\textwidth}
\vspace{-0.16in}                    
\begin{minipage}[t]{0.44\textwidth}
\begin{algorithm}[H]
\LinesNumbered
\caption{In-sample BH}
\label{alg:in-sample-bh}
\KwIn{($\bm \theta_j)_{j = 1}^m$, $j_0$ s.t. $\bm \theta_{j_0} = \bm 0$}
\AlgoStrut$\bm X^\learn \gets \bm X$\;
\AlgoStrut$\bm X^\score \gets \bm X$\;
\AlgoStrut\(\hat \Lambda \gets L(\bm X^\learn)\)\;
\AlgoStrut$S_j \gets T_{\hat \Lambda}(\bm X_j^\score)$\;
\AlgoStrut$\bar G_{0m}(t) \gets \mathbb P[T_{\hat \Lambda}(\bm X_{j_0}) \geq t \mid (\bm \theta_j)_{j=1}^m]$\;
\AlgoStrut$p_j \gets \bar G_{0m}(S_j)$\;
\AlgoStrut\KwRet{\(\widehat{\mathcal R} = \textnormal{BH}(\{p_j\}_{j = 1}^m)\)}
\end{algorithm}
\end{minipage}
\end{wrapfigure}
In-sample BH uses the entire data $\bm X$ for learning and scoring (Steps 1-2). The null distribution $\smash{\mathbb P[T_{\hat \Lambda}(\bm X_{j}) \geq t \mid (\bm \theta_j)_{j=1}^m]}$ (Step 5) is complex due to the dependence between $\smash{\hat \Lambda}$ and $\bm X_j$ induced by direct data reuse. We approximate it via Monte Carlo: for each $b \in [B]$, we draw a sample from the data-generating model~\eqref{eq:data-generating}, compute $\hat \Lambda$, and compute $T_{\hat \Lambda}(\bm X_{j_0})$ for a null index $j_0$. Finally, we extract an empirical survival function of these $B$ sampled test statistics analogously to Step 5 of Algorithm~\ref{alg:bonus}.

\section{Comparing the asymptotic powers of the three methods} \label{sec:asymptotic-power-comparison}

We compare the asymptotic powers of the three methods under the following high-dimensional asymptotic regime:
\begin{equation}
h, \gamma \text{ fixed}, \quad m, d, \tilde m \rightarrow \infty \quad \text{with} \quad \frac{d}{m} \rightarrow c \geq 0, \quad \frac{m}{m + \tilde m} \rightarrow \pi_\aug \in [0,1].
\label{eq:asymptotic-regime}
\end{equation}
The ratio $\tfrac{m}{m + \tilde m}$ is the fraction of original samples in the augmented $\bm X^\learn = [\bm X; \tilde{\bm X}]_\Pi$, so we view it as the ``effective fraction of the data used for learning,'' analogous to $\pi_\spl$. This section begins with an overview (Section~\ref{sec:overview-of-asymptotic-power-results}) followed by results for the two cases of the prior $\Lambda$~\eqref{eq:priors}: point mass prior (Section~\ref{sec:point-mass-prior}) and subspace prior (Section~\ref{sec:subspace-prior}).

\subsection{Overview of asymptotic power results} \label{sec:overview-of-asymptotic-power-results}

We derive the power of three methods under two choices of prior in a unified framework (Figure~\ref{fig:theory-schematic}), which pinpoints how methodological differences translate to power differences. Our framework involves understanding the properties of five key objects computed by each method (Figure~\ref{fig:common-paradigm}): $\bm X^\learn$, $\bm X^\score$,  $\hat{\bm v}$, $\bm S$, and $\widehat{\mathcal R}$. Note that $\hat{\bm v}$ stands in place of $\hat \Lambda$ because it is the relevant portion of the prior to learn under the two priors considered~\eqref{eq:priors}.

\begin{figure}[h!]
\centering
\includegraphics[width = \textwidth]{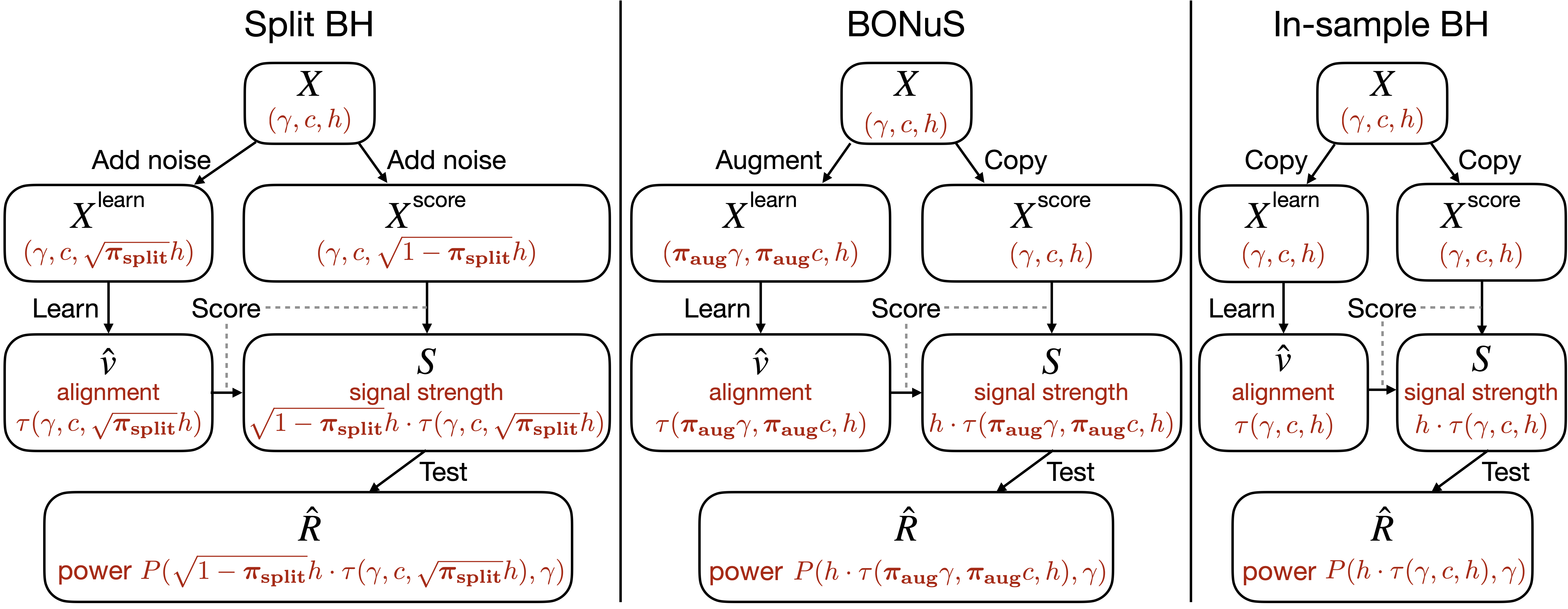}
\caption{A framework for dissecting the asymptotic power of the three methods under two prior choices, mirroring the structures of the methods themselves (Figure~\ref{fig:common-paradigm}).}
\label{fig:theory-schematic}
\end{figure}

\paragraph{$\bm X^\learn$.} Split BH and BONuS mask the input data $\bm X$ prior to learning. As \citet{Weinstein2017} did in a different setting, we express these masking operations as transformations of the three parameters governing the asymptotic behavior of the data~\eqref{eq:asymptotic-regime}: the non-null proportion $\gamma$, aspect ratio $c$, and signal strength $h$. It is easy to verify that splitting weakens the signal strength by a factor of $\sqrt{\pi_\spl}$ while null augmentation decreases the non-null proportion and aspect ratio by a factor of $\pi_\aug$:
\begin{equation}
\text{splitting} : (\gamma, c, h) \mapsto (\gamma, c, \sqrt{\pi_\spl}h), \quad \text{augmentation} : (\gamma, c, h) \mapsto (\pi_\aug \gamma, \pi_\aug c, h).
\label{eq:masking-parameter-transformations}
\end{equation}
By contrast, In-sample BH leaves the input data unchanged prior to learning.

\paragraph{$\bm X^\score$.} Only Split BH uses masked data $\bm X^\score$ for scoring; the other two methods use the original data $\bm X$. The masking operation for Split BH weakens the signal strength in the scoring data by a factor of $\sqrt{1 - \pi_\spl}$.

\paragraph{$\hat{\bm v}$.} We measure the quality of the learned direction $\hat{\bm v}$ via its limiting alignment with the true alternative direction $\bm v$. This alignment is a function $\tau$ of the three parameters governing $\bm X^\learn$. This function depends on the specific choices of prior and learner. In both cases, the quality of the learned directions $\hat{\bm v}$ degrades as $\pi_\spl$ and $\pi_\aug$ decrease, with In-sample BH achieving the highest limiting alignment among the three methods.

\paragraph{$\bm S$.} The scores $S_j$ are constructed based on $\hat{\bm v}$ and $\bm X^\score$ (Figure~\ref{fig:common-paradigm}). For both priors, the score for the $j$th hypothesis is a function of $\hat{\bm v}^\top \bm X^\score_j$, i.e., the projection of the $j$th vector onto the learned direction. Asymptotically, this quantity's distribution under the null and the alternative differs by a \textit{signal strength} $\mu$ (a normal mean shift or a $\chi^2$ scaling factor), the product of the alignment of $\hat{\bm v}$ with $\bm v$ and the signal strength in $\bm X^\score$:
\begin{equation}
\begin{cases}
\mu_\spl(\gamma, c, h; \pi_\spl) \!\!\! &\equiv h \sqrt{1 - \pi_\spl} \cdot \tau(\gamma, c, \sqrt{\pi_\spl}h), \\ 
\mu_\bonus(\gamma, c, h; \pi_\aug) \!\!\! &\equiv h \cdot \tau(\pi_\aug \gamma, \pi_\aug c, h), \\
\mu_\insamp(\gamma, c, h) \!\!\! &\equiv h \cdot \tau(\gamma, c, h).
\end{cases}
\label{eq:signal-strengths}
\end{equation}

\paragraph{$\widehat{\mathcal R}$.} The rejection set results from a calibration procedure on the scores $S_j$, or equivalently, an application of BH to score-derived $p$-values. The overall power can be expressed as a prior-dependent but method-independent function $P$ of the signal strength in the alternative scores as well as the non-null proportion $\gamma$. The function $P$ is defined in terms of the limiting power of BH applied to testing simple hypotheses of the form $H_0: Y \sim G_0 \ \text{versus} \ H_1: Y \sim G_1$, where a fraction $\gamma$ are non-nulls \citep{Genovese2002}:
\begin{definition}[BH TPR function] \label{def:bhtpr}
For a non-null proportion $\gamma \in (0,1]$ and null and alternative distributions $G_0$ and $G_1$ with survival functions $\bar G_0$ and $\bar G_1$, we define
\begin{equation}
\TPR_{\textnormal{BH}}(G_0, G_1, \gamma) \equiv \bar G_1(t_*), \ \text{where} \ t_* \equiv \inf\left\{t \in \R: \frac{\bar G_0(t)}{(1-\gamma) \bar G_0(t) + \gamma \bar G_1(t)} \leq q\right\}.
\end{equation}
\end{definition}


\subsection{Case 1: Point mass prior} \label{sec:point-mass-prior}

Throughout this section, consider data $\bm X$ generated according to equations~\eqref{eq:data-generating} and~\eqref{eq:two-groups}, with $\Lambda = \delta_{h \bm v}$, and all asymptotic statements are with respect to the regime~\eqref{eq:asymptotic-regime}.

\paragraph{Likelihood ratio and learner.} We can derive that the likelihood ratio statistic~\eqref{eq:likelihood-ratio-statistic} is
\begin{equation}
T_\Lambda(\bm X_j) = h\bm v^\top \bm X_j - \tfrac12 h^2, \quad \text{or} \quad \bm v^\top \bm X_j \ \text{after monotonic transformation.}
\label{eq:likelihood-ratio-statistic-point-mass}
\end{equation}
Given the invariance of all three methods to monotonic transformations of the statistic, we use the functional form $T_\Lambda(\bm X_j) \equiv \bm v^\top \bm X_j$ for the remainder of Section~\ref{sec:point-mass-prior}. The important portion of the prior to learn is the direction $\bm v \in \R^d$, and since $\E[\bm X_j] =  \gamma h \bm v$, we consider a learner based on the average of $\bm X_j$, similar to that employed by \citet{Kim2024}:
\begin{equation}
\hat{\bm v}_{\text{mean}} \equiv L_{\text{mean}}(\bm X) \equiv \frac{\frac{1}{m}\sum_{j = 1}^m \bm X_j}{\|\frac{1}{m}\sum_{j = 1}^m \bm X_j\|}.
\label{eq:mean-learner}
\end{equation}
This simple estimator is not necessarily optimal, but facilitates transparent analysis.

\paragraph{Quality of learned directions $\hat{\bm v}$.} For In-sample BH, the above learner is applied to the original data. The learned direction has the following asymptotic alignment with $\bm v$:
\begin{lemma}[Quality of mean learner on original data] \label{lem:mean-learner-untransformed}
The direction $\bm{\hat v}$ $= L_{\textnormal{mean}}(\bm X)$ has the following asymptotic alignment with the true direction $\bm v$:
\begin{equation}
\hat{\bm v}^\top \bm v \convp \frac{h\gamma}{\sqrt{h^2\gamma^2 + c}} \equiv \tau_{\textnormal{mean}}(\gamma, c, h).
\end{equation}
\end{lemma}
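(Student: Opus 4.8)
The plan is to reduce to the canonical direction $\bm v = \bm e_1$ by rotational invariance, split the sample mean into a signal part and a noise part, and apply the law of large numbers coordinatewise. Write $\bm X_j = \bm \theta_j + \bm \xi_j$ with $\bm \xi_j \iidsim N(\bm 0, \bm I_d)$ independent of $\{\bm \theta_j\}$, and set $\bar{\bm X} \equiv \frac1m \sum_{j=1}^m \bm X_j$, so that $\hat{\bm v} = \bar{\bm X}/\norm{\bar{\bm X}}$ and the target quantity is $\hat{\bm v}^\top \bm v = (\bar{\bm X}^\top \bm v)/\norm{\bar{\bm X}}$. Applying an orthogonal map $R$ with $R\bm v = \bm e_1$ leaves $\hat{\bm v}^\top \bm v = (R\hat{\bm v})^\top(R\bm v)$ unchanged and sends $\bm \xi_j$ to an i.i.d.\ $N(\bm 0, \bm I_d)$ sequence, so we may assume without loss of generality $\bm v = \bm e_1$; then $\bm \theta_j = h B_j \bm e_1$ with $B_j \iidsim \mathrm{Bernoulli}(\gamma)$, and we must show $\bar X^{(1)}/\norm{\bar{\bm X}} \convp h\gamma/\sqrt{h^2\gamma^2 + c}$, where $\bar X^{(i)}$ denotes the $i$th coordinate of $\bar{\bm X}$.

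First I would analyze the first coordinate: $\bar X^{(1)} = h\bar B + \bar\xi^{(1)}$, where $\bar B \equiv \frac1m\sum_j B_j$ and $\bar \xi^{(1)} \equiv \frac1m \sum_j \xi_j^{(1)} \sim N(0, 1/m)$. The strong law gives $\bar B \convas \gamma$, and $\bar\xi^{(1)} \convp 0$ since its variance $1/m \to 0$, so $\bar X^{(1)} \convp h\gamma$. Next I would analyze the remaining coordinates: $\norm{\bar{\bm X}}^2 = (\bar X^{(1)})^2 + \sum_{i=2}^d (\bar\xi^{(i)})^2$, where the $\bar\xi^{(i)} \sim N(0, 1/m)$ are independent across $i$, so $\sum_{i=2}^d (\bar\xi^{(i)})^2 \sim \frac1m \chi^2_{d-1}$, which has mean $(d-1)/m \to c$ and variance $2(d-1)/m^2 \to 0$ and hence converges in probability to $c$. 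Combining the two displays, $\norm{\bar{\bm X}}^2 \convp h^2\gamma^2 + c$.

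Finally, in the non-degenerate case $h^2\gamma^2 + c > 0$, the map $(a,b) \mapsto a/\sqrt{b}$ is continuous at $(h\gamma, h^2\gamma^2 + c)$, so the continuous mapping theorem applied to the (jointly convergent) pair $(\bar X^{(1)}, \norm{\bar{\bm X}}^2)$ yields $\hat{\bm v}^\top \bm v \convp h\gamma/\sqrt{h^2\gamma^2 + c} = \tau_{\mean}(\gamma, c, h)$; the degenerate case $h^2\gamma^2 + c = 0$ (forcing $h\gamma = 0$ and $c = 0$, i.e.\ no signal to learn) can be excluded or handled separately.

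The argument is essentially routine, and the only steps needing care are the reduction to $\bm v = \bm e_1$ (which works because $\{\bm \xi_j\}$ is rotation-invariant and independent of $\{\bm \theta_j\}$) and the concentration of the $\chi^2_{d-1}/m$ term. The latter is the crux: it is where the aspect-ratio hypothesis $d/m \to c$ enters, and it is precisely the source of the additive $c$ in the denominator, which caps the alignment below $1$ even in the large-$m$ limit. No genuine obstacle is anticipated beyond bookkeeping.
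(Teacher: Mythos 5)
Your proof is correct and takes essentially the same route as the paper: the paper proves a slightly more general version (Lemma S5, which allows a splitting proportion $\pi_{\spl}$) and then specializes to $\pi_{\spl} = 1$, but the underlying argument — rotate so $\bm v = \bm e_1$, decompose $\|\bar{\bm X}\|^2$ into the first-coordinate (signal plus $N(0,1/m)$ noise) and orthogonal ($\tfrac{1}{m}\chi^2_{d-1}$ noise) parts, apply the LLN for the Bernoulli average and $\chi^2$ concentration, and finish with the continuous mapping theorem — is identical. Your explicit handling of the rotation-invariance reduction and the degenerate case $h^2\gamma^2 + c = 0$ is slightly more careful than the paper's terse "WLOG," but introduces no new ideas.
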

\noindent For Split BH and BONuS, the learner is applied to the masked data $\bm X^\learn$, which is governed by modified sets of parameters~\eqref{eq:masking-parameter-transformations}. We therefore deduce the asymptotic alignment of the learned direction $\hat{\bm v}$ with the true direction $\bm v$ for both methods:
\begin{proposition}
[Quality of mean learner on masked data] \label{prop:mean-learner-masked}
The directions $\hat{\bm v}_\spl$ and $\hat{\bm v}_\bonus$ learned by $L_{\text{mean}}$~\eqref{eq:mean-learner} applied to the learning data $\bm X^\learn$ of Split BH and BONuS have the following limiting alignments with the true direction $\bm v$:
\begin{align}
\hat{\bm v}_\spl^\top \bm v \convp \tau_{\textnormal{mean}}(\gamma, c, \sqrt{\pi_\spl}h) = \frac{h \gamma}{\sqrt{h^2 \gamma^2 + c/\pi_\spl}}; \\
\hat{\bm v}_\bonus^\top \bm v \convp \tau_{\textnormal{mean}}(\pi_\aug \gamma, \pi_\aug c, h) = \frac{h \gamma}{\sqrt{h^2 \gamma^2 + c/\pi_\aug}}.
\end{align}
\end{proposition}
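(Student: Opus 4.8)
The plan is to reduce Proposition~\ref{prop:mean-learner-masked} to Lemma~\ref{lem:mean-learner-untransformed} by observing that, conditionally on $\{\bm\theta_j\}_{j\in[m]}$, the learning data $\bm X^\learn$ of each masking method is itself an instance of the data-generating model~\eqref{eq:data-generating}--\eqref{eq:two-groups}, but with the parameter triple $(\gamma,c,h)$ replaced by the transformed triple recorded in~\eqref{eq:masking-parameter-transformations}. For Split BH, Steps 1--3 of Algorithm~\ref{alg:split-bh} combine $\bm X$ with an independent $N(\bm 0,\bm I_d)$ matrix $\tilde{\bm X}$ via an orthogonal rotation in the $(\bm X_j,\tilde{\bm X}_j)$ plane, so $\bm X^\learn_j = \sqrt{\pi_\spl}\,\bm X_j + \sqrt{1-\pi_\spl}\,\tilde{\bm X}_j \indsim N(\sqrt{\pi_\spl}\,\bm\theta_j,\bm I_d)$; the rotated means $\sqrt{\pi_\spl}\,\bm\theta_j$ are i.i.d.\ from $(1-\gamma)\delta_{\bm 0}+\gamma\,\delta_{\sqrt{\pi_\spl}h\bm v}$, i.e.\ the same two-groups prior with signal strength $\sqrt{\pi_\spl}h$ and the same $\gamma$ and $c$. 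Hence applying $L_{\textnormal{mean}}$ to $\bm X^\learn$ is exactly applying the mean learner to fresh data from the model with parameters $(\gamma,c,\sqrt{\pi_\spl}h)$, and Lemma~\ref{lem:mean-learner-untransformed} gives $\hat{\bm v}_\spl^\top\bm v \convp \tau_{\textnormal{mean}}(\gamma,c,\sqrt{\pi_\spl}h)$; substituting into the closed form $\tau_{\textnormal{mean}}(\gamma,c,h)=h\gamma/\sqrt{h^2\gamma^2+c}$ yields $h\gamma/\sqrt{h^2\gamma^2+c/\pi_\spl}$ after dividing numerator and denominator by $\sqrt{\pi_\spl}$.

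For BONuS, the argument is similar but the bookkeeping is in the sample size rather than the signal. The augmented matrix $\bm X^\learn=[\bm X;\tilde{\bm X}]$ has $m+\tilde m$ rows, each $\indsim N(\bm\theta_j^\aug,\bm I_d)$ where the first $m$ means are drawn from $(1-\gamma)\delta_{\bm 0}+\gamma\,\delta_{h\bm v}$ and the last $\tilde m$ are identically $\bm 0$. Thus $\bm X^\learn$ is (conditionally) an instance of the two-groups model over $m+\tilde m$ hypotheses, with effective non-null proportion equal to the original non-null count $\gamma m$ divided by $m+\tilde m$, which converges to $\pi_\aug\gamma$, signal strength still $h$, and effective aspect ratio $d/(m+\tilde m)=(d/m)\cdot(m/(m+\tilde m))\to c\pi_\aug$. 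Invoking Lemma~\ref{lem:mean-learner-untransformed} with these parameters gives $\hat{\bm v}_\bonus^\top\bm v\convp\tau_{\textnormal{mean}}(\pi_\aug\gamma,\pi_\aug c,h)=h\pi_\aug\gamma/\sqrt{h^2\pi_\aug^2\gamma^2+\pi_\aug c}=h\gamma/\sqrt{h^2\gamma^2+c/\pi_\aug}$, again by cancelling a factor of $\sqrt{\pi_\aug}$.

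The one point needing care—and the main obstacle—is justifying the reduction rigorously, since Lemma~\ref{lem:mean-learner-untransformed} is a statement about data from the \emph{unconditional} model~\eqref{eq:data-generating}--\eqref{eq:two-groups} whereas here we have a conditional description given $\{\bm\theta_j\}$. Two subtleties arise. First, the transformed "signal strength" $\sqrt{\pi_\spl}h$ (Split BH) and transformed "$\gamma$" and "$c$" (BONuS) are not exactly the realized quantities but their limits; one must confirm that $L_{\textnormal{mean}}$ applied to $\bm X^\learn$ depends on the configuration only through quantities whose laws of large numbers match those in the proof of Lemma~\ref{lem:mean-learner-untransformed}. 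Concretely, $\frac1{m'}\sum_j \bm X^\learn_j$ with $m'$ the row count decomposes as a signal term $\to$ (limiting non-null proportion)$\,\times\,$(limiting signal)$\,\times\,\bm v$ plus a noise term whose squared norm concentrates at (limiting aspect ratio), and the alignment is continuous in these; so the conclusion transfers. Second, for BONuS one should check the regime~\eqref{eq:asymptotic-regime} is internally consistent—$m,d,\tilde m\to\infty$ with $d/m\to c$ and $m/(m+\tilde m)\to\pi_\aug$ forces $d/(m+\tilde m)\to c\pi_\aug$ and $\gamma m/(m+\tilde m)\to\gamma\pi_\aug$, and in particular $m+\tilde m\to\infty$ so Lemma~\ref{lem:mean-learner-untransformed}'s asymptotics apply to the augmented sample. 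Once these are in place the two displayed alignments and their closed forms follow by the elementary algebraic simplifications above; I would state the reduction as a short conditioning-plus-continuous-mapping lemma and then invoke it twice rather than re-deriving the central limit / concentration arguments.
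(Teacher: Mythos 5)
Your proposal is correct and matches the paper's narrative in the main text (Section~\ref{sec:overview-of-asymptotic-power-results} and equation~\eqref{eq:masking-parameter-transformations}), which frames masking as a transformation of the parameter triple and then invokes the unmasked result. The paper's actual appendix proofs (Lemmas~\ref{lem:mean-learner-split} and~\ref{lem:mean-learner-bonus}), however, do not perform a formal reduction to Lemma~\ref{lem:mean-learner-untransformed}; they simply re-run the same decomposition $\bar{\bm X}^\learn = (\text{signal}) + \bar{\bm Z}$, the law of large numbers for the Bernoulli proportion, the $\chi^2$ concentration for $\|\bar{\bm Z}\|^2$, and the continuous mapping theorem, substituting the transformed parameters directly. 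So the two approaches are equivalent in substance; yours would factor the shared argument into a transfer lemma and invoke it twice, while the paper inlines it three times. You correctly flag the one place where the reduction is not a literal parameter substitution: the BONuS learning data is \emph{not} an i.i.d.\ draw from a two-groups model with non-null proportion $\pi_\aug\gamma$, because the $\tilde m$ synthetic rows are deterministically null (the number of non-nulls is $\mathrm{Binomial}(m,\gamma)$, not $\mathrm{Binomial}(m+\tilde m,\pi_\aug\gamma)$). As you observe, this is harmless because $L_{\textnormal{mean}}$ depends on the data only through the sample mean, whose signal component concentrates at $\pi_\aug\gamma h\bm v$ regardless; the paper's Lemma~\ref{lem:mean-learner-bonus} handles this implicitly by applying the LLN only to $\hat\gamma_m$. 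If you formalize your version, state the transfer lemma at the level of these three concentration facts (proportion of non-nulls, noise norm, cross term), not at the level of exact distributional equality, precisely to accommodate the BONuS case.
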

\noindent Note that the quality of the learned direction is the same across the two methods when $\pi_\spl = \pi_\aug$. In Figure~\ref{fig:point-mass-prior-alignment}, we illustrate how the alignment varies with $\pi_\spl$ or $\pi_\aug$ for one choice of $(\gamma, c, h)$ and provide numerical simulation results,\footnote{All replication code is available at \textcolor{blue}{\url{https://github.com/Katsevich-Lab/masking-power}}.} which support our theory. Unless stated otherwise, all figures use the default setting $(\gamma, c, h, m) = (0.2, 0.2, 4, 1000)$, and Monte Carlo results are averaged over 1000 independent replications.

\begin{figure}[h]
\centering
\includegraphics{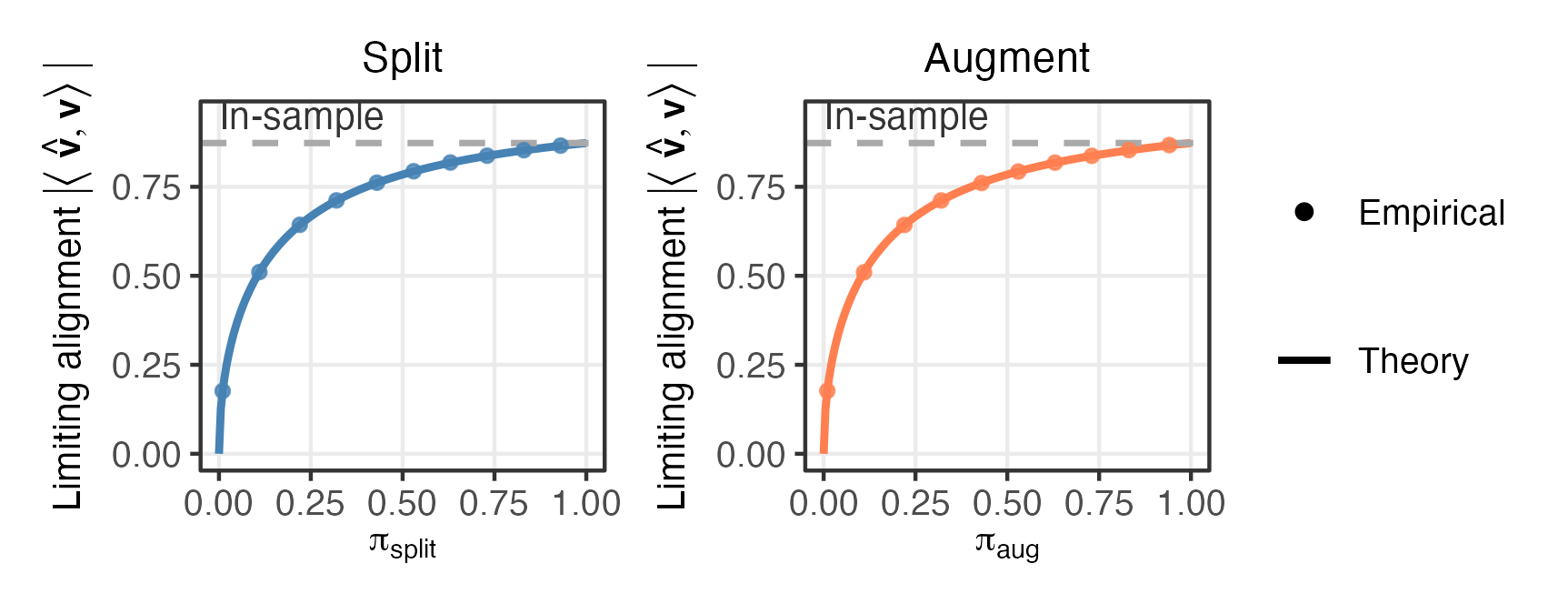}
\vspace{-0.1in}
\caption{Asymptotic alignment of learned direction $\hat{\bm v}$ with the true direction $\bm v$.}
\label{fig:point-mass-prior-alignment}
\end{figure}

\paragraph{Asymptotic distributions of scores.} We can derive the asymptotic signal strengths of the alternative scores based on equation~\eqref{eq:signal-strengths}:
\begin{equation}
\mu_\spl \equiv \frac{\sqrt{1-\pi_\spl} h^2 \gamma}{\sqrt{h^2 \gamma^2 + c/\pi_\spl}};\quad \mu_\bonus \equiv \frac{h^2 \gamma}{\sqrt{h^2 \gamma^2 + c/\pi_\aug}};\quad \mu_{\insamp} \equiv \frac{h^2 \gamma}{\sqrt{h^2 \gamma^2 + c}}.
\label{eq:point-mass-prior-signal-strengths}
\end{equation}
These signal strengths translate to mean shifts in the asymptotic score distributions.
\begin{proposition}[Asymptotic distributions of scores] \label{prop:point-mass-prior-score-distributions}
The scores produced by the three methods have the following limiting distributions:
\begin{align*}
&S^\spl_j \mid j \in \mathcal H_0 \convd N(0,1),\!\!\! &&S^\spl_j \mid j \in \mathcal H_1 \convd N(\mu_\spl, 1); \\
&S^\bonus_j \mid j \in \mathcal H_0 \convd N(K_\bonus,1),\!\!\! &&S^\bonus_j \mid j \in \mathcal H_1 \convd N(K_\bonus + \mu_\bonus, 1); \\
&S^{\insamp}_j \mid j \in \mathcal H_0 \convd N(K_{\insamp},1),\!\!\! && S^{\insamp}_j \mid j \in \mathcal H_1 \convd N(K_{\insamp} + \mu_{\insamp}, 1),
\end{align*}
where constants $K_\bonus$ and $K_{\insamp}$ depend on $(\gamma, c, h, \pi_\aug)$ and $(\gamma, c, h)$, respectively.
\end{proposition}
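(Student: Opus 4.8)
The plan is to treat the three methods separately, exploiting that under the point mass prior the score is $S_j = T_{\hat\Lambda}(\bm X^\score_j) = \hat{\bm v}^\top\bm X^\score_j$ with $\hat{\bm v}$ a unit vector, so that conditionally on $\hat{\bm v}$ (for Split BH) or on a leave-one-out version of it (for the other two methods) the score is exactly Gaussian; the whole task then reduces to tracking the conditional mean and variance through the asymptotic regime~\eqref{eq:asymptotic-regime} and verifying they converge to the stated values.

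\emph{Split BH.} By the data fission construction, $\bm X^\learn$ and $\bm X^\score$ are independent given $\{\bm\theta_k\}_{k\in[m]}$, so, since $\hat{\bm v}_\spl$ is a deterministic function of $\bm X^\learn$, conditioning only on $\bm\theta_j$ still yields $\bm X^\score_j \independent \hat{\bm v}_\spl \mid \bm\theta_j$, with $\bm X^\score_j \mid \bm\theta_j \sim N(\sqrt{1-\pi_\spl}\,\bm\theta_j, \bm I_d)$. Hence $S^\spl_j \mid \hat{\bm v}_\spl, \bm\theta_j \sim N(\sqrt{1-\pi_\spl}\,\hat{\bm v}_\spl^\top\bm\theta_j, 1)$. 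For $j\in\mathcal H_0$ this is $N(0,1)$ for every realization of $\hat{\bm v}_\spl$, giving the first claim exactly. For $j\in\mathcal H_1$ it is $N(\sqrt{1-\pi_\spl}\,h\,\hat{\bm v}_\spl^\top\bm v, 1)$; Proposition~\ref{prop:mean-learner-masked} gives $\hat{\bm v}_\spl^\top\bm v \convp \tau_{\textnormal{mean}}(\gamma,c,\sqrt{\pi_\spl}h)$, and this limit is unchanged by conditioning on the single coordinate $\bm\theta_j = h\bm v$ since that perturbs the mean-based learner by only $O_P(1/m)$; integrating the Gaussian characteristic function against this limit (bounded convergence) gives $S^\spl_j \mid j\in\mathcal H_1 \convd N(\mu_\spl, 1)$. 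Note the self-bias constant is $K_\spl = 0$, precisely because the scoring data are independent of the learned direction.

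\emph{BONuS and In-sample BH.} Set $N \equiv m + \tilde m$ for BONuS and $N \equiv m$ for In-sample BH, so that $\hat{\bm v} = \bar{\bm W}/\|\bar{\bm W}\|$ with $\bar{\bm W}$ the average of the $N$ rows of $\bm X^\learn$; a leave-one-out split gives $S_j = \hat{\bm v}^\top\bm X_j = \big(\bar{\bm W}_{-j}^\top\bm X_j + N^{-1}\|\bm X_j\|^2\big)\big/\big\|\bar{\bm W}_{-j} + N^{-1}\bm X_j\big\|$, where $\bar{\bm W}_{-j}$, the average of the other $N-1$ rows, is independent of $\bm X_j$. Decompose $\bar{\bm W}_{-j} = \hat\gamma_{-j}h\bm v + \bm\epsilon$, with $\hat\gamma_{-j}h\bm v = N^{-1}\sum_{k\ne j}\bm\theta_k$ the signal part (augmented rows contributing mean $\bm 0$) and $\bm\epsilon \sim N(\bm 0, (N-1)N^{-2}\bm I_d)$ the pure noise, which is exactly Gaussian and independent of both $\bm X_j$ and $\hat\gamma_{-j}$. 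Law-of-large-numbers/concentration arguments give $\hat\gamma_{-j}\convp\pi_\aug\gamma$ (resp.\ $\gamma$), $N^{-1}\|\bm X_j\|^2 \convp c\pi_\aug$ (resp.\ $c$), and $\|\bar{\bm W}_{-j}\| \convp \sqrt{\pi_\aug^2 h^2\gamma^2 + c\pi_\aug}$ (resp.\ $\sqrt{h^2\gamma^2 + c}$), while $\|N^{-1}\bm X_j\|\convp 0$ so the denominator has the same limit as $\|\bar{\bm W}_{-j}\|$. For the cross term $W \equiv \bm\epsilon^\top\bm X_j$: conditionally on $\bm X_j$, $W \sim N\big(0, (N-1)N^{-2}\|\bm X_j\|^2\big)$ exactly, and since the conditional variance converges in probability to $c\pi_\aug$ (resp.\ $c$)---a constant not involving $\bm v^\top\bm X_j$---a dominated-convergence argument shows $(W, \bm v^\top\bm X_j) \convd (Z,V)$ with $Z\sim N(0,c\pi_\aug)$ (resp.\ $N(0,c)$) independent of $V$, and $V \sim N(0,1)$ on $\mathcal H_0$, $V\sim N(h,1)$ on $\mathcal H_1$. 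Slutsky's theorem then gives $S_j \convd \big(\pi_\aug\gamma h V + Z + c\pi_\aug\big)\big/\sqrt{\pi_\aug^2 h^2\gamma^2 + c\pi_\aug}$ for BONuS and the $\pi_\aug=1$ version for In-sample BH; this is Gaussian with variance exactly $1$, and reading off the means identifies $K_\bonus = c/\sqrt{h^2\gamma^2 + c/\pi_\aug}$ and $K_\insamp = c/\sqrt{h^2\gamma^2+c}$, and confirms that the alternative mean equals $K+\mu$ in each case (with $\mu_\bonus,\mu_\insamp$ as in~\eqref{eq:point-mass-prior-signal-strengths}).

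I expect the main obstacle to be the joint limit $(W,\bm v^\top\bm X_j)\convd(Z,V)$ with asymptotic independence, together with the care needed to push all of these probabilistic and distributional limits through the ratio via Slutsky; the point that makes it manageable is that $\bm\epsilon$ is literally independent of $\bm X_j$ and exactly Gaussian, so one only has to verify that its conditional variance concentrates on a constant free of $\bm v^\top\bm X_j$. A minor but necessary technical step, recurring in both cases, is checking that the limits of $\hat{\bm v}^\top\bm v$ (Lemma~\ref{lem:mean-learner-untransformed}, Proposition~\ref{prop:mean-learner-masked}) and of $\hat\gamma_{-j}$ are unaffected by conditioning on $\{j\in\mathcal H_1\}$, i.e.\ on the single coordinate $\bm\theta_j = h\bm v$; and one uses throughout that the relevant limiting norms are strictly positive, which holds away from degenerate parameter values.
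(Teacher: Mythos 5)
Your proposal is correct and, at its core, follows the same strategy as the paper: a leave-one-out decomposition of the score into a conditionally Gaussian fluctuation, a self-bias term $O(d/N)$ arising from the overlap between learning and scoring data, and a signal shift; then track each piece through the asymptotic regime and apply Slutsky, and finally integrate out the conditioning (your bounded-convergence step mirrors the paper's Lemma~\ref{lemma:conditional-to-unconditional-convg}). For Split BH your argument is essentially verbatim what the paper does (condition on $\hat{\bm v}_\spl$, get exact normality, apply Proposition~\ref{prop:mean-learner-masked}).

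For BONuS and In-sample BH, the paper's Lemma~\ref{lemma:bonus-limit-law} works with $A_j := N\bar{\bm W}_{-j}$ and decomposes by writing $\bm X_j = b_j h\bm v + \bm\xi_j$, so that the numerator of $S_j$ splits as $A_j^\top\bm\xi_j + \bm X_j^\top\bm\xi_j + b_j h (A_j+\bm X_j)^\top\bm v$; the first term is treated as a single Gaussian (conditionally on $A_j$) with variance converging to $1$, the second gives the constant $c/D$, and the third gives the signal shift. You instead decompose the \emph{learned direction} $\bar{\bm W}_{-j} = \hat\gamma_{-j}h\bm v + \bm\epsilon$, which splits the numerator into $\hat\gamma_{-j}h V + W + N^{-1}\|\bm X_j\|^2$ with $V = \bm v^\top\bm X_j$ and $W = \bm\epsilon^\top\bm X_j$. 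This amounts to further splitting the paper's $A_j^\top\bm\xi_j$ term into its projection onto $\bm v$ and the orthogonal remainder; the price is that you must prove a joint limit $(W,V)\convd(Z,V)$ with asymptotic independence, and the payoff is that the limiting unit variance emerges transparently as $(\pi_\aug^2\gamma^2 h^2 + c\pi_\aug)/\|\bar{\bm W}_{-j}\|^2\to 1$. Your independence argument is sound: $W\mid\bm X_j$ is exactly Gaussian with variance $\tfrac{N-1}{N^2}\|\bm X_j\|^2$, and since $\|\bm X_j\|^2 = V^2 + \chi^2_{d-1} + O_P(\sqrt d)$ concentrates at $d$ regardless of $V$, the conditional variance converges to a constant; the conclusion then follows from a dominated-convergence argument on characteristic functions. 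Your explicit identification of $K_\bonus = c/\sqrt{h^2\gamma^2 + c/\pi_\aug}$ and $K_\insamp = c/\sqrt{h^2\gamma^2+c}$ matches what appears implicitly (as $c/D$) in the paper's lemmas. The only small looseness is the phrase ``a constant not involving $\bm v^\top\bm X_j$'': the conditional variance \emph{does} depend on $V$ through $\|\bm X_j\|^2$, but that dependence vanishes asymptotically since $V^2 = O_P(1)$ is dominated by $d$; you should state it that way rather than as a literal independence.
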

\noindent The overlap between $\bm X^\learn$ and $\bm X^\score$ for BONuS and In-sample BH shifts null and alternative distributions by constants $K_\bonus$ and $K_{\insamp}$, respectively. These constants do not affect the power of the methods, as they simply shift the rejection thresholds.

\paragraph{Overall power.} Given the score distributions from Proposition~\ref{prop:point-mass-prior-score-distributions}, we can view all three methods as being asymptotically equivalent to testing for mean shifts in univariate normal models. This leads to our first main result.
\begin{theorem}[Asymptotic power under point mass prior] \label{thm:power-point-mass}
Let $\widehat{\mathcal R}$ represent the rejection set of Split BH, BONuS, or In-sample BH, and $\mu(\gamma, c, h)$ represent the corresponding signal strength~\eqref{eq:point-mass-prior-signal-strengths}. The TPP of $\widehat{\mathcal R}$ for each method has the following limit:
\begin{equation}
\TPP(\widehat{\mathcal R}) \convp \TPR_{\textnormal{BH}}(N(0,1), N(\mu(\gamma, c, h), 1), \gamma).
\end{equation}
\end{theorem}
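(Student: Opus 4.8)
The plan is to show that each of the three procedures is asymptotically equivalent to running BH on $m$ test statistics drawn from the univariate two-groups mixture $(1-\gamma)\,N(K,1)+\gamma\,N(K+\mu,1)$, with $\mu$ the corresponding signal strength from~\eqref{eq:point-mass-prior-signal-strengths} and $K$ the method-specific shift from Proposition~\ref{prop:point-mass-prior-score-distributions} (with $K=0$ for Split BH), and then to remove $K$ using the shift-invariance of $\TPR_{\textnormal{BH}}$. As a first reduction, I would note that replacing every score $S_j$ by $S_j-K$ leaves $\widehat{\mathcal R}$ unchanged for each method, since $|\mathcal R(t)|$ and $\widehat V(t)$ are built from events $\{S_j\ge t\}$ and the matching null probabilities, so the threshold $\hat t$ shifts by $K$ as well; hence it suffices to analyse scores with limiting null law $N(0,1)$ and alternative law $N(\mu,1)$. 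In the same step I would observe that for BONuS the calibrator $\widehat V(t)=\tfrac{m}{1+\tilde m}\bigl(1+\sum_k\indicator(T_{\hat\Lambda}(\tilde{\bm X}_k)\ge t)\bigr)$ is, up to the vanishing $+1$ corrections, $m$ times the empirical survival function of the $\tilde m$ null scores $T_{\hat\Lambda}(\tilde{\bm X}_k)$, which are exchangeable with the null scores $S_j$, $j\in\mathcal H_0$; so its limit coincides with that of $\widehat V(t)/m$ for In-sample BH, and all three calibrators can be treated uniformly.

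The core step is empirical-process convergence of the scores. Writing $\hat F_0(t)\equiv\frac{1}{|\mathcal H_0|}\sum_{j\in\mathcal H_0}\indicator(S_j\ge t)$ and $\hat F_1(t)\equiv\frac{1}{|\mathcal H_1|}\sum_{j\in\mathcal H_1}\indicator(S_j\ge t)$, I would prove $\hat F_0(t)\convp\bar\Phi(t)$ and $\hat F_1(t)\convp\bar\Phi(t-\mu)$ uniformly in $t$, together with $|\mathcal H_1|/m\convp\gamma$ by the law of large numbers on the mixture~\eqref{eq:two-groups}. For Split BH this is immediate: conditionally on $(\hat{\bm v}_\spl,\{\bm\theta_j\})$ the scores $S_j=\hat{\bm v}_\spl^\top\bm X_j^\score$ are independent with exactly the marginal laws of Proposition~\ref{prop:point-mass-prior-score-distributions}, so a conditional Glivenko--Cantelli argument and dominated convergence give the claim. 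For In-sample BH and BONuS the scores are dependent through the shared learned direction; here I would use the leave-one-out decomposition already underlying Proposition~\ref{prop:point-mass-prior-score-distributions}, replacing $\hat{\bm v}$ by $\hat{\bm v}^{(-j)}$ at an $o_P(1)$ cost per score, after which the summands are conditionally independent with the right marginals, so a second-moment (Chebyshev) bound gives pointwise convergence in probability and monotonicity (Pólya's theorem) upgrades it to uniform convergence.

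It then remains to pass to the threshold and conclude. Writing $\widehat V(t)/m$ for the common rescaled calibrator, the previous steps give $\widehat V(t)/m\convp\bar\Phi(t)$ and $\frac{1}{m}|\mathcal R(t)|=\frac{|\mathcal H_0|}{m}\hat F_0(t)+\frac{|\mathcal H_1|}{m}\hat F_1(t)\convp(1-\gamma)\bar\Phi(t)+\gamma\bar\Phi(t-\mu)$, uniformly on compact $t$-intervals, so the ratio $\widehat V(t)/|\mathcal R(t)|$ converges uniformly to $r(t)\equiv\bar\Phi(t)/\bigl((1-\gamma)\bar\Phi(t)+\gamma\bar\Phi(t-\mu)\bigr)$. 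A standard argument (as in consistency proofs for BH under the mixture model of \citet{Genovese2002}) then yields $\hat t\convp t_*\equiv\inf\{t:r(t)\le q\}$, provided $r$ crosses level $q$ transversally at $t_*$; this regularity is checked directly for the normal-location mixture, and the degenerate cases where no crossing exists correspond to $\hat t\to\infty$, where both sides of the theorem are $0$. Finally $\TPP(\widehat{\mathcal R})=\hat F_1(\hat t)\convp\bar\Phi(t_*-\mu)$, which is exactly $\TPR_{\textnormal{BH}}(N(0,1),N(\mu,1),\gamma)$ by Definition~\ref{def:bhtpr}, and reinstating the shift $K$ does not change this value since $t_*\mapsto t_*+K$ under $(G_0,G_1)\mapsto(N(K,1),N(K+\mu,1))$.

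I expect the main obstacle to be the dependence handling in the second step for In-sample BH and BONuS: the marginal CLT of Proposition~\ref{prop:point-mass-prior-score-distributions} must be promoted to uniform convergence of the empirical survival functions of correlated scores, which is where the leave-one-out control of $\hat{\bm v}$ and a careful pairwise-covariance bound are needed. A secondary, milder obstacle is the transversality check for $r$ in the third step, since this is precisely what licenses the continuous-mapping passage from uniform ratio convergence to convergence of the BH threshold.
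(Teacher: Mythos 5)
Your proposal follows essentially the same route as the paper: pointwise-then-uniform convergence of the empirical rejection proportion and the calibrator, uniform convergence of their ratio to the limiting FDP curve, consistency of the threshold, and translation invariance to absorb the constant $K$ (the paper applies its Lemma~\ref{lem:bh-translation-invariance} at the end, whereas you normalize the scores up front; both are fine). The only substantive imprecision is in your dependence-handling step for BONuS and In-sample BH: replacing $\hat{\bm v}$ by a single leave-one-out direction $\hat{\bm v}^{(-j)}$ does \emph{not} make the summands conditionally independent, since different $\hat{\bm v}^{(-j)}$ still share $m-1$ common observations. What you actually need — and what your later remark about a ``careful pairwise-covariance bound'' correctly anticipates — is a leave-\emph{two}-out decomposition: for each pair $j_1\neq j_2$, replace $\hat{\bm v}$ by $\hat{\bm v}^{(-j_1,-j_2)}$, after which the two projections $\hat{\bm v}^{(-j_1,-j_2)\top}\bm X_{j_1}$ and $\hat{\bm v}^{(-j_1,-j_2)\top}\bm X_{j_2}$ are conditionally independent given the common leave-out aggregate. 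This is exactly how the paper establishes the vanishing pairwise covariance needed for its bounded-array weak law (Lemma~\ref{lemma:wlln-bounded-rv}) and the uniform convergence upgrade (Lemma~\ref{lemma:unif-convg-monotone}, the analogue of the Pólya step you invoke).
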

\noindent This verifies the bottom row of Figure~\ref{fig:theory-schematic}, with $P(\mu, \gamma) \equiv \TPR_{\textnormal{BH}}(N(0,1), N(\mu, 1), \gamma)$. Figure~\ref{fig:point-mass-prior-theorem} illustrates the comparison for one choice of $(\gamma, c, h)$; others are explored in Appendix~\ref{sec:additional-experiments}. Our theory provides an excellent fit to the empirical power of each method, except for BONuS with $\pi_\aug \approx 1$. We address this and other observations next.
\begin{figure}[h!]
\centering
\includegraphics{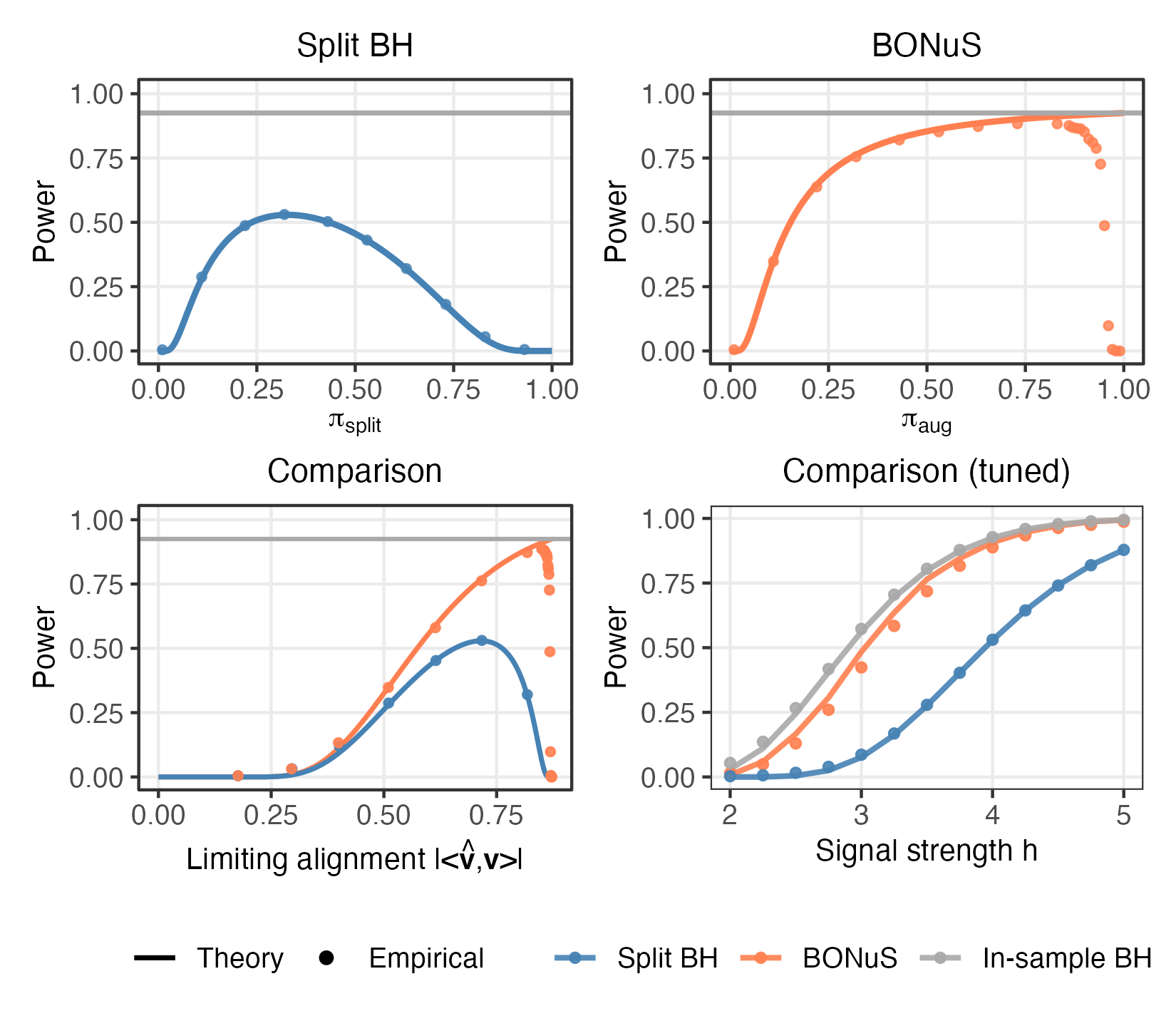}
\caption{The powers of Split BH (top left) and BONuS (top right) versus their tuning parameters, and compared to that of In-sample BH, for the point prior. The powers of the procedures are compared as a function of the limiting alignment of the learned direction with the true direction (bottom left). We also compare the empirical maximum powers attained by Split BH and BONuS, with the power of In-sample BH, which has no tuning parameters, as a function of the signal strength $h$ (bottom right).} 
\label{fig:point-mass-prior-theorem}
\end{figure}

\begin{wrapfigure}[12]{r}{0.4\textwidth}
\vspace{-0.1in}
\includegraphics{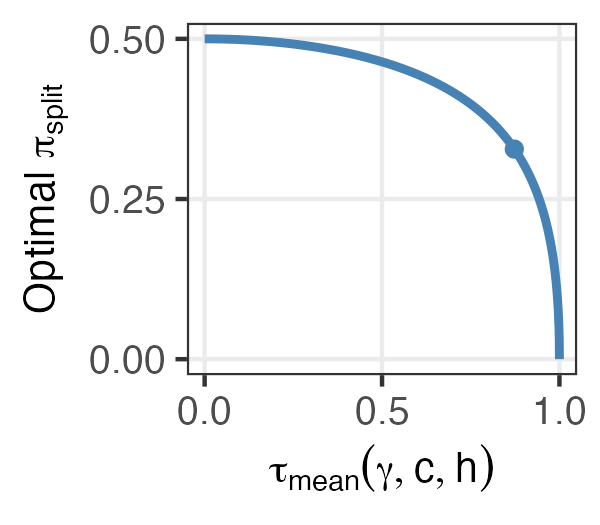}
\vspace{-0.15in}
\caption{Optimal Split BH learning proportion versus ease of learning, $\tau_{\text{mean}}$. Point marks default setting $(\gamma, c, h) = (0.2, 0.2, 4)$.}
\label{fig:optimal-pi-split}
\end{wrapfigure}
\begin{remarkwrap}[Power of Split BH]
The power of Split BH increases and then decreases as $\pi_\spl$ increases from 0 to 1 (Figure~\ref{fig:point-mass-prior-theorem}), due to the zero-sum trade-off between signal strength in $\bm X^\learn$ and $\bm X^\score$ (Figure~\ref{fig:theory-schematic}). Maximizing $\mu_\spl$ (\ref{eq:signal-strengths}, \ref{eq:point-mass-prior-signal-strengths}) over $\pi_\spl$ (Appendix~\ref{sec:pi-opt}) yields the optimal splitting proportion
\begin{equation}
\begin{split}
\pi_\spl^{\textnormal{opt}} &= \underset{\pi_\spl \in [0,1]}{\arg \max} \sqrt{1-\pi_\spl} \cdot \tau(\gamma, c, \sqrt{\pi_\spl}h) \\
 &= \frac{\sqrt{1 - \tau^2_{\text{mean}}(\gamma, c, h)}}{1 + \sqrt{1 - \tau^2_{\text{mean}}(\gamma, c, h)}}.
\label{eq:optimal-pi-split}
\end{split}
\end{equation}
The first equality applies to Cases 1 and 2, while the second equation is specific to Case 1. The first expression shows that the optimal splitting proportion depends on unknown parameters only through the curve $\pi_\spl \mapsto \tau(\gamma, c, \sqrt{\pi_\spl}h)$, which may be estimable from data (at least for $\pi_\spl \leq 1/2$) by examining alignments between $\hat{\bm v}$ learned on disjoint splits of the data. We leave this to future work. The second expression demonstrates that, in Case 1, the optimal splitting proportion depends on how easy it is to learn the direction $\bm v$ on the full data, as measured by $\tau_{\text{mean}}(\gamma, c, h)$ (Figure~\ref{fig:optimal-pi-split}). The easier it is to learn $\bm v$, the less data should be allocated to learning, with $\pi_\spl^{\text{opt}} \rightarrow 0$ as $\tau_{\text{mean}} \rightarrow 1$ and $\pi_\spl^{\text{opt}} \rightarrow 1/2$ as $\tau_{\text{mean}} \rightarrow 0$. A similar qualitative conclusion was reached by \citet{Bekerman2025}. 
\end{remarkwrap}

\begin{remark}[Power of BONuS]
In contrast to Split BH, the asymptotic power of BONuS increases monotonically as $\pi_\aug$ increases from 0 to 1 (Figure~\ref{fig:point-mass-prior-theorem}), since $\mu_\bonus$~\eqref{eq:point-mass-prior-signal-strengths} is an increasing function of $\pi_\aug$. Indeed, increasing $\pi_\aug$ improves the quality of $\hat{\bm v}$ without changing $\bm X^\score$, which is always the original data $\bm X$. The empirical power of BONuS matches this trend until $\pi_\aug$ approaches 1 ($\tilde m$ approaches zero), which causes empirical power to drop. In this regime, the empirical $p$-values from lines 5-6 of Algorithm~\ref{alg:bonus} become highly discrete, leading to conservative behavior; we investigate further in Section~\ref{sec:small-tilde-m}.
\end{remark}

\begin{remark}[Power comparison] \label{rem:power-comparison}
Comparing the powers of BONuS and Split BH (Figure~\ref{fig:point-mass-prior-theorem}, bottom row), we find that BONuS dominates across matched tuning parameters and, upon optimal tuning of each method, across signal strengths. This improvement stems from the stronger signal in $\bm X^\score$ (Figure~\ref{fig:theory-schematic}), manifesting as $\mu_\bonus \geq \mu_\spl$ when $\pi_\aug = \pi_\spl$~\eqref{eq:point-mass-prior-signal-strengths}. On the other hand, In-sample BH dominates BONuS by using the entire data for learning, leading to $\mu_\insamp \geq \mu_\bonus$ for all $\pi_\aug$. However, the asymptotic power of BONuS approaches that of In-sample BH as $\pi_\aug \rightarrow 1$, improving the quality of the learned direction for BONuS. Even with the finite-sample penalty of choosing large $\pi_\aug$, the power of optimally tuned BONuS is quite close to that of In-sample BH. We empirically compare these methods to an adaptation of knockoffs (Figure~\ref{fig:theorem-over_h}), which employs one-to-one null augmentation rather than BONuS's one-to-many augmentation. We observe that knockoff scores can be viewed as noisy BONuS scores, which can lead to lower power (though still at least that of Split BH). However, the masked likelihood ratio (MLR) knockoff statistic \citep{Spector2022e} has the effect of collapsing this noise and restoring power similar to that of BONuS.
\end{remark}

\subsection{Case 2: One-dimensional subspace prior} \label{sec:subspace-prior}
We retrace the steps of Section~\ref{sec:point-mass-prior} for the prior $\Lambda = N(\bm 0, h^2 \bm v \bm v^\top)$.

\paragraph{Likelihood ratio and learner.} Under this prior, the likelihood ratio statistic~\eqref{eq:likelihood-ratio-statistic} is
\begin{equation}
T_\Lambda(\bm X_j) = -\frac{\log(1 + h^2)}{2} + \frac{h^2(\bm v^\top \bm X_j)^2}{2(1+h^2)}, \ \text{or} \ (\bm v^\top \bm X_j)^2 \text{ after monotonic transformation.}
\label{eq:likelihood-ratio-statistic-subspace}
\end{equation}
We therefore use $T_\Lambda(\bm X_j) \equiv (\bm v^\top \bm X_j)^2$ in this section, and note that the signal manifests in
\begin{equation}
\V[\bm X_j] =  \gamma h^2 \bm v \bm v^\top + \bm I_d.
\label{eq:covariance-subspace-prior}
\end{equation}
Since $\bm v$ is the top eigenvector of this covariance matrix, we consider the PCA learner:
\begin{equation}
\hat{\bm v}_{\text{PCA}} = L_{\text{PCA}}(\bm X) \equiv \text{leading eigenvector of } \bm X^\top \bm X.
\label{eq:pca-learner}
\end{equation}

\paragraph{Quality of the learned directions $\hat{\bm v}$.} Given the rank-one spiked covariance structure~\eqref{eq:covariance-subspace-prior}, we use standard random matrix theory results \citep{Benaych-Georges2011} to compute the asymptotic alignment of the learned and true directions $\hat{\bm v}$ and $\bm v$.
\begin{lemma}[Quality of PCA learner on original data] \label{lem:pca-learner-untransformed}
The direction $\bm{\hat v}$ $= L_{\textnormal{PCA}}(\bm X)$ has the following squared asymptotic alignment with the true direction $\bm v$:
\begin{equation}
(\hat{\bm v}^\top \bm v)^2 \convp \frac{\left(1 - \frac{c}{\gamma^2 h^4}\right)_+}{1 + \frac{c}{\gamma h^2}} \equiv \tau^2_{\textnormal{PCA}}(\gamma, c, h), \quad \text{where} \quad (x)_+ \equiv \max(x,0).
\end{equation}
\end{lemma}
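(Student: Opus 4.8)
The plan is to recognize the matrix diagonalized by the PCA learner as a spiked sample covariance matrix and then read off the eigenvector overlap from standard random matrix theory. Since the leading eigenvector of a symmetric matrix is unchanged under positive rescaling, $\hat{\bm v} = L_{\textnormal{PCA}}(\bm X)$ equals the leading eigenvector of $\bm S_m \equiv \tfrac1m\bm X^\top\bm X = \tfrac1m\sum_{j=1}^m \bm X_j\bm X_j^\top$. I would first rewrite $\bm X_j = \bm\theta_j + \bm\epsilon_j$, where $\bm\epsilon_j \iidsim N(\bm 0,\bm I_d)$ and, under the prior $\Lambda = N(\bm 0, h^2\bm v\bm v^\top)$ with the two-groups model~\eqref{eq:two-groups}, $\bm\theta_j = b_j g_j\bm v$ with $b_j \sim \textnormal{Bernoulli}(\gamma)$ and $g_j \sim N(0,h^2)$ independent. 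Then the $\bm X_j$ are i.i.d.\ with mean $\bm 0$ and covariance $\Sigma \equiv \bm I_d + \gamma h^2\bm v\bm v^\top$; moreover, in an orthonormal basis whose first element is $\bm v$, the coordinates of $\bm X_j$ are independent, the $\bm v$-coordinate being $\epsilon_{j,1} + b_j g_j$ (mean zero, variance $1+\gamma h^2$, sub-Gaussian) and the remaining $d-1$ coordinates i.i.d.\ $N(0,1)$. Hence $\bm X_j = \Sigma^{1/2}\bm z_j$ for a vector $\bm z_j$ with independent, standardized, uniformly sub-Gaussian entries, so $\bm S_m$ is a spiked sample covariance matrix with population spike $\ell \equiv \gamma h^2$ along $\bm v$ and aspect ratio $d/m \to c$.

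With this reduction, I would invoke the spiked-model results of \citet{Benaych-Georges2011}. These give the BBP phase transition at $\ell = \sqrt c$: when $\ell > \sqrt c$, the top eigenvalue of $\bm S_m$ detaches from the bulk $[(1-\sqrt c)^2,(1+\sqrt c)^2]$ and its unit eigenvector satisfies $(\hat{\bm v}^\top\bm v)^2 \convp \tfrac{1 - c/\ell^2}{1 + c/\ell}$, while when $\ell \le \sqrt c$ the top eigenvalue sticks to the bulk edge and $(\hat{\bm v}^\top\bm v)^2 \convp 0$. Since $1 - c/\ell^2 \le 0$ exactly when $\ell \le \sqrt c$, both regimes collapse into $(\hat{\bm v}^\top\bm v)^2 \convp (1 - c/\ell^2)_+/(1 + c/\ell)$, and substituting $\ell = \gamma h^2$ gives $(1 - c/(\gamma^2 h^4))_+/(1 + c/(\gamma h^2)) = \tau^2_{\textnormal{PCA}}(\gamma,c,h)$, as claimed. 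An equivalent route conditions on $(b_j g_j)_{j\in[m]}$, which turns $\bm X$ into a deterministic rank-one perturbation $\bm E + \bm u\bm v^\top$ of a Gaussian matrix with $\|\bm u\|^2/m \convas \gamma h^2$ by the strong law, and then applies the low-rank perturbation theory for rectangular random matrices to the right singular vectors.

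The main obstacle is not the algebra but verifying that the cited results really cover the present model, whose rows are draws from a Gaussian \emph{mixture} rather than from a single Gaussian or a distribution with i.i.d.\ coordinates. The coordinate decomposition above is precisely what handles this: it exhibits $\bm X_j$ as $\Sigma^{1/2}$ applied to a vector of independent standardized entries with moments bounded uniformly in $j$ and $m$, which places the problem inside the distribution-robust (universality) versions of the spiked-covariance theorems; the conditioning argument is an alternative that sidesteps the difficulty entirely by making the perturbation deterministic and the noise exactly Gaussian. A secondary, routine point is to confirm via the strong law that $\tfrac1m\sum_{j}(b_j g_j)^2 \convas \gamma h^2$, which identifies the limiting spike, and to note that the top eigenvalue is asymptotically simple so that $\hat{\bm v}$ is well defined up to sign (immaterial since only $(\hat{\bm v}^\top\bm v)^2$ appears).
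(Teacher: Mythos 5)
Your proposal is correct and lands on the same citation, but it is organized differently from the paper's own proof of this lemma. The paper (via Lemma~\ref{lemma:pca-learner-split-subspace} at $\pi_{\spl}=1$) conditions only on $\bm b$, describes the conditional model as a rank-one spiked covariance with random spike $\theta_m(\bm b)=h^2\hat\gamma_m$, invokes Paul (2007)/BGN conditionally, and then removes the conditioning via BGN's random-spike remark once $\hat\gamma_m\to\gamma$. Your primary route instead keeps the rows i.i.d.\ (Gaussian mixture), whitens coordinates to exhibit $\bm X_j=\Sigma^{1/2}\bm z_j$ with independent standardized sub-Gaussian entries and $\Sigma=\bm I_d+\gamma h^2\bm v\bm v^\top$, and then reads off the overlap from the universality version of the spiked-covariance theorem; your alternative route conditions on the full signal $(b_j g_j)_j$ to make $\bm X=\bm u\bm v^\top+\bm E$ a \emph{deterministic} rank-one perturbation of a Gaussian matrix and applies BGN's singular-vector result for additive perturbations, with $\|\bm u\|^2/m\to\gamma h^2$ by the strong law. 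I would mildly prefer your alternative route over the paper's phrasing: conditional on $\bm b$ alone the rows are not drawn from a single spiked Gaussian (the $b_j=0$ and $b_j=1$ rows have different covariances), so the conditional model is not literally the spiked-covariance setting Paul's theorem addresses; conditioning on the full signal removes that ambiguity and is also exactly the device the paper itself uses in the parallel BONuS analysis (Lemma~\ref{lem:bonus-sigma-eta}, which writes $Y=S_n+W$ with the deterministic rank-one $S_n$). Your primary route is valid too, provided you can cite a universality version of the spiked-covariance eigenvector theorem that admits rows of the form $\Sigma^{1/2}\bm z$ with independent sub-Gaussian standardized coordinates; flagging that as the one nontrivial point to verify is exactly right, and the conditioning route is the safer way to discharge it.
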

\noindent As in Section~\ref{sec:point-mass-prior}, applying the parameter transformations~\eqref{eq:masking-parameter-transformations} yields the asymptotic alignment of the learned direction $\hat{\bm v}$ with the true direction $\bm v$ for Split BH and BONuS:
\begin{proposition}
[Quality of PCA learner on masked data] \label{prop:pca-learner-masked}
The directions $\hat{\bm v}_\spl$ and $\hat{\bm v}_\bonus$ learned by $L_{\text{PCA}}$~\eqref{eq:pca-learner} applied to the learning data $\bm X^\learn$ of Split BH and BONuS have the following limiting squared alignments with the true direction $\bm v$:
\begin{align}
(\hat{\bm v}_\spl^\top \bm v)^2 \convp \tau^2_{\textnormal{PCA}}(\gamma, c, \sqrt{\pi_\spl}h) = \frac{\left(1 - \frac{c}{\gamma^2 \pi_\spl^2 h^4}\right)_+}{1 + \frac{c}{\gamma \pi_\spl h^2}}; \\
(\hat{\bm v}_\bonus^\top \bm v)^2 \convp \tau^2_{\textnormal{PCA}}(\pi_\aug \gamma, \pi_\aug c, h) = \frac{\left(1 - \frac{c}{\pi_\aug \gamma^2 h^4}\right)_+}{1 + \frac{c}{\gamma h^2}}.
\end{align}
\end{proposition}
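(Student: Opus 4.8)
The plan is to reduce both claims to Lemma~\ref{lem:pca-learner-untransformed} by showing that the learning data $\bm X^\learn$ of each method behaves, in the regime~\eqref{eq:asymptotic-regime}, like a fresh dataset from the model~\eqref{eq:data-generating}--\eqref{eq:two-groups} with the transformed parameters recorded in~\eqref{eq:masking-parameter-transformations}. Since $L_{\textnormal{PCA}}$ returns the leading eigenvector of $(\bm X^\learn)^\top \bm X^\learn$, which equals the leading eigenvector of the sample second-moment matrix $\tfrac1N (\bm X^\learn)^\top \bm X^\learn$ with $N$ the number of rows (and means are zero under the subspace prior, so this is also the sample covariance), the conclusion of Lemma~\ref{lem:pca-learner-untransformed} depends on $\bm X^\learn$ only through (i) the limiting aspect ratio $d/N$ and (ii) the strength $\|\bm\Theta\|/\sqrt{N}$ of the rank-one mean matrix $\bm\Theta$, equivalently the size of the rank-one spike along $\bm v\bm v^\top$ in the population covariance~\eqref{eq:covariance-subspace-prior}. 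It therefore suffices to compute these two limiting quantities for Split BH and BONuS and invoke the random matrix theory result of \citet{Benaych-Georges2011} underlying Lemma~\ref{lem:pca-learner-untransformed}; the displayed formulas then follow by substituting the transformed parameters into $\tau^2_{\textnormal{PCA}}$.

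For Split BH, I would first establish the exact distributional identity that, conditionally on $\{\bm\theta_j\}_{j\in[m]}$, the rows of $\bm X^\learn = \sqrt{\pi_\spl}\bm X + \sqrt{1-\pi_\spl}\tilde{\bm X}$ are independent with $\bm X^\learn_j \sim N(\sqrt{\pi_\spl}\bm\theta_j, \bm I_d)$: writing $\bm X_j = \bm\theta_j + \bm\varepsilon_j$, the combined noise $\sqrt{\pi_\spl}\bm\varepsilon_j + \sqrt{1-\pi_\spl}\tilde{\bm X}_j$ is $N(\bm 0,\bm I_d)$ because $\pi_\spl + (1-\pi_\spl)=1$. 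Marginalizing over $\bm\theta_j \sim (1-\gamma)\delta_{\bm 0} + \gamma N(\bm 0, h^2\bm v\bm v^\top)$ shows $\sqrt{\pi_\spl}\bm\theta_j \sim (1-\gamma)\delta_{\bm 0} + \gamma N(\bm 0, (\sqrt{\pi_\spl}h)^2 \bm v\bm v^\top)$, so $\bm X^\learn$ is exactly a draw from~\eqref{eq:data-generating}--\eqref{eq:two-groups} with parameters $(\gamma, c, \sqrt{\pi_\spl}h)$ and the same dimensions $m,d$. Lemma~\ref{lem:pca-learner-untransformed} then yields $(\hat{\bm v}_\spl^\top\bm v)^2 \convp \tau^2_{\textnormal{PCA}}(\gamma, c, \sqrt{\pi_\spl}h)$, and substituting $h\mapsto\sqrt{\pi_\spl}h$ (hence $h^2\mapsto\pi_\spl h^2$, $h^4\mapsto\pi_\spl^2 h^4$) into the formula of Lemma~\ref{lem:pca-learner-untransformed} gives the first display.

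For BONuS, $\bm X^\learn = [\bm X;\tilde{\bm X}] \in \R^{(m+\tilde m)\times d}$ is not literally a draw from~\eqref{eq:data-generating}--\eqref{eq:two-groups}, so I would instead verify (i) and (ii) directly. The mean matrix is $\bm\Theta = \bm a\bm v^\top$ with $\bm a \in \R^{m+\tilde m}$ supported on the original non-null rows, where $\bm a_j = h g_j$ with $g_j \iidsim N(0,1)$; hence $\|\bm\Theta\|/\sqrt{m+\tilde m} = h(\sum_{j\in\mathcal H_1} g_j^2/(m+\tilde m))^{1/2} \convp h\sqrt{\pi_\aug\gamma}$, using $|\mathcal H_1|/m\convp\gamma$, $m/(m+\tilde m)\to\pi_\aug$, and the law of large numbers, while the limiting aspect ratio is $d/(m+\tilde m)\to\pi_\aug c$. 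These are exactly the aspect ratio and spike associated with parameters $(\pi_\aug\gamma, \pi_\aug c, h)$, so the Benaych-Georges--Nadakuditi analysis gives $(\hat{\bm v}_\bonus^\top\bm v)^2 \convp \tau^2_{\textnormal{PCA}}(\pi_\aug\gamma, \pi_\aug c, h)$; substituting $\gamma\mapsto\pi_\aug\gamma$, $c\mapsto\pi_\aug c$ into the formula of Lemma~\ref{lem:pca-learner-untransformed} simplifies (the $\pi_\aug$ cancels in the denominator and leaves a single $\pi_\aug$ in the numerator's second term) to the second display. I expect the main obstacle to be precisely this last transfer step: because the appended null rows make $\bm X^\learn$ a heterogeneous stack rather than an i.i.d. sample, one cannot cite Lemma~\ref{lem:pca-learner-untransformed} verbatim and must confirm that its proof uses the data only through the two limiting summaries above --- or, equivalently, condition on $|\mathcal H_1|$ and observe that the number of non-null rows divided by $m+\tilde m$ converges to $\pi_\aug\gamma$, reducing to an i.i.d.\ spiked model with the transformed parameters. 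Everything else is the routine algebra of substituting into $\tau^2_{\textnormal{PCA}}$.
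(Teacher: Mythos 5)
Your proposal is correct and follows essentially the same approach as the paper's Lemmas~\ref{lemma:pca-learner-split-subspace} and~\ref{lemma:pca-learner-bonus-subspace}: identify the rank-one spiked structure of $\bm X^\learn$, compute the limiting aspect ratio and spike strength (the two summaries the random-matrix result of \citet{Benaych-Georges2011} depends on), and pass to the limit; your Split BH argument via the clean distributional reduction to Lemma~\ref{lem:pca-learner-untransformed} is if anything slightly tidier than the paper's re-derivation through the conditional covariance. One small caveat on the final sentence: conditioning on $|\mathcal H_1|$ does not literally reduce BONuS to an ``i.i.d.\ spiked model'' (null and non-null rows remain distributionally distinct even then), but your preceding alternative --- verifying that the argument depends only on the limiting aspect ratio and the size of the rank-one signal, which amounts to conditioning on the realized signal component $b\odot Z$ and invoking the additive spiked rectangular model of \citet{Benaych-Georges2011} --- is exactly the rigorous route the paper takes in Lemma~\ref{lem:bonus-sigma-eta}.
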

Figure~\ref{fig:subspace-prior-alignment} illustrates how the theoretical alignment varies as a function of $\pi_\spl$ or $\pi_\aug$ for one choice of $(\gamma, c, h)$, as well as agreement with numerical simulations.

\paragraph{Asymptotic distributions of scores.} As in Section~\ref{sec:point-mass-prior}, we can derive the limiting distributions of the scores based on Lemma~\ref{lem:pca-learner-untransformed}, Proposition~\ref{prop:pca-learner-masked}, and equation~\eqref{eq:signal-strengths}:
\small
\begin{equation}
\mu^2_\spl = (1-\pi_\spl)h^2 \cdot \tfrac{\left(1 - \frac{c}{\gamma^2 \pi_\spl^2 h^4}\right)_+}{1 + \frac{c}{\gamma \pi_\spl h^2}};\ \mu^2_\bonus = h^2 \cdot \tfrac{\left(1 - \frac{c}{\pi_\aug \gamma^2 h^4}\right)_+}{1 + \frac{c}{\gamma h^2}};\ \mu^2_{\insamp} = h^2 \cdot \tfrac{\left(1 - \frac{c}{\gamma^2 h^4}\right)_+}{1 + \frac{c}{\gamma h^2}}.
\label{eq:subspace-prior-signal-strengths}
\end{equation}
\normalsize
These signal strengths translate to scaling factors in the asymptotic score distributions.
\begin{proposition}[Asymptotic distributions of scores] \label{prop:subspace-prior-score-distributions}
The scores produced by the three methods have the following limiting distributions:
\begin{align*}
&S^\spl_j \mid j \in \mathcal H_0 \convd \chi^2_1, \quad &&S^\spl_j \mid j \in \mathcal H_1 \convd (1 + \mu^2_\spl)\chi^2_1; \\
&S^\bonus_j \mid j \in \mathcal H_0 \convd  K_\bonus \chi^2_1, \quad &&S^\bonus_j \mid j \in \mathcal H_1 \convd K_\bonus (1 + \mu^2_\bonus)\chi^2_1; \\
&S^{\insamp}_j \mid j \in \mathcal H_0 \convd K_{\insamp}\chi^2_1, \quad && S^{\insamp}_j \mid j \in \mathcal H_1 \convd K_{\insamp}(1 + \mu^2_{\insamp})\chi^2_1,
\end{align*}
where constants $K_\bonus$ and $K_{\insamp}$ depend on $(\gamma, c, h, \pi_\aug)$ and $(\gamma, c, h)$, respectively.
\end{proposition}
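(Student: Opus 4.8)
The plan is to reduce each of the three distributional claims to a one-dimensional normal computation. For Split BH this is immediate from the independence built into the fission step: conditionally on $\{\bm\theta_j\}$, Steps~1--3 of Algorithm~\ref{alg:split-bh} make $\bm X^\score$ independent of $\bm X^\learn$ with $\bm X^\score_j\sim N(\sqrt{1-\pi_\spl}\,\bm\theta_j,\bm I_d)$, so $\hat{\bm v}_\spl$ is independent of $\bm X^\score$, and since $\|\hat{\bm v}_\spl\|=1$, conditioning on $\hat{\bm v}_\spl$ gives $\hat{\bm v}_\spl^\top\bm X^\score_j\sim N(0,1)$ when $j\in\cH_0$ (so $S^\spl_j\sim\chi^2_1$ exactly), while writing $\bm\theta_j=Z_jh\bm v$ with $Z_j\sim N(0,1)$ and marginalizing yields $\hat{\bm v}_\spl^\top\bm X^\score_j\sim N\!\left(0,\,1+(1-\pi_\spl)h^2(\hat{\bm v}_\spl^\top\bm v)^2\right)$ when $j\in\cH_1$. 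Proposition~\ref{prop:pca-learner-masked} gives $(\hat{\bm v}_\spl^\top\bm v)^2\convp\tau^2_{\textnormal{PCA}}(\gamma,c,\sqrt{\pi_\spl}h)$, so this conditional variance converges in probability to $1+\mu^2_\spl$; representing $\hat{\bm v}_\spl^\top\bm X^\score_j$ as that random scale times an independent $N(0,1)$ and invoking Slutsky's theorem gives the stated $\chi^2_1$ and $(1+\mu^2_\spl)\chi^2_1$ limits.

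For BONuS and In-sample BH, $\bm X^\score_j=\bm X_j$ but $\hat{\bm v}$ is the leading eigenvector of $\bm M\equiv(\bm X^\learn)^\top\bm X^\learn$, which contains the row $\bm X_j$. I would remove that row: set $\bm M_{-j}\equiv\bm M-\bm X_j\bm X_j^\top$, with leading eigenvector $\hat{\bm v}^{(-j)}$, and let $\hat\lambda$ be the top eigenvalue of $\bm M$. The rank-one secular identity gives $\hat{\bm v}\propto(\hat\lambda\bm I-\bm M_{-j})^{-1}\bm X_j$ together with $\bm X_j^\top(\hat\lambda\bm I-\bm M_{-j})^{-1}\bm X_j=1$, so after normalizing,
\[
S_j=(\hat{\bm v}^\top\bm X_j)^2=\bigl(\bm X_j^\top(\hat\lambda\bm I-\bm M_{-j})^{-2}\bm X_j\bigr)^{-1}.
\]
Now $\bm X_j$ is independent of $\bm M_{-j}$, which is itself the Gram matrix of the masked data with one row deleted, so Proposition~\ref{prop:pca-learner-masked} (respectively Lemma~\ref{lem:pca-learner-untransformed}) applies to it unchanged, giving $(\hat{\bm v}^{(-j)\top}\bm v)^2\convp\tau^2_{\textnormal{PCA}}$ at the relevant parameters. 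Decomposing $\bm X_j=(\bm v^\top\bm X_j)\bm v+\bm g_j^\perp$ with $\bm g_j^\perp$ standard Gaussian on $\bm v^\perp$, I would use standard spiked-Wishart random matrix theory --- Marchenko--Pastur convergence of the spectrum of $N^{-1}\bm M_{-j}$, isolation of its top eigenvalue, and concentration of the resolvent quadratic forms $\bm X_j^\top(\hat\lambda\bm I-\bm M_{-j})^{-p}\bm X_j$, $p=1,2$ --- to show $\hat\lambda/N\convp\ell_*$ and $\bm X_j^\top(\hat\lambda\bm I-\bm M_{-j})^{-2}\bm X_j=\delta^2/(\hat{\bm v}^{(-j)\top}\bm X_j)^2+o_P(1)$, where $N$ is the number of rows of $\bm X^\learn$, $\delta\equiv1-(d/N)\bar s(\ell_*)$, and $\bar s$ is the Stieltjes transform of the limiting bulk law; $\ell_*,\bar s,\delta$ are deterministic functions of $(\gamma,c,h,\pi_\aug)$ for BONuS and of $(\gamma,c,h)$ for In-sample BH. The key point is that the $\bm X_j$-dependence cancels in this ratio, so $S_j=\delta^{-2}(\hat{\bm v}^{(-j)\top}\bm X_j)^2(1+o_P(1))$: the self-interaction rescales the projection onto $\hat{\bm v}^{(-j)}$ by the \emph{same} deterministic factor whether $j$ is null or alternative.

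Given this representation, the Split BH argument applied with $\bm X^\score_j$ replaced by $\bm X_j$ and $\hat{\bm v}_\spl$ by the independent unit vector $\hat{\bm v}^{(-j)}$ gives $(\hat{\bm v}^{(-j)\top}\bm X_j)^2\sim\chi^2_1$ for $j\in\cH_0$ and $(\hat{\bm v}^{(-j)\top}\bm X_j)^2\convd(1+\mu^2)\chi^2_1$ for $j\in\cH_1$, with $\mu^2$ equal to $\mu^2_\bonus$ or $\mu^2_\insamp$ as in~\eqref{eq:subspace-prior-signal-strengths}; combining through Slutsky's theorem yields $S_j\convd K\chi^2_1$ under the null and $S_j\convd K(1+\mu^2)\chi^2_1$ under the alternative with $K\equiv\delta^{-2}$, so that $K_\bonus$ and $K_\insamp$ are the respective values of $\delta^{-2}$.

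The substantive difficulty is the random-matrix step inside the leave-one-out reduction: deriving the deterministic equivalents for the \emph{anisotropic} resolvent quadratic forms (the rank-one spike along $\bm v$ breaks isotropy of $\bm M_{-j}$) and pinning down $\hat\lambda$ from the secular equation with the required $o_P$ control. Extra care is needed in the sub-threshold regime, where $\tau^2_{\textnormal{PCA}}$ vanishes, the spike eigenvalue of $\bm M_{-j}$ merges into the bulk edge, and the secular equation for $\hat\lambda$ must be analyzed near the edge; there the null and alternative limits coincide ($\mu^2=0$), consistent with the claimed distributions. Finally, since the statement asserts only the existence of the constants $K_\bonus,K_\insamp$ and not their values, their closed forms --- obtainable from the standard spiked Marchenko--Pastur expressions --- need not be computed.
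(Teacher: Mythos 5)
Your Split BH argument is exactly the one the paper uses. For BONuS and In-sample BH, however, you take a genuinely different route from the paper, and it is worth comparing. The paper's proof (Lemma~\ref{lem:joint-scores-in-sample}) works entirely in the observation/row space: it writes $S_j = (m+\tilde m)\,\hat\sigma_1^2\,(\hat u)_j^2$ via the SVD identity $\bm X \hat{\bm v} = \sqrt{m+\tilde m}\,\hat\sigma_1\hat{\bm u}$, decomposes the top left singular vector $\hat{\bm u}$ as $\hat\eta_n \bm u + \sqrt{1-\hat\eta_n^2}\,\bm w$ where $\bm u \propto \bm b\odot\bm Z$ encodes the per-hypothesis alternative indicators, and observes that rotational invariance of the Gaussian noise around $\bm u$ makes $\bm w$ \emph{exactly} Haar-uniform on $\bm u^\perp$ conditionally on $(\bm u, \hat\eta_n)$. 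A coordinate CLT on the sphere then gives the Gaussian limit for $\sqrt{n}\,\hat u_j$, with variance $1-\eta^2$ under the null and $(1-\eta^2)+\eta^2/\bar\pi$ under the alternative; the constants $K_\bonus, K_\insamp$ come out as $\sigma_\infty^2(1-\eta^2)$. Your approach instead works in the feature/column space with a leave-one-out trick: you exploit the rank-one secular identity $\bm X_j^\top(\hat\lambda\bm I - \bm M_{-j})^{-1}\bm X_j=1$ and $S_j = \bigl(\bm X_j^\top(\hat\lambda\bm I - \bm M_{-j})^{-2}\bm X_j\bigr)^{-1}$, then show the spike term of the resolvent dominates and the bulk is insensitive to whether $j$ is null or alternative, yielding $S_j \approx \delta^{-2}(\hat{\bm v}^{(-j)\top}\bm X_j)^2$. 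Both arguments are sound in principle, and you correctly identify the place where the real labor lies: establishing anisotropic deterministic equivalents for $\bm X_j^\top(\hat\lambda\bm I - \bm M_{-j})^{-p}\bm X_j$ and pinning down $\hat\lambda$ via the secular equation, including the degenerate edge behavior in the subcritical regime. The paper's observation-space argument buys you a large simplification precisely here: because it conditions on the exact Haar symmetry of $\bm w$ given $(\bm u,\hat\eta_n)$, it never needs resolvent bounds, local laws, or separate treatment of sub- and super-critical cases (the formulas for $\eta^2$ and $\sigma_\infty$ already cover both), whereas your route would have to redo the edge analysis from scratch. Two small imprecisions in your write-up that are easy to fix: the claimed expansion should be multiplicative, $\bm X_j^\top(\hat\lambda\bm I - \bm M_{-j})^{-2}\bm X_j = \delta^2\,(\hat{\bm v}^{(-j)\top}\bm X_j)^{-2}\bigl(1+o_P(1)\bigr)$, not an additive $o_P(1)$, since the leading term is not bounded away from infinity; and what you call ``concentration of $\bm X_j^\top(\hat\lambda\bm I-\bm M_{-j})^{-2}\bm X_j$'' is really concentration of its bulk part only --- the spike contribution is the $\chi^2_1$-like fluctuation you want to keep, not suppress.
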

\noindent Comparing to Proposition~\ref{prop:point-mass-prior-score-distributions} for the point mass prior, the score distributions are now scaled $\chi^2_1$ distributions rather than shifted normal distributions. The null and alternative distributions of BONuS and In-sample BH again are impacted by selection bias, this time in the form of multiplicative constants $K_\bonus$ and $K_{\insamp}$ that do not affect power. 

\paragraph{Overall power.} Given the score distributions from Proposition~\ref{prop:subspace-prior-score-distributions}, we can view all three methods as being asymptotically equivalent to testing for scale changes in univariate $\chi^2_1$ models. This leads to our second main result.
\begin{theorem}[Asymptotic power under one-dimensional subspace prior] \label{thm:power-one-dimensional}
Let $\widehat{\mathcal R}$ represent the rejection set of Split BH, BONuS, or In-sample BH, and let $\mu(\gamma, c, h)$ represent the corresponding signal strength~\eqref{eq:subspace-prior-signal-strengths}. Then, for each method, we have
\begin{equation}
\TPP(\widehat{\mathcal R}) \convp \TPR_{\textnormal{BH}}(\chi^2_1, (1 + \mu^2(\gamma, c, h))\chi^2_1, \gamma).
\label{eq:subspace-prior-asymptotic-power}
\end{equation}
\end{theorem}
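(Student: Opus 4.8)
The argument parallels the proof of Theorem~\ref{thm:power-point-mass}, with shifted Gaussian score distributions replaced by scaled $\chi^2_1$ ones, so the plan is to reduce each of the three procedures to Benjamini--Hochberg applied to scores with the limiting null and alternative laws of Proposition~\ref{prop:subspace-prior-score-distributions}, and then invoke a Genovese--Wasserman-type threshold-convergence argument. The first step is to dispose of the selection-bias constants. For BONuS and In-sample BH, the conditional null and alternative laws in Proposition~\ref{prop:subspace-prior-score-distributions} differ from the target laws $\chi^2_1$ and $(1+\mu^2)\chi^2_1$ only through a common multiplicative factor $K_\bonus$ (resp.\ $K_\insamp$), and both $\widehat V(t)$ and $|\mathcal R(t)|$ are computed from these same scores; hence $t \mapsto \widehat V(t)/|\mathcal R(t)|$ is merely time-changed by $t \mapsto t/K$, the threshold $\hat t$ rescales by $K$, and the rejection set $\widehat{\mathcal R} = \{j : S_j \ge \hat t\}$---and therefore $\TPP(\widehat{\mathcal R})$---is unchanged. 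This is the multiplicative analogue of the cancellation of the additive constants $K$ in Theorem~\ref{thm:power-point-mass}, and it lets us work throughout with normalized scores whose conditional laws converge to $\bar G_0 = \chi^2_1$ under the null and $\bar G_1 = (1+\mu^2)\chi^2_1$ under the alternative, with $\mu \in \{\mu_\spl, \mu_\bonus, \mu_\insamp\}$ as appropriate.

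Next I would establish the relevant empirical-process limits, working conditionally on the learning data (equivalently, on $\hat{\bm v}$). Writing $\bar G \equiv (1-\gamma)\bar G_0 + \gamma \bar G_1$, the three ingredients needed are: (i) triangular-array Glivenko--Cantelli statements $\sup_t | \tfrac1m \sum_{j=1}^m \indicator(S_j \ge t) - \bar G(t) | \convp 0$ and $\sup_t | \tfrac1{|\mathcal H_1|} \sum_{j \in \mathcal H_1} \indicator(S_j \ge t) - \bar G_1(t) | \convp 0$; for Split BH the $S_j$ are conditionally independent with laws converging to $\bar G_0$ or $\bar G_1$ by Proposition~\ref{prop:subspace-prior-score-distributions}, while for BONuS and In-sample BH the same follows once the weak dependence induced by the shared low-dimensional summary $\hat{\bm v}$ is controlled, using the concentration arguments behind Proposition~\ref{prop:subspace-prior-score-distributions}; (ii) $\widehat V(t)/m \convp \bar G_0(t)$ uniformly in $t$---for Split BH and In-sample BH because $\widehat V(t)/m$ \emph{is} the (oracle or conditional) null survival function, which converges uniformly since pointwise convergence of monotone functions to a continuous limit is uniform, and for BONuS because $\tfrac1{1+\tilde m}(1 + \sum_{k=1}^{\tilde m} \indicator(T_{\hat\Lambda}(\tilde{\bm X}_k) \ge t)) \convp \bar G_0(t)$ uniformly by Glivenko--Cantelli over the null pool (using $\tilde m \to \infty$), which is where BONuS's bespoke calibration collapses to BH asymptotically.

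Combining these gives $\widehat V(t)/|\mathcal R(t)| \convp \bar G_0(t)/\bar G(t)$ uniformly on any region bounded away from the right end of the support. I would then show $\hat t \convp t_* \equiv \inf\{t : \bar G_0(t)/\bar G(t) \le q\}$ by the standard sandwiching argument: since the scale family $\{(1+\mu^2)\chi^2_1 : \mu^2 > 0\}$ has monotone likelihood ratio in $S_j$, the ratio $\bar G_1/\bar G_0$ is strictly increasing on the support, so $\bar G_0/\bar G = [(1-\gamma) + \gamma \bar G_1/\bar G_0]^{-1}$ is strictly decreasing, equals $1$ at $t=0$, and tends to $0$ as $t \to \infty$ (the $\chi^2_1$ tail is lighter than the $(1+\mu^2)\chi^2_1$ tail when $\mu>0$); hence it crosses $q$ at a unique, transversal point $t_*$, and uniform convergence forces $\hat t \convp t_*$. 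Continuity of $\bar G_1$ at $t_*$ together with $\hat t \convp t_*$ then yields $\TPP(\widehat{\mathcal R}) = \tfrac1{|\mathcal H_1|} \sum_{j \in \mathcal H_1} \indicator(S_j \ge \hat t) \convp \bar G_1(t_*) = \TPR_{\textnormal{BH}}(\chi^2_1, (1+\mu^2)\chi^2_1, \gamma)$. The degenerate case $\mu = 0$ (PCA below its phase transition, so the truncation in~\eqref{eq:subspace-prior-signal-strengths} activates) is consistent: there $\bar G_1 \equiv \bar G_0$, so $\bar G_0/\bar G \equiv 1 > q$, forcing $\hat t = \infty$ and $\widehat{\mathcal R} = \emptyset$, while the right-hand side is likewise $0$.

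\emph{The main obstacle.} I expect the crux to be the threshold convergence $\hat t \convp t_*$ and the uniform control of $\widehat V(t)/|\mathcal R(t)|$ near where the denominator $\bar G(t)$ is small: one must either restrict to the region where $\bar G$ is bounded below and separately rule out spurious far-tail rejections, or carry a ratio-type empirical-process bound, and one must verify transversality of the crossing at $t_*$ for the specific $\chi^2_1$ versus $(1+\mu^2)\chi^2_1$ pair rather than assume it abstractly (the monotone-likelihood-ratio observation above is what makes this clean). A secondary point absent in Theorem~\ref{thm:power-point-mass} is that the overlap-induced constants are multiplicative here rather than additive, so the invariance of $\widehat{\mathcal R}$ must be argued through a scale change rather than a location shift; and for In-sample BH and BONuS one must justify running the conditional empirical-process argument despite the residual dependence among the $S_j$ through $\hat{\bm v}$, which reuses the machinery underlying Proposition~\ref{prop:subspace-prior-score-distributions}.
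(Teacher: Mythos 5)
Your plan is correct and follows the same architecture the paper uses: reduce each method to BH applied to scores whose conditional laws converge to $\chi^2_1$ and $(1+\mu^2)\chi^2_1$, show the FDP estimator converges uniformly, exploit monotonicity of the survival ratio $\bar G_1/\bar G_0$ to establish threshold consistency, then evaluate $\bar G_1$ at the limiting threshold. The disposal of the multiplicative constants $K$ by scale invariance of BH is exactly the paper's device (the proof of Lemma~\ref{lem:power-subspace-bonus} literally ``factors out $c_0$''), the Glivenko--Cantelli statement for the BONuS null pool with $\tilde m\to\infty$ is how the paper collapses BONuS's calibration to BH, the restriction to $(-\infty,C]$ to keep the ratio denominator bounded away from zero is the same device, and your MLR observation for $\chi^2_1$ versus $(1+\mu^2)\chi^2_1$ is the transversality lemma. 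The one place where your route would need modification is the framing ``working conditionally on the learning data (equivalently, on $\hat{\bm v}$).'' That works for Split BH, where $\hat{\bm v}$ is independent of the scoring data and conditioning renders the scores i.i.d. But for BONuS and In-sample BH, conditioning on $\hat{\bm v}$ does \emph{not} make the scores conditionally independent---$\hat{\bm v}$ is itself a function of every $\bm X_j$---so the conditional-empirical-process calculus does not directly apply. The paper instead conditions only on $(\cH_0,\cH_1)$ and proves \emph{asymptotic pairwise independence} of the score pairs via a leave-two-out decomposition and the uniformity of the top left singular vector's residual component on the sphere orthogonal to the signal (Lemma~\ref{lem:joint-scores-in-sample}), which then feeds a bounded-array WLLN with vanishing pairwise covariances (Lemma~\ref{lemma:wlln-bounded-rv}). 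You flag this as a secondary issue and point to ``concentration arguments,'' which is the right instinct, but this is really the main piece of new work beyond the Split BH case, not a secondary point; the paper packages the two routes into separate master theorems (Theorem~\ref{thm:master-split-BH} conditions on $\hat\Lambda$, Theorems~\ref{thm:master-bonus} and~\ref{thm:master-in-sample} carry the pairwise-independence hypothesis instead) precisely because the conditioning structures differ.
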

\noindent This verifies the bottom row of Figure~\ref{fig:theory-schematic}, with $P(\mu, \gamma) \equiv \TPR_{\textnormal{BH}}(\chi^2_1, (1 + \mu^2)\chi^2_1, \gamma)$. Figure~\ref{fig:subspace-mass-prior-theorem} illustrates the comparison; the qualitative behavior matches that of Figure~\ref{fig:point-mass-prior-theorem} for the point prior, so the associated discussion mostly carries over. Nevertheless, we highlight a few departures from Case 1. First, setting $\pi_\spl = \pi_\aug$ does not yield the same quality of learned direction for Split BH and BONuS. However, it can be seen both from the specific signal strengths~\eqref{eq:subspace-prior-signal-strengths} for Case 2 and from the general expressions~\eqref{eq:signal-strengths} that $\mu_\bonus \geq \mu_\spl$ when $\pi_\spl$ and $\pi_\aug$ are chosen to equalize the quality of the learned direction, $\tau$. Second, there is no closed-form analog to the optimal splitting proportion for Split BH as in Case 1~\eqref{eq:optimal-pi-split}. Nevertheless, the optimal splitting proportion's dependency on the learning curve $\pi_\spl \mapsto \tau(\gamma, c, \sqrt{\pi_\spl}h)$ is the same. Finally, the learning curve for the PCA learner (Figure~\ref{fig:subspace-prior-alignment}) rises and plateaus more sharply than that of the mean learner (Figure~\ref{fig:point-mass-prior-alignment}), especially for augmentation. This leads to a broader range of $\pi_\aug$ for which the empirical and theoretical power of BONuS is near its maximum (Figure~\ref{fig:subspace-mass-prior-theorem}).

\section{Power of BONuS for small $\tilde m$} \label{sec:small-tilde-m}

In Section~\ref{sec:asymptotic-power-comparison}, we found that the asymptotic power of BONuS is maximized at $\pi_\aug = 1$, which corresponds to $\tilde m \ll m$. However, this result does not provide the actual growth rate of $\tilde m_{\text{opt}}$, the power-optimal value of $\tilde m$. Setting $\tilde m$ too small leads to a highly discrete empirical $p$-values, which can cause conservative behavior and power degradation. We observed this empirically in Figure~\ref{fig:point-mass-prior-theorem} for $\pi_\aug$ close to 1. Therefore, the optimal choice of $\tilde m$ must balance the trade-off between quality of the learned direction (benefiting from smaller $\tilde m$) and the granularity of $p$-values (benefiting from larger $\tilde m$). To dissect this trade-off, we carry out a higher-order asymptotic analysis to approximate the power of BONuS in the regime $\tilde m \ll m$ and use this to characterize the growth rate of $\tilde m_{\text{opt}}$. We do so for point mass prior in Sections~\ref{sec:tractable-approximation} and~\ref{sec:small-tilde-m-point-mass-prior} and for subspace prior in Appendix~\ref{sec:small-tilde-m-subspace-prior}. We also provide empirical evidence supporting our theory (Section~\ref{sec:numerical-evidence-small-tilde-m}).

\subsection{Quantifying the trade-off in choosing $\tilde m$} \label{sec:small-tilde-m-point-mass-prior}

It is challenging to directly analyze the impact of $\tilde m$ on the power of BONuS within the regime $\tilde m \ll m$. We instead analyze an approximation to BONuS power, derived in Appendix~\ref{sec:tractable-approximation} and defined as follows:
\begin{equation*}
\TPR'_\bonus(\tilde m) \equiv \bar \Phi(t'_*(\tilde m) - \mu'(\tilde m)),
\end{equation*}
where
\begin{equation}
\mu'(\tilde m) =  \frac{h^2 \gamma}{\sqrt{h^2 \gamma^2 + c(1 + \tilde m / m)}}
\label{eq:approximate-mean-shift-main}
\end{equation}
and
\begin{equation}
t'_*(\tilde m) = \inf\left\{t : \widehat{\text{FDR}}'(t; \tilde m)  \leq q\right\}; \quad \widehat{\text{FDR}}'(t; \tilde m) \equiv \frac{\frac{1}{1 + \tilde m}\left(1 + \tilde m \cdot \bar \Phi(t)\right)}{(1-\gamma)\bar \Phi(t) + \gamma \bar \Phi(t - \mu'(\tilde m))}.
\label{eq:approximate-fdr-main}
\end{equation}
While this expression is obtained in part by substituting quantities involving empirical $p$-values with their expectations~\eqref{eq:approximate-fdr}, it still captures the trade-off in $\tilde m$. Increasing $\tilde m$ decreases the quality of the learned direction $\hat{\bm v}$, which manifests as a decrease in the mean shift $\mu'(\tilde m)$ of the scores for non-null hypotheses~\eqref{eq:approximate-mean-shift-main}. Decreasing $\mu'(\tilde m)$ has the impact of decreasing $\TPR'_\bonus(\tilde m) = \bar \Phi(t'_*(\tilde m) - \mu'(\tilde m))$ both directly and indirectly (by increasing the approximate threshold $t'_*(\tilde m)$). On the other hand, decreasing $\tilde m$ increases \textit{pseudocount bias} in the numerator of $\smash{\widehat{\text{FDR}}'(t; \tilde m)}$~\eqref{eq:approximate-fdr-main}, which increases the approximate threshold $t'_*(\tilde m)$. However, it remains unclear how these two competing forces compare to one another, and how they interact to determine the optimal $\tilde m$. To understand this trade-off, we carry out an asymptotic analysis of $\TPR'_\bonus(\tilde m)$.

We observe that, in line with Remark~\ref{rem:power-comparison}, the limit of the approximate BONuS power as $m, \tilde m \rightarrow \infty, \tilde m / m \rightarrow 0$ tends to the asymptotic power of In-sample BH:
\begin{equation*}
\lim_{m, \tilde m \to \infty,\tilde m / m \to 0} \TPR'_\bonus(\tilde m) = \TPR_{\textnormal{BH}}(N(0,1), N(\mu_{\insamp}(\gamma, c, h), 1), \gamma) \equiv \TPR_{\insamp}.
\end{equation*}
This motivates us to expand $\TPR'_\bonus(\tilde m)$ about $\TPR_{\insamp}$ in this asymptotic regime.
\begin{theorem} \label{thm:point-mass-small-m}
We have
\begin{equation}\label{eq:point-mass-small-m}
\TPR'_\bonus(\tilde m) = \TPR_{\insamp} - \eta_1 \frac{1}{\tilde m} - \eta_2 \frac{\tilde m}{m} + o\left(\frac{1}{\tilde m} + \frac{\tilde m}{m}\right),
\end{equation}
where $\eta_1, \eta_2 > 0$ are constants depending on $(\gamma, c, h, q)$.
\end{theorem}
This result quantifies the trade-off in $\tilde m$: the ``cost'' of the pseudocount bias resulting from decreasing $\tilde m$ is $\asymp 1/\tilde m$ and the ``cost'' of the degraded learning performance resulting from increasing $\tilde m$ is $\asymp \tilde m / m$. Balancing these two terms implies that the optimal $\tilde m$ is on the order of $\sqrt{m}$. We conjecture that the same statement holds for BONuS itself.
\begin{conjecture} \label{conj:optimal-tilde-m-point-mass}
Let $\tilde m_{\textnormal{opt}}$ be the choice of $\tilde m$ that maximizes the power of BONuS in the case of point mass prior for a fixed value of $m$. Then, $\tilde m_{\textnormal{opt}} \asymp \sqrt{m}$.
\end{conjecture}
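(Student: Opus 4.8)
We interpret ``the power of BONuS'' in Conjecture~\ref{conj:optimal-tilde-m-point-mass} as $P_m(\tilde m) \equiv \E[\TPP_{m,\tilde m}]$, where $\TPP_{m,\tilde m}$ is the true positive proportion of BONuS run with $\tilde m$ null features on $m$ hypotheses under the point mass prior, so that $\tilde m_{\textnormal{opt}} = \argmax{\tilde m\in\N} P_m(\tilde m)$ (other natural notions of power are handled identically). The plan is to establish a finite-$\tilde m$ sharpening of Theorem~\ref{thm:power-point-mass},
\[
P_m(\tilde m) \;=\; \TPR_{\insamp} \;-\; a_1\,\tfrac{1}{\tilde m} \;-\; a_2\,\tfrac{\tilde m}{m} \;+\; o\!\left(\tfrac{1}{\tilde m} + \tfrac{\tilde m}{m}\right), \qquad a_1, a_2 > 0,
\]
\emph{uniformly} over $m^{\delta} \le \tilde m \le m^{1-\delta}$ for a fixed small $\delta \in (0,1/2)$, supplemented by two cruder bounds: for $\tilde m > m^{1-\delta}$ the augmentation fraction $\pi_{\aug} = m/(m+\tilde m)$ is bounded away from $1$, so $\mu_{\bonus} \le \mu_{\insamp} - \Omega(m^{-\delta})$ by~\eqref{eq:point-mass-prior-signal-strengths} and, since the power is a continuous increasing function of the signal strength, $P_m(\tilde m) \le \TPR_{\insamp} - \Omega(m^{-\delta})$; and for $1 \le \tilde m < m^{\delta}$ a direct analysis (the pseudocount $\tfrac{1}{1+\tilde m}$ in line~7 of Algorithm~\ref{alg:bonus} being strictly conservative) gives $P_m(\tilde m) \le \TPR_{\insamp} - \Omega(1/\tilde m)$. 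Granting these, the conjecture follows by a localization argument: at $\tilde m = \lceil \sqrt{a_1/a_2}\,\sqrt m\,\rceil$ the expansion gives $P_m(\tilde m) \ge \TPR_{\insamp} - C/\sqrt m$ for an explicit constant $C$; for $\tilde m < \epsilon\sqrt m$ the $1/\tilde m$ term dominates, giving $P_m(\tilde m) \le \TPR_{\insamp} - a_1/(\epsilon\sqrt m)\,(1+o(1))$; and for $\tilde m > M\sqrt m$ the $\tilde m/m$ term (or, beyond $m^{1-\delta}$, the first crude bound) dominates, giving $P_m(\tilde m) \le \TPR_{\insamp} - \Omega(M/\sqrt m)$. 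Taking $\epsilon$ small and $M$ large relative to $C/\min(a_1,a_2)$ excludes the maximizer from $(0,\epsilon\sqrt m) \cup (M\sqrt m, \infty)$ for all large $m$, whence $\tilde m_{\textnormal{opt}} = \Theta(\sqrt m)$.

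The core is the uniform expansion, whose proof parallels that of Theorem~\ref{thm:point-mass-small-m} but reinstates the Monte Carlo fluctuation of $\widehat V$ that the proxy $\widetilde{\TPR}_{\bonus}$ of~\eqref{eq:proxy-power-bonus} omits. Conditioning throughout on the learned direction $\hat{\bm v} = \hat{\bm v}_{\bonus}$, I would proceed in four steps. \emph{(i)} A finite-sample refinement of Proposition~\ref{prop:mean-learner-masked}: for $\bm X^{\learn} = [\bm X; \tilde{\bm X}]$ one has $\hat{\bm v}^\top\bm v = h\gamma/\sqrt{h^2\gamma^2 + c(1+\tilde m/m)} + O_p(m^{-1/2})$ with bias $O(m^{-1})$, so the signal strength induced by $\hat{\bm v}$ equals $\mu_{\bonus}(\tilde m)$ of~\eqref{eq:mu-bonus-finite} up to a mean-zero $O_p(m^{-1/2})$ term contributing only $O(m^{-1})$ to $P_m(\tilde m)$. \emph{(ii)} A decoupling step: because $\hat{\bm v}$ depends on each of $\bm X_j$ and $\tilde{\bm X}_k$ only through one term of an average over $m+\tilde m$ vectors, the scores $S_j = \hat{\bm v}^\top\bm X_j$ and the null-sample statistics $\hat{\bm v}^\top\tilde{\bm X}_k$ are, conditionally on $\hat{\bm v}$, jointly close to independent draws from $N(K_{\bonus},1)$ (nulls and null samples) and $N(K_{\bonus}+\mu_{\bonus}(\tilde m),1)$ (alternatives), with cross-correlations $O((m+\tilde m)^{-1/2})$ controlled by a leave-one-out / Efron--Stein argument; this promotes Proposition~\ref{prop:point-mass-prior-score-distributions} to the joint statement needed below. \emph{(iii)} In the recentered coordinate $u = t - K_{\bonus}$, write $\widehat V(u) = m\bar\Phi(u) + \tfrac{m(1-\bar\Phi(u))}{1+\tilde m} + \tfrac{m}{\sqrt{\tilde m}}\mathbb G_{\tilde m}(u)$, where $\mathbb G_{\tilde m}$ is asymptotically a Brownian-bridge-type process with $\E\mathbb G_{\tilde m} = 0$ and $\Var\mathbb G_{\tilde m}(u)\to \bar\Phi(u)(1-\bar\Phi(u))$, while $|\mathcal R(u)|/m \to (1-\gamma)\bar\Phi(u) + \gamma\bar\Phi(u-\mu_{\bonus}(\tilde m))$ with $O_p(m^{-1/2})$ error. \emph{(iv)} Analyze the threshold $\hat u = \inf\{u: \widehat V(u)/|\mathcal R(u)|\le q\}$ by combining the analytic threshold expansion behind Theorem~\ref{thm:point-mass-small-m} with a functional delta method for the perturbations in (iii), obtaining $\hat u = u_*(\tilde m) + O_p(\tilde m^{-1/2})$, where $u_*(\tilde m)$ coincides with the proxy threshold $t_*(\tilde m)$ of~\eqref{eq:proxy-power-bonus}, together with $\E[\hat u - u_*(\tilde m)] = O(1/\tilde m)$ and $\E[(\hat u - u_*(\tilde m))^2] = O(1/\tilde m)$. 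A second-order Taylor expansion of $u\mapsto \bar\Phi(u - \mu_{\bonus}(\tilde m))$ then yields $P_m(\tilde m) = \widetilde{\TPR}_{\bonus}(\tilde m) + \sigma/\tilde m + o(1/\tilde m)$ for a computable constant $\sigma$, and Theorem~\ref{thm:point-mass-small-m} supplies $\widetilde{\TPR}_{\bonus}(\tilde m) = \TPR_{\insamp} - \eta_1/\tilde m - \eta_2\tilde m/m + o(\cdot)$ with $\eta_1,\eta_2 > 0$; setting $a_1 = \eta_1 - \sigma$ and $a_2 = \eta_2$ gives the expansion, provided $a_1 > 0$. The passage from the in-probability statements to $\E[\TPP_{m,\tilde m}]$ is licensed by uniform integrability of $\hat u$, which holds because $\widehat V$ takes values in $\tfrac{m}{1+\tilde m}\cdot\{1,\dots,1+\tilde m\}$, forcing $\hat u$ below an empirical quantile of the scores.

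The main obstacle is step (iv), and within it the two points on which the $\Theta(\sqrt m)$ scaling hinges. First, one must show the Monte Carlo fluctuation $\mathbb G_{\tilde m}$ --- a mean-zero perturbation of $\widehat V$ of relative size $\tilde m^{-1/2}$, hence nominally \emph{larger} than the pseudocount bias $\tilde m^{-1}$ --- nevertheless moves $P_m(\tilde m)$ only at order $1/\tilde m$; this rests on the quadratic dependence of the power on $\hat u - u_*(\tilde m)$ near $u_*(\tilde m)$ and on $\Var(\hat u) = O(1/\tilde m)$, but rigorizing it requires a uniform-in-$\tilde m$ empirical-process bound on $\mathbb G_{\tilde m}$ near $u_*(\tilde m)$ and the fact that $u_*(\tilde m)$ stays in a compact region where $\bar\Phi$ and $(1-\gamma)\bar\Phi + \gamma\bar\Phi(\cdot - \mu_{\bonus}(\tilde m))$ are bounded away from $0$ --- valid for $\tilde m\le m^{1-\delta}$ but degenerating as $\tilde m$ shrinks, which is precisely why the small-$\tilde m$ window demands the separate elementary bound. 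Second, one must verify $a_1 = \eta_1 - \sigma > 0$, i.e.\ that the Monte Carlo averaging does not produce a power \emph{gain} large enough to cancel the pseudocount loss $\eta_1$; unlike $a_2 = \eta_2$, which is inherited directly from the proxy analysis, this does \emph{not} reduce to Theorem~\ref{thm:point-mass-small-m} and requires computing $\sigma$ from the local geometry of the BH threshold functional (the sign of $\bar\Phi''$ at $u_*(\tilde m) - \mu_{\bonus}(\tilde m)$, the curvature of the threshold-inversion map, and $\Var(\hat u)$). We expect $\sigma < \eta_1$ in view of the empirically observed interior maximum of power in $\tilde m$, and a useful intermediate device is the coupling $1 + \mathrm{Bin}(\tilde m, p) \succeq \mathrm{Bin}(1+\tilde m, p)$, which shows BONuS is dominated in power by a pseudocount-free reference procedure on $1+\tilde m$ null samples and thereby isolates $\eta_1$ from $\sigma$; but the final sign verification appears to be genuinely computational.
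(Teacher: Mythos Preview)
The paper does \emph{not} prove Conjecture~\ref{conj:optimal-tilde-m-point-mass}. It is stated explicitly as a conjecture: the paper proves the expansion of Theorem~\ref{thm:point-mass-small-m} only for the \emph{proxy} power $\widetilde{\TPR}_{\bonus}$ of~\eqref{eq:proxy-power-bonus}, which deliberately discards the Monte Carlo variance of $\widehat V$, and then writes ``This leads us to conjecture that the same statement holds for BONuS itself.'' The only support the paper offers for the conjecture proper is the numerical evidence in Section~\ref{sec:numerical-evidence-small-tilde-m} (the fitted log--log slope of $0.45$ with confidence interval $[0.41,0.49]$). So there is no ``paper's own proof'' to compare against; your proposal is an attempt to go strictly beyond what the paper establishes.

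As a proof strategy, your outline is coherent and you have correctly located the two genuine obstructions. The first --- that the mean-zero fluctuation $\mathbb G_{\tilde m}$ of nominal size $\tilde m^{-1/2}$ enters the expected power only at order $\tilde m^{-1}$ --- is exactly the ``variance effect'' the paper explicitly declines to quantify (``Since quantifying the effect of the variance inflation is challenging, we restrict our attention to quantifying the impact of the pseudocount bias''). Your proposed mechanism (quadratic dependence of power on $\hat u - u_*$ near the threshold, plus $\Var(\hat u)=O(1/\tilde m)$) is the right heuristic, but turning it into a rigorous statement requires a uniform functional expansion of the BH threshold map around a \emph{moving} base point $u_*(\tilde m)$, together with control of the bias $\E[\hat u - u_*(\tilde m)]$ to order $o(1/\tilde m)$; neither is routine, and the paper does not supply the ingredients. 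The second obstruction --- verifying $a_1 = \eta_1 - \sigma > 0$ --- is, as you say, not reducible to Theorem~\ref{thm:point-mass-small-m}, and your stochastic-ordering device $1+\mathrm{Bin}(\tilde m,p)\succeq \mathrm{Bin}(1+\tilde m,p)$ does not by itself separate the pseudocount and variance contributions to the constant in front of $1/\tilde m$. In short: your plan is a reasonable roadmap toward a proof the paper does not claim to have, but the two points you flag as ``the main obstacle'' are precisely the content of the open problem, and neither is resolved in your proposal.
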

In Appendix~\ref{sec:small-tilde-m-subspace-prior}, we prove an analogous result for the subspace prior (Theorem~\ref{thm:subspace-small-m}) and make an analogous conjecture (Conjecture~\ref{conj:optimal-tilde-m-subspace}).
Having observed the $\sqrt{m}$ scaling for $\tilde m_{\textnormal{opt}}$ in two settings suggests that such a result might apply in a wider range of settings. 




\begin{remark}[Comparison to existing work on choosing $\tilde m$] \label{rem:choosing-m-tilde}

The question of choosing $\tilde m$ has been considered for the original iterative variant of BONuS \citep{Yang2021} and for the related AdaDetect conformal outlier detection procedure \citep{Marandon2024}. Both works recognized the trade-off inherent in choosing $\tilde m$ but neither formally derived an optimal choice. Nevertheless, they offered parallel heuristic discussions of this issue. Both procedures involve BH applied to empirical $p$-values, so to make any rejections, the smallest possible empirical $p$-value $1/(\tilde m + 1)$ must be less than or equal to the BH threshold $q |\widehat{\mathcal R}| / m$. This leads to the criterion $\tilde m \geq m/q|\widehat{\mathcal R}| - 1$, which \citet{Yang2021,Marandon2024} take as their starting point for choosing $\tilde m$. This criterion restricts $\tilde m$ only implicitly, as the rejection set size $\widehat{\mathcal R}$ is itself a function of $\tilde m$. Nevertheless, these works reasoned $\tilde m$ must be at least $\asymp m$ for weak- or sparse-signal scenarios where $\widehat{\mathcal R} = O(1)$ is possible. On the other hand, for $\widehat{\mathcal R} \asymp m$, \citet{Marandon2024} observed that $\tilde m = O(1)$ would suffice for sufficient granularity. Presumably to be conservative in the face of unknown signal, both prior works recommend a default setting of $\tilde m \asymp m$. Other works have studied the power of conformal procedures, but where score function is learned on a separate dataset \citep{Mary2021a,Gazin2024}.

Our simpler problem setup allows for a more detailed analysis, explicitly quantifying the trade-off between $p$-value granularity and quality of learned direction (Theorem~\ref{thm:point-mass-small-m}), leading to a potentially surprising conjecture for the optimal scaling: $\tilde m \asymp \sqrt{m}$. This is larger than the $O(1)$ scaling that \citet{Marandon2024} observed is required for sufficient granularity in our setting where $\widehat{\mathcal R} \asymp m$. While $\tilde m = O(1)$ is sufficient to make \textit{any rejections}, we believe it is not the best choice to make the \textit{largest number of rejections}. On the other hand, $\tilde m \asymp \sqrt{m}$ is smaller than the default $\tilde m \asymp m$ recommended by prior works. This suggests that accommodating sparse signals can lead to suboptimal choices in proportional scaling regimes, highlighting the need for signal-adaptive choices of $\tilde m$. We note that the $\sqrt m$ scaling is still a conjecture for BONuS, and even if this conjecture is correct, we do not suggest that scaling is optimal for proportional scaling regimes in all null augmentation problems. Nevertheless, we expect at least that $1 \ll \tilde m_{\text{opt}} \ll m$ in a broad range of such problems, as $\tilde m \gg 1$ eliminates the $p$-value discreteness problem in cases where the threshold converges to a constant while we expect $\tilde m \ll m$ eliminates the learning degradation problem in many settings. 
\end{remark}

\subsection{Numerical evidence for theoretical approximations} \label{sec:numerical-evidence-small-tilde-m}

We first compare the $\tilde m / m \rightarrow 0$ power given by Theorems~\ref{thm:point-mass-small-m} and~\ref{thm:subspace-small-m} to the general results in Theorems~\ref{thm:power-point-mass} and~\ref{thm:power-one-dimensional} (Section~\ref{sec:asymptotic-power-comparison}) as well as the empirical power of BONuS (Figure~\ref{fig:theorem-with-approx}). The refined approximation is not a perfect match for the empirical power, particularly for $\pi_\aug$ near zero (Theorems~\ref{thm:point-mass-small-m} and~\ref{thm:subspace-small-m} assume $\pi_\aug \rightarrow 1$) or too near one (these theorems still require $\tilde m \rightarrow \infty$). Nevertheless, it captures the qualitative behavior of the empirical power better than the results in Theorems~\ref{thm:power-point-mass} and~\ref{thm:power-one-dimensional}, particularly the increasing and then decreasing shape of the curves as $\pi_\aug$ increases from zero to one.
\begin{figure}[h]
\centering
\includegraphics{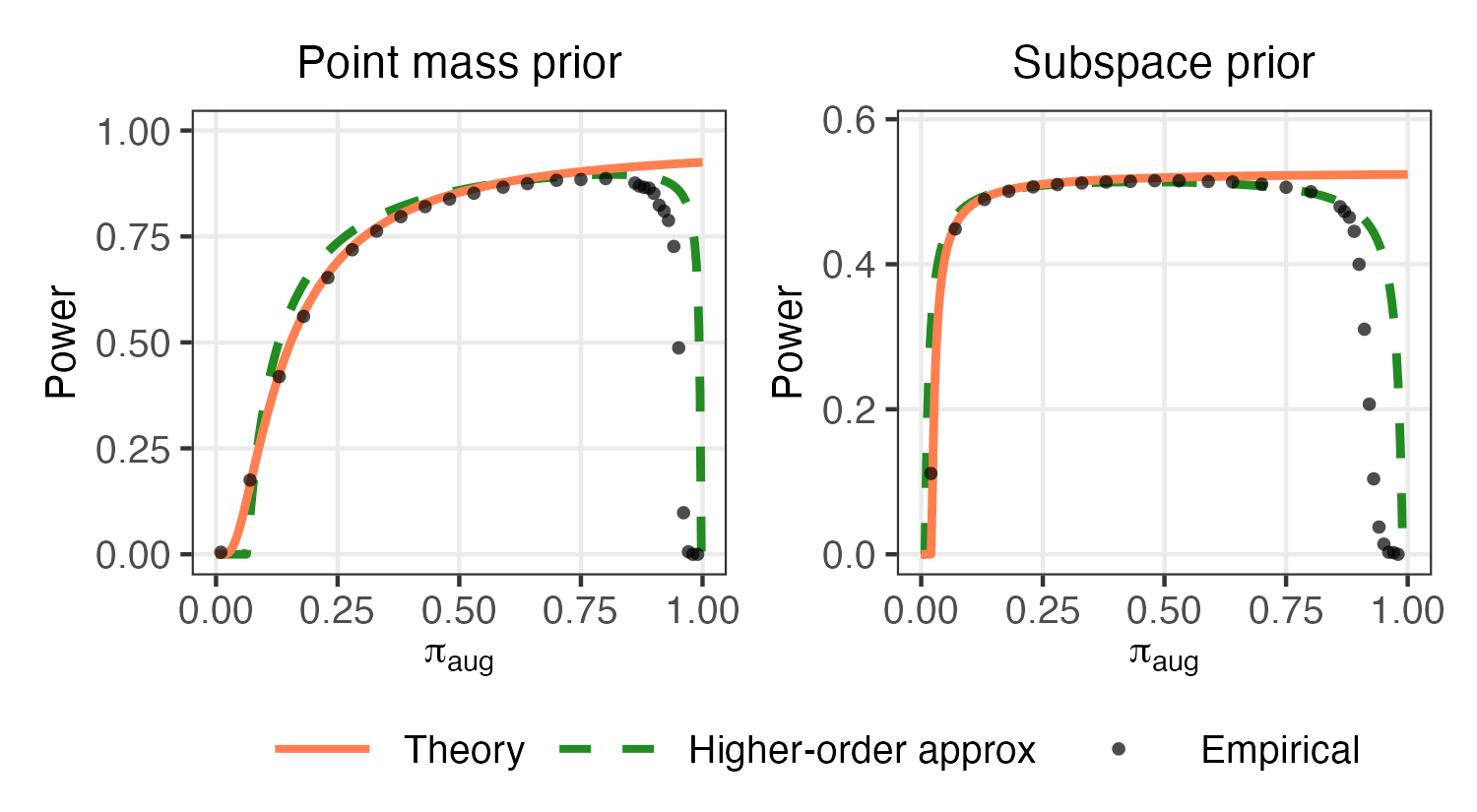}
\vspace{-0.1in}
\caption{Comparing the empirical performance of BONuS with the asymptotic power curves (Theorems~\ref{thm:power-point-mass} and~\ref{thm:power-one-dimensional}) and higher-order variants (Theorems~\ref{thm:point-mass-small-m} and~\ref{thm:subspace-small-m}) to handle small $\tilde{m}$.}
\label{fig:theorem-with-approx}
\end{figure} 

Next, we provide numerical evidence to support Conjecture~\ref{conj:optimal-tilde-m-point-mass} regarding the $\sqrt{m}$ scaling of the optimal $\tilde m$ for the point mass prior (Figure~\ref{fig:point-mass-prior-optimal}, left). For each $m$, we compare the value of $\tilde m$ maximizing empirical power with the value maximizing the approximation in Theorem~\ref{thm:point-mass-small-m}, which grows at a rate of $\sqrt{m}$. We find that the best-fit line through the empirical maximizers is nearly parallel (on a log-log scale) to the theoretical line with slope 1/2; it has a slope of 0.45 (confidence interval [0.41, 0.49]). 
The realized powers at the theoretical and empirical optimal values of $\tilde m$ are also quite similar (Figure~\ref{fig:point-mass-prior-optimal}, right).
\begin{figure}[h!]
\centering
\includegraphics[width = \textwidth]{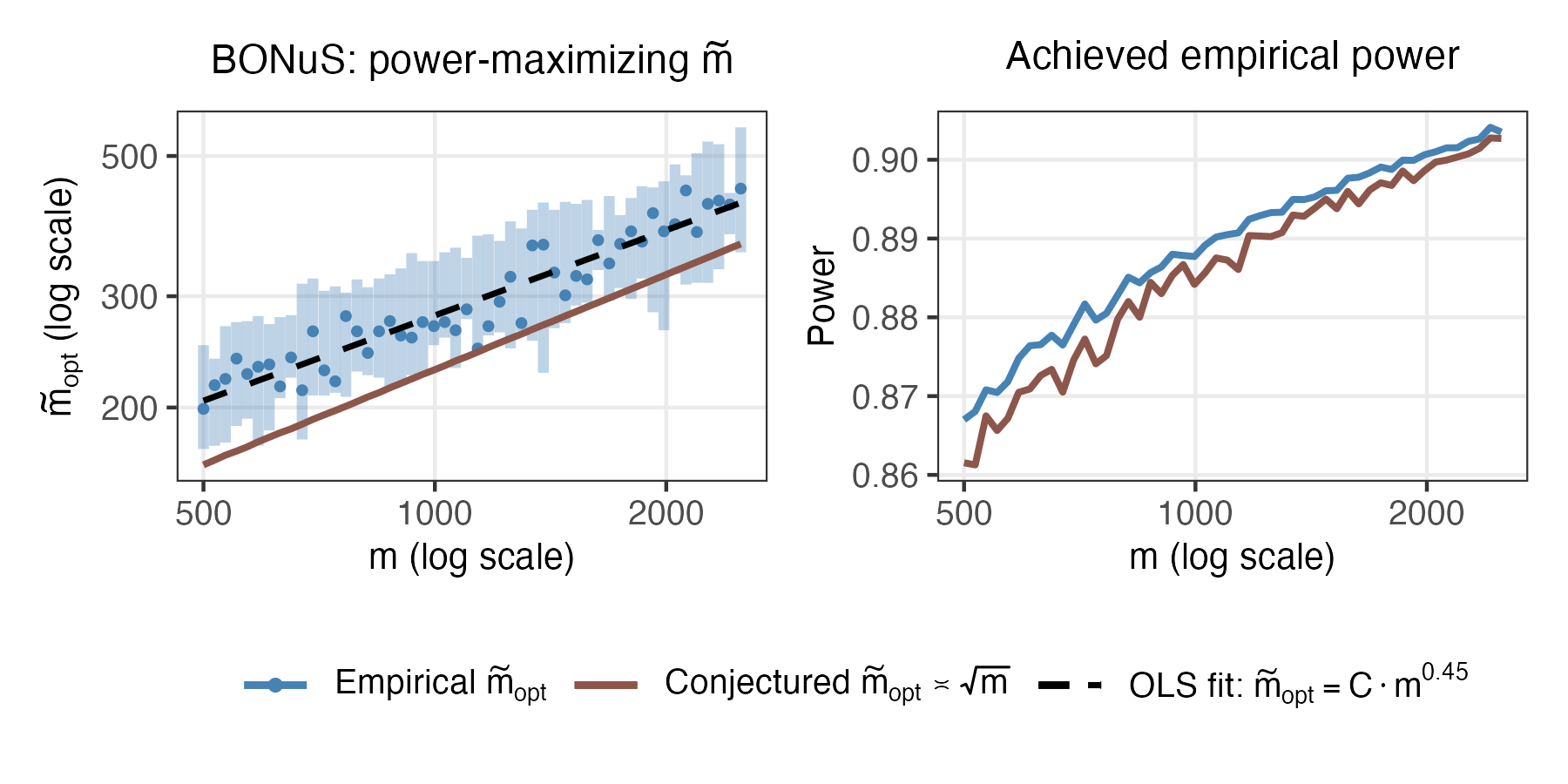}
\vspace{-0.4in}
\caption{Comparing the conjectured power-maximizing $\tilde{m}_{\text{opt}}$ with the empirical maximizer, and the realized powers, supporting the $\sqrt{m}$ scaling of $\tilde{m}_{\text{opt}}$.}
\label{fig:point-mass-prior-optimal}
\end{figure}

\vspace{-0.1in}
 
\section{Discussion} \label{sec:discussion}
 
We have identified masking as a recognizable strategy for avoiding double dipping that is shared across methods and problem settings. The main motivation of this manuscript was to leverage power analysis in a stylized setting to extract insights that may apply more broadly. We hypothesize that the following qualitative insights (one for each of the motivating questions in Section~\ref{sec:questions}) hold for masking methods similar to Split BH and BONuS in settings similar to ours: 1) one-to-many null augmentation methods are more powerful than sample splitting methods based on the same learning procedure, 2) the optimal number of null samples for one-to-many null augmentation methods satisfies $1 \ll \tilde m_{\text{opt}} \ll m$ (recall Remark~\ref{rem:choosing-m-tilde}), and 3) one-to-many null augmentation methods can be nearly as powerful as non-masking methods based on the same learning procedure, while sample splitting methods usually cannot. 

A clear direction for future work is to extend our analyses, conducted in a deliberately simple setting, to other statistical contexts, masking mechanisms, and/or learning procedures. This will help sharpen and verify the conjectures above. A natural starting point is the more general setup outlined in Appendix~\ref{sec:more-general-setup}, which applies to the same three methods analyzed here but accommodates more general data-generating models, learning procedures, and test statistics. Our master theorems (Appendix~\ref{sec:master-theorem-statements}) provide conditions on the induced score distributions under which our power formulas can be extended. Another direction for future work is to verify Conjectures~\ref{conj:optimal-tilde-m-point-mass} and~\ref{conj:optimal-tilde-m-subspace} regarding the optimal scaling of $\tilde m$ for BONuS and extending them to adjacent conformal applications. Furthermore, translating such theoretical insights into practical recommendations and methodology for choosing $\tilde m$ remains a separate open problem.

One of our main findings is that null augmentation methods like BONuS and knockoffs can be more powerful than sample splitting methods like Split BH. Aside from power, domain of applicability is an important consideration in method selection. Both null augmentation methods considered rely on a null exchangeability structure---global exchangeability for BONuS and local exchangeability for knockoffs---that may not apply to all problems. Sample splitting methods, on the other hand, do not require any exchangeability structure, and are more widely applicable. By the same token, non-masking methods may use the data somewhat more efficiently than masking methods, but usually require mathematical derivation \citep{LB16} or Monte Carlo computation \citep{CetL16} of null distributions, which is not always feasible. In sum, the power considerations explored in this work should be used in conjunction with other practical considerations to guide method selection.

\section*{Acknowledgement}
We thank Jiaoyang Huang for help with the proof of Lemma~\ref{lem:joint-scores-in-sample}. We thank Lucas Janson, Zhimei Ren, Jeffrey Zhang, Yaniv Romano, Anna Neufeld, Ronan Perry, Dan Kessler, and Snigdha Panigrahi for feedback on this work. J.L. and E.K. were partially supported by an NSF Graduate Research Fellowship and NSF DMS-2310654, respectively.

\clearpage

\appendix
\counterwithout{equation}{section}
\renewcommand{\theequation}{S\arabic{equation}}

\section{Multi-observation variant of data-generating model} \label{sec:multi-observation-model}

Here we consider an expanded version of the data-generating model~\eqref{eq:data-generating} that is amenable to sample splitting. Suppose that, independently across $j$, we observe $n$ samples for each $j = 1, \dots, m$:
\begin{equation}
\bm Z_{ij} \mid \bm \theta_j \iidsim N(\bm \theta_j/\sqrt{n}, \bm I_d), \quad i = 1, \ldots, n.
\label{eq:data-generating-expanded}
\end{equation}
Then, it is evident that, conditionally on $\bm \theta_j$, the variables
\begin{equation}
\bm X_j \equiv \frac{1}{\sqrt{n}} \sum_{i = 1}^n \bm Z_{ij}
\end{equation}
have the distribution~\eqref{eq:data-generating}. Consider splitting the $n$ samples into two groups $\mathcal I_\learn$ and $\mathcal I_\score$ of sizes $n_\learn \equiv \pi_\spl n$ and $n_\score \equiv (1 - \pi_\spl)n$, respectively (assuming $\pi_\spl n$ is an integer). Then, define
\begin{equation}
\bm X_j^\learn \equiv \frac{1}{\sqrt{n_\learn}} \sum_{i \in \mathcal I_\learn} \bm Z_{ij}, \quad \bm X_j^\score \equiv \frac{1}{\sqrt{n_\score}} \sum_{i \in \mathcal I_\score} \bm Z_{ij}.
\end{equation}
Then, conditionally on $\bm \theta_j$, note that $\{\bm X_j^\learn\}_{j \in [m]}$ and $\{\bm X_j^\score\}_{j \in [m]}$ are each independent across $j$ and independent of each other. Furthermore, we have
\begin{equation}
\begin{split}
\bm X_j^\learn \mid \bm \theta_j &\sim N(\sqrt{n_\learn/n}\cdot \bm \theta_j, \bm I_d)  = N(\sqrt{\pi_{\text{split}}} \cdot \bm \theta_j, \bm I_d); \\ 
\bm X_j^\score \mid \bm \theta_j &\sim N( \sqrt{n_\score/n} \cdot \bm \theta_j, \bm I_d) = N(\sqrt{1 - \pi_{\text{split}}} \cdot \bm \theta_j, \bm I_d),
\end{split}
\end{equation}
which are distributed the same way as $\bm X^\learn$ and $\bm X^\score$ defined in lines 2 and 3 of Algorithm~\ref{alg:split-bh}.

\section{Supporting material for Section~\ref{sec:asymptotic-power-comparison} on asymptotic power comparison}

\subsection{Derivation of the optimal split proportion $\pi_{\spl}^{\opt}$ for point prior} \label{sec:pi-opt}

Throughout, let $\pi \equiv \pi_{\spl}\in(0,1)$ denote the split proportion. Consider the objective
\begin{equation}\label{eq:obj}
f(\pi)
=
\frac{\sqrt{1-\pi}\, h^2\gamma}{\sqrt{h^2\gamma^2 + c/\pi}},
\qquad 0<\pi<1,
\end{equation}
where $h, \gamma, c > 0$. Since $f(\pi)>0$ on $(0,1)$, maximizing $f$ is equivalent to maximizing $f^2$. Dropping the positive constant $(h^2\gamma)^2$, we maximize
\begin{equation}\label{eq:Jpi}
J(\pi)
\;\equiv\;
\frac{1-\pi}{h^2\gamma^2 + c/\pi}
=
\frac{\pi(1-\pi)}{h^2\gamma^2\,\pi + c},
\qquad 0<\pi<1.
\end{equation}
Write $a \equiv h^2\gamma^2$. Up to the positive constant $(h^2\gamma)^2$, maximizing $f(\pi)$ is equivalent to maximizing
\[
J(\pi)=\frac{\pi(1-\pi)}{a\pi+c},\qquad 0<\pi<1.
\]
A direct calculation gives
\[
J'(\pi)
=\frac{(1-2\pi)(a\pi+c)-a(\pi-\pi^2)}{(a\pi+c)^2}
=\frac{c-a\pi^2-2c\pi}{(a\pi+c)^2}.
\]
Thus the stationary point satisfies
\[
a\pi^2+2c\pi-c=0,
\]
with the unique admissible solution
\[
\pi_{\spl}^{\opt}=\frac{-c+\sqrt{c^2+ac}}{a}
=\frac{\sqrt{c(c+h^2\gamma^2)}-c}{h^2\gamma^2}.
\]
Recalling that
\begin{equation}\label{eq:tau-mean}
\tau_{\mean}
\;\equiv\;
\frac{h\gamma}{\sqrt{h^2\gamma^2+c}}
\in(0,1)
\end{equation}
and defining the ratio $r\equiv c/a$, we find
\[
\tau_{\mean}^2=\frac{a}{a+c}=\frac{1}{1+r}
\qquad\Longrightarrow\qquad
r=\frac{1-\tau_{\mean}^2}{\tau_{\mean}^2}.
\]
Therefore, 
\[
\pi_{\spl}^{\opt}
=
\frac{\sqrt{c(c+a)}-c}{a}
=
\sqrt{r(r+1)}-r.
\]
Substituting $r=(1-\tau_{\mean}^2)/\tau_{\mean}^2$ and simplifying yields the compact form
\begin{equation}\label{eq:pi-opt-tau}
\pi_{\spl}^{\opt}
=
\frac{\sqrt{1-\tau_{\mean}^2}}{1+\sqrt{1-\tau_{\mean}^2}}
\end{equation}
stated in equation~\eqref{eq:optimal-pi-split}.

\subsection{Figures supporting Section~\ref{sec:subspace-prior}: power analysis under subspace prior}

Below are Figures~\ref{fig:subspace-prior-alignment} and~\ref{fig:subspace-mass-prior-theorem}, the analogs of Figures~\ref{fig:point-mass-prior-alignment} and~\ref{fig:point-mass-prior-theorem} for the subspace prior. They are qualitatively similar to their counterparts in the main text.
\begin{figure}[h!]
\centering
\includegraphics[width = \textwidth]{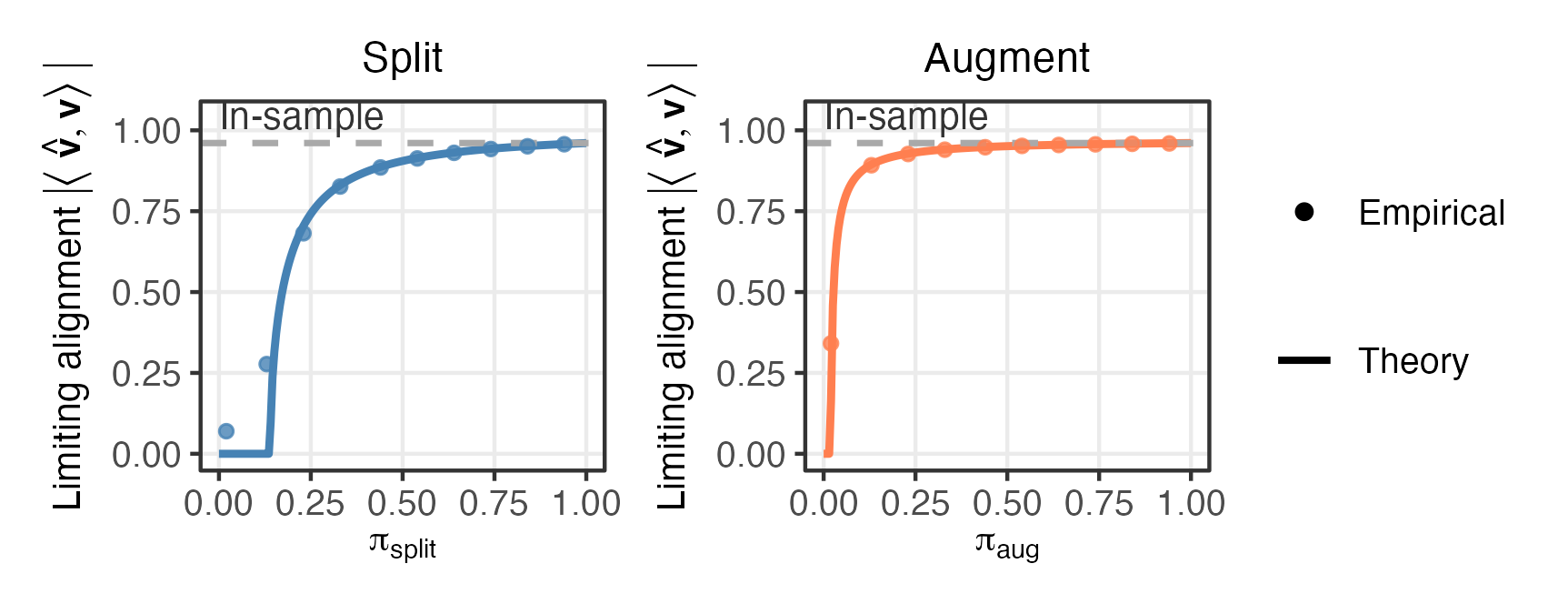}
\caption{Asymptotic alignment of learned direction $\hat{\bm v}$ with the true direction for $(\gamma, c, h) = (0.2, 0.2, 4)$. We overlay points representing the average alignment over 1000 replications of finite-sample ($m = 1000$) simulations for multiple values of $\pi_\aug$ or $\pi_\spl$.}
\label{fig:subspace-prior-alignment}
\end{figure}
\begin{figure}[h!]
\centering
\includegraphics{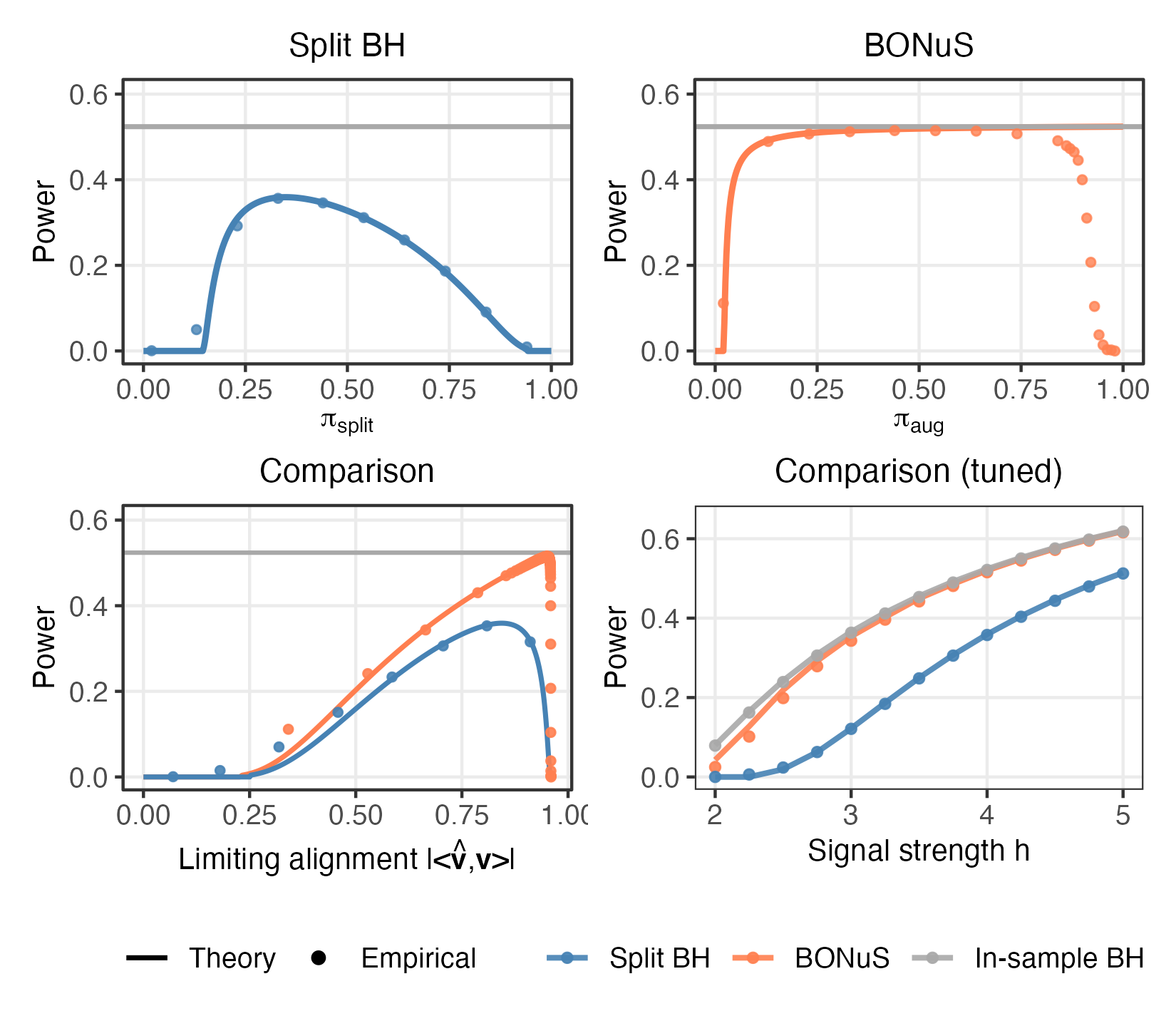}
\caption{Comparing the powers of Split BH, BONuS, and In-sample BH under the subspace prior. Other details are as in the caption of Figure~\ref{fig:point-mass-prior-theorem}.} 
\label{fig:subspace-mass-prior-theorem}
\end{figure}

\clearpage

\subsection{Additional experiments showcasing Theorems \ref{thm:power-point-mass} and \ref{thm:power-one-dimensional} in Section~\ref{sec:asymptotic-power-comparison}} \label{sec:additional-experiments}

\paragraph{Higher-dimensional setting}

We complement the experiments in Section \ref{sec:asymptotic-power-comparison} by considering a higher-dimensional setting with $(\gamma, c, h) = (0.2, 1, 5)$ and $m = 1000$ (so we increased the aspect ratio $c$ by a factor of five, from 0.2 to 1). Figures \ref{fig:point-mass-prior-theorem-high-dim} and \ref{fig:subspace-mass-prior-theorem-high-dim} illustrate the asymptotic power comparison between Split BH and BONuS under the point mass prior and one-dimensional subspace prior, respectively. We again overlay points representing the average power over 1000 replications of finite-sample ($m = 1000$) simulations for multiple values of $\pi_\aug$ or $\pi_\spl$. The trends are similar to those observed in the main text for $c = 0.2$: the power of BONuS always exceeds that of Split BH, and the agreement between the simulated power and asymptotic power is still quite good everywhere but the ``finite-sample regime'' of $\pi_\aug \approx 1$. 

\begin{figure}[h]
\centering
\includegraphics[width = \textwidth]{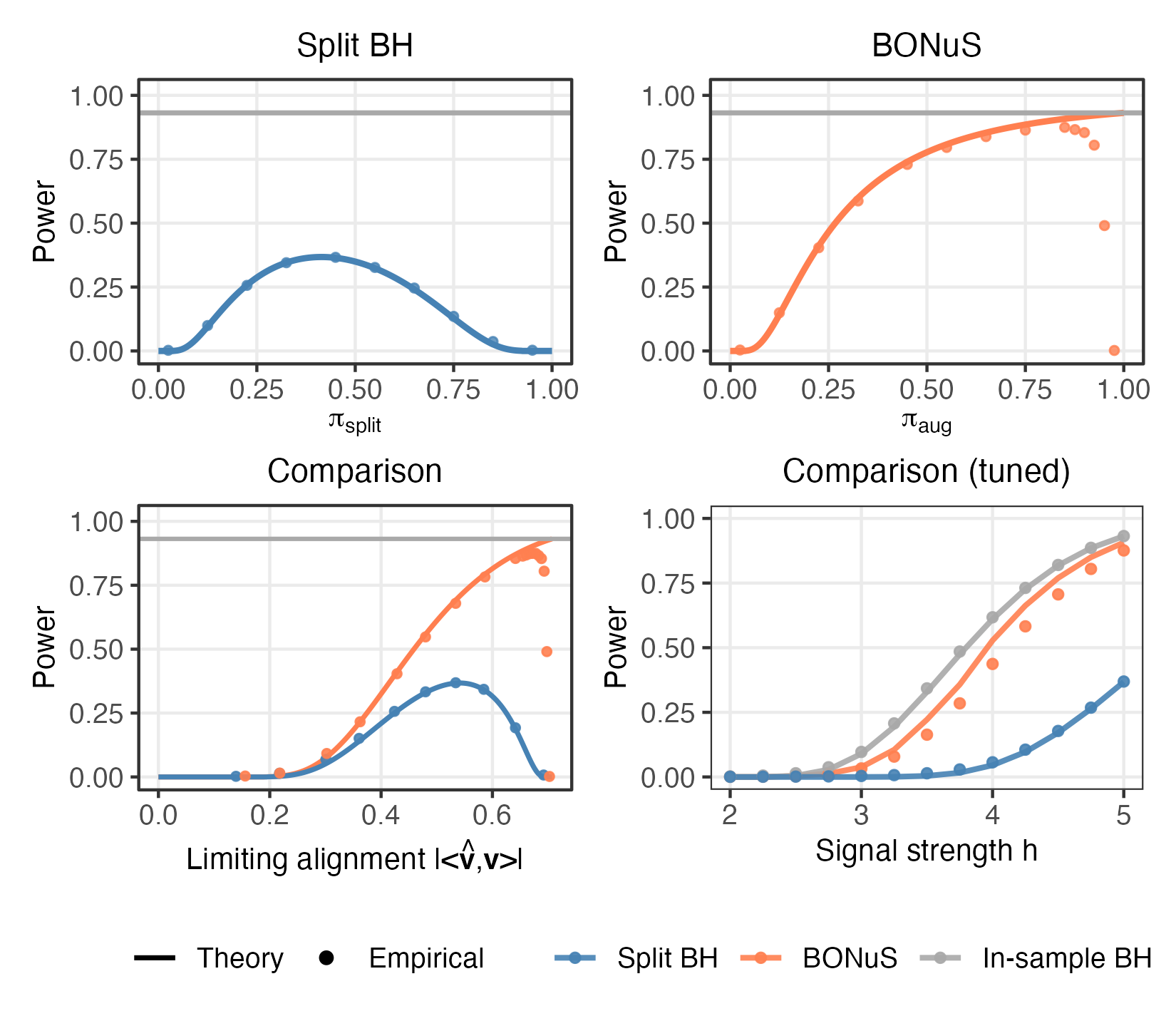}
\caption{Comparing the powers of Split BH, BONuS, and In-sample BH under the point mass prior for $(\gamma, c, h) = (0.2, 1, 5)$. Other details are as in the caption of Figure~\ref{fig:point-mass-prior-theorem}.}
\label{fig:point-mass-prior-theorem-high-dim}
\end{figure}

\clearpage

\begin{figure}[h]
\centering
\includegraphics[width = \textwidth]{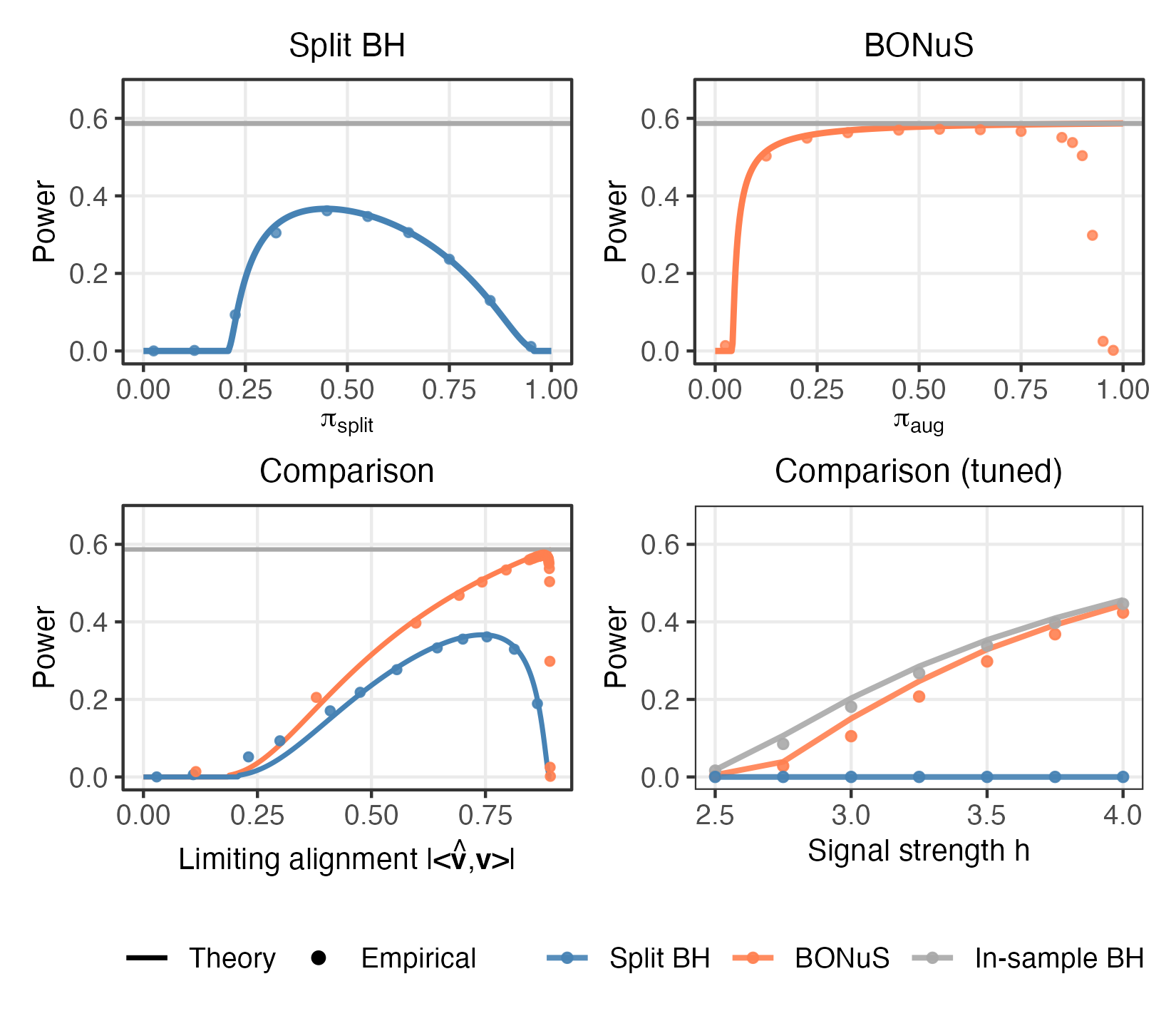}
\caption{Comparing the powers of Split BH, BONuS, and In-sample BH under the subspace prior for $(\gamma, c, h) = (0.2, 1, 5)$. Other details are as in the caption of Figure~\ref{fig:point-mass-prior-theorem}.}
\label{fig:subspace-mass-prior-theorem-high-dim}
\end{figure}

\clearpage

\paragraph{Sparse setting}

We also compare the power curves, both empirical and asymptotic, in a sparse setting where only a small fraction of features are non-null. We set $(\gamma, c, h) = (0.05, 0.2, 8)$ and $m = 1000$ (so we decreased the non-null proportion $\gamma$ by a factor of four, from 0.2 to 0.05). Figures \ref{fig:point-mass-prior-theorem-sparse} and \ref{fig:subspace-mass-prior-theorem-sparse} illustrate the asymptotic and finite-sample ($m=1000$, 1000 replications) power comparison between Split BH and BONuS under the point mass prior and one-dimensional subspace prior, respectively. We again see good agreement between the theory we develop and 
the simulated power, but with a higher penalty associated with the ``finite-sample regime'' of $\pi_\aug \approx 1$.  

\begin{figure}[h]
\centering
\includegraphics[width = \textwidth]{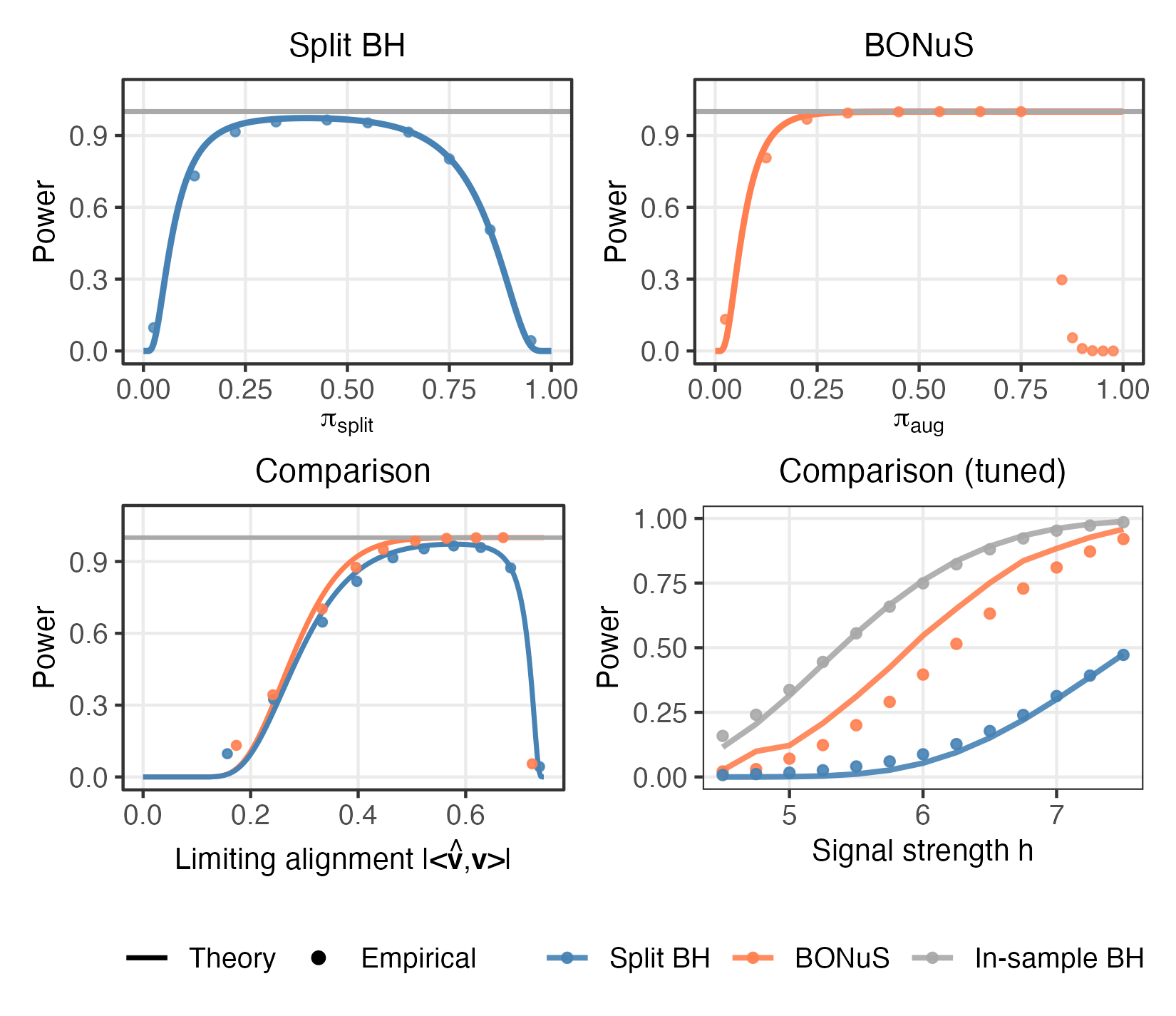}
\caption{Comparing the powers of Split BH, BONuS, and In-sample BH under the point mass prior for $(\gamma, c, h) = (0.05, 0.2, 8)$. Other details are as in the caption of Figure~\ref{fig:point-mass-prior-theorem}.}
\label{fig:point-mass-prior-theorem-sparse}
\end{figure}

\clearpage

\begin{figure}[h]
\centering
\includegraphics[width = \textwidth]{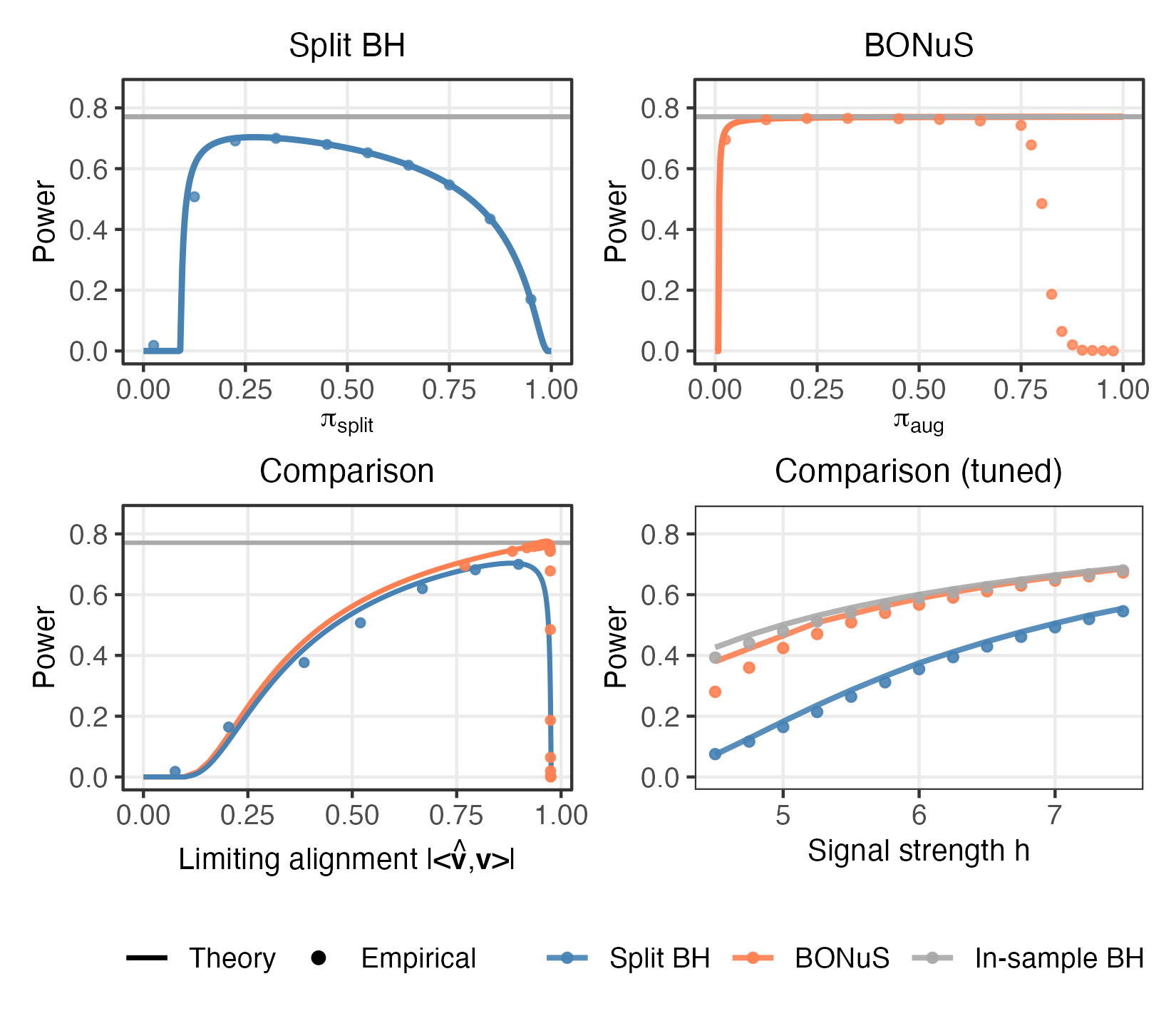}
\caption{Comparing the powers of Split BH, BONuS, and In-sample BH under the subspace prior for $(\gamma, c, h) = (0.05, 0.2, 8)$. Other details are as in the caption of Figure~\ref{fig:point-mass-prior-theorem}.}
\label{fig:subspace-mass-prior-theorem-sparse}
\end{figure}

\clearpage

\section{A tractable approximation to the power of BONuS under point mass prior} \label{sec:tractable-approximation}

Let $S_j = T_{\hat \Lambda}(\bm X_j)$ and $\tilde S_k = T_{\hat \Lambda}(\tilde{\bm X_k})$ be the BONuS scores for hypothesis $j$ and null sample $k$, respectively. Throughout, we drop the ``$\bonus$'' superscript on these scores and other quantities to ease notation. An equivalent expression for the BONuS rejection set is given in Appendix~\ref{sec:algorithmic-variants}:
\begin{equation}
\begin{split}
  &\widehat{\mathcal R} = \{j \in [m]: S_j\geq t_*\}, \quad \text{where} \\
&t_* = \inf\{t \in \R: \widehat{\text{FDP}}(t) \leq q\}; \quad \widehat{\text{FDP}}(t) = \frac{\frac{1}{1+\tilde m}\left(1 + \sum_{k = 1}^{\tilde m}\indicator(\tilde S_k \geq t) \right)}{\frac1m \sum_{j = 1}^m \indicator(S_j \geq t)}.
\end{split}
\end{equation} 
It is challenging to analyze the impact of $\tilde m$ on this procedure within the regime $\tilde m \ll m$. We instead analyze an approximation to the power of BONuS, which we obtain as follows.


First, we approximate the distributions of the scores $S_j$ and $\tilde S_k$. Recall from Proposition~\ref{prop:point-mass-prior-score-distributions} that the limiting mean shift of the BONuS scores $S_j$ for non-null hypotheses is
\begin{equation}
\mu(\gamma, c, h; \pi_\aug) = \frac{h^2 \gamma}{\sqrt{h^2 \gamma^2 + c/\pi_\aug}} = \lim_{\tilde m, m \rightarrow \infty} \frac{h^2 \gamma}{\sqrt{h^2 \gamma^2 + c(1 + \tilde m / m)}} \equiv \lim_{\tilde m, m \rightarrow \infty} \mu'(\tilde m).
\label{eq:approximate-mean-shift}
\end{equation}
The quantity $\mu'(\tilde m)$ captures the mean shift as a function of $\tilde m$, leaving the dependence on $h, \gamma, c, m$ implicit. We can then use Proposition~\ref{prop:point-mass-prior-score-distributions} and the relation~\eqref{eq:approximate-mean-shift} to approximate the distributions of the scores:
\begin{equation}
\begin{split}
S_j, \tilde S_k \overset \cdot \sim N(K, 1) \ \text{for } j \in \mathcal H_0, k \in [\tilde m], \quad 
 S_j \overset \cdot \sim N(K + \mu'(\tilde m), 1) \ \text{for } j \in \mathcal H_1.
\end{split}
\end{equation}

Next, we use these approximate score distributions to obtain an approximation for the power of BONuS. To this end, we approximate 
\begin{equation}
\begin{split}
\widehat{\text{FDP}}(t + K) &= \frac{\frac{1}{1+\tilde m}\left(1 + \sum_{k = 1}^{\tilde m}\indicator(\tilde S_k - K \geq t) \right)}{\frac1m \sum_{j = 1}^m \indicator(S_j - K \geq t)} \\
&\approx \frac{\frac{1}{1 + \tilde m} + \frac{\tilde m}{1 + \tilde m}\bar \Phi(t)}{(1-\gamma)\bar \Phi(t) + \gamma \bar \Phi(t - \mu'(\tilde m))} \equiv \widehat{\text{FDR}}'(t; \tilde m).
\end{split}
\label{eq:approximate-fdr}
\end{equation}
The quantity $\widehat{\text{FDR}}'(t; \tilde m)$ is a deterministic approximation to $\widehat{\text{FDP}}(t + K)$, obtained by taking expectations of the numerator and denominator under the approximate score distributions. Next, we approximate
\begin{equation}
\begin{split}
t_* - K = \inf\{t \in \R: \widehat{\text{FDP}}(t + K) \leq q\} &\approx \inf\{t \in \R: \widehat{\text{FDR}}'(t; \tilde m) \leq q\} \\
&\approx \inf\{t \leq C: \widehat{\text{FDR}}'(t; \tilde m) \leq q\} \equiv t'_*(\tilde m),
\end{split}
\label{eq:approximate-threshold}
\end{equation}
where $C > 0$ is an arbitrary large constant required for technical reasons. Finally,
\begin{equation}
\begin{split}
\TPP(\tilde m) = \frac{1}{|\mathcal H_1|}\sum_{j \in \mathcal H_1}\indicator(S_j \geq t_*) &\approx \frac{1}{|\mathcal H_1|}\sum_{j \in \mathcal H_1}\indicator(S_j - K \geq t'_*(\tilde m)) \\
&\approx \bar \Phi(t'_*(\tilde m) - \mu'(\tilde m)) \equiv \TPR'_\bonus(\tilde m).
\end{split}
\label{eq:approximate-power}
\end{equation}
The function $\TPR'_\bonus(\tilde m)$ defined through equations~\eqref{eq:approximate-mean-shift}, \eqref{eq:approximate-fdr}, \eqref{eq:approximate-threshold}, and \eqref{eq:approximate-power} is a tractable approximation to the power of BONuS for finite $\tilde m$. 

\section{One-to-one null augmentation} \label{sec:knockoffs}

\subsection{Knockoffs as one-to-one null augmentation} 

BONuS is an effective augmentation-based method for scenarios (like ours) where all null data points are exchangeable, so a common pool of additional nulls can be used to test each hypothesis (\textit{one-to-many} null augmentation). In other scenarios, such as variable selection, there is usually no such exchangeable structure to leverage. However, it may still be possible to generate one synthetic null per hypothesis, so that exchangeability holds per hypothesis rather than globally (\textit{one-to-one} null augmentation). This idea underlies \textit{knockoffs} \citep{BC15,CetL16}, a leading approach to variable selection. In this section, we empirically compare a variant of knockoffs adapted to our problem setting to the three methods studied previously.

\begin{wrapfigure}[11]{r}{0.4\textwidth}
\vspace{-0.225in}                    
\begin{center}
\begin{minipage}[t]{0.4\textwidth}
\begin{algorithm}[H]
\LinesNumbered
\caption{Knockoffs}
\label{alg:knockoffs}
$\tilde{\bm X}_{j} \sim N(\bm 0, \bm I_d)$, $j = 1, \ldots, m$\;
$\bm X^\learn \gets [\bm X; \tilde{\bm X}]$\;
\(\hat \Lambda \gets L(\bm X^\learn)\)\;
$S_j \gets W_{\hat \Lambda}(\bm X_j, \tilde{\bm X}_j)$\;
\(\mathcal R(t)\gets\{j:\,S_j\ge t\}\)\;
$\widehat V(t) \gets 1 + \sum_{j = 1}^{m}\indicator(S_j \leq -t)$\;
\(\hat t \gets \inf\{t > 0:\,\widehat V(t)/|\mathcal R(t)|\le q\}\)\;
\KwRet{\(\widehat{\mathcal R} = \mathcal R(\hat t)\)}
\end{algorithm}
\end{minipage}
\end{center}
\end{wrapfigure}

Knockoffs (Algorithm~\ref{alg:knockoffs}) starts similarly to BONuS (Algorithm~\ref{alg:bonus}), by generating a set of synthetic nulls $\tilde{\bm X}$ and then learning $\hat \Lambda$ based on the augmented data $[\bm X; \tilde{\bm X}]$. Unlike BONuS, exactly $m$ synthetic null samples are generated (one per original sample). Furthermore, knockoffs uses a scoring function $W_{\hat \Lambda}(\bm X_j, \tilde{\bm X}_j)$ that contrasts the signal present in $\bm X_j$ and $\tilde{\bm X}_j$. This scoring function must satisfy the anti-symmetry property that $W_{\hat \Lambda}(\tilde{\bm x}_j, \bm x_j) = -W_{\hat \Lambda}(\bm x_j, \tilde{\bm x}_j)$ for any $\bm x_j, \tilde{\bm x}_j \in \R^d$. The estimation of the number of false discoveries at a threshold $t > 0$ follows a different formula, based on the property that every null score has a symmetric distribution about the origin due to anti-symmetry. 

\subsection{The choice of knockoff statistic} 

Whereas for the three procedures studied previously, the clear choice of test statistic is $T_{\hat \Lambda}(\bm X^\score_j)$~\eqref{eq:likelihood-ratio-statistic}, the contrast form of the knockoffs test statistic makes it nontrivial to choose. A natural choice is to directly contrast the values of the log likelihood ratio statistics for the original and knockoff samples:
\begin{equation}
W_{\hat \Lambda}^{\text{vanilla}}(\bm X_j, \tilde{\bm X}_j) \equiv T_{\hat \Lambda}(\bm X_j) - T_{\hat \Lambda}(\tilde{\bm X}_j).
\label{eq:vanilla-knockoff-stat}
\end{equation}
This construction echoes common knockoff statistic choices in the literature, like the lasso coefficient difference \citep{CetL16}. On the other hand, \citet{Spector2022e} investigated the optimality of knockoff statistics in a Bayesian context that includes the present setup (Section~\ref{sec:problem-setup}). They found that the optimal statistic is the \textit{masked likelihood ratio} (MLR): 
\begin{equation}
\begin{split}
W_j^*(\bm x, \tilde{\bm x}) &\equiv \log \left(\frac{\E_{\bm \theta}[L_{\bm \theta}(\bm X_{j} = \bm x_{j} \mid \{\bm X_{j'}, \tilde{\bm X}_{j'}\}_{j' \in [m]} = \{\bm x_{j'}, \tilde{\bm x}_{j'}\}_{j' \in [m]})]}{\E_{\bm \theta}[L_{\bm \theta}(\bm X_{j} = \tilde{\bm x}_{j} \mid \{\bm X_{j'}, \tilde{\bm X}_{j'}\}_{j' \in [m]} = \{\bm x_{j'}, \tilde{\bm x}_{j'}\}_{j' \in [m]})]}\right), 
\end{split}
\end{equation}
where $ \{\bm X_{j'}, \tilde{\bm X}_{j'}\}_{j' \in [m]}$ is the set of unordered pairs $\{\bm X_{j'}, \tilde{\bm X}_{j'}\}$ for each $j' \in [m]$ and $L_{\bm \theta}$ is the likelihood given $\bm \theta = (\bm \theta_1, \dots, \bm \theta_m)$. In general variable selection problems, this expression requires Monte Carlo to compute but in our case can be computed explicitly. Letting $p$ denote the marginal density of each $\bm X_j$ under our data-generating model, we can rewrite the optimal statistic $W_j^*$ via
\begin{equation}
\begin{split}
W_j^*(\bm x, \tilde{\bm x}) &= \log \left(\frac{\E_{\bm \theta}[L_{\bm \theta}(\bm X_{j} = \bm x_{j} \mid \{\bm X_{j'}, \tilde{\bm X}_{j'}\}_{j' \in [m]} = \{\bm x_{j'}, \tilde{\bm x}_{j'}\}_{j' \in [m]})]}{\E_{\bm \theta}[L_{\bm \theta}(\bm X_{j} = \tilde{\bm x}_{j} \mid \{\bm X_{j'}, \tilde{\bm X}_{j'}\}_{j' \in [m]} = \{\bm x_{j'}, \tilde{\bm x}_{j'}\}_{j' \in [m]})]}\right) \\
&= \log \left(\frac{\E_{\bm \theta}[L_{\bm \theta}(\bm X_j = \bm x_j \mid \{\bm X_j, \tilde{\bm X}_j\} = \{\bm x_j, \tilde{\bm x}_j\})]}{\E_{\bm \theta}[L_{\bm \theta}(\bm X_j = \tilde{\bm x}_j \mid \{\bm X_j, \tilde{\bm X}_j\} = \{\bm x_j, \tilde{\bm x}_j\})]}\right) \\
&= \log \left(\frac{p(\bm X_j = \bm x_j \mid \{\bm X_j, \tilde{\bm X}_j\} = \{\bm x_j, \tilde{\bm x}_j\})}{p(\bm X_j = \tilde{\bm x}_j \mid \{\bm X_j, \tilde{\bm X}_j\} = \{\bm x_j, \tilde{\bm x}_j\})}\right) \\
&= \log \left(\frac{p(\bm X_j = \bm x_j, \tilde{\bm X}_j = \tilde{\bm x}_j)}{p(\bm X_j = \tilde{\bm x}_j, \tilde{\bm X}_j = \bm x_j)}\right) \\
&= \log \left(\frac{p(\bm X_j = \bm x_j)}{p(\tilde{\bm X}_j = \bm x_j)}\right) - \log \left(\frac{p(\bm X_j = \tilde{\bm x}_j)}{p(\tilde{\bm X}_j = \tilde{\bm x}_j)}\right),
\end{split}
\end{equation}
where we have used the mutual independence of $\bm X_j$ and $\tilde{\bm X}_j$ across $j$ in the first and last steps. Given our data-generating model~\eqref{eq:data-generating} and~\eqref{eq:two-groups}, we can derive the log-likelihood ratio for each $j$ as
\begin{equation}
\begin{split}
\log \left(\frac{p(\bm X_j = \bm x_j)}{p(\tilde{\bm X}_j = \bm x_j)}\right) &= \log \left(\frac{(1-\gamma) \phi_d(\bm x_j) + \gamma \int_{\R^d}\phi_d(\bm x_j - \bm \theta)d\Lambda(\bm \theta)}{\phi_d(\bm x_j)}\right) \\
&= \log\left(1-\gamma + \gamma \frac{\int_{\R^d}\phi_d(\bm x_j - \bm \theta)d\Lambda(\bm \theta)}{\phi_d(\bm x_j)}\right) \\
&= \log\left(1-\gamma + \gamma \exp(T_{\Lambda}(\bm x_j))\right).
\end{split}
\end{equation}
Therefore, MLR knockoff statistics reduce to
\begin{equation}
\begin{split}
W_{\hat \Lambda}^{\text{MLR}}(\bm X_j, \tilde{\bm X}_j) &= \log\left(1 - \gamma + \gamma \exp(T_{\hat \Lambda}(\bm X_j))\right) - \log\left(1 - \gamma + \gamma \exp(T_{\hat \Lambda}(\tilde{\bm X}_j))\right) \\
&\equiv f(T_{\hat \Lambda}(\bm X_j)) - f(T_{\hat \Lambda}(\tilde{\bm X}_j)),
\end{split}
\label{eq:mlr-knockoff-stat}
\end{equation}
where $f(t) \equiv \log(1-\gamma + \gamma \exp(t))$. 

\subsection{An empirical comparison of knockoffs to Split BH, BONuS, and In-sample BH}

We empirically compare knockoffs to the three methods studied previously. We include both statistics $W_{\hat \Lambda}^{\text{vanilla}}$ and $W_{\hat \Lambda}^{\text{MLR}}$ to represent options that are less powerful but easy to compute and more powerful but hard to compute (in more general settings), respectively. We implement $W_{\hat \Lambda}^{\text{MLR}}$ with oracle knowledge of $\gamma$ and $h$ for simplicity. Since neither $W_{\hat \Lambda}^{\text{vanilla}}$ nor $W_{\hat \Lambda}^{\text{MLR}}$ is necessarily invariant to monotonic transformations of $T_{\hat \Lambda}$ (unlike the other three methods considered), we keep $T_{\hat \Lambda}$ as the exact log-likelihood ratios defined in equations~\eqref{eq:likelihood-ratio-statistic-point-mass} and \eqref{eq:likelihood-ratio-statistic-subspace}. In order to put all methods compared on equal footing, we ensure that the quality of the learned direction is the same across methods. Since $\tilde m = m$ for knockoffs, the augmentation proportion is fixed at $\pi_\aug = 1/2$. We therefore choose $\pi_\spl$ and $\pi_\aug$ for Split BH and BONuS, respectively, to match the resulting quality of the learned direction $\hat{\bm v}$ to that of knockoffs. We assess the power of all five methods under both choices of prior~\eqref{eq:priors}.

Figure~\ref{fig:theorem-over_h} shows the results of our simulation. The ordering of the powers of Split BH, BONuS, and In-sample BH remains as in Section~\ref{sec:asymptotic-power-comparison}. Compared to these methods, we find that the power of MLR knockoffs essentially matches that of BONuS for both cases of prior. By contrast, vanilla knockoffs has lower power than MLR knockoffs for the point mass prior (essentially matching the power of Split BH), while matching the power of both MLR knockoffs and BONuS for the subspace prior.

\begin{figure}[h!]
\centering
\includegraphics{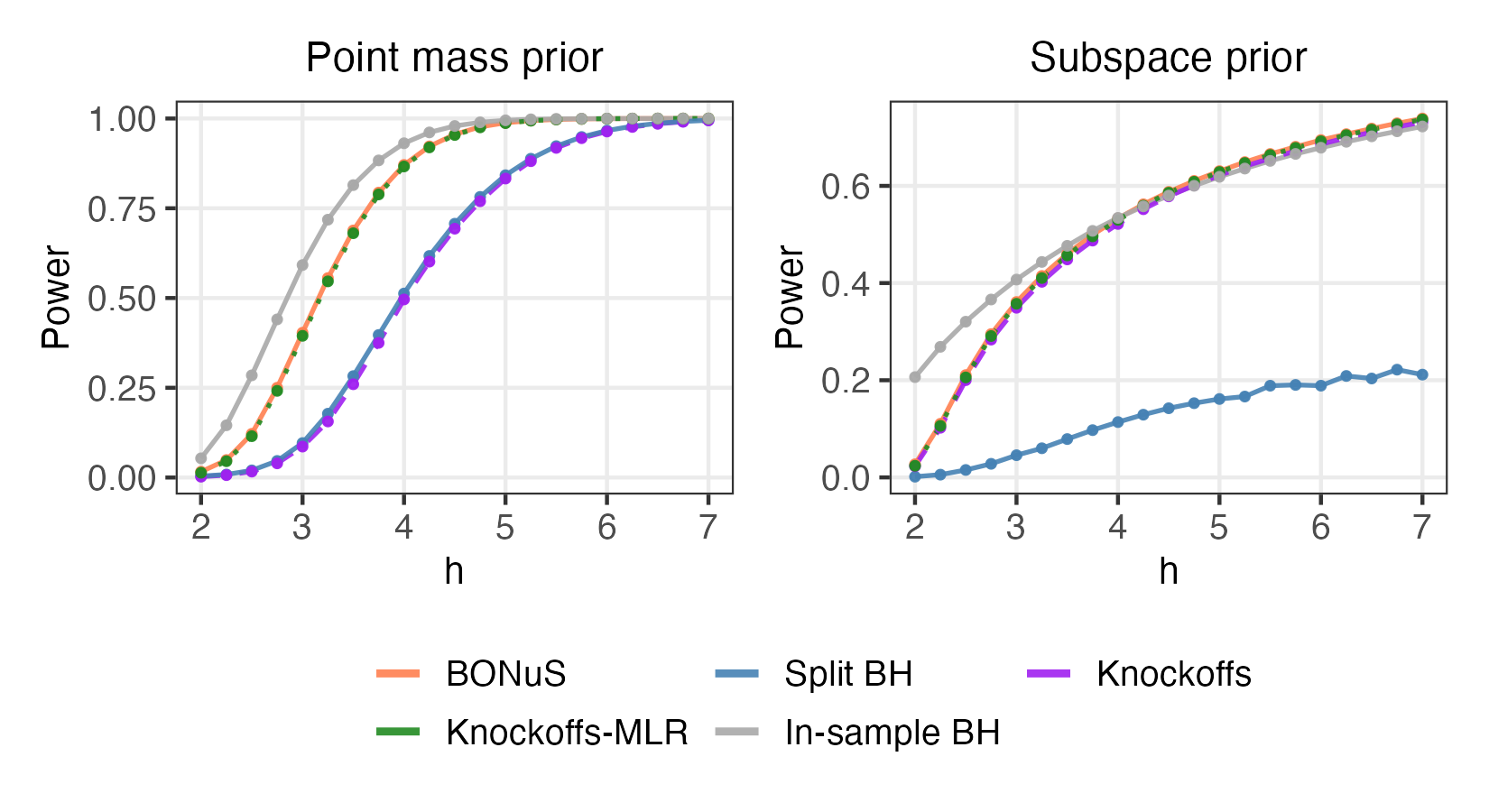}
\caption{Empirical power of Split BH, BONuS, In-sample BH, and two knockoffs variants. To fix learner quality across methods, we set $\tilde m = m = 1000$ for the augmentation methods (i.e., $\pi_\aug = 0.5$) and tune $\pi_\spl$ for Split BH accordingly using Propositions \ref{prop:mean-learner-masked} and \ref{prop:pca-learner-masked}.}
\label{fig:theorem-over_h}
\end{figure}

To better understand these results, we make several observations. We start with the point prior, where 
\begin{equation}
W_{\hat \Lambda}^{\text{vanilla}}(\bm X_j, \tilde{\bm X}_j) = (h\hat{\bm v}^\top \bm X_j - \tfrac12 h^2) - (h\hat{\bm v}^\top \tilde{\bm X}_j - \tfrac12 h^2) = h\hat{\bm v}^\top (\bm X_j - \tilde{\bm X}_j).
\end{equation}
The contrast between $\bm X_j$ and $\tilde{\bm X}_j$ effectively doubles the variance of the test statistic, decreasing the power of knockoffs relative to BONuS, whose score is based solely on $\hat{\bm v}^\top \bm X_j$. In fact, the term $\bm X_j - \tilde{\bm X}_j$ is a constant multiple of $\bm X^\score_j$ in line 3 of Split BH (Algorithm~\ref{alg:split-bh}). This suggests why the power of vanilla knockoffs matches that of Split BH for the point prior. This detrimental impact of the knockoff contrast has been observed previously \citep{Wang2020b,Weinstein2023}. However, knockoffs does not always suffer from this issue, as we see in moving from the vanilla knockoff statistic to the MLR knockoff statistic
\begin{equation}
W_{\hat \Lambda}^{\text{MLR}}(\bm X_j, \tilde{\bm X}_j) = f(h\hat{\bm v}^\top \bm X_j - \tfrac12 h^2) - f(h\hat{\bm v}^\top \tilde{\bm X}_j - \tfrac12 h^2).
\end{equation}
This operation is visualized in the top row of Figure~\ref{fig:mlr-transformation-examples}, which displays the inputs to the scores for the alternative hypotheses. The top-left panel shows $T_{\hat \Lambda}(\bm X_j)$ (red) and $T_{\hat \Lambda}(\tilde{\bm X}_j)$ (blue). The blue histogram is subtracted from the red histogram in the construction of the knockoff score, leading to the variance-doubling effect compared to BONuS, which uses just the red histogram. The MLR transformation (top middle) has the effect of compressing negative inputs to roughly zero, while keeping positive inputs roughly unchanged. The result is that the transformation collapses the blue histogram while keeping the red histogram nearly intact. For this reason, the MLR knockoff statistics are not very different from the BONuS statistics (the red histogram), leading to the two methods having similar power. By contrast, for the subspace prior, the knockoff log-likelihood ratios $T_{\hat \Lambda}(\tilde{\bm X}_j)$ are much smaller than the original log-likelihood ratios $T_{\hat \Lambda}(\bm X_j)$ (bottom left of Figure~\ref{fig:mlr-transformation-examples}). Therefore, even the vanilla knockoff statistics $W_{\hat \Lambda}^{\text{vanilla}}$ are close to the BONuS statistics, leading to similar power between the two methods. The MLR transformation (bottom middle) has little effect in this case, so MLR knockoffs and vanilla knockoffs also have similar power. Other instances where the augmentation of knockoffs does not have a detrimental effect, even in the absence of MLR statistics, have been observed \citep{Weinstein2017,Wang2020b,Weinstein2023, Ke2021a}.

\begin{figure}[h!]
\centering
\includegraphics[width = \textwidth]{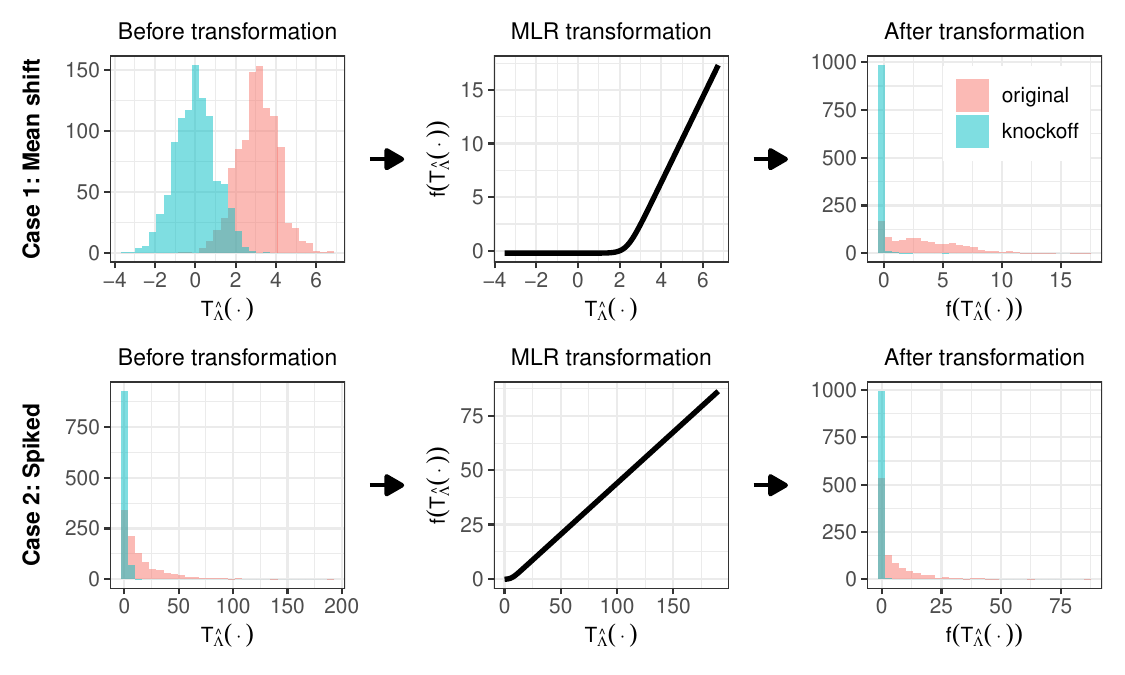}
\caption{The impact of the MLR transformation $f(t) \equiv \log(1-\gamma + \gamma \exp(t))$ on the distributions of original and knockoff scores for alternative hypotheses.}
\label{fig:mlr-transformation-examples}
\end{figure}

In conclusion, knockoffs can be less powerful than BONuS because the contrast form of the knockoff statistic effectively adds noise to the BONuS statistic. However, the MLR transformation can mitigate this issue by collapsing the noise term, leading to knockoff statistics that are close to the BONuS statistics and thus similar power. While MLR knockoffs statistics are computationally challenging in general settings, perhaps this insight can inspire improved knockoff statistics that mimic the MLR effect while being easier to compute.

\section{Small-$\tilde m$ behavior of BONuS for subspace prior} \label{sec:small-tilde-m-subspace-prior}

We follow a similar strategy to that of Section~\ref{sec:small-tilde-m-point-mass-prior}. We require a \emph{detectable signal condition}
\begin{equation}\label{eq:strong-signal-subspace}
	\gamma^2 h^4 > c,
\end{equation}
which ensures that the alternative direction learned on the entire data via the PCA learner has nontrivial alignment with the true direction (recall Lemma~\ref{lem:pca-learner-untransformed}). As in Section~\ref{sec:small-tilde-m-point-mass-prior}, we define a finite-sample variant of the BONuS signal strength~\eqref{eq:subspace-prior-signal-strengths}
\begin{equation}\label{eq:sigma2-finite-tilde-m}
	\mu'(\tilde m)
	\equiv 
	h\cdot
	\sqrt{\frac{\Bigl(1 - \frac{c (1+\tilde m/m)}{\gamma^2 h^4}\Bigr)_+}{1 + \frac{c}{\gamma h^2}}}
\end{equation}
and the deterministic approximation
\begin{equation}
\widehat{\textnormal{FDR}}'(t; \tilde m) \equiv \frac{\frac{1}{1+\tilde m}
		+
		\frac{\tilde m}{1+\tilde m}\,\bar F_0(t)}
	{(1-\gamma)\,\bar F_0(t)
		+
		\gamma\,\bar F_0\!\Big(\frac{t}{1 + (\mu'(\tilde m))^2}\Big)},
\label{eq:approximate-fdr-subspace}
\end{equation}
where $F_0$ denotes the CDF of $\chi^2_1$. Define
\[
t'_*(\tilde m):=\inf\{t\le C:\widehat{\textnormal{FDR}}'(t;\tilde m)\le q\},\qquad
\TPR'_\bonus(\tilde m):=\bar F_0\!\left(\frac{t'_*(\tilde m)}{1+(\mu'(\tilde m))^2}\right).
\]
We then expand \(\TPR'_\bonus(\tilde m)\) around the power of In-sample BH~\eqref{eq:subspace-prior-asymptotic-power}:
\begin{equation}
\TPR_{\insamp} \equiv \TPR_{\textnormal{BH}}(\chi^2_1, (1 + \mu^2_{\insamp}(\gamma, c, h))\chi^2_1, \gamma),
\end{equation}
giving the following result.
\begin{theorem}[Small–$\tilde m$ Expansion under Subspace Prior]
	\label{thm:subspace-small-m}
	Assume the detectable signal condition~\eqref{eq:strong-signal-subspace}. Then, as $\tilde m \to \infty$ with $\tilde m/m \to 0$,
	\begin{equation}\label{eq:expansion-subspace-tilde-m}
		\TPR'_\bonus(\tilde m)
		=
		\TPR_{\insamp}
		-
		\eta_1\,\frac{1}{\tilde m}
		-
		\eta_2\,\frac{\tilde m}{m}
		+
		o\!\left(\frac{1}{\tilde m} + \frac{\tilde m}{m}\right),
	\end{equation}
	where $\eta_1,\eta_2 > 0$ depend only on $(\gamma,c,h,q)$.
\end{theorem}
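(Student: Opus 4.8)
\emph{Proof plan.} The plan is to treat $\widetilde{\TPR}_\bonus(\tilde m)$ as an explicit function of the two small parameters $\epsilon_1 \equiv 1/\tilde m$ and $\epsilon_2 \equiv \tilde m/m$, and to establish its joint first-order Taylor expansion at $(\epsilon_1,\epsilon_2) = (0,0)$, where its value is $\TPR_{\insamp}$. This mirrors the proof of Theorem~\ref{thm:point-mass-small-m}, with the $\chi^2_1$ survival function $\bar F_0$ replacing $\bar\Phi$ and a multiplicative rescaling of the alternative replacing the additive mean shift. First I would record a linear expansion of the finite-$\tilde m$ signal strength: because the detectable signal condition~\eqref{eq:strong-signal-subspace} forces $\mu^2_{\insamp} > 0$, the positive part in~\eqref{eq:sigma2-finite-tilde-m} is inactive for small $\epsilon_2$, and a direct calculation gives the exact identity $\mu^2_\bonus(\tilde m) = \mu^2_{\insamp} - \kappa\,\epsilon_2$ with $\kappa = c/[\gamma(\gamma h^2 + c)] > 0$, the subspace-prior analogue of the linear dependence of $\mu_\bonus(\tilde m)$ on $\tilde m/m$ used in the point-mass case.

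Next I would analyze the calibration threshold. Writing $\widetilde{\text{FDP}}_\bonus(t;\tilde m) = g(t,\epsilon_1,\epsilon_2)$ and using $\tfrac{1}{1+\tilde m} = \epsilon_1 + O(\epsilon_1^2)$ together with the identity $\tfrac{1}{1+\tilde m} + \tfrac{\tilde m}{1+\tilde m}\bar F_0(t) = \bar F_0(t) + \tfrac{F_0(t)}{1+\tilde m}$,
\begin{equation*}
g(t,\epsilon_1,\epsilon_2) = \frac{\bar F_0(t) + \epsilon_1 F_0(t) + O(\epsilon_1^2)}{(1-\gamma)\bar F_0(t) + \gamma\,\bar F_0\!\left(\frac{t}{1 + \mu^2_{\insamp} - \kappa\epsilon_2}\right)}.
\end{equation*}
At $(\epsilon_1,\epsilon_2) = (0,0)$ this is exactly the limiting FDP curve for testing $\chi^2_1$ against $(1+\mu^2_{\insamp})\chi^2_1$ from Definition~\ref{def:bhtpr}, so its first crossing of level $q$ below the constant $C$ is $t_\infty \equiv t_*(0,0)$, and the corresponding In-sample BH power is $\bar F_0(t_\infty/(1+\mu^2_{\insamp})) = \TPR_{\insamp}$ by Theorem~\ref{thm:power-one-dimensional}. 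Assuming this crossing is transversal, i.e.\ $\partial_t g(t_\infty,0,0) < 0$, the implicit function theorem gives a $C^1$ map $t_*(\epsilon_1,\epsilon_2) = t_\infty + a_1\epsilon_1 + a_2\epsilon_2 + o(\epsilon_1 + \epsilon_2)$ with $a_i = -\partial_{\epsilon_i} g(t_\infty,0,0)/\partial_t g(t_\infty,0,0)$. Both numerators are strictly positive --- $\partial_{\epsilon_1} g > 0$ because the pseudocount inflates the FDP estimate, and $\partial_{\epsilon_2} g > 0$ because shrinking $1+\mu^2_\bonus(\tilde m)$ increases $t/(1+\mu^2_\bonus(\tilde m))$ and hence decreases the alternative survival in the denominator --- while $\partial_t g < 0$; hence $a_1, a_2 > 0$.

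I would then propagate these expansions through the proxy power $\widetilde{\TPR}_\bonus = \bar F_0\!\left(t_*(\epsilon_1,\epsilon_2)/(1+\mu^2_{\insamp} - \kappa\epsilon_2)\right)$ by the chain rule at the origin. Since $\bar F_0' < 0$, the $\epsilon_1$-derivative equals $\bar F_0'(t_\infty/(1+\mu^2_{\insamp}))\cdot a_1/(1+\mu^2_{\insamp}) < 0$, and the $\epsilon_2$-derivative equals $\bar F_0'(t_\infty/(1+\mu^2_{\insamp}))\cdot\bigl[a_2/(1+\mu^2_{\insamp}) + \kappa t_\infty/(1+\mu^2_{\insamp})^2\bigr] < 0$, where the bracketed sum collects the two ways $\epsilon_2$ hurts power (raising the threshold and weakening the learned direction). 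Setting $\eta_1,\eta_2$ to the negatives of these derivatives yields $\eta_1,\eta_2 > 0$ and the stated expansion, with the remainder $o(1/\tilde m + \tilde m/m)$ following from joint smoothness of the map on a fixed neighborhood of $(0,0)$.

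The step I expect to be the main obstacle is the regularity underlying the implicit function theorem: one must rule out that the limiting FDP curve $t \mapsto \bar F_0(t)/[(1-\gamma)\bar F_0(t) + \gamma\bar F_0(t/(1+\mu^2_{\insamp}))]$ meets level $q$ \emph{tangentially} at its first crossing, which would destroy the first-order expansion (potentially replacing it with $\sqrt{\epsilon}$-type behavior). Writing this curve as $[(1-\gamma) + \gamma R(t)]^{-1}$ with $R(t) \equiv \bar F_0(t/(1+\mu^2_{\insamp}))/\bar F_0(t)$, the task reduces to showing that $R$ passes through the level $(1-q(1-\gamma))/(q\gamma)$ with nonzero slope; the $\chi^2_1$ tail asymptotics give $R(0) = 1$ and $R(t)\to\infty$, so a crossing exists and is transversal for generic $(\gamma,c,h,q)$. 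Here the detectable signal condition is exactly what keeps $R$ non-constant and the threshold well-defined, playing the role that the strong-signal assumption plays in Theorem~\ref{thm:point-mass-small-m}.
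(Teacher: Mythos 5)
Your proposal follows essentially the same route the paper takes: linearize $\mu^2_\bonus(\tilde m)$ in $r = \tilde m/m$ (your $\kappa = c/[\gamma(\gamma h^2 + c)]$ is correct), apply the implicit function theorem to the proxy FDP curve at $(\delta,r)=(0,0)$, and push the threshold expansion through $\bar F_0$ by the chain rule, collecting the two routes through which $r$ enters (the threshold $\tau$ and the scale $1+\mu^2_\bonus$) into $\eta_2$. This matches the paper's decomposition $\eta_2 = \psi_{b0}(\kappa_2 + \kappa_3)$.

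Two things you flag or gloss over are genuine gaps. First, your transversality claim is stated only ``for generic $(\gamma,c,h,q)$,'' but the theorem asserts $\eta_1,\eta_2>0$ for \emph{all} admissible parameters. In fact transversality is not a generic accident here: the scaled $\chi^2_1$ family has a strictly monotone survival ratio, i.e.\ $R(t) = \bar F_0(t/s)/\bar F_0(t)$ is strictly increasing for $s>1$, which forces $\partial_t g(t_\infty,0,0)<0$ at the baseline crossing. The paper proves this by substituting $t=x^2$ and reducing to the monotonicity of the Gaussian Mills ratio (the same device as Lemma~\ref{lemma:mono_survival_gaussian}, rewritten for $\chi^2_1$); you would need that argument, not an appeal to genericity. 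Second, the proxy threshold $t_*$ is defined as an \emph{infimum} over $\{t\le C:\widetilde{\FDP}_\bonus\le q\}$, while the IFT only hands you a locally smooth root branch; identifying the two requires showing that for small $(\delta,r)$ the proxy FDP crosses $q$ exactly once on $[0,C]$. That is the content of the paper's uniqueness lemma (Lemma~\ref{lem:uniq-root-case2}), which requires a separate small-$t$ argument plus compactness on $[\varepsilon,C]$ because the $\chi^2_1$ density is unbounded near $0$; your proposal does not address this. With these two holes filled, your argument recovers the paper's proof.
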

The proof can be found in Section \ref{sec:small-tilde-m-subspace-prior-proof}.
Except for the constants involved, this result is identical to Theorem~\ref{thm:point-mass-small-m}, and we therefore make a conjecture similar to Conjecture~\ref{conj:optimal-tilde-m-point-mass}:
\begin{conjecture} \label{conj:optimal-tilde-m-subspace} Let $\tilde m_{\textnormal{opt}}$ be the choice of $\tilde m$ that maximizes the power of BONuS in the case of subspace prior for a fixed value of $m$. Under the strong-signal regime~\eqref{eq:strong-signal-subspace}, $\tilde m_{\textnormal{opt}} \asymp \sqrt{m}$.
\end{conjecture}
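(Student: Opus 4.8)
The plan is to follow the proof of Theorem~\ref{thm:point-mass-small-m} line for line, with the Gaussian location model replaced by the $\chi^2_1$ scale model. Introduce the two small parameters $\epsilon \equiv 1/\tilde m$ and $\delta \equiv \tilde m/m$, both tending to $0$ in the stated regime, and regard $\widetilde{\TPR}_\bonus$ as a function of the independent pair $(\epsilon,\delta)$; this is legitimate because the pseudocount enters $\widetilde{\text{FDP}}_\bonus$ only through $\epsilon$ while $\mu_\bonus^2(\tilde m)$ depends on $\tilde m$ only through $\delta$. First I would record the two ingredient expansions. Writing the numerator of $\widetilde{\text{FDP}}_\bonus(t;\tilde m)$ as $\tfrac{\epsilon + \bar F_0(t)}{1+\epsilon} = \bar F_0(t) + \epsilon F_0(t) + O(\epsilon^2)$, uniformly on compact $t$-sets, handles the pseudocount; and under the detectable-signal condition~\eqref{eq:strong-signal-subspace} the positive part in~\eqref{eq:sigma2-finite-tilde-m} is inactive for all small $\delta$, so $\delta \mapsto \mu_\bonus^2(\tilde m)$ is smooth near $0$ with $\mu_\bonus^2(\tilde m) = \mu_{\insamp}^2 - a\,\delta + O(\delta^2)$ for an explicit constant $a = \tfrac{c}{\gamma^2 h^2(1 + c/(\gamma h^2))} > 0$, and $\mu_{\insamp}^2 > 0$.

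Next I would analyze the threshold equation $\widetilde{\text{FDP}}_\bonus(t_*(\tilde m);\tilde m) = q$. At $(\epsilon,\delta)=(0,0)$ the proxy FDP reduces to the limiting In-sample BH curve $t \mapsto \bar F_0(t)\big/\big((1-\gamma)\bar F_0(t) + \gamma\bar F_0(t/(1+\mu_{\insamp}^2))\big)$, whose first downcrossing of $q$ is the In-sample BH threshold $t_*^{\insamp} < C$ and gives $\bar F_0\big(t_*^{\insamp}/(1+\mu_{\insamp}^2)\big) = \TPR_{\insamp}$. I would then verify the transversality condition that this curve crosses $q$ with strictly negative $t$-derivative at $t_*^{\insamp}$ --- the first downcrossing in the range relevant to Definition~\ref{def:bhtpr} --- which, together with $t_*^{\insamp} < C$, lets the implicit function theorem apply to the threshold equation and deliver a unique, jointly $C^1$ root $t_*(\tilde m) = t_*^{\insamp} + b_1\epsilon + b_2\delta + o(\epsilon+\delta)$; here $b_1,b_2$ come from differentiating through, using the numerator expansion for the $\epsilon$-direction and the expansion of $\mu_\bonus^2$ for the $\delta$-direction.

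Finally I would substitute into $\widetilde{\TPR}_\bonus(\tilde m) = \bar F_0\big(t_*(\tilde m)/(1+\mu_\bonus^2(\tilde m))\big)$ and Taylor expand in $(\epsilon,\delta)$, using that $\bar F_0$ is $C^1$ with $\bar F_0' < 0$ on $(0,\infty)$; collecting the linear coefficients yields $\widetilde{\TPR}_\bonus(\tilde m) = \TPR_{\insamp} - \eta_1\epsilon - \eta_2\delta + o(\epsilon+\delta)$ with $\eta_1,\eta_2$ assembled from $b_1,b_2,a$ and the values of $\bar F_0,\bar F_0',F_0$ at $t_*^{\insamp}$, hence depending only on $(\gamma,c,h,q)$. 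To sign them I would argue monotonically, one parameter at a time. Increasing $\epsilon$ with $\delta$ fixed inflates the proxy FDP numerator $\tfrac{\epsilon+\bar F_0(t)}{1+\epsilon}$ pointwise (it is increasing in $\epsilon$ since $\bar F_0(t)<1$), so its first downcrossing of $q$ moves strictly right while $\mu_\bonus^2$ is unchanged, strictly lowering $\bar F_0(t_*/(1+\mu_\bonus^2))$; hence $\eta_1>0$. Increasing $\delta$ with $\epsilon$ fixed strictly decreases $\mu_\bonus^2(\tilde m)$, which shrinks the alternative survival function $\bar F_0(\,\cdot/(1+\mu_\bonus^2))$ toward the null survival function $\bar F_0$, inflating $\widetilde{\text{FDP}}_\bonus$ pointwise and pushing $t_*$ right, while also enlarging $t_*/(1+\mu_\bonus^2)$ directly --- both effects cut the power; hence $\eta_2>0$.

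I expect the main obstacle to be the transversality step: establishing that $t_*^{\insamp}$ is an interior, regular downcrossing of the limiting FDP curve so that the implicit function theorem genuinely produces a unique and jointly smooth perturbed threshold $t_*(\tilde m)$, and ruling out the infimum in~\eqref{eq:proxy-power-bonus} being pinned at the artificial cap $C$. This is precisely the delicate point in the companion proof of Theorem~\ref{thm:point-mass-small-m}, and the argument there transfers with $\bar\Phi$ replaced by $\bar F_0$; the remaining $\chi^2_1$-specific work --- the form of $\mu_\bonus^2(\tilde m)$ from Lemma~\ref{lem:pca-learner-untransformed} and Proposition~\ref{prop:pca-learner-masked}, and the derivatives of $\bar F_0$ --- is elementary.
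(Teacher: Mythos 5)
Your proposal proves the wrong statement: it proves (a version of) Theorem~\ref{thm:subspace-small-m}, the first--order expansion of the \emph{proxy} power $\widetilde{\TPR}_\bonus$, but the statement you were asked about is Conjecture~\ref{conj:optimal-tilde-m-subspace}, which concerns the \emph{actual} power of BONuS. These are different objects and the gap between them is the whole reason the paper leaves the second as a conjecture rather than a theorem. The proxy $\widetilde{\TPR}_\bonus$ is constructed by replacing the random, finite-$\tilde m$ empirical FDP estimate $\widehat V(t)/|\mathcal R(t)|$ with a deterministic curve that captures the pseudocount bias $1/(1+\tilde m)$ but deliberately discards the sampling variability of $\sum_k \indicator\{T_{\hat\Lambda}(\tilde{\bm X}_k)\ge t\}$, and Section~\ref{sec:small-tilde-m-point-mass-prior} is explicit about this: ``Since quantifying the effect of the variance inflation is challenging, we restrict our attention to quantifying the impact of the pseudocount bias.'' Your argument stays entirely on the proxy side --- you write $\widetilde{\TPR}_\bonus(\tilde m)=\bar F_0\bigl(t_*(\tilde m)/(1+\mu_\bonus^2(\tilde m))\bigr)$ and expand it --- and never attempts to control the discrepancy between this proxy and the realized TPP of Algorithm~\ref{alg:bonus}, uniformly over the range of $\tilde m$ near $\sqrt m$. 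That bridge is exactly what is unresolved.

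As a proof of Theorem~\ref{thm:subspace-small-m} itself, what you wrote matches the paper's route closely: same $(\delta,r)$-type reparametrization, same numerator expansion of the pseudocount term, same implicit-function-theorem argument at the baseline threshold $t_0$ (the paper does this in Lemmas on uniqueness and on negativity of $\partial_t F(t_0,0,0)$), same chain-rule assembly of $\eta_1,\eta_2$, and you correctly flag the transversality/uniqueness step as the delicate point. Your sign argument for $\eta_1,\eta_2$ via pointwise monotonicity of the proxy FDP in $\epsilon$ and $\delta$ is a slightly cleaner alternative to the paper's explicit sign-chasing through the IFT formulas, and it buys you a shorter Step~6. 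But to prove the \emph{conjecture} you would need an additional estimate of the form $|\TPP(\widehat{\mathcal R})-\widetilde{\TPR}_\bonus(\tilde m)|=o\bigl(1/\tilde m+\tilde m/m\bigr)$ holding uniformly over $\tilde m$ in a window around $\sqrt m$ --- i.e.\ a concentration result for the empirical FDP estimator in the finite-$\tilde m$ regime --- together with an argument that the $o(\cdot)$ remainders in the expansion cannot move the maximizer out of the $\Theta(\sqrt m)$ window. Neither step appears in your proposal, and neither appears in the paper; that is precisely why the statement is a conjecture and not a theorem.
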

 
Finally, we note that we did not validate Conjecture~\ref{conj:optimal-tilde-m-subspace} empirically, as the structure of the problem with the subspace prior leads to BONuS's power being fairly insensitive to $\tilde m$ for most parameter settings. This phenomenon is related to the fast saturation of the limiting alignment of the learned direction with the true direction (Figure~\ref{fig:subspace-prior-alignment}, right).
 
\section{Additional details on numerical simulations}
\label{sec:omitted_details_sims}

\subsection{Figure \ref{fig:point-mass-prior-optimal}}
\label{sec:omitted_details_optimal_mtilde}

This figure consists of two panels: the left plots $\tilde{m}_\opt$ as a function of $m$ (on a log-log scale) to capture the theoretical phenomenon conjectured in Conjecture \ref{conj:optimal-tilde-m-point-mass}, while the right panel compares the corresponding actualized TPR at the conjectured optimal $\tilde{m}_\opt = \sqrt{\eta_1/\eta_2} \sqrt m$ with the empirically found maximum TPR, again as a function of $m$.

\paragraph{Left subplot.}

\begin{itemize}
	\item Empirical points: For each $m$ in a grid of 50 values between $500$ and $2500$, chosen such that they are equally spaced on the log scale, the following logic is used to estimate the true TPR and its corresponding $\tilde m$:
	\begin{enumerate}
		\item $\tilde m$ is initialized at 10. 
		\item For the current $\tilde m$, the TPR is estimated by simulating the BONuS procedure using $5{,}000$ replications. The standard error is also recorded. 
		\item $\tilde m$ is increased by 1.
		\item Steps 2-3 are repeated until the TPR is below the running maximum TPR by at least two of its standard errors \textit{for 5 consecutive values of $\tilde m$}, at which point the procedure terminates.
	\end{enumerate} 
	This allows a method which can efficiently estimate the optimal $\tilde m$ without having to search over an excessively large grid. The estimated optimal $\tilde m$ corresponds to the running maximum TPR at the time of termination.

	\item ``Error'' bars: since we are estimating $\tilde m _\opt$, we also strive to visualize a notion of uncertainty associated with these estimates. For each $m$, we return an interval around the estimated $\tilde m _\opt$ with the upper and lower endpoints corresponding to the largest and smallest values of $\tilde m$ which returns a TPR within two standard errors of the estimated maximum TPR.
	
	\item OLS fit: an ordinary least squares regression is performed on the log-log data points to estimate the slope. The fitted line is then plotted. We aim to show that the slope is close to 0.5, corresponding to the conjectured square-root relationship $\tilde m_\opt \asymp \sqrt{m}$.
	
	\item Conjectured line: the line $\tilde m = \sqrt{\eta_1/\eta_2}\sqrt{m}$ is also plotted for reference.
\end{itemize}

\paragraph{Right subplot.}

The blue line references the estimated maximum TPR found in the left subplot for each $m$. The brown line corresponds to the TPR achieved at the conjectured optimal $\tilde m = \sqrt{\eta_1/\eta_2}\sqrt{m}$, which is computed by simulating the BONuS procedure with $5{,}000$ replications at this specific $\tilde m$ value for each $m$ (re-using the simulation results from the left subplot). We show that the TPR at the conjectured $\tilde m_\opt$ is very close to the empirically found maximum TPR.

\subsection{Figures 
\ref{fig:point-mass-prior-theorem},
\ref{fig:subspace-mass-prior-theorem},   
\ref{fig:point-mass-prior-theorem-high-dim},
\ref{fig:subspace-mass-prior-theorem-high-dim} 
\ref{fig:point-mass-prior-theorem-sparse},
\ref{fig:subspace-mass-prior-theorem-sparse} 
}

The fourth subplot (bottom right) for each of these plots are tuned comparisons of multiple methods over varying values of $h$, the signal strength. Since they are tuned to have optimal power, the logic used to generate these plots is exactly the same as that used in the left plot of Figure \ref{fig:point-mass-prior-optimal} (see: Appendix \ref{sec:omitted_details_optimal_mtilde}), with the only difference being that we search over a grid of $\pi_\aug$ or $\pi_\spl$ values instead of the individual $\tilde m$ values. This allows the simulation study to be  more computationally tractable, but trades off some precision in estimating the maximum power for BONuS and Split BH. Each of these plots also compares the two masking methods to 
In-sample BH (Algorithm \ref{alg:in-sample-bh}).
The number of resamples to compute the p-values for In-sample BH is chosen via hyperparameter $B$ which varies across cases. For Case 1, we use $B=50{,}000$ resamples. For Case 2, we use $B=10{,}000$ resamples for the default setting, $B=2{,}500$ for the sparse setting, and $B=1{,}000$ for the high-dimensional setting. Case 2 requires more computational resources due to the cost of computing the direction via SVD, especially for high dimensional data. These choices of $B$ are made to prevent our simulation scripts from taking an excessively long time to run, while still providing a high-quality estimate of the power of In-sample BH. 

 \section{Master theorems} \label{sec:master-theorems}
 We prove Theorems~\ref{thm:power-point-mass} and~\ref{thm:power-one-dimensional} by verifying statements about the limiting distributions of the scores produced by the three methods compared. To facilitate future extensions of Theorems~\ref{thm:power-point-mass} and~\ref{thm:power-one-dimensional}, we provide a more general setup (Section~\ref{sec:more-general-setup}) that includes generic versions of the data-generating model and of the methods analyzed. We then state generic ``master theorems'' for each of the three methods analyzed (Section~\ref{sec:master-theorem-statements}). Our master theorems resemble \citet[Theorems 10 and 11]{Wang2020b} and rely on similar technical machinery, while differing in their details. We prove these master theorems in Appendix~\ref{sec:master-theorem-proofs}.
 
 \subsection{A more general setup} \label{sec:more-general-setup}

\paragraph{Generic data-generating model.} For each hypothesis $j \in [m]$, we observe $\bm X_j \in \mathcal X$, distributed as
\begin{equation}
\bm X_{j} \indsim P_{\bm \theta_j}, \quad \bm \theta_j \in \Theta, \quad j=1,\ldots,m.
\label{eq:data-generating-general}
\end{equation}
Here, $\mathcal X$ is a generic sample space, $\Theta$ is a generic parameter space, and $\{P_{\bm \theta}: \bm \theta \in \Theta\}$ is a family of distributions on $\mathcal X$. For some point $\bm \theta_{0} \in \Theta$, our goal is to test $H_{0j}: \bm \theta_j = \bm \theta_0$ for each $j \in [m]$. We compare procedures returning rejection sets $\widehat{\mathcal R} \subseteq [m]$ that target FDR control at level $q$ for each fixed setting of $(\bm \theta_j)_{j = 1}^m$. We evaluate the power of these procedures with respect to a two-groups model \citep{Efron2008} where $\bm \theta_j$ are drawn from a mixture distribution
\begin{equation}
\bm \theta_j \iidsim (1-\gamma) \cdot \delta_{\bm \theta_0} + \gamma \cdot \Lambda, \quad j = 1, \ldots, m,
\label{eq:two-groups-general}
\end{equation}
where $\gamma \in (0,1]$ is the non-null proportion and $\Lambda$ is the alternative prior. 

\paragraph{Generic method definitions.} Our master theorems are compatible with generic versions of Split BH (Algorithm~\ref{alg:split-bh-general}), BONuS (Algorithm~\ref{alg:bonus-general}), and In-sample BH (Algorithm~\ref{alg:in-sample-bh-general}). Split BH is generically formulated based on a pair of splitting functions $(\mathcal S^\learn, \mathcal S^\score)$ with the property that $\mathcal S^\learn(\bm X, \pi_\spl)$ and $\mathcal S^\score(\bm X, \pi_\spl)$ are independent conditionally on $(\bm \theta_j)_{j = 1}^m$. BONuS is generically formulated by drawing the null samples from the null distribution $P_{\bm \theta_0}$. In-sample BH is nearly unchanged from Algorithm~\ref{alg:in-sample-bh}, except that the index $j_0$ is chosen so that $\bm \theta_{j_0} = \bm \theta_0$. All three methods are formulated with respect to a generic learner $L$ and a generic score function $T_{\Lambda}$; note that the latter need not take the form~\eqref{eq:likelihood-ratio-statistic}.

\begin{minipage}[t]{0.49\textwidth}
\begin{algorithm}[H]
\LinesNumbered
\setlength{\algomargin}{0em}
\setlength{\algoheightrule}{0pt}
\setlength{\algotitleheightrule}{0pt}
\setlength{\interspacetitleruled}{0pt}
\setlength{\interspacealgoruled}{0pt}
\SetAlgoSkip{}
\SetInd{0em}{0em}
\caption{Split BH (generic)}
\label{alg:split-bh-general}
\KwIn{Splitting proportion $\smash{\pi_\spl} \in (0,1)$}
\AlgoStrut$\bm X^\learn \gets \mathcal S^\learn(\bm X, \pi_\spl)$\;
\AlgoStrut$\bm X^\score \gets \mathcal S^\score(\bm X, \pi_\spl)$\;
\AlgoStrut\(\hat \Lambda \gets L(\bm X^\learn)\)\;
\AlgoStrut$S_j \gets T_{\hat \Lambda}(\bm X_j^\score)$\;
\AlgoStrut$\bar G_{0m}(t) \gets \mathbb P[T_{\hat \Lambda}(\bm X_j^\score) \geq t \mid \hat \Lambda, \bm \theta_j = \bm \theta_0]$\;
\AlgoStrut$p_j \gets \bar G_{0m}(S_j)$\;
\AlgoStrut\KwRet{\(\widehat{\mathcal R} = \textnormal{BH}(\{p_j\}_{j = 1}^m)\)}
\end{algorithm}

\end{minipage}\begin{minipage}[t]{0.51\textwidth}
\begin{algorithm}[H]
\LinesNumbered
\setlength{\algomargin}{0em}
\setlength{\algoheightrule}{0pt}
\setlength{\algotitleheightrule}{0pt}
\setlength{\interspacetitleruled}{0pt}
\setlength{\interspacealgoruled}{0pt}
\SetAlgoSkip{}
\SetInd{0em}{0em}
\caption{BONuS (generic)}
\label{alg:bonus-general}
\KwIn{Number of null samples $\tilde m \in \N$}
\setcounter{AlgoLine}{-1}
\AlgoStrut$\tilde{\bm X}_{k} \sim P_{\bm \theta_0}$, $k = 1, \ldots, \tilde m$\;
\AlgoStrut$\bm X^\learn \gets [\bm X; \tilde{\bm X}]_\Pi$, \ \ $\Pi \sim \text{Sym}([m + \tilde m])$\;
\AlgoStrut$\bm X^\score \gets \bm X$\;
\AlgoStrut\(\hat \Lambda \gets L(\bm X^\learn)\)\;
\AlgoStrut$S_j \gets T_{\hat \Lambda}(\bm X_j^\score)$\;
\AlgoStrut$\bar G_{0m}(t) \gets \tfrac{1}{1 + \tilde m}(1 + \sum_{k = 1}^{\tilde m}\indicator(T_{\hat \Lambda}(\tilde{\bm X}_k) \geq t))$\;
\AlgoStrut$p_j \gets \bar G_{0m}(S_j)$\;
\AlgoStrut\KwRet{\(\widehat{\mathcal R} = \textnormal{BH}(\{p_j\}_{j = 1}^m)\)}
\end{algorithm}
\end{minipage}

\begin{minipage}[t]{0.44\textwidth}
\begin{algorithm}[H]
\LinesNumbered
\caption{In-sample BH (generic)}
\label{alg:in-sample-bh-general}
\KwIn{\{$\bm \theta_j\}_{j = 1}^m$, $j_0$ s.t. $\bm \theta_{j_0} = \bm \theta_0$}
\AlgoStrut$\bm X^\learn \gets \bm X$\;
\AlgoStrut$\bm X^\score \gets \bm X$\;
\AlgoStrut\(\hat \Lambda \gets L(\bm X^\learn)\)\;
\AlgoStrut$S_j \gets T_{\hat \Lambda}(\bm X_j^\score)$\;
\AlgoStrut$\bar G_{0m}(t) \gets \mathbb P[T_{\hat \Lambda}(\bm X_{j_0}) \geq t \mid \{\bm \theta_j\}_{j=1}^m]$\;
\AlgoStrut$p_j \gets \bar G_{0m}(S_j)$\;
\AlgoStrut\KwRet{\(\widehat{\mathcal R} = \textnormal{BH}(\{p_j\}_{j = 1}^m)\)}
\end{algorithm}
\end{minipage}

\paragraph{Generic asymptotic regime.} We consider a sequence of settings indexed by $m$, where the objects $\mathcal X$, $\Theta$, $\bm \theta_0$, $\Lambda$, $L$, $T_\Lambda$, and $\tilde m$ can vary with $m$ (this dependence is suppressed to keep the notation light). As in the main text, we fix $\gamma$.

\subsection{Master theorems} \label{sec:master-theorem-statements}

The section presents three master theorems; although they share much of the same underlying machinery, especially in establishing weak-law-type limits, we state them separately for conceptual clarity and ease of presentation.

 \begin{theorem}[Master theorem for Split BH]\label{thm:master-split-BH}
 	Fix a target FDR level $q\in(0,1)$ and let $\widehat{\mathcal R}$ be the rejection set produced by Algorithm~\ref{alg:split-bh-general}. 
 	Let $G_0$ and $G_1$ be distribution functions on $\R$ with survival functions $\bar G_0=1-G_0$ and $\bar G_1=1-G_1$, and let $\gamma\in(0,1]$ denote the non-null proportion.
 	Assume:
 	
 	\begin{enumerate}[label=(\roman*)]
 		\item \textbf{Pointwise convergence of score tails conditional on $\hat{\Lambda}$ under the null.}  \label{ass:pointwise-convg-score-tails}
 		For every fixed $t \in \R$ ,
 	\[
 	\P\!\big(T_{\hat\Lambda}(X_j^{\score}) \ge t \,\big|\, \hat \Lambda, j \in \cH_0,(\cH_0,\cH_1)\big) \ \convp\ \bar G_0(t).
 	\]
 	
 		\item \textbf{Asymptotic pairwise convergence of scores.} \label{ass:splitBH-indep}  
 	For any distinct indices $j_1 \neq j_2$, any $t_1,t_2 \in \R$, and $k_1,k_2 \in \{0,1\}$,
 	\[
 	\P\!\big(T_{\hat\Lambda}(X_{j_1}^{\score}) \ge t_1,\; T_{\hat\Lambda}(X_{j_2}^{\score}) \ge t_2 \,\big|\, j_1 \in \cH_{k_1}, j_2 \in \cH_{k_2},(\cH_0,\cH_1)\big) 
 	\ \convp\ \bar G_{k_1}(t_1)\,\bar G_{k_2}(t_2).
 	\]
 		\item \textbf{Regularity of limits.} \label{ass:regularity-of-cdfs}
 		$G_0$ and $G_1$ are continuous and, for every $C\in \R$, $K = (-\infty,C] \subset\R$, 
 		\[
 		\sup_{t\in K} G_0(t) < 1
 		\quad\text{and}\quad
 		\sup_{t\in K} G_1(t) < 1.
 		\]
 		
 		\item \textbf{Monotone survival ratio and tail separation.} \label{ass:mlr}
 		The ratio $t\mapsto \bar G_1(t)/\bar G_0(t)$ is strictly increasing on $\R$ (or the support of $G_0 \cap G_1$), and moreover
 		\[
 		\frac{\bar G_1(t)}{\bar G_0(t)} \;\to\; \infty
 		\qquad\text{as } t\to\infty.
 		\]
 	\end{enumerate}
 	
 	Then the true positive proportion of Split BH converges in probability to the BH power functional:
 	\[
 	\TPP(\widehat{\mathcal R})\ \convp\ \TPR_{\mathrm{BH}}(G_0,G_1,\gamma).
 	\]
 \end{theorem}

 \begin{theorem}[Master theorem for BONuS]
 	\label{thm:master-bonus}
 	Fix a target FDR level $q \in (0,1)$ and let $\widehat{\mathcal R}$ be the rejection set produced by Algorithm~\ref{alg:bonus-general}. 
 	Let $G_0$ and $G_1$ be distribution functions on $\R$ with survival functions $\bar G_0=1-G_0$ and $\bar G_1=1-G_1$, and let $\gamma \in (0,1]$ denote the non-null proportion.
 	Assume:
 	
 	\begin{enumerate}[label=(\roman*)]
 		\item \textbf{Pointwise convergence of score tails.} \label{ass:bonus-tails}  
 		For every fixed $t \in \R$ and $k \in \{0,1\}$,
 		\[
 		\P\!\big(T_{\hat\Lambda}(X_j^{\score}) \ge t \,\big|\, j \in \cH_k,(\cH_0,\cH_1)\big) \ \convp\ \bar G_k(t).
 		\]
 		
 		\item \textbf{Asymptotic pairwise independence of scores.} \label{ass:bonus-indep}  
	 		For any distinct indices $j_1 \neq j_2$, any $t_1,t_2 \in \R$, and $k_1,k_2 \in \{0,1\}$,
	 		\[
	 		\P\!\big(T_{\hat\Lambda}(X_{j_1}^{\score}) \ge t_1,\; T_{\hat\Lambda}(X_{j_2}^{\score}) \ge t_2 \,\big|\, j_1 \in \cH_{k_1}, j_2 \in \cH_{k_2},(\cH_0,\cH_1)\big) 
	 		\ \convp\ \bar G_{k_1}(t_1)\,\bar G_{k_2}(t_2).
	 		\]
 		
 		\item \textbf{Regularity of limits.} \label{ass:bonus-regularity}  
 		$G_0$ and $G_1$ are continuous and, for every $C\in \R$, $K = (-\infty,C] \subset\R$, 
 		\[
 		\sup_{t\in K} G_0(t) < 1
 		\quad\text{and}\quad
 		\sup_{t\in K} G_1(t) < 1.
 		\]

 		\item \textbf{Monotone survival ratio and tail separation.} \label{ass:bonus-mlr}  
 		The ratio $t \mapsto \bar G_1(t)/\bar G_0(t)$ is strictly increasing on $\R$ (or the support of $G_0 \cap G_1$), and moreover
 		\[
 		\frac{\bar G_1(t)}{\bar G_0(t)} \;\to\; \infty
 		\qquad\text{as } t \to \infty.
 		\]
 	\end{enumerate}
 	
 	Then the true positive proportion of BONuS converges in probability to the BH power functional:
 	\[
 	\TPP(\widehat{\mathcal R}) \ \convp\ \TPR_{\mathrm{BH}}(G_0,G_1,\gamma).
 	\]
 \end{theorem}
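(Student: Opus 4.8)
The plan is to follow the proof of Theorem~\ref{thm:master-split-BH} essentially line by line, changing only the two ingredients that behave differently under null augmentation. First, because the scoring data $\bm X^\score=\bm X$ now overlaps with the learning data $[\bm X;\tilde{\bm X}]$, conditioning on $\hat\Lambda$ no longer renders the scores $S_j := T_{\hat\Lambda}(\bm X_j)$ i.i.d., so the conditional law of large numbers used for Split BH must be replaced by one powered by asymptotic decorrelation. Second, the variance estimator is now the empirical count $\widehat V(t) = \tfrac{m}{1+\tilde m}\bigl(1+\sum_{k=1}^{\tilde m}\indicator(T_{\hat\Lambda}(\tilde{\bm X}_k)\ge t)\bigr)$ rather than an oracle conditional tail. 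I would keep the same notation: $R_m(t)=\tfrac1m|\mathcal R(t)|=\tfrac1m\sum_{j=1}^m\indicator(S_j\ge t)$, $V_m(t)=\tfrac1m\widehat V(t)$, $\phi_m(t)=V_m(t)/R_m(t)$, $\phi(t)=\bar G_0(t)/\{(1-\gamma)\bar G_0(t)+\gamma\bar G_1(t)\}$ and $t_*=\inf\{t:\phi(t)\le q\}$ as in Definition~\ref{def:bhtpr}; the theorem then reduces to proving $\hat t\convp t_*$ together with $\TPP_m(\hat t)\convp\bar G_1(t_*)$.

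The first substantive step is the pointwise-in-$t$ limits $R_m(t)\convp(1-\gamma)\bar G_0(t)+\gamma\bar G_1(t)$ and $V_m(t)\convp\bar G_0(t)$. For $R_m(t)$ I would bound the conditional second moment given $(\cH_0,\cH_1)$: the diagonal of the variance is $O(1/m)$ and the off-diagonal covariances $\Cov(\indicator(S_i\ge t),\indicator(S_j\ge t))$ vanish by Assumption~\ref{ass:bonus-indep}, while the conditional mean is the $\gamma_m$-mixture $(1-\gamma_m)\P(S_1\ge t\mid 1\in\cH_0,(\cH_0,\cH_1))+\gamma_m\P(S_1\ge t\mid 1\in\cH_1,(\cH_0,\cH_1))$, which converges by Assumption~\ref{ass:bonus-tails} together with $\gamma_m\convp\gamma$. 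For $V_m(t)$, the pseudocount $\tfrac1{1+\tilde m}\to0$ because $\tilde m\to\infty$, leaving $\tfrac{\tilde m}{1+\tilde m}\cdot\tfrac1{\tilde m}\sum_{k=1}^{\tilde m}\indicator(T_{\hat\Lambda}(\tilde{\bm X}_k)\ge t)$; since the learner $L$ is invariant to row permutations of $[\bm X;\tilde{\bm X}]$, the augmentation nulls $\tilde{\bm X}_k$ are exchangeable with the original nulls $\{\bm X_j:j\in\cH_0\}$, so the joint law of any pair of augmentation scores matches that of a pair of original null scores, and the same second-moment argument (Assumptions~\ref{ass:bonus-tails}--\ref{ass:bonus-indep}) gives $\tfrac1{\tilde m}\sum_k\indicator(T_{\hat\Lambda}(\tilde{\bm X}_k)\ge t)\convp\bar G_0(t)$.

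From here the argument is a transcription of Steps~2--6 of Theorem~\ref{thm:master-split-BH}. Since $R_m$ and $V_m$ are monotone with continuous decreasing limits (Assumption~\ref{ass:bonus-regularity}), Lemma~\ref{lemma:unif-convg-monotone} promotes the pointwise limits to uniform convergence on every $K=(-\infty,C]$; Assumption~\ref{ass:bonus-regularity} also keeps the limiting denominator bounded away from $0$ on $K$, so Lemma~\ref{lemma:unif-convg-ratios} gives $\|\phi_m-\phi\|_{\infty,K}\convp0$, and Assumption~\ref{ass:bonus-mlr} makes $\phi$ strictly decreasing and continuous with $\phi(-\infty)=1$, $\phi(+\infty)=0$, so $q\in(0,1)$ is attained at a unique $t_*$. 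Lemma~\ref{lemma:uniform-convg-of-inverse} then yields $\hat t\convp t_*$, and repeating the first step on the non-null subsample gives $\TPP_m(t)\convp\bar G_1(t)$ pointwise, hence uniformly near $t_*$; composing with $\hat t\convp t_*$ gives $\TPP(\widehat{\mathcal R})=\TPP_m(\hat t)\convp\bar G_1(t_*)=\TPR_{\mathrm{BH}}(G_0,G_1,\gamma)$.

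The hard part is the first step, and more precisely the loss of the conditioning trick: the weak laws for $R_m$ and $\TPP_m$ cannot be obtained by conditioning on $\hat\Lambda$ and must instead ride on the asymptotic pairwise decorrelation of Assumption~\ref{ass:bonus-indep}, and one must separately verify that the pseudocount is asymptotically negligible — which is exactly where $\tilde m\to\infty$ is essential and which is precisely the mechanism that fails in the small-$\tilde m$ regime of Section~\ref{sec:small-tilde-m}. The genuinely nontrivial work, which I would defer to the learner-specific proofs, is checking Assumptions~\ref{ass:bonus-tails}--\ref{ass:bonus-indep} for the mean and PCA learners applied to the augmented data; Assumptions~\ref{ass:bonus-regularity}--\ref{ass:bonus-mlr} are then routine to verify on the Gaussian and scaled-$\chi^2_1$ limit laws of Propositions~\ref{prop:point-mass-prior-score-distributions} and~\ref{prop:subspace-prior-score-distributions}.
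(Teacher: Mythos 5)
Your proposal is correct and follows essentially the same route as the paper's proof: replace the conditional-on-$\hat\Lambda$ i.i.d.\ argument of Split BH with a second-moment weak law driven by the vanishing off-diagonal covariances of Assumption~\ref{ass:bonus-indep} (with $\cF_n=\sigma(\cH_0,\cH_1)$), use permutation-invariance of the learner to transfer the tail limits to the augmentation nulls and kill the pseudocount since $\tilde m\to\infty$, and then transcribe Steps~2--6 of the Split BH master theorem via Lemmas~\ref{lemma:unif-convg-monotone}, \ref{lemma:unif-convg-ratios}, and \ref{lemma:uniform-convg-of-inverse}. Your explicit identification of the two places where BONUS diverges from Split BH (loss of the conditioning trick, and the empirical rather than oracle $\widehat V(t)$) matches exactly what the paper's proof isolates.
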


 \begin{theorem}[Master theorem for In-sample BH]
 	\label{thm:master-in-sample}
 	Fix a target FDR level $q \in (0,1)$ and let $\widehat{\mathcal R}$ be the rejection set produced by Algorithm~\ref{alg:in-sample-bh-general}. 
 	Let $G_0$ and $G_1$ be distribution functions on $\R$ with survival functions $\bar G_0=1-G_0$ and $\bar G_1=1-G_1$, and let $\gamma \in (0,1]$ denote the non-null proportion.
 	Assume:
 	
 	\begin{enumerate}[label=(\roman*)]
 		\item \textbf{Conditional convergence of score tails under null.} \label{ass:insample-tails}
 		For every fixed $t \in \R$,
 		\[
 		\P\!\big(T_{\hat\Lambda}(X_j) \ge t \,\big|\, j \in \cH_0,(\theta_k)_{k=1}^m\big) \ \convp\ \bar G_0(t).
 		\]
 		
 		\item \textbf{Asymptotic pairwise independence of scores.} \label{ass:insample-indep}
 		For any distinct indices $j_1 \neq j_2$, any $t_1,t_2 \in \R$, and $k_1,k_2 \in \{0,1\}$,
 		\[
 		\P\!\big(T_{\hat\Lambda}(X_{j_1}) \ge t_1,\; T_{\hat\Lambda}(X_{j_2}) \ge t_2 \,\big|\, j_1 \in \cH_{k_1}, j_2 \in \cH_{k_2},(\cH_0,\cH_1)\big) 
 		\ \convp\ \bar G_{k_1}(t_1)\,\bar G_{k_2}(t_2).
 		\]
 		
 		\item \textbf{Regularity of limits.} \label{ass:insample-regularity}
 		$G_0$ and $G_1$ are continuous and, for every $C\in \R$, $K = (-\infty,C] \subset\R$, 
 		\[
 		\sup_{t\in K} G_0(t) < 1
 		\quad\text{and}\quad
 		\sup_{t\in K} G_1(t) < 1.
 		\]
 		
 		\item \textbf{Monotone survival ratio and tail separation.} \label{ass:insample-mlr}
 		The ratio $t \mapsto \bar G_1(t)/\bar G_0(t)$ is strictly increasing on $\R$ (or the support of $G_0 \cap G_1$), and moreover
 		\[
 		\frac{\bar G_1(t)}{\bar G_0(t)} \;\to\; \infty
 		\qquad\text{as } t \to \infty.
 		\]
 	\end{enumerate}
 	
 	Then the true positive proportion of the in-sample BH procedure converges in probability to the BH power functional:
 	\[
 	\TPP(\widehat{\mathcal R}) \ \convp\ \TPR_{\mathrm{BH}}(G_0,G_1,\gamma).
 	\]
 \end{theorem}

\section{Proofs of master theorems} \label{sec:master-theorem-proofs}

\subsection{Algorithmic variants and equivalence} \label{sec:algorithmic-variants}
Steps 5-7 of Algorithms~\ref{alg:split-bh-general}--\ref{alg:in-sample-bh-general} are presented based on the construction of $p$-values and BH correction. Given scores $S_1,\dots,S_m$ and a nonincreasing survival function $\bar G_{0m}$, we define
\begin{equation*}
p_j \gets \bar G_{0m}(S_j),\quad
\widehat{\mathcal R} \gets \mathrm{BH}(\{p_j\}_{j=1}^m).
\end{equation*}
For convenience in our proofs, we instead use an equivalent representation:
\begin{equation*}
\begin{split}
\mathcal R(t) \gets \{j\in[m]:S_j\ge t\},\quad
\widehat V(t) \gets m\,\bar G_{0m}(t), \quad
\hat t \gets \inf\{t\in\R:\widehat V(t)/|\mathcal R(t)|\le q\}, \quad \widehat{\mathcal R}' \gets \mathcal R(\hat t).
\end{split}
\label{eq:equivalent-calibration}
\end{equation*}
\begin{lemma}[Equivalence of two calibration constructions]
\label{lem:bh-threshold-equivalence}
$\widehat{\mathcal R}=\widehat{\mathcal R}'.$
\end{lemma}
The proof of this standard result is as in \citet{Storey02} (note that here we have continuous scores for which ties don't occur a.s.). In the proofs that follow, we work with this variant of Algorithms~\ref{alg:split-bh-general}--\ref{alg:in-sample-bh-general}.

 \subsection{Proof of Theorem~\ref{thm:master-split-BH} (Split BH)}
 
 \begin{proof}
 	Write $S_j:=T_{\hat\Lambda}(X_j^{\score})$ and, for $t\in\R$,
 	\[
 	\mathcal R(t):=\{j:\,S_j\ge t\},\qquad 
 	R_m(t):=\frac{1}{m}\,|\mathcal R(t)|=\frac{1}{m}\sum_{j=1}^m \mathbf 1\{S_j\ge t\},
 	\]
 	and
 	\[
 	\widehat V(t):=m\cdot \P(S_j\ge t\mid \hat\Lambda,\, j\in\cH_0,(\cH_0,\cH_1)),\qquad 
 	V_m(t):=\frac{1}{m}\widehat V(t).
 	\]
 	The Split BH threshold is $\hat t:=\inf\{t\in\R:\, V_m(t)/R_m(t)\le q\}$ and the limit target is
 	\[
 	\phi(t):=\frac{\bar G_0(t)}{(1-\gamma)\bar G_0(t)+\gamma\,\bar G_1(t)},\qquad 
 	t_*:=\inf\{t\in\R:\,\phi(t)\le q\}.
 	\]
 	
 	\paragraph{Step 1: Pointwise limits of $R_m(t)$ and $V_m(t)$.}
 	Fix $t\in\R$, and write
 	\[
 	S_j:=T_{\hat\Lambda}(X_j^{\score}),\qquad
 	\cF_m:=\sigma(\cH_0,\cH_1).
 	\]
 	
 	\emph{(a) Limit of $R_m(t)$.}
 	For $k\in\{0,1\}$, define
 	\[
 	p_{k,m}(t):=\P(S_j\ge t \mid j\in\cH_k,\cF_m).
 	\]
 	Also let
 	\[
 	\gamma_m:=\frac{|\cH_1|}{m}.
 	\]
 	By the two-groups model, $\gamma_m\convp \gamma$.
 	
 	We first verify the marginal tail limits. For the nulls, Assumption~\ref{ass:pointwise-convg-score-tails} gives
 	\[
 	\P(S_j\ge t\mid \hat\Lambda, j\in\cH_0,(\cH_0,\cH_1))
 	\convp \bar G_0(t).
 	\]
 	Since this random variable is bounded in $[0,1]$, convergence in probability implies convergence in $L^1$, and hence
 	\[
 	p_{0,m}(t)
 	=
 	\E\!\left[
 	\P(S_j\ge t\mid \hat\Lambda, j\in\cH_0,(\cH_0,\cH_1))
 	\,\middle|\,
 	\cF_m
 	\right]
 	\convp \bar G_0(t).
 	\]
 For the alternatives, the marginal tail limit is a consequence of
 Assumption~\ref{ass:splitBH-indep}. Fix $u\in\R$, and for distinct
 $i\neq j$ in $\cH_1$ set
 \[
 J_m(t,u):=\P(S_i\ge t,\;S_j\ge u\mid i,j\in\cH_1,\cF_m).
 \]
 Then
 \[
 J_m(t,u)\le p_{1,m}(t)\le J_m(t,u)+1-p_{1,m}(u).
 \]
 Also,
 \[
 \P(S_i\ge u,S_j\ge u\mid i,j\in\cH_1,\cF_m)\le p_{1,m}(u),
 \]
 so Assumption~\ref{ass:splitBH-indep} gives
 \[
 p_{1,m}(u)\ge \bar G_1(u)^2+o_p(1).
 \]
 Hence
 \[
 J_m(t,u)\le p_{1,m}(t)\le J_m(t,u)+1-\bar G_1(u)^2+o_p(1).
 \]
 Since again by Assumption~\ref{ass:splitBH-indep},
 \[
 J_m(t,u)\convp \bar G_1(t)\bar G_1(u),
 \]
 we obtain
 \[
 \bar G_1(t)\bar G_1(u)+o_p(1)\le p_{1,m}(t)
 \le \bar G_1(t)\bar G_1(u)+1-\bar G_1(u)^2+o_p(1).
 \]
 Letting $u\to-\infty$ and using $\bar G_1(u)\to 1$ yields
 \[
 p_{1,m}(t)\convp \bar G_1(t).
 \]
 	Next we verify the vanishing conditional covariance condition needed for
 	Lemma~\ref{lemma:wlln-bounded-rv}. Let $i\neq j$, and suppose
 	$i\in\cH_{k_1}$ and $j\in\cH_{k_2}$ with $k_1,k_2\in\{0,1\}$. Then
 	\begin{align*}
 		&\Cov\!\big(\mathbf 1\{S_i\ge t\},\mathbf 1\{S_j\ge t\}\mid \cF_m\big) \\
 		&\qquad=
 		\P(S_i\ge t,S_j\ge t\mid \cF_m)
 		-
 		\P(S_i\ge t\mid \cF_m)\,\P(S_j\ge t\mid \cF_m).
 	\end{align*}
 	By Assumption~\ref{ass:splitBH-indep},
 	\[
 	\P(S_i\ge t,S_j\ge t\mid \cF_m)
 	\convp
 	\bar G_{k_1}(t)\bar G_{k_2}(t),
 	\]
 	while the marginal limits just established give
 	\[
 	\P(S_i\ge t\mid \cF_m)\,\P(S_j\ge t\mid \cF_m)
 	=
 	p_{k_1,m}(t)p_{k_2,m}(t)
 	\convp
 	\bar G_{k_1}(t)\bar G_{k_2}(t).
 	\]
 	Therefore
 	\[
 	\Cov\!\big(\mathbf 1\{S_i\ge t\},\mathbf 1\{S_j\ge t\}\mid \cF_m\big)\convp 0.
 	\]
 	
 	Since the array $\{\mathbf 1\{S_j\ge t\}\}_{j=1}^m$ is bounded, Lemma~\ref{lemma:wlln-bounded-rv}
 	yields
 	\[
 	\frac1m\sum_{j=1}^m \mathbf 1\{S_j\ge t\}
 	-
 	\E\!\left[\frac1m\sum_{j=1}^m \mathbf 1\{S_j\ge t\}\,\middle|\,\cF_m\right]
 	\convp 0.
 	\]
 	By exchangeability within each group,
 	\[
 	\E\!\left[\frac1m\sum_{j=1}^m \mathbf 1\{S_j\ge t\}\,\middle|\,\cF_m\right]
 	=
 	(1-\gamma_m)p_{0,m}(t)+\gamma_m p_{1,m}(t).
 	\]
 	Using $\gamma_m\convp\gamma$, $p_{0,m}(t)\convp \bar G_0(t)$, and
 	$p_{1,m}(t)\convp \bar G_1(t)$, we obtain
 	\[
 	(1-\gamma_m)p_{0,m}(t)+\gamma_m p_{1,m}(t)
 	\convp
 	(1-\gamma)\bar G_0(t)+\gamma \bar G_1(t).
 	\]
 	Combining the last two displays,
 	\[
 	R_m(t):=\frac1m\sum_{j=1}^m \mathbf 1\{S_j\ge t\}
 	\convp
 	(1-\gamma)\bar G_0(t)+\gamma \bar G_1(t).
 	\]
 	
 	\medskip
 	\noindent
 	\emph{(b) Limit of $V_m(t)$.}
 	By definition,
 	\[
 	V_m(t)
 	=
 	\P(S_j\ge t \mid \hat\Lambda, j\in\cH_0,(\cH_0,\cH_1)).
 	\]
 	Assumption~\ref{ass:pointwise-convg-score-tails} therefore gives, for each fixed $t$,
 	\[
 	V_m(t)\convp \bar G_0(t).
 	\]
 	
 	\paragraph{Step 2: Uniform convergence .}
 	Both $t\mapsto R_m(t)$ and $t\mapsto V_m(t)$ are (random) monotone decreasing and bounded, and their limits
 	\(
 	t\mapsto (1-\gamma)\bar G_0(t)+\gamma\bar G_1(t)
 	\)
 	and
 	\(
 	t\mapsto \bar G_0(t)
 	\)
 	are continuous and decreasing by Assumption~\ref{ass:regularity-of-cdfs}.
 	Therefore, Lemma~\ref{lemma:unif-convg-monotone} (since $(R_m,(1-\gamma)\bar G_0-\gamma\bar G_1)$ and $(V_m, \bar G_0)$ agree on $\pm \infty$) yields,
 	\[
 	\|R_m-(1-\gamma)\bar G_0-\gamma\bar G_1\|_{\infty}\ \convp\ 0,
 	\qquad
 	\|V_m-\bar G_0\|_{\infty}\ \convp\ 0.
 	\]
 	
 	\paragraph{Step 3: Uniform convergence of the ratio and identification of $t_*$.}
 	Define $\phi_m(t):=V_m(t)/R_m(t)$. By Assumption~\ref{ass:regularity-of-cdfs}, the limit denominator
 	\(
 	(1-\gamma)\bar G_0(t)+\gamma\bar G_1(t)
 	\)
 	is bounded away from $0$ on any  $K =(-\infty,C]$, $C\in\R$; by Step~2, the same holds with high probability for $R_m$ on $K$. Lemma~\ref{lemma:unif-convg-ratios} then applies and gives, for every such $K\subset\R$,
 	\[
 	\|\phi_m-\phi\|_{\infty,K}\ \convp\ 0,
 	\qquad
 	\phi(t)=\frac{\bar G_0(t)}{(1-\gamma)\bar G_0(t)+\gamma\bar G_1(t)}.
 	\]
 	Assumption~\ref{ass:mlr} (strictly increasing $\bar G_1/\bar G_0$) implies $\phi$ is strictly decreasing and continuous on $\R$.
 	Moreover, since  $\bar G_1/\bar G_0\to\infty$ as $t\to+\infty$ (by assumption \ref{ass:mlr}), then $\phi(+\infty)=0$ and its easy to $\phi(-\infty)=1$ (since $G_i$'s are cdf's ), so any $q\in(0,1)$ is attained.
 	
 	\paragraph{Step 4: Consistency of the data-driven threshold.}
 	Since $q$ is attained by $\phi$ and $\phi$ is strictly decreasing and continuous, we can invoke Lemma~\ref{lemma:uniform-convg-of-inverse} (with $\phi_n=\phi_m$ and level $q$) to conclude
 	\[
 	\hat t=\inf\{t:\phi_m(t)\le q\}\ \convp\ t_*=\inf\{t:\phi(t)\le q\}.
 	\]
 	
 	\paragraph{Step 5: Convergence of TPP at the random threshold.}
 	Let
 	\[
 	\TPP_m(t):=\frac{|\{j\in\cH_1:\,S_j\ge t\}|}{|\cH_1|}
 	=\frac{\frac{1}{m}\sum_{j=1}^m \mathbf 1\{j\in\cH_1,S_j\ge t\}}{\frac{1}{m}\sum_{j=1}^m \mathbf 1\{j\in\cH_1\}}.
 	\]
 	As in Step~1, Lemma~\ref{lemma:wlln-bounded-rv} and Assumption~\ref{ass:pointwise-convg-score-tails} yield, for each fixed $t$,
 	\[
 	\frac{1}{m}\sum_{j=1}^m \mathbf 1\{j\in\cH_1,S_j\ge t\}\ \convp\ \gamma\,\bar G_1(t),
 	\qquad
 	\frac{1}{m}\sum_{j=1}^m \mathbf 1\{j\in\cH_1\}\ \convp\ \gamma,
 	\]
 	whence $\TPP_m(t)\convp\bar G_1(t)$ pointwise. By monotonicity and continuity of the limit, Lemma~\ref{lemma:unif-convg-monotone} (since $\TPP_m$ and $\bar G_1$ agree on $\pm \infty$ ) gives uniform convergence on a neighbourhood of $t_*$. Combining with $\hat t\convp t_*$ (Step~4) and the continuous mapping argument,
 	\[
 	\TPP_m(\hat t)\ \convp\ \bar G_1(t_*).
 	\]
 	
 	\paragraph{Step 6: Identify the BH power functional.}
 	By definition,
 	\(
 	\TPR_{\mathrm{BH}}(G_0,G_1,\gamma)=\bar G_1(t_*)
 	\)
 	with $t_*$ determined by $\phi(t_*)=q$. Thus,
 	\[
 	\TPP(\widehat{\mathcal R})=\TPP_m(\hat t)\ \convp\ \TPR_{\mathrm{BH}}(G_0,G_1,\gamma).
 	\]
 \end{proof}

 \subsection{Proof of Theorem~\ref{thm:master-bonus} (BONuS)}
 
 \begin{proof}
 	Let $S_j:=T_{\hat\Lambda}(X_j^{\score})$ for original features and $\tilde S_k:=T_{\hat\Lambda}(\tilde X_k)$ for augmented null features. For $t\in\R$ define
 	\[
 	\mathcal R(t):=\{j\in[m]:\, S_j\ge t\},\qquad 
 	R_m(t):=\frac{1}{m}\,|\mathcal R(t)|=\frac{1}{m}\sum_{j=1}^m \mathbf 1\{S_j\ge t\}.
 	\]
 	For BONuS,
 	\[
 	\widehat V(t)=\frac{m}{1+\tilde m}\Bigl(1+\sum_{k=1}^{\tilde m}\mathbf 1\{\tilde S_k\ge t\}\Bigr),
 	\qquad
 	V_m(t):=\frac{1}{m}\widehat V(t).
 	\]
 	Define the limit BH curve and target threshold
 	\[
 	\phi(t):=\frac{\bar G_0(t)}{(1-\gamma)\bar G_0(t)+\gamma\,\bar G_1(t)},\qquad
 	t_*:=\inf\{t\in\R:\,\phi(t)\le q\}.
 	\]
 	By Assumption~\ref{ass:bonus-mlr}, $\phi$ is strictly decreasing and continuous with $\phi(-\infty)=1$ and $\phi(+\infty)=0$, so $t_*$ is finite and well-defined for any $q\in(0,1)$.
 	
 	\paragraph{Step 1: Pointwise limits of $R_m(t)$ and $V_m(t)$.}
 	\emph{(a) Original features.}  
 	By Assumption~\ref{ass:bonus-tails},
 	\[
 	\P(S_j\ge t\mid j\in\cH_0,(\cH_0,\cH_1))\convp \bar G_0(t),\qquad
 	\P(S_j\ge t\mid j\in\cH_1,(\cH_0,\cH_1))\convp \bar G_1(t),
 	\]
 	for each fixed $t$. By Assumption~\ref{ass:bonus-tails}  and~\ref{ass:bonus-indep}, for any $i\neq j$,
 	\[
 	\P\!\big(S_i\!\ge t,\;S_j\!\ge t \,\big|\, \cF_m\big)
 	-\P\!\big(S_i\!\ge t \,\big|\, \cF_m\big)\,
 	\P\!\big(S_j\!\ge t \,\big|\, \cF_m\big)
 	\;\convp\;0,
 	\quad\text{with }\;\cF_m:=\sigma(\cH_0,\cH_1).
 	\]
 	Hence
 	\[
 	\Cov\!\big(\mathbf 1\{S_i\ge t\},\,\mathbf 1\{S_j\ge t\}\,\big|\,\cF_m\big)
 	\;=\;
 	\P\!\big(S_i\!\ge t,\;S_j\!\ge t \,\big|\, \cF_m\big)
 	-\P\!\big(S_i\!\ge t \,\big|\, \cF_m\big)\,
 	\P\!\big(S_j\!\ge t \,\big|\, \cF_m\big)
 	\;\convp\;0.
 	\]
 	Therefore the bounded array $\{\mathbf 1\{S_j\ge t\}\}_{j=1}^m$ satisfies the vanishing conditional covariance condition of Lemma~\ref{lemma:wlln-bounded-rv} (with $\cF_n=\cF_m$), and so
 	\[
 	\frac{1}{m}\sum_{j=1}^m \mathbf 1\{S_j\ge t\}
 	-\E\!\left[\frac{1}{m}\sum_{j=1}^m \mathbf 1\{S_j\ge t\}\,\middle|\,\cF_m\right]
 	\convp 0.
 	\]
 	
 	Write $\gamma_m:=m^{-1}|\cH_1|$ and
 	\[
 	p_{k,m}(t):=\P(S_j\ge t \mid j\in\cH_k,(\cH_0,\cH_1)),\quad k\in\{0,1\}.
 	\]
 	By the tower property and exchangeability within each group,
 	\[
 	\E\!\left[\frac{1}{m}\sum_{j=1}^m \mathbf 1\{S_j\ge t\}\,\middle|\,\cF_m\right]
 	= (1-\gamma_m)\,p_{0,m}(t) + \gamma_m\,p_{1,m}(t).
 	\]
 	Assumption~\ref{ass:bonus-tails} gives $p_{k,m}(t)\convp \bar G_k(t)$ for each fixed $t$, and by the two-groups model  $\gamma_m\convp \gamma$. Hence,
 	\[
 	(1-\gamma_m)\,p_{0,m}(t) + \gamma_m\,p_{1,m}(t)
 	\;\convp\; (1-\gamma)\,\bar G_0(t) + \gamma\,\bar G_1(t).
 	\]
 	Combining with the previous display (Slutsky),
 	\[
 	R_m(t)
 	=\frac{1}{m}\sum_{j=1}^m \mathbf 1\{S_j\ge t\}
 	\;\convp\; (1-\gamma)\,\bar G_0(t) + \gamma\,\bar G_1(t).
 	\]
 	
 	\emph{(b) Augmented nulls and exchangeability.}  
 	By construction and permutation invariance of learner, synthetic null scores are exchangeable with true null scores:
 	\[
 	\P(\tilde S_k\ge t\mid(\cH_0,\cH_1))\;=\;\P(S_j\ge t\mid j\in\cH_0,(\cH_0,\cH_1))\ \convp\ \bar G_0(t).
 	\]
 	Similarly we also have 
 	\[
 	\P(\tilde S_{k_1}\ge t, \tilde S_{k_2}\ge t\mid(\cH_0,\cH_1))\;=\;\P(S_{j_1}\ge t, S_{j_2}\ge t\mid j_1,j_2\in\cH_0,(\cH_0,\cH_1))\ \convp\ \bar G^2_0(t).
 	\]
 	where we used Assumption~\ref{ass:bonus-indep}. Hence the $\mathbf 1\{\tilde S_k\ge t\}$ are asymptotically pairwise independent (conditional on $(\cH_0,\cH_1)$) using a similar argument as before. Lemma~\ref{lemma:wlln-bounded-rv} then gives
 	\[
 	\frac{1}{\tilde m}\sum_{k=1}^{\tilde m}\mathbf 1\{\tilde S_k\ge t\}\ \convp\ \bar G_0(t).
 	\]
 	Therefore, as $\tilde m\to\infty$,
 	\[
 	V_m(t)\ \convp\ \bar G_0(t)\qquad\text{for each fixed }t.
 	\]
 	
 	\paragraph{Step 2: Uniform convergence.}
 	The processes $t\mapsto R_m(t)$ and $t\mapsto V_m(t)$ are monotone decreasing and bounded. Their limits,
 	\[
 	t\mapsto (1-\gamma)\bar G_0(t)+\gamma\bar G_1(t)
 	\quad\text{and}\quad
 	t\mapsto \bar G_0(t),
 	\]
 	are continuous and decreasing by Assumption~\ref{ass:bonus-regularity}. Lemma~\ref{lemma:unif-convg-monotone} (since $(R_m,(1-\gamma)\bar G_0-\gamma\bar G_1)$ and $(V_m, \bar G_0)$ agree on $\pm \infty$) then gives,
 	\[
 	\|R_m-(1-\gamma)\bar G_0-\gamma\bar G_1\|_{\infty}\ \convp\ 0,
 	\qquad
 	\|V_m-\bar G_0\|_{\infty}\ \convp\ 0.
 	\]
 	
 	\paragraph{Step 3: Uniform convergence of the BH ratio and consistency of the threshold.}
 	Define $\phi_m(t):=V_m(t)/R_m(t)$. By Assumption~\ref{ass:bonus-regularity}, the denominator $(1-\gamma)\bar G_0+\gamma\bar G_1$ is bounded away from $0$ on every set $K =(-\infty,C]$, $C\in \R$, and by Step~2 the same holds for $R_m$ with high probability. Lemma~\ref{lemma:unif-convg-ratios} then gives
 	\[
 	\|\phi_m-\phi\|_{\infty,K}\ \convp\ 0.
 	\]
 	By Assumption~\ref{ass:bonus-mlr}, $\phi$ is strictly decreasing and continuous, and by tail separation every $q\in(0,1)$ is attained. Hence Lemma~\ref{lemma:uniform-convg-of-inverse} applies, giving
 	\[
 	\hat t:=\inf\{t:\phi_m(t)\le q\}\ \convp\ t_*:=\inf\{t:\phi(t)\le q\}.
 	\]
 	
 	\paragraph{Step 4: Convergence of TPP at the random threshold.}
 	Define
 	\[
 	\TPP_m(t):=\frac{|\{j\in\cH_1:\, S_j\ge t\}|}{|\cH_1|}
 	=\frac{\frac{1}{m}\sum_{j=1}^m \mathbf 1\{j\in\cH_1,S_j\ge t\}}{\frac{1}{m}\sum_{j=1}^m \mathbf 1\{j\in\cH_1\}}.
 	\]
 	As in Step~1(a), using Lemma~\ref{lemma:wlln-bounded-rv} and Assumption~\ref{ass:bonus-tails},
 	\[
 	\frac{1}{m}\sum_{j=1}^m \mathbf 1\{j\in\cH_1,S_j\ge t\}\ \convp\ \gamma\,\bar G_1(t),
 	\qquad
 	\frac{1}{m}\sum_{j=1}^m \mathbf 1\{j\in\cH_1\}\ \convp\ \gamma,
 	\]
 	so $\TPP_m(t)\convp \bar G_1(t)$. By monotonicity and continuity of the limit in $t$, Lemma~\ref{lemma:unif-convg-monotone} (since $\TPP_m$ and $\bar G_1$ agree on $\pm \infty$ ) yields uniform convergence near $t_*$. With $\hat t\convp t_*$,
 	\[
 	\TPP_m(\hat t)\ \convp\ \bar G_1(t_*).
 	\]
 	
 	\paragraph{Step 5: Identify the BH power.}
 	By definition of the BH TPR functional,
 	\[
 	\TPR_{\mathrm{BH}}(G_0,G_1,\gamma)=\bar G_1(t_*).
 	\]
 	Therefore,
 	\[
 	\TPP(\widehat{\mathcal R})=\TPP_m(\hat t)\ \convp\ \TPR_{\mathrm{BH}}(G_0,G_1,\gamma).
 	\]
 \end{proof}
 
 \subsection{Proof of Theorem~\ref{thm:master-in-sample} (In-sample BH)}
 
 \begin{proof}
 	Let $S_j:=T_{\hat\Lambda}(X_j)$ and, for $t\in\R$,
 	\[
 	\cR(t):=\{j\in[m]:\,S_j\ge t\},\qquad
 	R_m(t):=\frac{1}{m}|\cR(t)|=\frac{1}{m}\sum_{j=1}^m \mathbf 1\{S_j\ge t\}.
 	\]
 	The (oracle) calibration term in Algorithm~\ref{alg:in-sample-bh} is
 	\[
 	\widehat V(t):= m\cdot \P\!\big(T_{\hat\Lambda}(X_j)\ge t \,\big|\, j\in\cH_0,(\theta_k)_{k=1}^m\big),
 	\qquad
 	V_m(t):=\frac{1}{m}\widehat V(t)= \P\!\big(T_{\hat\Lambda}(X_j)\ge t \,\big|\, j\in\cH_0,(\theta_k)_{k=1}^m\big).
 	\]
 	Define the limit BH curve and target threshold
 	\[
 	\phi(t):=\frac{\bar G_0(t)}{(1-\gamma)\bar G_0(t)+\gamma\,\bar G_1(t)},
 	\qquad
 	t_*:=\inf\{t\in\R:\,\phi(t)\le q\}.
 	\]
 	By Assumption~\ref{ass:insample-mlr}, $\phi$ is strictly decreasing and continuous with
 	$\phi(-\infty)=1$ and $\phi(+\infty)=0$, so $t_*$ is finite and well-defined for any $q\in(0,1)$.
 	
 	\paragraph{Step 1: Pointwise limit of $R_m(t)$.}
 	Fix $t\in\R$ and set $Y_j(t):=\mathbf 1\{S_j\ge t\}$. By Assumption~\ref{ass:insample-indep},
 	for any $i\neq j$,
 	\[
 	\P\!\big(S_i\ge t,S_j\ge t \,\big|\,\cF_m\big)
 	- \P(S_i\ge t \mid \cF_m)\,\P(S_j\ge t \mid \cF_m)\ \convp\ 0,
 	\quad \cF_m:=\sigma(\cH_0,\cH_1),
 	\]
 	hence
 	\[
 	\Cov\!\big(Y_i(t),Y_j(t)\mid \cF_m\big)\ \convp\ 0.
 	\]
 	Since $|Y_j(t)|\le 1$, Lemma~\ref{lemma:wlln-bounded-rv} yields
 	\[
 	R_m(t)- \E\!\Big[\frac{1}{m}\sum_{j=1}^m Y_j(t)\,\Big|\,\cF_m\Big]\ \convp\ 0.
 	\]
 	Write $\gamma_m:=m^{-1}|\cH_1|$ and
 	$p_{k,m}(t):=\P(S_j\ge t\mid j\in\cH_k,(\cH_0,\cH_1))$.
 	By the tower property and exchangeability within groups,
 	\[
 	\E\!\Big[\frac{1}{m}\sum_{j=1}^m Y_j(t)\,\Big|\,\cF_m\Big]
 	= (1-\gamma_m)p_{0,m}(t)+\gamma_m p_{1,m}(t).
 	\]
 	Assumption~\ref{ass:insample-indep} (by setting $t_2 =-\infty$, more formally follow a similar argument to Step 1 part (a) of Proof of Theorem \ref{thm:master-in-sample}) gives $p_{k,m}(t)\convp \bar G_k(t)$, and $\gamma_m\convp \gamma$,
 	so
 	\[
 	R_m(t)\ \convp\ (1-\gamma)\bar G_0(t)+\gamma\,\bar G_1(t).
 	\]
 	
 	\paragraph{Step 2: Pointwise limit of $V_m(t)$.}
 	By Assumption~\ref{ass:insample-tails} 
 	\[
 	V_m(t)=\P\!\big(T_{\hat\Lambda}(X_j)\ge t \,\big|\, j\in\cH_0,(\theta_k)_{k=1}^m\big)\    \to\ \bar G_0(t).
 	\]
 	(Here $V_m(t)$ is an oracle quantity in Algorithm~\ref{alg:in-sample-bh} and does not require empirical averaging.)
 	
 	\paragraph{Step 3: Uniform convergence.}
 	Both $R_m(t)$ and $V_m(t)$ are bounded, monotone decreasing in $t$.
 	Their limits $(1-\gamma)\bar G_0+\gamma\bar G_1$ and $\bar G_0$ are continuous and decreasing by Assumption~\ref{ass:insample-regularity}. Hence Lemma~\ref{lemma:unif-convg-monotone} (since $(R_m,(1-\gamma)\bar G_0-\gamma\bar G_1)$ and $(V_m, \bar G_0)$ agree on $\pm \infty$) implies
 	\[
 	\|R_m-(1-\gamma)\bar G_0-\gamma\bar G_1\|_{\infty}\ \convp\ 0,
 	\qquad
 	\|V_m-\bar G_0\|_{\infty}\ \convp\ 0.
 	\]
 	
 	\paragraph{Step 4: Ratio convergence and threshold consistency.}
 	Let $\phi_m(t):=V_m(t)/R_m(t)$. By Assumption~\ref{ass:insample-regularity}, the limit denominator $(1-\gamma)\bar G_0+\gamma\bar G_1$ is bounded away from $0$ on $(-\infty,C]$ for all $C\in R$; Step~3 implies the same for $R_m$ with high probability. Lemma~\ref{lemma:unif-convg-ratios} yields
 	\[
 	\|\phi_m-\phi\|_{\infty,K}\ \convp\ 0\qquad\text{for each }K =(-\infty,C].
 	\]
 	Assumption~\ref{ass:insample-mlr} (strict monotonicity and tail separation) guarantees that $q$ is attained. Therefore, by Lemma~\ref{lemma:uniform-convg-of-inverse},
 	\[
 	\hat t:=\inf\{t:\phi_m(t)\le q\}\ \convp\ t_*:=\inf\{t:\phi(t)\le q\}.
 	\]
 	
 	\paragraph{Step 5: TPP at the random threshold.}
 	Define
 	\[
 	\TPP_m(t):=\frac{|\{j\in\cH_1:S_j\ge t\}|}{|\cH_1|}
 	=\frac{\frac{1}{m}\sum_{j=1}^m \mathbf 1\{j\in\cH_1,S_j\ge t\}}{\frac{1}{m}\sum_{j=1}^m \mathbf 1\{j\in\cH_1\}}.
 	\]
 	Using the same conditional–covariance argument as in Step~1 and Assumption~\ref{ass:insample-indep},
 	\[
 	\TPP_m(t)\ \convp\ \bar G_1(t)\qquad\text{for each fixed }t.
 	\]
 	By Lemma~\ref{lemma:unif-convg-monotone} (uniform convergence near $t_*$, since $\TPP_m$ and $\bar G_1$ agree on $\pm \infty$ ) and $\hat t\convp t_*$,
 	\[
 	\TPP_m(\hat t)\ \convp\ \bar G_1(t_*).
 	\]
 	
 	\paragraph{Step 6: Identify the BH power.}
 	By definition, $\TPR_{\mathrm{BH}}(G_0,G_1,\gamma)=\bar G_1(t_*)$. Hence
 	\[
 	\TPP(\widehat{\cR})=\TPP_m(\hat t)\ \convp\ \TPR_{\mathrm{BH}}(G_0,G_1,\gamma).
 	\]
 \end{proof}
 
 \section{Results Related to point prior}
 
 \subsection{Quality of learned direction}
 
 \paragraph{Proof of Lemma \ref{lem:mean-learner-untransformed} and Proposition \ref{prop:mean-learner-masked}.} The proof of Lemma \ref{lem:mean-learner-untransformed} follows directly from Lemma \ref{lem:mean-learner-split} with $\pi_{\spl} =1$ and proof of Proposition \ref{prop:mean-learner-masked} follows from Lemma \ref{lem:mean-learner-split}  and Lemma \ref{lem:mean-learner-bonus}.
 \begin{lemma}[Quality of mean learner on split data]
 	\label{lem:mean-learner-split}
 	Assume the model \eqref{eq:data-generating}–\eqref{eq:two-groups} with the point-mass prior
 	\(\Lambda=\delta_{h\bm v}\), and the asymptotic regime \eqref{eq:asymptotic-regime}.
 	Let \(\bm X^\learn\) be obtained by data fission with splitting proportion \(\pi_{\spl}\in(0,1]\),
 	i.e.
 	\[
 	\bm X_j^\learn \;=\; \sqrt{\pi_{\spl}}\;\bm X_j \;+\; \sqrt{1-\pi_{\spl}}\;\widetilde{\bm X}_j,
 	\qquad \widetilde{\bm X}_j \iidsim N(\bm 0,\bm I_d) .
 	\]
 	Define the mean learner (cf. \eqref{eq:mean-learner})
 	\[
 	\hat{\bm v}_{\spl} \;=\; \frac{\bar{\bm X}^{\learn}}{\|\bar{\bm X}^{\learn}\|},
 	\qquad
 	\bar{\bm X}^{\learn} \;:=\; \frac{1}{m}\sum_{j=1}^m \bm X_j^\learn .
 	\]
 	Then
 	\[
 	\hat{\bm v}_{\spl}^{\top}\bm v \;\convp\;
 	\tau_{\textnormal{mean}}(\gamma,c,\sqrt{\pi_{\spl}}\,h)
 	\;=\;
 	\frac{h\gamma}{\sqrt{h^2\gamma^2 + c/\pi_{\spl}}}\, .
 	\]
 \end{lemma}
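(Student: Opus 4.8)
The plan is to reduce the high-dimensional problem to a pair of scalar limits via the weak law of large numbers, tracking separately the component of the empirical mean $\bar{\bm X}^{\learn}$ along the true direction $\bm v$ and the squared norm of $\bar{\bm X}^{\learn}$.

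First I would put the learning data in signal-plus-isotropic-noise form. Since $\bm X_j \mid \bm\theta_j \sim N(\bm\theta_j,\bm I_d)$ and $\widetilde{\bm X}_j \iidsim N(\bm 0,\bm I_d)$ independently, the fission combination can be written as $\bm X_j^{\learn} = \sqrt{\pi_{\spl}}\,\bm\theta_j + \bm\xi_j$ with $\bm\xi_j \equiv \sqrt{\pi_{\spl}}(\bm X_j-\bm\theta_j) + \sqrt{1-\pi_{\spl}}\,\widetilde{\bm X}_j \iidsim N(\bm 0,\bm I_d)$, and the $\bm\xi_j$ are independent of $\{\bm\theta_j\}_{j\in[m]}$ (the $\sqrt{\pi_{\spl}}$ and $\sqrt{1-\pi_{\spl}}$ weights make the two noise contributions combine into an isotropic one). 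Averaging over $j$ gives $\bar{\bm X}^{\learn} = \sqrt{\pi_{\spl}}\,\tfrac{|\mathcal H_1|}{m}\,h\bm v + \bar{\bm\xi}$, where $\bar{\bm\xi} \equiv m^{-1}\sum_j \bm\xi_j \sim N(\bm 0, m^{-1}\bm I_d)$ and $|\mathcal H_1|=\sum_j \indicator(\bm\theta_j = h\bm v)$ is $\mathrm{Binomial}(m,\gamma)$, so $|\mathcal H_1|/m \convp \gamma$.

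Next I would compute the numerator and denominator of $\hat{\bm v}_{\spl}^{\top}\bm v = (\bar{\bm X}^{\learn})^{\top}\bm v \big/ \|\bar{\bm X}^{\learn}\|$. Using $\|\bm v\|=1$, the numerator is $(\bar{\bm X}^{\learn})^{\top}\bm v = \sqrt{\pi_{\spl}}\,\tfrac{|\mathcal H_1|}{m}\,h + \bar{\bm\xi}^{\top}\bm v$, where $\bar{\bm\xi}^{\top}\bm v \sim N(0,1/m)\convp 0$, so the numerator $\convp \sqrt{\pi_{\spl}}\,\gamma h$. For the denominator I expand $\|\bar{\bm X}^{\learn}\|^2 = \pi_{\spl}\,\tfrac{|\mathcal H_1|^2}{m^2}h^2 + 2\sqrt{\pi_{\spl}}\,\tfrac{|\mathcal H_1|}{m}h\,\bar{\bm\xi}^{\top}\bm v + \|\bar{\bm\xi}\|^2$: the first term $\convp \pi_{\spl}\gamma^2 h^2$, the cross term is a bounded sequence times $o_p(1)$ hence $o_p(1)$, and the crucial term is $\|\bar{\bm\xi}\|^2 = m^{-1}\chi^2_d$, whose mean is $d/m\to c$ and whose variance is $2d/m^2\to 0$, so $\|\bar{\bm\xi}\|^2 \convp c$. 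Therefore $\|\bar{\bm X}^{\learn}\|^2 \convp \pi_{\spl}\gamma^2 h^2 + c$, which is strictly positive whenever $c>0$ (or $\gamma h>0$), and Slutsky's theorem together with the continuous mapping theorem (continuity of $x\mapsto \sqrt{x}$ at the positive limit) yields $\hat{\bm v}_{\spl}^{\top}\bm v \convp \sqrt{\pi_{\spl}}\,\gamma h\big/\sqrt{\pi_{\spl}\gamma^2 h^2 + c}$, which equals $\tau_{\textnormal{mean}}(\gamma,c,\sqrt{\pi_{\spl}}\,h)$ after dividing numerator and denominator by $\sqrt{\pi_{\spl}}$. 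Taking $\pi_{\spl}=1$ recovers Lemma~\ref{lem:mean-learner-untransformed}.

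The main obstacle is conceptual rather than technical: it is the treatment of the averaged noise norm $\|\bar{\bm\xi}\|^2$. In a fixed dimension the mean learner would be consistent, but under the proportional regime $d/m\to c$ the noise does not average away and contributes a deterministic $c$ to the squared norm, which is precisely what keeps the limiting alignment strictly below $1$. The remaining ingredients — the binomial law of large numbers for $|\mathcal H_1|/m$, the $\chi^2_d/m$ concentration, Slutsky, and the continuous mapping theorem — are routine, and one only needs the mild nondegeneracy $\pi_{\spl}\gamma^2 h^2 + c>0$ so that the square-root map is continuous at the limit point.
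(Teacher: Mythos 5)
Your proof is correct and follows essentially the same route as the paper's: rewrite the fissioned data in signal-plus-isotropic-noise form, apply the LLN to the non-null fraction, identify $\|\bar{\bm\xi}\|^2 \sim m^{-1}\chi^2_d \convp c$ as the source of the aspect-ratio term, and close with Slutsky and the continuous mapping theorem. The only cosmetic difference is that the paper decomposes the noise mean into its $\bm v$-parallel component $B_m$ and an orthogonal remainder $T_m \overset{d}{=} m^{-1}\chi^2_{d-1}$, whereas you expand $\|\bar{\bm X}^\learn\|^2$ directly and keep $\|\bar{\bm\xi}\|^2$ whole; the two bookkeeping schemes are algebraically identical.
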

 
 \begin{proof}
 	Under the point-mass prior, \(\bm\theta_j\in\{\bm 0,h\bm v\}\) with
 	\(\P(\bm\theta_j=h\bm v)=\gamma\) i.i.d.\ across \(j\).
 	By construction of \(\bm X_j^\learn\),
 	\[
 	\bm X_j^\learn \;\overset{d}{=}\; \sqrt{\pi_{\spl}}\;\bm\theta_j \;+\; \bm Z_j,
 	\qquad \bm Z_j \iidsim N(\bm 0,\bm I_d),
 	\]
 	so
 	\[
 	\bar{\bm X}^{\learn}
 	\;=\;
 	\sqrt{\pi_{\spl}}\;\bar{\bm\theta} \;+\; \bar{\bm Z},
 	\qquad
 	\bar{\bm\theta}:=\frac{1}{m}\sum_{j=1}^m \bm\theta_j,
 	\quad
 	\bar{\bm Z}:=\frac{1}{m}\sum_{j=1}^m \bm Z_j \sim N\!\Big(\bm 0, \frac{1}{m}\bm I_d\Big).
 	\]
 	Let \(\gamma_m:=m^{-1}|\cH_1|\). Then \(\bar{\bm\theta}= \gamma_m h\,\bm v\) and \(\gamma_m\convp\gamma\) by the law of large numbers across features.
 	
 	Without loss of generality rotate coordinates so that \(\bm v=\bm e_1\).
 	Write the decomposition of \(\bar{\bm Z}\) into the component along \(\bm e_1\) and its orthogonal complement:
 	\[
 	B_m := \bm e_1^{\top}\bar{\bm Z} \sim N\!\Big(0,\frac{1}{m}\Big), 
 	\qquad
 	T_m := \big\|\bar{\bm Z} - B_m \bm e_1\big\|^2 \;\overset{d}{=}\; \frac{1}{m}\chi^2_{d-1}.
 	\]
 	Then
 	\[
 	\bar{\bm X}^{\learn}
 	= \big(\sqrt{\pi_{\spl}}\,\gamma_m h + B_m\big)\bm e_1
 	\;+\; \big(\bar{\bm Z} - B_m \bm e_1\big),
 	\]
 	and consequently
 	\[
 	\hat{\bm v}_{\spl}^{\top}\bm v
 	= \frac{\sqrt{\pi_{\spl}}\,\gamma_m h + B_m}
 	{\sqrt{\big(\sqrt{\pi_{\spl}}\,\gamma_m h + B_m\big)^2 + T_m}}.
 	\]
 	
 	Now \(\gamma_m\convp\gamma\), \(B_m\to 0\) in probability, and
 	\(T_m \convp \lim (d-1)/m = c\) by \eqref{eq:asymptotic-regime}.
 	By the continuous mapping theorem,
 	\[
 	\hat{\bm v}_{\spl}^{\top}\bm v
 	\;\convp\;
 	\frac{\sqrt{\pi_{\spl}}\,\gamma h}{\sqrt{\big(\sqrt{\pi_{\spl}}\,\gamma h\big)^2 + c}}
 	\;=\;
 	\frac{h\gamma}{\sqrt{h^2\gamma^2 + c/\pi_{\spl}}}\,,
 	\]
 	which is \(\tau_{\textnormal{mean}}(\gamma,c,\sqrt{\pi_{\spl}}\,h)\) as claimed.
 \end{proof}

 \begin{lemma}[Quality of mean learner on augmented data (BONuS)]
 	\label{lem:mean-learner-bonus}
 	Assume the model \eqref{eq:data-generating}–\eqref{eq:two-groups} with the point-mass prior
 	\(\Lambda=\delta_{h\bm v}\), and the asymptotic regime \eqref{eq:asymptotic-regime}.
 	Let the BONuS learning data be
 	\[
 	\bm X^{\learn}=[\bm X,\tilde{\bm X}],
 	\qquad
 	\tilde{\bm X}_k \iidsim N(\bm 0,\bm I_d),\ k=1,\dots,\tilde m,
 	\]
 	and define the mean learner (cf.\ \eqref{eq:mean-learner})
 	\[
 	\hat{\bm v}_{\bonus}\;=\;\frac{\bar{\bm X}^{\learn}}{\|\bar{\bm X}^{\learn}\|},
 	\qquad
 	\bar{\bm X}^{\learn}\;:=\;\frac{1}{m+\tilde m}
 	\left(\sum_{j=1}^m \bm X_j+\sum_{k=1}^{\tilde m}\tilde{\bm X}_k\right).
 	\]
 	Then
 	\[
 	\hat{\bm v}_{\bonus}^{\top}\bm v
 	\;\convp\;
 	\tau_{\textnormal{mean}}(\pi_{\aug}\gamma,\pi_{\aug}c, h)
 	\;=\;
 	\frac{h\gamma}{\sqrt{h^2\gamma^2 + c/\pi_{\aug}}}\,,
 	\]
 	where \(\pi_{\aug}=\lim m/(m+\tilde m)\in(0,1]\).
 \end{lemma}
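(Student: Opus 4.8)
The plan is to mirror the proof of Lemma~\ref{lem:mean-learner-split}, the only conceptual change being that null augmentation adds pure-noise rows rather than attenuating the signal rows. First I would use the two-groups structure to write $\bm X_j \overset{d}{=} \bm\theta_j + \bm Z_j$ with $\bm Z_j \iidsim N(\bm 0,\bm I_d)$ and $\bm\theta_j\in\{\bm 0,h\bm v\}$ taking the value $h\bm v$ with probability $\gamma$, while each augmented row is simply $\tilde{\bm X}_k = \tilde{\bm Z}_k \iidsim N(\bm 0,\bm I_d)$. Collecting all $m+\tilde m$ Gaussian vectors into a single average $\bar{\bm W} := (m+\tilde m)^{-1}\big(\sum_{j=1}^m \bm Z_j + \sum_{k=1}^{\tilde m}\tilde{\bm Z}_k\big)\sim N(\bm 0,(m+\tilde m)^{-1}\bm I_d)$ and writing $\gamma_m := m^{-1}|\cH_1|$, this gives
\[
\bar{\bm X}^{\learn} \;=\; \Big(\gamma_m h\cdot\tfrac{m}{m+\tilde m}\Big)\bm v \;+\; \bar{\bm W}.
\]

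Next, exactly as in the split case, I would rotate coordinates so that $\bm v=\bm e_1$ and split $\bar{\bm W}=B_m\bm e_1+\bar{\bm W}_\perp$, where $B_m\sim N(0,(m+\tilde m)^{-1})$ and $\|\bar{\bm W}_\perp\|^2\overset{d}{=}(m+\tilde m)^{-1}\chi^2_{d-1}$ independently, yielding the closed form
\[
\hat{\bm v}_{\bonus}^{\top}\bm v \;=\; \frac{\gamma_m h\,\tfrac{m}{m+\tilde m}+B_m}{\sqrt{\big(\gamma_m h\,\tfrac{m}{m+\tilde m}+B_m\big)^2+\|\bar{\bm W}_\perp\|^2}}.
\]
I would then pass to the limit in the regime~\eqref{eq:asymptotic-regime} term by term: $\gamma_m\convp\gamma$ by the law of large numbers across features; $m/(m+\tilde m)\to\pi_{\aug}$ by definition; $B_m\convp 0$ since its variance tends to $0$; and, writing $\|\bar{\bm W}_\perp\|^2 = \tfrac{d-1}{m+\tilde m}\cdot\tfrac{\chi^2_{d-1}}{d-1}$ with $\tfrac{d-1}{m+\tilde m}=\tfrac{d-1}{m}\cdot\tfrac{m}{m+\tilde m}\to c\,\pi_{\aug}$ and $\chi^2_{d-1}/(d-1)\convp 1$ (valid since $d\to\infty$), we get $\|\bar{\bm W}_\perp\|^2\convp c\,\pi_{\aug}$. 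The continuous mapping theorem then gives
\[
\hat{\bm v}_{\bonus}^{\top}\bm v \;\convp\; \frac{\pi_{\aug}\gamma h}{\sqrt{\pi_{\aug}^2\gamma^2 h^2+c\,\pi_{\aug}}} \;=\; \frac{\sqrt{\pi_{\aug}}\,h\gamma}{\sqrt{\pi_{\aug}\gamma^2 h^2+c}},
\]
and multiplying numerator and denominator by $\pi_{\aug}^{-1/2}$ identifies the right-hand side with $\tau_{\textnormal{mean}}(\pi_{\aug}\gamma,\pi_{\aug}c,h)=h\gamma/\sqrt{h^2\gamma^2+c/\pi_{\aug}}$, as claimed.

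I do not expect a genuine obstacle; this is a routine adaptation of Lemma~\ref{lem:mean-learner-split}. The only points needing a touch of care are the boundary cases of the regime: the continuous mapping step requires the limiting denominator $\pi_{\aug}^2\gamma^2 h^2+c\,\pi_{\aug}$ to be nonzero, which holds whenever $\pi_{\aug}>0$ and $(\gamma h,c)\neq(0,0)$, and when $\pi_{\aug}=0$ (so $\tilde m/m\to\infty$) one argues directly that the signal coefficient $\gamma_m h\,m/(m+\tilde m)$ vanishes, giving $\hat{\bm v}_{\bonus}^{\top}\bm v\convp 0$ when $c>0$, consistent with interpreting $c/\pi_{\aug}=+\infty$ in the stated formula. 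These are exactly the caveats already implicit in Lemma~\ref{lem:mean-learner-split} and can each be dispatched in one sentence.
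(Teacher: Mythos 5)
Your proof is correct and follows essentially the same route as the paper's: decompose $\bar{\bm X}^{\learn}$ into signal and averaged noise, rotate so $\bm v=\bm e_1$, split the noise into parallel and orthogonal components, then take limits via $\gamma_m\convp\gamma$, $B_m\convp 0$, $\|\bar{\bm W}_\perp\|^2\convp c\,\pi_{\aug}$, and the continuous mapping theorem. The boundary remarks you add ($\pi_{\aug}=0$ and the nondegeneracy of the limiting denominator) are a minor refinement over the paper's version but do not alter the argument.
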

 
 \begin{proof}
 	Under the point-mass prior, \(\bm\theta_j\in\{\bm 0,h\bm v\}\) with
 	\(\P(\bm\theta_j=h\bm v)=\gamma\) i.i.d.\ across \(j\), and
 	\(\bm X_j\sim N(\bm\theta_j,\bm I_d)\) while each \(\tilde{\bm X}_k\sim N(\bm 0,\bm I_d)\).
 	Hence
 	\[
 	\bar{\bm X}^{\learn}
 	=\frac{1}{m+\tilde m}\Big(\sum_{j=1}^m \bm\theta_j\Big)
 	\;+\;\bar{\bm Z},
 	\qquad
 	\bar{\bm Z}
 	:=\frac{1}{m+\tilde m}\left(\sum_{j=1}^m(\bm X_j-\bm\theta_j)
 	+ \sum_{k=1}^{\tilde m}\tilde{\bm X}_k\right)
 	\sim N\!\Big(\bm 0,\tfrac{1}{m+\tilde m}\bm I_d\Big).
 	\]
 	Let \(\gamma_m:=m^{-1}|\cH_1|\). Then \(\frac{1}{m}\sum_{j=1}^m \bm\theta_j=\gamma_m h\,\bm v\), so
 	\[
 	\bar{\bm X}^{\learn}
 	= \frac{m}{m+\tilde m}\,\gamma_m h\,\bm v \;+\; \bar{\bm Z}
 	= \pi_{\aug,m}\,\gamma_m h\,\bm v \;+\; \bar{\bm Z},
 	\qquad
 	\pi_{\aug,m}:=\frac{m}{m+\tilde m}\ \to\ \pi_{\aug}.
 	\]
 	
 	Without loss of generality rotate coordinates so that \(\bm v=\bm e_1\).
 	Write the decomposition of \(\bar{\bm Z}\):
 	\[
 	B_m := \bm e_1^{\top}\bar{\bm Z} \sim N\!\Big(0,\frac{1}{m+\tilde m}\Big), 
 	\qquad
 	T_m := \big\|\bar{\bm Z} - B_m \bm e_1\big\|^2 
 	\;\overset{d}{=}\; \frac{1}{m+\tilde m}\chi^2_{d-1}.
 	\]
 	Then
 	\[
 	\hat{\bm v}_{\bonus}^{\top}\bm v
 	= \frac{\pi_{\aug,m}\gamma_m h + B_m}
 	{\sqrt{\big(\pi_{\aug,m}\gamma_m h + B_m\big)^2 + T_m}}.
 	\]
 	
 	By the law of large numbers across features \(\gamma_m\convp\gamma\), by assumption \eqref{eq:asymptotic-regime} we have \(d/m\to c\) and hence
 	\[
 	T_m \;\convp\; c\,\pi_{\aug},
 	\qquad
 	B_m \;\convp\; 0.
 	\]
 	since $\frac{d}{m+\tilde m} \;\to\; \frac{d}{m}\cdot\frac{m}{m+\tilde m} \;=\; c\,\pi_{\aug}$.
 	Therefore, by the continuous mapping theorem,
 	\[
 	\hat{\bm v}_{\bonus}^{\top}\bm v
 	\;\convp\;
 	\frac{\pi_{\aug}\gamma h}{\sqrt{(\pi_{\aug}\gamma h)^2 + c\,\pi_{\aug}}}
 	\;=\;
 	\frac{h\gamma}{\sqrt{h^2\gamma^2 + c/\pi_{\aug}}}
 	\;=\;
 	\tau_{\textnormal{mean}}(\pi_{\aug}\gamma,\pi_{\aug}c,h),
 	\]
 	as claimed.
 \end{proof}

 \subsection{Results on power}
 
 \paragraph{Proof of Theorem \ref{thm:power-point-mass}}
 
 The proof follows by using Lemma \ref{lem:power-point-mass-split}, \ref{lem:power-point-mass-bonus} and \ref{lem:power-point-mass-insample}.
 
 \begin{lemma}[Asymptotic power for Split BH under point-mass prior]
 	\label{lem:power-point-mass-split}
 	Assume the model \eqref{eq:data-generating}–\eqref{eq:two-groups} with $\Lambda=\delta_{h\bm v}$ and the asymptotic regime \eqref{eq:asymptotic-regime}. 
 	Let $\widehat{\cR}$ be the rejection set of \emph{Split BH} with splitting proportion $\pi_{\spl}\in(0,1)$ and the mean learner \eqref{eq:mean-learner} fit on $\bm X^{\learn}$. 
 	Then
 	\[
 	\TPP(\widehat{\cR})\ \convp\ \TPR_{\textnormal{BH}}\!\big(N(0,1),\,N(\mu_{\spl},1),\,\gamma\big),
 	\]
 	where
 	\[
 	\mu_{\spl}
 	\;:=\;
 	\sqrt{1-\pi_{\spl}}\;h\;\tau_{\textnormal{mean}}(\gamma,c,\sqrt{\pi_{\spl}}\,h)
 	\;=\;
 	\sqrt{1-\pi_{\spl}}\;h\;\frac{h\gamma}{\sqrt{h^2\gamma^2+c/\pi_{\spl}}}.
 	\]
 \end{lemma}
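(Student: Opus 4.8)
The plan is to deduce the claim from the master theorem for Split BH (Theorem~\ref{thm:master-split-BH}) applied with $G_0 = N(0,1)$ and $G_1 = N(\mu_{\spl},1)$, since then $\TPR_{\textnormal{BH}}(G_0,G_1,\gamma)$ is exactly the asserted limit. The substance of the argument is verifying the three hypotheses \ref{ass:pointwise-convg-score-tails}--\ref{ass:mlr} of that theorem; I assume throughout the nondegenerate regime $h>0$, $\gamma>0$ (so $\mu_{\spl}>0$ since $\pi_{\spl}\in(0,1)$).

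\emph{Conditional law of the scores.} Recall from Algorithm~\ref{alg:split-bh} that $\bm X^{\score}=\sqrt{1-\pi_{\spl}}\,\bm X-\sqrt{\pi_{\spl}}\,\tilde{\bm X}$, while $\hat\Lambda=\hat{\bm v}_{\spl}$ is a deterministic function of $\bm X^{\learn}=\sqrt{\pi_{\spl}}\,\bm X+\sqrt{1-\pi_{\spl}}\,\tilde{\bm X}$ alone. Conditionally on $\{\bm\theta_j\}_{j\in[m]}$ the matrices $\bm X^{\learn}$ and $\bm X^{\score}$ are jointly Gaussian with $\Cov(\bm X^{\learn}_j,\bm X^{\score}_j)=\bm 0$ for every $j$ (the data-fission property), hence $\bm X^{\score}$ is conditionally independent of $\hat\Lambda$; moreover, under the point-mass prior the event $(\cH_0,\cH_1)$ is measurable with respect to $\{\bm\theta_j\}$, so conditioning on it coincides with conditioning on $\{\bm\theta_j\}$. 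For $j\in\cH_0$, $\bm X^{\score}_j\mid\hat\Lambda,(\cH_0,\cH_1)\sim N(\bm 0,\bm I_d)$, and since $\hat{\bm v}_{\spl}$ is a unit vector, $S^{\spl}_j=\hat{\bm v}_{\spl}^{\top}\bm X^{\score}_j\mid\hat\Lambda,(\cH_0,\cH_1)\sim N(0,1)$ \emph{exactly}, so $\P(S^{\spl}_j\ge t\mid\hat\Lambda,j\in\cH_0,(\cH_0,\cH_1))=\bar\Phi(t)=\bar G_0(t)$. For $j\in\cH_1$, $\bm\theta_j=h\bm v$, hence $\bm X^{\score}_j\mid\hat\Lambda,(\cH_0,\cH_1)\sim N(\sqrt{1-\pi_{\spl}}\,h\bm v,\bm I_d)$ and $S^{\spl}_j\mid\hat\Lambda\sim N(\sqrt{1-\pi_{\spl}}\,h\,\hat{\bm v}_{\spl}^{\top}\bm v,\,1)$, so $\P(S^{\spl}_j\ge t\mid\hat\Lambda,j\in\cH_1,(\cH_0,\cH_1))=\bar\Phi\!\big(t-\sqrt{1-\pi_{\spl}}\,h\,\hat{\bm v}_{\spl}^{\top}\bm v\big)$. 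By Lemma~\ref{lem:mean-learner-split}, $\hat{\bm v}_{\spl}^{\top}\bm v\convp\tau_{\textnormal{mean}}(\gamma,c,\sqrt{\pi_{\spl}}\,h)$, so the continuous mapping theorem gives this tail $\convp\bar\Phi(t-\mu_{\spl})=\bar G_1(t)$ for each fixed $t$. This is hypothesis~\ref{ass:pointwise-convg-score-tails}.

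\emph{Remaining hypotheses.} Hypothesis~\ref{ass:regularity-of-cdfs} is immediate: $N(0,1)$ and $N(\mu_{\spl},1)$ are continuous and their CDFs stay below $1$ on every half-line $(-\infty,C]$. For hypothesis~\ref{ass:mlr}, the survival ratio is $\bar G_1(t)/\bar G_0(t)=\bar\Phi(t-\mu_{\spl})/\bar\Phi(t)$; writing $\lambda=\phi/\bar\Phi$ for the standard normal hazard rate, $\frac{d}{dt}\log\!\big(\bar\Phi(t-\mu_{\spl})/\bar\Phi(t)\big)=\lambda(t)-\lambda(t-\mu_{\spl})>0$ since $\lambda$ is strictly increasing and $\mu_{\spl}>0$, so the ratio is strictly increasing; and a Mills-ratio estimate gives $\bar\Phi(t-\mu_{\spl})/\bar\Phi(t)\sim\tfrac{t}{t-\mu_{\spl}}\exp(\mu_{\spl}t-\tfrac12\mu_{\spl}^2)\to\infty$ as $t\to\infty$. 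With \ref{ass:pointwise-convg-score-tails}--\ref{ass:mlr} verified, Theorem~\ref{thm:master-split-BH} yields $\TPP(\widehat{\cR})\convp\TPR_{\textnormal{BH}}(N(0,1),N(\mu_{\spl},1),\gamma)$, as claimed.

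\emph{Main obstacle.} No single step is deep; the care lies in the conditional-independence bookkeeping—tracking that $\hat\Lambda$ sees only $\bm X^{\learn}$, that $\bm X^{\learn}\perp\bm X^{\score}$ given $\{\bm\theta_j\}$ by data fission, and that the master theorem's conditioning event $(\cH_0,\cH_1)$ coincides with conditioning on $\{\bm\theta_j\}$ here—so that the conditional score tails are exactly Gaussian except for the single random scalar $\hat{\bm v}_{\spl}^{\top}\bm v$, whose limit is supplied by Lemma~\ref{lem:mean-learner-split}. The monotone-hazard and Mills-ratio facts underlying hypothesis~\ref{ass:mlr} are standard.
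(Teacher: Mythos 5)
Your proof is correct and follows essentially the same route as the paper: reduce to the master theorem for Split BH (Theorem~\ref{thm:master-split-BH}) with $G_0=N(0,1)$, $G_1=N(\mu_{\spl},1)$, use the data-fission independence of $\bm X^{\learn}$ and $\bm X^{\score}$ to compute the conditional Gaussian score tails exactly, invoke Lemma~\ref{lem:mean-learner-split} for $\hat{\bm v}_{\spl}^{\top}\bm v\convp\tau_{\textnormal{mean}}(\gamma,c,\sqrt{\pi_\spl}h)$, and verify the monotone-likelihood-ratio hypothesis. The only cosmetic difference is in verifying hypothesis~\ref{ass:mlr}: you argue via the strictly increasing Gaussian hazard rate $\lambda=\phi/\bar\Phi$, while the paper cites Lemma~\ref{lemma:mono_survival_gaussian}, which proves the same fact through the decreasing Mills ratio $m=\bar\Phi/\phi=1/\lambda$; these are equivalent. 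Your explicit caveat $h>0$, $\gamma>0$ (ensuring $\mu_{\spl}>0$ so the survival ratio is strictly increasing rather than constant) is a sensible clarification that the paper leaves implicit.
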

 
 \begin{proof}
 	Let
 	\[
 	S_j:=T_{\hat\Lambda}(X_j^{\score})=\hat v^{\top}X_j^{\score},
 	\qquad
 	\mu_{\spl}:=\sqrt{1-\pi_{\spl}}\,h\,\tau_{\mathrm{mean}}(\gamma,c,\sqrt{\pi_{\spl}}\,h).
 	\]
 	By data fission, \(X^{\learn}\independent X^{\score}\) given the parameter, and by Lemma~\ref{lem:mean-learner-split},
 	\[
 	\hat v^{\top}v \convp \tau_{\mathrm{mean}}(\gamma,c,\sqrt{\pi_{\spl}}\,h).
 	\]
 	Set
 	\[
 	\bar G_0(t):=\bar\Phi(t),
 	\qquad
 	\bar G_1(t):=\bar\Phi(t-\mu_{\spl}).
 	\]
 	
 	\emph{(i) Pointwise null-tail convergence.}
 	If \(j\in\cH_0\), then conditional on \(\hat v\) and \((\cH_0,\cH_1)\),
 	\[
 	S_j=\hat v^{\top}X_j^{\score}\sim N(0,1),
 	\]
 	so for every fixed \(t\in\R\),
 	\begin{equation}\label{eq:score-split-bh-null}
 	\P(S_j\ge t\mid \hat v,\,j\in\cH_0,(\cH_0,\cH_1))
 	=\bar\Phi(t)=\bar G_0(t).
 	\end{equation}
 	Hence Assumption~\ref{ass:pointwise-convg-score-tails} holds exactly.
 	
 	For later reference, if \(j\in\cH_1\), then
 	\[
 	X_j^{\score}\mid (j\in\cH_1,(\cH_0,\cH_1))
 	\sim N(\sqrt{1-\pi_{\spl}}\,h\,v,I_d),
 	\]
 	and therefore, conditional on \(\hat v\),
 	\[
 	S_j\mid (\hat v,\,j\in\cH_1,(\cH_0,\cH_1))
 	\sim N(\sqrt{1-\pi_{\spl}}\,h\,\hat v^{\top}v,\,1).
 	\]
 	Since \(\hat v^{\top}v\convp \tau_{\mathrm{mean}}(\gamma,c,\sqrt{\pi_{\spl}}\,h)\),
 	\begin{equation}\label{eq:score-split-bh-alt}
 	\P(S_j\ge t\mid \hat v,\,j\in\cH_1,(\cH_0,\cH_1))
 	=\bar\Phi\!\bigl(t-\sqrt{1-\pi_{\spl}}\,h\,\hat v^{\top}v\bigr)
 	\convp \bar G_1(t).
 	\end{equation}
 	
 	\medskip
 	\noindent
 \emph{(ii) Asymptotic pairwise independence.}
 Fix distinct \(i\neq j\), \(k_1,k_2\in\{0,1\}\), and \(t_1,t_2\in\R\).
 Under the point prior, once we condition on \((\cH_0,\cH_1)\), the means of the score-split observations are fixed. Moreover, by data splitting, conditional on \(\hat v\) and \((\cH_0,\cH_1)\), the variables \(X_i^{\score}\) and \(X_j^{\score}\) are independent. Hence \(S_i=\hat v^\top X_i^{\score}\) and \(S_j=\hat v^\top X_j^{\score}\) are conditionally independent given \(\hat v\) and \((\cH_0,\cH_1)\), so
 \[
 \P(S_i\ge t_1,\ S_j\ge t_2 \mid \hat v,\, i\in\cH_{k_1},\,j\in\cH_{k_2},\,(\cH_0,\cH_1))
 =
 \P(S_i\ge t_1 \mid \hat v,\, i\in\cH_{k_1},\,(\cH_0,\cH_1))
 \]
 \[
 \hspace{5cm}\times\,
 \P(S_j\ge t_2 \mid \hat v,\, j\in\cH_{k_2},\,(\cH_0,\cH_1)).
 \]
 From part (i), for each fixed \(t\),
 \[
 \P(S_\ell\ge t \mid \hat v,\, \ell\in\cH_k,\,(\cH_0,\cH_1))
 \convp \bar G_k(t),\qquad k\in\{0,1\}.
 \]
 Since these conditional tail probabilities are bounded in \([0,1]\), taking expectations and using bounded convergence yields
 \[
 \P\!\big(S_i\ge t_1,\ S_j\ge t_2 \mid i\in\cH_{k_1},\,j\in\cH_{k_2},\,(\cH_0,\cH_1)\big)
 \to \bar G_{k_1}(t_1)\bar G_{k_2}(t_2).
 \]
 Thus Assumption~\ref{ass:splitBH-indep} holds.
 	\medskip
 	
 	\noindent
 	\emph{(iii) Regularity of the limit cdfs.}
 	Here
 	\[
 	G_0(t)=\Phi(t),
 	\qquad
 	G_1(t)=\Phi(t-\mu_{\spl}).
 	\]
 	Both are continuous cdfs on \(\R\), and for every \(C\in\R\),
 	\[
 	\sup_{t\le C} G_0(t)<1,
 	\qquad
 	\sup_{t\le C} G_1(t)<1.
 	\]
 	So Assumption~\ref{ass:regularity-of-cdfs} holds.
 	
 	\medskip
 	\noindent
 	\emph{(iv) Monotone survival ratio and tail separation.}
 	We have
 	\[
 	\frac{\bar G_1(t)}{\bar G_0(t)}
 	=
 	\frac{\bar\Phi(t-\mu_{\spl})}{\bar\Phi(t)}.
 	\]
 	By Lemma~\ref{lemma:mono_survival_gaussian}, this ratio is strictly increasing in \(t\)
 	whenever \(\mu_{\spl}>0\), and moreover
 	\[
 	\frac{\bar G_1(t)}{\bar G_0(t)}\to\infty
 	\qquad\text{as }t\to\infty.
 	\]
 	Hence Assumption~\ref{ass:mlr} holds. (If \(\mu_{\spl}=0\), then \(G_0=G_1\) and the conclusion is immediate.)
 	
 	\medskip
 	All assumptions of Theorem~\ref{thm:master-split-BH} are satisfied. Therefore,
 	\[
 	\TPP(\widehat{\cR})\convp \TPR_{\mathrm{BH}}(G_0,G_1,\gamma)
 	=
 	\TPR_{\mathrm{BH}}\bigl(N(0,1),N(\mu_{\spl},1),\gamma\bigr).
 	\]
 \end{proof}
 
 \begin{lemma}[Asymptotic power for BONuS under point-mass prior]
 	\label{lem:power-point-mass-bonus}
 	Assume the model \eqref{eq:data-generating}–\eqref{eq:two-groups} with $\Lambda=\delta_{h\bm v}$ and the asymptotic regime \eqref{eq:asymptotic-regime}.
 	Run \emph{BONuS} with augmentation size $\tilde m$ and a permutation-invariant mean learner \eqref{eq:mean-learner} trained on $\bm X^{\learn}=[\bm X,\tilde{\bm X}]$.
 	Let $\widehat{\cR}$ be the resulting rejection set.
 	Define
 	\[
 	\mu_{\bonus}
 	\;:=\;
 	h\;\tau_{\textnormal{mean}}(\pi_{\aug}\gamma,\pi_{\aug}c,h)
 	\;=\;
 	h\cdot\frac{h\gamma}{\sqrt{h^2\gamma^2 + c/\pi_{\aug}}},
 	\qquad
 	\pi_{\aug}=\lim\frac{m}{m+\tilde m}\in(0,1].
 	\]
 	Then
 	\[
 	\TPP(\widehat{\cR})\ \convp\ \TPR_{\textnormal{BH}}\!\big(N(0,1),\,N(\mu_{\bonus},1),\,\gamma\big).
 	\]
 \end{lemma}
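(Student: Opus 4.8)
The plan is to apply the BONUS master theorem (Theorem~\ref{thm:master-bonus}) with limiting null and alternative distribution functions
\[
G_0 = N(K_\bonus,\,1), \qquad G_1 = N(K_\bonus + \mu_\bonus,\,1),
\]
where $\mu_\bonus = h\,\tau_{\textnormal{mean}}(\pi_\aug\gamma,\pi_\aug c,h)$ and $K_\bonus$ is the self-correlation constant of Proposition~\ref{prop:point-mass-prior-score-distributions}. Since $\TPR_{\textnormal{BH}}(G_0,G_1,\gamma)$ is invariant under a common location shift of $G_0$ and $G_1$ --- the solution $t_*$ of its defining fixed-point equation moves by exactly that shift while $\bar G_1(t_*)$ is unchanged --- the conclusion of Theorem~\ref{thm:master-bonus} reads $\TPR_{\textnormal{BH}}(N(0,1),N(\mu_\bonus,1),\gamma)$, which is the assertion. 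It therefore suffices to verify the four hypotheses of Theorem~\ref{thm:master-bonus}, the substance being hypothesis~(i) (pointwise convergence of conditional score tails) together with the identification of $G_0$ and $G_1$.

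For hypothesis~(i), recall that after the monotone reduction $T_\Lambda(\bm X_j)\equiv \bm v^\top\bm X_j$ the score is $S_j = \hat{\bm v}_\bonus^\top\bm X_j$, with $\hat{\bm v}_\bonus = \bar{\bm X}^\learn/\|\bar{\bm X}^\learn\|$ and $\bar{\bm X}^\learn = (m+\tilde m)^{-1}(\sum_{j'}\bm X_{j'} + \sum_k\tilde{\bm X}_k)$. The delicate point is that $\bm X_j$ itself is one of the rows entering $\hat{\bm v}_\bonus$. I would use the leave-one-out decomposition $\bar{\bm X}^\learn = \bm a_j + (m+\tilde m)^{-1}\bm X_j$, where $\bm a_j$ averages the remaining $m+\tilde m-1$ rows and is independent of $\bm X_j = \bm\theta_j + \bm Z_j$, $\bm Z_j\sim N(\bm 0,\bm I_d)$. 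Then $\bar{\bm X}^\learn{}^\top\bm X_j = \bm a_j^\top\bm\theta_j + \bm a_j^\top\bm Z_j + (m+\tilde m)^{-1}\|\bm X_j\|^2$. Conditionally on $\bm a_j$ one has $\bm a_j^\top\bm Z_j/\|\bm a_j\|\sim N(0,1)$ exactly; the last term converges in probability to $c\pi_\aug$ by the regime~\eqref{eq:asymptotic-regime}, writing $(m+\tilde m)^{-1}\|\bm X_j\|^2 = \tfrac{d}{m+\tilde m}\cdot\tfrac{\|\bm X_j\|^2}{d}$ and noting the self-projection component is lower order; and $\|\bm a_j\|$, $\bm a_j^\top\bm v/\|\bm a_j\|$ inherit the limits $\|\bar{\bm X}^\learn\|\to\sqrt{h^2\gamma^2\pi_\aug^2 + c\pi_\aug}$ and $\hat{\bm v}_\bonus^\top\bm v\to\tau_{\textnormal{mean}}(\pi_\aug\gamma,\pi_\aug c,h)$ of Lemma~\ref{lem:mean-learner-bonus}, since the leave-one-out perturbation has norm $O_p((m+\tilde m)^{-1/2})$. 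Dividing by $\|\bar{\bm X}^\learn\|$ and applying Slutsky (the shift terms converge to constants) yields $S_j\mid j\in\cH_0\convd N(K_\bonus,1)$ with $K_\bonus = c\pi_\aug/\sqrt{h^2\gamma^2\pi_\aug^2 + c\pi_\aug}$, and $S_j\mid j\in\cH_1\convd N(K_\bonus+\mu_\bonus,1)$ because for $\bm\theta_j = h\bm v$ the extra contribution is $\bm a_j^\top\bm\theta_j/\|\bm a_j\| = h\,\bm a_j^\top\bm v/\|\bm a_j\|\to h\,\tau_{\textnormal{mean}}(\pi_\aug\gamma,\pi_\aug c,h) = \mu_\bonus$. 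Hence $\bar G_0 = \bar\Phi(\,\cdot - K_\bonus)$ and $\bar G_1 = \bar\Phi(\,\cdot - K_\bonus - \mu_\bonus)$, matching Proposition~\ref{prop:point-mass-prior-score-distributions}.

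The remaining hypotheses are light. For asymptotic pairwise independence (hypothesis~(ii)), the same argument with a leave-two-out decomposition $\bar{\bm X}^\learn = \bm a_{j_1 j_2} + (m+\tilde m)^{-1}(\bm X_{j_1}+\bm X_{j_2})$ works: the extra cross term $(m+\tilde m)^{-1}\bm X_{j_1}^\top\bm X_{j_2}$ is $O_p((m+\tilde m)^{-1/2})$, hence negligible, and conditionally on $\bm a_{j_1 j_2}$ the two standardized projections $\bm a_{j_1 j_2}^\top\bm Z_{j_i}/\|\bm a_{j_1 j_2}\|$ are independent $N(0,1)$'s, so the joint tail probability converges to the product of marginal limits. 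Regularity (hypothesis~(iii)) is immediate since $G_0,G_1$ are Gaussian cdfs. For the monotone survival ratio and tail separation (hypothesis~(iv)), $\bar G_1(t)/\bar G_0(t) = \bar\Phi(t-K_\bonus-\mu_\bonus)/\bar\Phi(t-K_\bonus)$ is strictly increasing and diverges as $t\to\infty$ by the Gaussian survival-ratio monotonicity argument used in the proof of Lemma~\ref{lem:power-point-mass-split}, using $\mu_\bonus>0$ (which holds whenever $h\gamma>0$). With all four hypotheses in place, Theorem~\ref{thm:master-bonus} plus shift-invariance of $\TPR_{\textnormal{BH}}$ gives the claim. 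The main obstacle is precisely hypothesis~(i): one must show that the leave-one-out average $\bm a_j$ inherits the limiting norm and direction-alignment of $\bar{\bm X}^\learn$ (so that Lemma~\ref{lem:mean-learner-bonus} transfers) and that the self-term $(m+\tilde m)^{-1}\|\bm X_j\|^2$ converges --- this is where the ratios $d/m\to c$ and $m/(m+\tilde m)\to\pi_\aug$ enter, and it is the origin of the shift $K_\bonus$ distinguishing BONUS's score law from that of Split BH. Everything downstream of the score-distribution identification is bookkeeping plus the two cited lemmas.
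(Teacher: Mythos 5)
Your proposal is correct and follows essentially the same route as the paper's own proof: a leave-one-out decomposition of $S_j = \hat{\bm v}_\bonus^\top\bm X_j$ (splitting the inner product into a signal term, a noise-projection term that becomes standard Gaussian after normalization, and a self-term $\propto\|\bm X_j\|^2$ producing the shift $K_\bonus$), then a leave-two-out version for pairwise independence, then the BONUS master theorem, then translation invariance of $\TPR_{\textnormal{BH}}$. The paper packages the leave-one-out limit law and the pairwise-independence factorization as separate lemmas (its Lemmas on the BONUS limit law and on asymptotic pairwise independence), and you rederive their content inline, but the decomposition, the asymptotic orders, and the identification $K_\bonus = c\pi_\aug/\sqrt{h^2\gamma^2\pi_\aug^2 + c\pi_\aug} = c/\sqrt{h^2\gamma^2 + c/\pi_\aug}$ all match; your bookkeeping differs only in that you scale by $(m+\tilde m)^{-1}$ throughout rather than working with the unnormalized sum $A_j$.
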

 
 \begin{proof}
 	Let $S_j:=T_{\hat\Lambda}(X_j)=\hat v_{\bonus}^{\top}X_j$ be the BONuS score.
 	
 	\emph{(i) Pointwise convergence of score tails.}  
 	By Lemma~\ref{lemma:bonus-limit-law}, under the asymptotic regime,
 	\[
 	S_j \mid j\in\cH_0,(\cH_0,\cH_1) \;\convd\; N(c/D,1),
 	\quad
 	S_j \mid j\in\cH_1,(\cH_0,\cH_1) \;\convd\; N\big((c+h^2\gamma)/D,1\big),
 	\quad \text{almost surely}\]
 	with $D=\sqrt{h^2\gamma^2+c/\pi_{\aug}}$.  
 	Hence for each fixed $t$,
 	\begin{equation}\label{eq:score-bonus-null}
 	\P(S_j \ge t \mid j\in\cH_0,(\cH_0,\cH_1))\ \convp\ \bar\Phi(t-c/D)=:\bar G_0(t),
 	\end{equation}
 	and
 	\begin{equation}\label{eq:score-bonus-alt}
 	\P(S_j \ge t \mid j\in\cH_1,(\cH_0,\cH_1))\ \convp\ \bar\Phi\!\big(t-(c+h^2\gamma)/D\big)=:\bar G_1(t).
 	\end{equation}
 	
 	\emph{(ii) Asymptotic pairwise independence.}  
 	By Lemma~\ref{lemma:bonus-pairwise-independence} (pairwise asymptotic independence of BONuS scores),
 	for any distinct $j_1\neq j_2$, any fixed $t_1,t_2\in\R$, and $k_1,k_2\in\{0,1\}$,
 	\[
 	\P\!\big(S_{j_1}\!\ge t_1,\ S_{j_2}\!\ge t_2 \,\big|\, \cH_{0},\cH_{1}\big)
 	\;-\;
 	\P\!\big(S_{j_1}\!\ge t_1 \,\big|\, \cH_{0},\cH_{1}\big)\,
 	\P\!\big(S_{j_2}\!\ge t_2 \,\big|\, \cH_{0},\cH_{1}\big)
 	\ \convp\ 0,
 	\]
 	so the joint exceedance probability factorizes in the limit, verifying the pairwise-independence requirement of the BONuS master theorem.
 	
 	\emph{(iii) Regularity of limits.}  
 	$G_0(t)=\Phi(t-c/D)$ and $G_1(t)=\Phi(t-(c+h^2\gamma)/D)$ are continuous distribution functions, and for set $K = (-\infty,C]$, $\sup_{t\in K}G_k(t)<1$.
 	
 	\emph{(iv) Monotone survival ratio and tail separation.}  
 	We have
 	\[
 	\frac{\bar G_1(t)}{\bar G_0(t)}=\frac{\bar\Phi\!\big(t-(c+h^2\gamma)/D\big)}{\bar\Phi(t-c/D)}.
 	\]
 	By Lemma~\ref{lemma:mono_survival_gaussian}, this ratio is strictly increasing in $t$ and diverges to $+\infty$ as $t\to\infty$.
 	
 	Thus all assumptions of the BONuS master theorem hold in the point-mass case, and applying it gives
 	\[
 	\TPP(\widehat{\cR})\ \convp\ \TPR_{\textnormal{BH}}\!\big(N(c/D,1),\,N((c+h^2\gamma)/D,1),\,\gamma\big).
 	\]
 	
 	Finally, observe that $\big(N(c/D,1),N(c/D+\mu_{\bonus},1)\big)$
 	is a common shift by $a=c/D$ of $\big(N(0,1),N(\mu_{\bonus},1)\big)$.
 	By Lemma~\ref{lem:bh-translation-invariance},
 	\[
 	\TPR_{\mathrm{BH}}\!\big(N(c/D,1),\,N(c/D+\mu_{\bonus},1),\,\gamma\big)
 	=
 	\TPR_{\mathrm{BH}}\!\big(N(0,1),\,N(\mu_{\bonus},1),\,\gamma\big).
 	\]
 	
 \end{proof}
 
 \begin{lemma}[Asymptotic power for in-sample BH under point-mass prior]
 	\label{lem:power-point-mass-insample}
 	Assume the model \eqref{eq:data-generating}–\eqref{eq:two-groups} with $\Lambda=\delta_{h\bm v}$ and the asymptotic regime \eqref{eq:asymptotic-regime}.
 	Run \emph{in-sample BH} with a permutation-invariant mean learner \eqref{eq:mean-learner} trained on $\bm X^{\learn}=\bm X$.
 	Let $\widehat{\cR}$ be the resulting rejection set.
 	Define
 	\[
 	\mu_{\textnormal{in-sample}}
 	\;:=\;
 	h\;\tau_{\textnormal{mean}}(\gamma,c,h)
 	\;=\;
 	h\cdot\frac{h\gamma}{\sqrt{h^2\gamma^2 + c}}.
 	\]
 	Then
 	\[
 	\TPP(\widehat{\cR})\ \convp\ \TPR_{\textnormal{BH}}\!\big(N(0,1),\,N(\mu_{\textnormal{in-sample}},1),\,\gamma\big).
 	\]
 \end{lemma}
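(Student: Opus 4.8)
\medskip
\noindent\textbf{Proof proposal.} The plan is to apply the in-sample master theorem (Theorem~\ref{thm:master-in-sample}) to the scores $S_j := T_{\hat\Lambda}(\bm X_j) = \hat{\bm v}^\top \bm X_j$, where $\hat{\bm v} = L_{\mean}(\bm X)$ is the mean learner fit on the \emph{entire} data, so it suffices to produce limiting null and alternative score laws $G_0,G_1$ verifying the four hypotheses of that theorem. The feature distinguishing this case from Split BH is that $\hat{\bm v}$ depends on every $\bm X_j$, so one cannot condition on $\hat{\bm v}$ and read off a Gaussian law; this reuse introduces a selection-bias shift, namely the constant $K_{\insamp}$ of Proposition~\ref{prop:point-mass-prior-score-distributions}.

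First I would establish pointwise convergence of the conditional score tails (Assumption~\ref{ass:insample-tails}) by invoking Lemma~\ref{lem:joint-scores-in-sample} on the limiting joint law of the in-sample scores: conditionally on $(\cH_0,\cH_1)$, the scores $\{\hat{\bm v}^\top \bm X_j : j \in \cH_0\}$ and $\{\hat{\bm v}^\top \bm X_j : j \in \cH_1\}$ are asymptotically i.i.d.\ $N(K_{\insamp},1)$ and $N(K_{\insamp}+\mu_{\insamp},1)$ respectively, with $\mu_{\insamp} = h\,\tau_{\textnormal{mean}}(\gamma,c,h)$ as in~\eqref{eq:point-mass-prior-signal-strengths}. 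The mechanism is the decomposition $\hat{\bm v} = \alpha_m \bm v + \beta_m \bm u_m$ with $\bm u_m \perp \bm v$ a unit vector and $\alpha_m \convp \tau_{\textnormal{mean}}(\gamma,c,h)$ (Lemma~\ref{lem:mean-learner-untransformed}): writing $\hat{\bm v}^\top \bm X_j = \alpha_m(\bm v^\top \bm X_j) + \beta_m(\bm u_m^\top \bm X_j)$, the first term yields the signal shift $\mu_{\insamp}$ on the alternative, and the second contributes both a deterministic bias coming from the $O(1/m)$ self-contribution of $\bm X_j$ to $\bar{\bm X}$ (which, aggregated along the $(d-1)$-dimensional orthogonal complement, with $d/m \to c$, produces $K_{\insamp}$) and an asymptotically $N(0,1)$ fluctuation, giving $\bar G_0(t) = \bar\Phi(t - K_{\insamp})$ and $\bar G_1(t) = \bar\Phi(t - K_{\insamp} - \mu_{\insamp})$. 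Asymptotic pairwise independence (Assumption~\ref{ass:insample-indep}) follows from the same lemma, since the part of $S_j$ shared across $j$ through $\hat{\bm v}$ vanishes. Assumption~\ref{ass:insample-regularity} is immediate: $G_0,G_1$ are shifted standard-normal cdfs, hence continuous with $\sup_{t\le C} G_k(t)<1$ for all $C$. Assumption~\ref{ass:insample-mlr} follows from Lemma~\ref{lemma:mono_survival_gaussian} with means $(K_{\insamp}+\mu_{\insamp}, K_{\insamp})$. Theorem~\ref{thm:master-in-sample} then gives $\TPP(\widehat{\cR}) \convp \TPR_{\textnormal{BH}}(N(K_{\insamp},1), N(K_{\insamp}+\mu_{\insamp},1),\gamma)$, and since this pair is a common translation of $(N(0,1),N(\mu_{\insamp},1))$, applying the translation invariance of the BH TPR functional (Lemma~\ref{lem:bh-translation-invariance}) removes the shift and yields the claim.

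I expect the main obstacle to be Lemma~\ref{lem:joint-scores-in-sample}, i.e., identifying the limiting joint law of the in-sample scores. Unlike Split BH, where $\hat{\bm v}$ and $\bm X^\score$ are independent, here one must track the feedback of each $\bm X_j$ into $\hat{\bm v}$, which calls for a careful second-order expansion of $L_{\mean}(\bm X)$ about its population limit, separating the rank-one signal contribution $\gamma h \bm v$, the self-contribution of the fixed coordinate $\bm X_j$, and the bulk Gaussian noise of growing dimension, and then showing that the surviving cross-terms produce only the deterministic shift $K_{\insamp}$ plus a standard Gaussian while the residual cross-dependence across distinct $j$ is $o(1)$. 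This is presumably the step for which the external help acknowledged in the paper was required; once it is in hand, the remaining verification is routine and parallels Lemmas~\ref{lem:power-point-mass-split} and~\ref{lem:power-point-mass-bonus}.
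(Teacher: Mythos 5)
Your high-level strategy matches the paper's: verify the four hypotheses of Theorem~\ref{thm:master-in-sample}, obtain the limiting null and alternative score laws as common translations of $N(0,1)$ and $N(\mu_{\insamp},1)$, and finish with the translation invariance Lemma~\ref{lem:bh-translation-invariance}. Your description of the mechanism is also correct — decompose $\hat{\bm v}$ into a $\bm v$-aligned and orthogonal part, then track (a) the self-contribution of $\bm X_j$ to $\bar{\bm X}$ through the bulk $(d-1)$-dimensional noise, which with $d/m \to c$ produces the deterministic shift $K_{\insamp} = c/\sqrt{h^2\gamma^2+c}$, and (b) the residual Gaussian fluctuation. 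This is substantively the leave-one-out decomposition $\hat{\bm v} = (A_j + \bm X_j)/\|A_j + \bm X_j\|$ with $A_j = \sum_{i\ne j}\bm X_i$ that the paper uses.

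However, you cite the wrong supporting lemma. Lemma~\ref{lem:joint-scores-in-sample} is the random-matrix-theoretic result about the joint law of two coordinates of the top left singular vector under the \emph{one-dimensional subspace prior} with the PCA learner; its output is a pair of independent scaled $\chi^2_1$ limits, not shifted Gaussians, and it is not applicable to the point-mass prior with the mean learner. The lemmas the paper actually uses here are Lemma~\ref{lemma:bonus-limit-law} (limit law of the BONUS test statistic under the point prior) and Lemma~\ref{lemma:bonus-pairwise-independence}, both specialized to $\tilde m = 0$ (equivalently $\pi_\aug = 1$). These carry out precisely the leave-one-out and leave-two-out calculations you sketched — normalize $\|A_j\|/m \convp D$, identify the $(II)_j = \bm X_j^\top \bm\xi_j / \|A_j + \bm X_j\| \to c/D$ bias, and show the remaining term is $N(0,1)$ in the limit with asymptotic pairwise factorization — but they are elementary (LLN plus Slutsky) and require no spectral analysis. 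Relatedly, you misidentify where the acknowledged external help went: that was indeed for Lemma~\ref{lem:joint-scores-in-sample}, but it concerns the \emph{subspace prior} branch of the argument, not the present point-mass lemma, which is the straightforward one. Correcting the citation to Lemmas~\ref{lemma:bonus-limit-law} and~\ref{lemma:bonus-pairwise-independence} with $\tilde m=0$ makes your outline match the paper's proof.
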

 
 \begin{proof}
 	Let $S_j:=T_{\hat\Lambda}(X_j)=\hat{\bm v}^{\!\top}X_j$, where the learner is the mean learner \eqref{eq:mean-learner} trained on $\bm X^{\learn}=\bm X$ (in-sample).
 	Under the point prior $\Lambda=\delta_{h\bm v}$ the likelihood-ratio score is linear, and the quality of the learned direction satisfies
 	\[
 	\hat{\bm v}^{\!\top}\bm v \ \convp\ \tau_{\textnormal{mean}}(\gamma,c,h)
 	\ =\ \frac{h\gamma}{\sqrt{h^2\gamma^2+c}},
 	\]
 	by the BONuS alignment lemmas specialized to the unmasked case (set $\pi_{\aug}=1$ in Proposition~\ref{prop:mean-learner-masked}).
 	
 	We verify the assumptions of Theorem~\ref{thm:master-in-sample} with the following choices:
 	\[
 	G_0(t)=\Phi\!\big(t-\mu_0\big),\qquad
 	G_1(t)=\Phi\!\big(t-\mu_1\big),\qquad
 	\mu_0:=\frac{c}{\sqrt{h^2\gamma^2+c}},\quad
 	\mu_1:=\mu_0+\mu_{\textnormal{in-sample}},
 	\]
 	where $\mu_{\textnormal{in-sample}}:=h\,\tau_{\textnormal{mean}}(\gamma,c,h)=\dfrac{h^2\gamma}{\sqrt{h^2\gamma^2+c}}$.
 	
 	\paragraph{(i) Pointwise convergence of score tails (Assumption~\ref{ass:insample-tails}).}
 	The “limit law of the test statistic” argument used for BONuS (Lemma~\ref{lemma:bonus-limit-law}) carries over verbatim with $\tilde m=0$ (equivalently, $\pi_{\aug}=1$): use the leave-one-out decomposition with
 	\(
 	A_j:=\sum_{i\neq j}X_i
 	\)
 	(in place of $A_j$ that also included synthetics). Exactly the same order calculations yield that on set of probability $1$
 	\[
 	S_j \mid j\in\cH_0,(\cH_0,\cH_1) \ \convd\ N(\mu_0,1),\qquad
 	S_j \mid j\in\cH_1 ,(\cH_0,\cH_1)\ \convd\ N(\mu_1,1),
 	\]
 	with $\mu_0=c/\sqrt{h^2\gamma^2+c}$ and $\mu_1=\mu_0+h^2\gamma/\sqrt{h^2\gamma^2+c}$, hence for each fixed $t$,
 	\[
 	\P(S_j\ge t\mid j\in\cH_k,(\cH_0,\cH_1))\ \convp\ \bar G_k(t),\qquad k\in\{0,1\}.
 	\]
 	Since $(\theta_k)_{k=1}^m$ is a deterministic function of $(\cH_0,\cH_1)$ we have that
 	\[
 	\P(S_j\ge t\mid j\in\cH_0,(\theta_k)_{k=1}^m)\ \convp\ \bar G_0(t),\qquad.
 	\]
 	\paragraph{(ii) Asymptotic pairwise independence (Assumption~\ref{ass:insample-indep}).}
 	Use the BONuS pairwise-independence lemma (Lemma \ref{lemma:bonus-pairwise-independence}) for original scores with the same leave-two-out device but with no synthetics (set $\tilde m=0$). Define
 	\(
 	A_{-12}:=\sum_{i\notin\{j_1,j_2\}}X_i
 	\)
 	and replicate the decomposition from the BONuS proof: for $\nu\in\{1,2\}$,
 	\[
 	S_{j_\nu}
 	=\frac{A_{-12}^{\top}X_{j_\nu}}{\|A_{-12}\|}\;+\;\frac{c}{\sqrt{h^2\gamma^2+c}}\;+\;o_p(1).
 	\]
 	Conditioning on $A_{-12}$, the two leading Gaussian terms
 	\(
 	A_{-12}^{\top}X_{j_\nu}/\|A_{-12}\|
 	\)
 	are independent (because $X_{j_1}\independent X_{j_2}$ and both are independent of $A_{-12}$). Hence the unconditional joint law factors in the limit, yielding the desired pairwise tail factorization:
 	\begin{align*}
 	\P&(S_{j_1}\ge t_1,\ S_{j_2}\ge t_2\mid j_\nu\in\cH_{k_\nu},(\cH_0,\cH_1))\\
 	\;&=\;
 	\P(S_{j_1}\ge t_1\mid j_1\in\cH_{k_1},(\cH_0,\cH_1))
 	\;\P(S_{j_2}\ge t_2\mid j_2\in\cH_{k_2},(\cH_0,\cH_1))
 	\;+\;o_p(1).
 	\end{align*}
 	
 	\paragraph{(iii) Regularity (Assumption~\ref{ass:insample-regularity}).}
 	$G_0$ and $G_1$ are Gaussian cdfs with finite means, hence continuous and bounded away from $1$ on any interval of the form $(-\infty,C]$ for $C\in R$.
 	
 	\paragraph{(iv) Monotone survival ratio and tail separation (Assumption~\ref{ass:insample-mlr}).}
 	With $\bar G_0(t)=\bar\Phi(t-\mu_0)$ and $\bar G_1(t)=\bar\Phi(t-\mu_1)$ where $\mu_1>\mu_0$, Lemma~\ref{lemma:mono_survival_gaussian} gives that the survival ratio
 	\(
 	t\mapsto \bar G_1(t)/\bar G_0(t)=\bar\Phi(t-\mu_1)/\bar\Phi(t-\mu_0)
 	\)
 	is strictly increasing in $t$ and diverges to $\infty$ as $t\to\infty$.
 	
 	\medskip
 	With (i)–(iv) verified, Theorem~\ref{thm:master-in-sample} applies and yields
 	\[
 	\TPP(\widehat{\cR})\ \convp\ \TPR_{\mathrm{BH}}\!\big(N(\mu_0,1),\,N(\mu_1,1),\,\gamma\big).
 	\]
 	Finally, by the translation invariance of the BH TPR functional (Lemma~\ref{lem:bh-translation-invariance}),
 	\[
 	\TPR_{\mathrm{BH}}\!\big(N(\mu_0,1),\,N(\mu_1,1),\,\gamma\big)
 	=\TPR_{\mathrm{BH}}\!\big(N(0,1),\,N(\mu_1-\mu_0,1),\,\gamma\big)
 	=\TPR_{\mathrm{BH}}\!\big(N(0,1),\,N(\mu_{\textnormal{in-sample}},1),\,\gamma\big),
 	\]
 	which proves the claim.
 \end{proof}
  \subsection{Results on scores}
 \paragraph{Proof of Proposition \ref{prop:point-mass-prior-score-distributions}}
 \begin{enumerate}
 	\item Split BH: We start at~\eqref{eq:score-split-bh-null} and use Lemma \ref{lemma:conditional-to-unconditional-convg} (to integrate out $\{\hat{\bm v}_{\spl},\cH_0,\cH_1\}$ ) with $X_n =S^\spl_j  $ and $\cF_n = \{\hat{\bm v}_{\spl},\cH_0,\cH_1\}$ to obtain that $S^\spl_j \mid j \in \mathcal H_0 \convd N(0,1)$, similarly we can use~\eqref{eq:score-split-bh-alt} along with Lemma \ref{lemma:conditional-to-unconditional-convg} to obtain $S^\spl_j \mid j \in \mathcal H_1 \convd N(\mu_\spl(\gamma, c, h; \pi_\spl), 1)$.
 	
 	\item BONuS: We start at~\eqref{eq:score-bonus-null} and use Lemma \ref{lemma:conditional-to-unconditional-convg} with$X_n =S^\bonus_j  $ and $\cF_n = \{\cH_0,\cH_1\}$ to obtain that $S^\bonus_j \mid j \in \mathcal H_0 \convd N(K_\bonus,1)$, similarly we can use~\eqref{eq:score-bonus-alt} along with Lemma \ref{lemma:conditional-to-unconditional-convg} to obtain $S^\bonus_j \mid j \in \mathcal H_1 \convd N(K_\bonus + \mu_\bonus(\gamma, c, h; \pi_\aug), 1)$.
 	
 	\item In-Sample BH: The proof directly follows from BONuS score result with $\pi_{\aug} = 1$. 
 \end{enumerate}

 \subsection{Auxiliary Lemmas}
 \begin{lemma}[Limit law of the BONuS test statistic under the point prior]
 	\label{lemma:bonus-limit-law}
 	Let $\pi_{\aug}:=\lim m/(m+\tilde m)\in(0,1]$ and $c:=\lim d/m\in[0,\infty)$ under the asymptotic regime~\eqref{eq:asymptotic-regime}.  
 	Assume the point prior $\Lambda=\delta_{h\bm v}$ and generate features
 	\[
 	\bm X_j \;=\; b_j h \bm v + \bm \xi_j,
 	\qquad
 	b_j \stackrel{\text{i.i.d.}}{\sim} \mathrm{Ber}(\gamma),\quad
 	\bm \xi_j \iidsim \mathcal N(0,I_d).
 	\]
 	Run \textnormal{BONuS}: augment with $\tilde m$ synthetic nulls, fit the permutation-invariant mean learner
 	\[
 	\hat{\bm v}_{\bonus}
 	\;:=\;
 	\frac{\tfrac{1}{m+\tilde m}\Big(\sum_{j=1}^m \bm X_j + \sum_{k=1}^{\tilde m} \tilde{\bm X}_k\Big)}
 	{\left\|\tfrac{1}{m+\tilde m}\Big(\sum_{j=1}^m \bm X_j + \sum_{k=1}^{\tilde m} \tilde{\bm X}_k\Big)\right\|_2},
 	\]
 	and compute scores
 	\[
 	S_j^{\bonus} \;:=\; \hat{\bm v}_{\bonus}^{\!\top}\bm X_j,\qquad j=1,\dots,m.
 	\]
 	Define
 	\[
 	D \;:=\; \sqrt{h^2\gamma^2 + c/\pi_{\aug}}.
 	\]
 	Then as $m,d,\tilde m\to\infty$ with $d/m\to c$ and $m/(m+\tilde m)\to\pi_{\aug}$,
 	\[
 	S_j^{\bonus}\,\Big|\,j\in\cH_0,(\cH_0,\cH_1) \;\;\convd\;\; \mathcal N\!\Big(\tfrac{c}{D},1\Big),
 	\quad
 	S_j^{\bonus}\,\Big|\,j\in\cH_1,(\cH_0,\cH_1) \;\;\convd\;\; \mathcal N\!\Big(\tfrac{c+h^2\gamma}{D},1\Big) \quad \text{almost surely}.
 	\]
 \end{lemma}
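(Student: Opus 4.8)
The plan is to use a leave-one-out device so that the component of $\hat{\bm v}_{\bonus}$ correlated with $\bm X_j$ contributes only a deterministic shift, while the orthogonal remainder acts on an independent Gaussian. Fix $j$ and condition on $(\cH_0,\cH_1)$; write $b_j=\indicator(j\in\cH_1)$ and $\bm X_j=b_jh\bm v+\bm\xi_j$ with $\bm\xi_j\sim N(\bm0,\bm I_d)$. Since the factor $\tfrac1{m+\tilde m}$ cancels in the normalization, $\hat{\bm v}_{\bonus}=(\bm X_j+\bm A_j)/\|\bm X_j+\bm A_j\|$ with $\bm A_j:=\sum_{i\neq j}\bm X_i+\sum_{k=1}^{\tilde m}\tilde{\bm X}_k$. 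The key structural fact is that, given $(\cH_0,\cH_1)$, we have $\bm A_j=\bm\mu_{A,j}+\bm W_j$ where $\bm\mu_{A,j}:=(|\cH_1|-b_j)h\bm v$ is (conditionally) deterministic and $\bm W_j:=\sum_{i\neq j}\bm\xi_i+\sum_k\tilde{\bm X}_k\sim N(\bm0,(m+\tilde m-1)\bm I_d)$ is independent of $\bm\xi_j$ and of $(\cH_0,\cH_1)$. I would then decompose
\[
S_j^{\bonus}=\hat{\bm v}_{\bonus}^\top\bm X_j
= b_j h\,\hat{\bm v}_{\bonus}^\top\bm v \;+\; \hat{\bm v}_{\bonus}^\top\bm\xi_j .
\]

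For the first summand, Proposition~\ref{prop:mean-learner-masked} (equivalently Lemma~\ref{lem:mean-learner-bonus}) gives $\hat{\bm v}_{\bonus}^\top\bm v\convp h\gamma/D$, so $b_jh\,\hat{\bm v}_{\bonus}^\top\bm v\convp b_jh^2\gamma/D$. For the second, I would write $\hat{\bm v}_{\bonus}^\top\bm\xi_j=\tfrac{\bm A_j^\top\bm\xi_j}{\|\bm X_j+\bm A_j\|}+\tfrac{\bm X_j^\top\bm\xi_j}{\|\bm X_j+\bm A_j\|}$ and do the order-of-magnitude bookkeeping. Using $|\cH_1|/m\to\gamma$, $d/m\to c$, $(m+\tilde m-1)/m\to1/\pi_{\aug}$, and $\|\bm W_j\|^2\overset{d}{=}(m+\tilde m-1)\chi^2_d$, one checks $\|\bm\mu_{A,j}\|^2/m^2\convp\gamma^2h^2$, $\|\bm W_j\|^2/m^2\convp c/\pi_{\aug}$, and $\bm\mu_{A,j}^\top\bm W_j=o_p(m^2)$, hence $\|\bm A_j\|^2/m^2\convp D^2$; moreover $\bm A_j^\top\bm X_j=O_p(m)$ and $\|\bm X_j\|^2=O_p(m)$, so $\|\bm X_j+\bm A_j\|^2=\|\bm A_j\|^2(1+O_p(m^{-1}))$, giving $\|\bm X_j+\bm A_j\|/m\convp D$ and $\|\bm A_j\|/\|\bm X_j+\bm A_j\|\convp1$. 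Then $\tfrac{\bm A_j^\top\bm\xi_j}{\|\bm X_j+\bm A_j\|}=\bigl(\tfrac{\bm A_j}{\|\bm A_j\|}\bigr)^{\!\top}\bm\xi_j\cdot\tfrac{\|\bm A_j\|}{\|\bm X_j+\bm A_j\|}$, and since $\bm A_j/\|\bm A_j\|$ is a unit vector independent of $\bm\xi_j$, its first factor is exactly $N(0,1)$, so by Slutsky the product $\convd N(0,1)$. Finally $\bm X_j^\top\bm\xi_j=\|\bm\xi_j\|^2+b_jh\,\bm v^\top\bm\xi_j$ with $\|\bm\xi_j\|^2/m\convp c$ and $b_jh\bm v^\top\bm\xi_j/m\convp0$, so $\tfrac{\bm X_j^\top\bm\xi_j}{\|\bm X_j+\bm A_j\|}=\tfrac{\bm X_j^\top\bm\xi_j/m}{\|\bm X_j+\bm A_j\|/m}\convp c/D$.

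Combining, $\hat{\bm v}_{\bonus}^\top\bm\xi_j\convd N(c/D,1)$; since $b_jh\,\hat{\bm v}_{\bonus}^\top\bm v$ converges in probability to a constant, the joint limit is $S_j^{\bonus}\convd N\!\bigl(\tfrac{c+b_jh^2\gamma}{D},1\bigr)$, i.e.\ $N(c/D,1)$ when $j\in\cH_0$ and $N((c+h^2\gamma)/D,1)$ when $j\in\cH_1$. The only input concerning $(\cH_0,\cH_1)$ is $|\cH_1|/m\to\gamma$, which holds almost surely by the strong law of large numbers; conditioning on a realization in that full-measure event, all remaining randomness lies in the i.i.d.\ Gaussians $\bm\xi_j,\{\bm\xi_i\}_{i\neq j},\{\tilde{\bm X}_k\}$, so every step above is valid conditionally, and the conditional weak convergence holds almost surely as claimed.

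I expect the main obstacle to be exactly the order-of-magnitude accounting in the middle paragraph: one must verify that the dependence of $\hat{\bm v}_{\bonus}$ on $\bm X_j$ surfaces only as the deterministic shift $c/D$ (from $\|\bm\xi_j\|^2/\|\bm X_j+\bm A_j\|\to c/D$) plus multiplicative $1+O_p(m^{-1})$ corrections, so that the ``clean'' piece $(\bm A_j/\|\bm A_j\|)^\top\bm\xi_j$ keeps its exact $N(0,1)$ law and supplies the unit variance; in particular the cross term $\bm A_j^\top\bm X_j$, which is of the same order $m$ as the quantities of interest, must be shown harmless because it enters only through $\|\bm X_j+\bm A_j\|^2$, which is of order $m^2$.
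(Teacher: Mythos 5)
Your proof is correct and is essentially the same leave-one-out argument the paper uses: you isolate $\bm A_j := \sum_{i\ne j}\bm X_i + \sum_k\tilde{\bm X}_k$, establish $\|\bm A_j\|/m \convp D$ and $\|\bm X_j+\bm A_j\|/m \convp D$ by the same signal/noise/cross-term accounting, obtain the exact $N(0,1)$ piece from $(\bm A_j/\|\bm A_j\|)^\top\bm\xi_j$ via independence of $\bm A_j$ and $\bm\xi_j$, and extract the deterministic shift $c/D$ from $\bm X_j^\top\bm\xi_j$ dominated by $\|\bm\xi_j\|^2$. The only cosmetic difference is that you offload the drift term $b_j h\,\hat{\bm v}_{\bonus}^\top\bm v \convp b_j h^2\gamma/D$ to Lemma~\ref{lem:mean-learner-bonus}, whereas the paper's proof re-derives that projection inline as its term $(III)_j$; both are valid since the alignment lemma's argument itself holds conditionally on the full-measure SLLN event, which you correctly identify as the only place $(\cH_0,\cH_1)$ enters.
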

 
 \begin{proof}
 	W.l.o.g.\ take $\bm v=\bm e_1$, so $\bm X_j=b_j h \bm e_1+\bm\xi_j$ with $\bm\xi_j\sim\mathcal N(0,I_d)$.  
 	Let $p=m+\tilde m$ and define the empirical mean
 	\[
 	\hat{\bm\mu}:=\frac{1}{p}\Big(\sum_{i=1}^m \bm X_i + \sum_{k=1}^{\tilde m}\tilde{\bm X}_k\Big),
 	\qquad
 	\hat{\bm v}_{\bonus}=\frac{\hat{\bm\mu}}{\|\hat{\bm\mu}\|_2}.
 	\]
 	For a fixed $j$, write the leave-one-out decomposition
 	\[
 	A_j:=\sum_{i\ne j}\bm X_i+\sum_{k=1}^{\tilde m}\tilde{\bm X}_k,
 	\qquad
 	\hat{\bm\mu}=\tfrac{1}{p}(A_j+\bm X_j),\qquad
 	\hat{\bm v}_{\bonus}=\frac{A_j+\bm X_j}{\|A_j+\bm X_j\|_2}.
 	\]
 	Note $A_j\independent \bm X_j$.

 	\emph{Step 1 (Scale of $\|A_j+\bm X_j\|$).}
 	Recall
 	\[
 	A_j \;=\; \sum_{i\ne j}\bm X_i \;+\; \sum_{k=1}^{\tilde m}\tilde{\bm X}_k
 	\;=\;
 	\underbrace{\sum_{i\ne j} b_i h\,\bm e_1}_{\text{signal}}
 	\;+\;
 	\underbrace{\sum_{i\ne j}\bm\xi_i \;+\; \sum_{k=1}^{\tilde m}\tilde{\bm\xi}_k}_{\text{noise}},
 	\]
 	where $\bm\xi_i,\tilde{\bm\xi}_k\overset{\mathrm{i.i.d.}}{\sim}\mathcal N(\bm0,I_d)$ and
 	$b_i\overset{\mathrm{i.i.d.}}{\sim}\mathrm{Ber}(\gamma)$, independent of the noises.
 	
 	\paragraph{Signal size.}
 	By the strong law of large numbers there exists a set $\mathcal{S}$ such that $\P(\mathcal{S}) =1$ and on $\mathcal{S}$,
 	\[
 	\frac{1}{m}\sum_{i\ne j} b_i \;\to\; \gamma
 	\quad\Longrightarrow\quad
 	\Big\|\sum_{i\ne j} b_i h\,\bm e_1\Big\|
 	\;=\; h\sum_{i\ne j} b_i
 	\;=\; m\gamma h \,\big(1+o(1)\big).
 	\]
 	
 	\paragraph{Noise size.}
 	Let
 	\[
 	\bm Z \;:=\; \sum_{i\ne j}\bm\xi_i \;+\; \sum_{k=1}^{\tilde m}\tilde{\bm\xi}_k.
 	\]
 	Then $\bm Z\sim\mathcal N\!\big(\bm0,(p-1)\,I_d\big)$ with $p=m+\tilde m$.
 	Hence
 	\[
 	\|\bm Z\|^2 \;\stackrel{d}{=}\; (p-1)\,\chi^2_d
 	\quad\convd\quad
 	\frac{\|\bm Z\|^2}{(p-1)\,d} \;\convp\; 1,
 	\qquad
 	\frac{\|\bm Z\|}{\sqrt{(p-1)\,d}} \;\convp\; 1.
 	\]
 	Therefore
 	\[
 	\|\bm Z\| \;=\; \sqrt{d\,p}\,\big(1+o_p(1)\big).
 	\]
 	
 	\paragraph{Cross term is negligible.}
 	Write $A_j = S_j + \bm Z$ with $S_j:=\sum_{i\ne j} b_i h\,\bm e_1$.
 	Then
 	\[
 	\|A_j\|^2 \;=\; \|S_j\|^2 + \|\bm Z\|^2 + 2\,S_j^{\!\top}\bm Z.
 	\]
 	Since $S_j\parallel \bm e_1$ and $\bm Z\sim\mathcal N(\bm0,(p-1)I_d)$,
 	$S_j^{\!\top}\bm Z \,\big|\, S_j \sim \mathcal N\!\big(0,\,(p-1)\,\|S_j\|^2\big)$.
 	Hence
 	\[
 	\frac{S_j^{\!\top}\bm Z}{m^2}
 	\;=\;
 	\frac{\|S_j\|}{m}\cdot \frac{1}{m}\,\mathcal N\!\big(0,p-1\big)
 	\;=\;
 	O_p\!\Big(\frac{\|S_j\|}{m}\cdot \sqrt{\frac{p}{m^2}}\Big)
 	\;=\;
 	O_p\!\Big(\gamma h\cdot \sqrt{\frac{p}{m^2}}\Big)
 	\;=\; o_p(1),
 	\]
 	because $p/m\to 1/\pi_{\aug}$ is constant, so $\sqrt{p}/m\to 0$.
 	
 	\paragraph{Asymptotic scale of $\|A_j\|$.}
 	Combining the three pieces,
 	\[
 	\|A_j\|^2
 	\;=\;
 	(m\gamma h)^2\,(1+o(1)) \;+\; d\,p\,(1+o_p(1)) \;+\; o_p(m^2).
 	\]
 	Divide by $m^2$ and use $d/m\to c$ and $p/m\to 1/\pi_{\aug}$:
 	\[
 	\frac{\|A_j\|^2}{m^2}
 	\;\convp\;
 	\gamma^2 h^2 \;+\; \frac{c}{\pi_{\aug}}
 	\;=\;
 	D^2,
 	\qquad
 	\text{so}\quad
 	\frac{\|A_j\|}{m} \;\convp\; D
 	\quad\text{with}\quad
 	D:=\sqrt{\gamma^2 h^2 + c/\pi_{\aug}}.
 	\]
 	
 	\paragraph{Adding back $\bm X_j$ changes the norm by $o_p(m)$.}
 	By the reverse triangle inequality,
 	\[
 	\Big|\,\|A_j+\bm X_j\| - \|A_j\|\,\Big| \;\le\; \|\bm X_j\|.
 	\]
 	Here $\|\bm X_j\| \le \|b_j h \bm e_1\| + \|\bm\xi_j\| = h\,\mathbf 1\{b_j=1\}+O_p(\sqrt d)=O_p(\sqrt d + h)$.
 	Since $d=cm(1+o(1))$ and $h$ is fixed,
 	\[
 	\frac{\|\bm X_j\|}{m} \;=\; O_p\!\Big(\frac{\sqrt d}{m}\Big) + O_p\!\Big(\frac{1}{m}\Big)
 	\;=\; O_p\!\Big(\frac{1}{\sqrt m}\Big) + o_p(1) \;=\; o_p(1).
 	\]
 	Therefore,
 	\[
 	\frac{\|A_j+\bm X_j\|}{m}
 	\;=\;
 	\frac{\|A_j\|}{m} \,+\, o_p(1)
 	\;\convp\; D,
 	\]
 	i.e.
 	\[
 	\|A_j+\bm X_j\| \;=\; m\,D\,\big(1+o_p(1)\big),
 	\qquad
 	\frac{\|A_j\|}{\|A_j+\bm X_j\|} \;\convp\; 1.
 	\]
 	This completes Step~1.

 	\emph{Step 2 (Decomposition).}  
 	With $\bm X_j=b_j h \bm e_1+\bm\xi_j$,
 	\[
 	S_j^{\bonus}
 	=\frac{(A_j+\bm X_j)^\top \bm X_j}{\|A_j+\bm X_j\|}
 	=\underbrace{\frac{A_j^\top\bm\xi_j}{\|A_j+\bm X_j\|}}_{(I)_j}
 	+\underbrace{\frac{\bm X_j^\top\bm\xi_j}{\|A_j+\bm X_j\|}}_{(II)_j}
 	+\underbrace{b_j h\cdot\frac{(A_j+\bm X_j)^\top \bm e_1}{\|A_j+\bm X_j\|}}_{(III)_j}.
 	\]

 	\emph{Step 3 (Limits).}
 	- $(I)_j$: conditional on $A_j$, $A_j^\top\bm\xi_j\sim\mathcal N(0,\|A_j\|^2)$, so
 	\[
 	(I)_j=\tfrac{\|A_j\|}{\|A_j+\bm X_j\|}\cdot\frac{A_j^\top\bm\xi_j}{\|A_j\|}\convd \mathcal N(0,1).
 	\]
 	
 	- $(II)_j$: expand $\bm X_j^\top\bm\xi_j=b_j h\,\xi_j^{(1)}+\|\bm\xi_j\|_2^2$. The first part is $o_p(1)$ since $\xi_j^{(1)}\sim N(0,1)$. The second satisfies $\|\bm\xi_j\|_2^2=d+O_p(\sqrt d)$, giving
 	\[
 	(II)_j=\tfrac{d}{\|A_j+\bm X_j\|}+o_p(1)=\tfrac{c}{D}+o_p(1).
 	\]
 	
 	- $(III)_j$: 
 	This term captures how the learned direction projects the true signal onto $\bm e_1$.  
 	Since
 	\[
 	(A_j+\bm X_j)^\top \bm e_1 \;=\; \sum_{i=1}^m b_i h \;+\; O_p(\sqrt p)
 	\;=\; m\gamma h \;+\; o_p(m),
 	\]
 	we obtain
 	\[
 	(III)_j
 	\;=\; b_j h \cdot \frac{m\gamma h + o_p(m)}{mD(1+o_p(1))}
 	\;=\; b_j\,\frac{h^2\gamma}{D}+o_p(1).
 	\]
 	Thus the contribution from $(III)_j$ is a deterministic shift proportional to $b_j$.

 	\emph{Step 4 (Assemble).}  
 	Thus on event $\mathcal{S}$
 	\[
 	S_j^{\bonus}=(I)_j+\frac{c}{D}+b_j\frac{h^2\gamma}{D}+o_p(1).
 	\]
 	By Slutsky on $\mathcal{S}$ with $\P(\mathcal S) =1$,
 	\[
 	S_j^{\bonus}\mid j\in\cH_0,(\cH_0,\cH_1) \convd \mathcal N(c/D,1),
 	\qquad
 	S_j^{\bonus}\mid j\in\cH_1,(\cH_0,\cH_1) \convd \mathcal N((c+h^2\gamma)/D,1).
 	\]
 \end{proof}
 
 \begin{lemma}[Asymptotic pairwise independence of BONuS scores for original features]
 	\label{lemma:bonus-pairwise-independence}
 	Assume the model \eqref{eq:data-generating}–\eqref{eq:two-groups} with $\Lambda=\delta_{h\bm v}$ and the asymptotic regime \eqref{eq:asymptotic-regime}.
 	Run \emph{BONuS} with permutation–invariant mean learner \eqref{eq:mean-learner} on $\bm X^{\learn}=[\bm X,\tilde{\bm X}]$, and define the scores on original features by
 	\[
 	S_j:=\hat{\bm v}_{\bonus}^{\!\top}\bm X_j ,\qquad j\in[m],
 	\]
 	where $\hat{\bm v}_{\bonus}$ is the direction learned on $\bm X^{\learn}$.
 	Fix two distinct indices $j_1\neq j_2$ and labels $k_1,k_2\in\{0,1\}$ (with $k=1$ denoting $j\in\cH_1$ and $k=0$ denoting $j\in\cH_0$).
 	Then for any fixed $t_1,t_2\in\R$,
 	\[
 	\P\!\big(S_{j_1}\ge t_1,\ S_{j_2}\ge t_2 \,\big|\, (\cH_0,\cH_1)\big)
 	-\;
 	\P\!\big(S_{j_1}\ge t_1 \,\big|\, (\cH_0,\cH_1)\big)\,
 	\P\!\big(S_{j_2}\ge t_2 \,\big|\, (\cH_0,\cH_1)\big)
 	\;\longrightarrow\;0 .
 	\]
 	Equivalently, the conditional covariance of the indicators $\mathbf 1\{S_{j_\nu}\ge t_\nu\}$ vanishes, verifying Assumption~\textnormal{(ii)} for BONuS.
 \end{lemma}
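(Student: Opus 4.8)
The plan is to reduce the bivariate claim to the single-index computation already performed in the proof of Lemma~\ref{lemma:bonus-limit-law}, via a \emph{leave-two-out} decomposition of the learned direction. Without loss of generality take $\bm v=\bm e_1$ and write $\bm X_i = b_i h\bm e_1 + \bm\xi_i$ with $\bm\xi_i\iidsim N(\bm 0,\bm I_d)$; set $p:=m+\tilde m$ and
\[
A_{-12}\;:=\;\sum_{i\notin\{j_1,j_2\}}\bm X_i\;+\;\sum_{k=1}^{\tilde m}\tilde{\bm X}_k,
\]
so that $A_{-12}\independent(\bm X_{j_1},\bm X_{j_2})$ and the BONuS direction is $\hat{\bm v}_\bonus=(A_{-12}+\bm X_{j_1}+\bm X_{j_2})/\|A_{-12}+\bm X_{j_1}+\bm X_{j_2}\|$. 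I would not delete $\bm X_{j_1},\bm X_{j_2}$ from the learner; instead I would plug this form directly into $S_{j_\nu}=\hat{\bm v}_\bonus^{\top}\bm X_{j_\nu}$ and expand the numerator, reusing verbatim the norm estimate from Step~1 of the proof of Lemma~\ref{lemma:bonus-limit-law} (with $A_j$ replaced by $A_{-12}$ and two features, rather than one, added back): under the regime~\eqref{eq:asymptotic-regime}, $\|A_{-12}+\bm X_{j_1}+\bm X_{j_2}\| = mD\,(1+o_p(1))$ with $D:=\sqrt{h^2\gamma^2+c/\pi_\aug}$.

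The key step is the expansion, valid for $\nu\in\{1,2\}$,
\[
S_{j_\nu}
=\frac{A_{-12}^{\top}\bm X_{j_\nu}\;+\;\|\bm X_{j_\nu}\|^2\;+\;\bm X_{j_{3-\nu}}^{\top}\bm X_{j_\nu}}{\|A_{-12}+\bm X_{j_1}+\bm X_{j_2}\|}.
\]
In the numerator: the self-term is $\|\bm X_{j_\nu}\|^2=d+O_p(\sqrt d)$, contributing the deterministic shift $c/D$; the signal part of the first term is $b_{j_\nu}h\,A_{-12}^{\top}\bm e_1 = b_{j_\nu}h\,(m\gamma h(1+o(1)) + O_p(\sqrt p))$, contributing the shift $b_{j_\nu}h^2\gamma/D$; the remaining part of the first term is $A_{-12}^{\top}\bm\xi_{j_\nu}$, which after dividing by $mD$ equals $(1+o_p(1))\,G_\nu$ with $G_\nu:=A_{-12}^{\top}\bm\xi_{j_\nu}/\|A_{-12}\|$; and the \emph{cross term} $\bm X_{j_{3-\nu}}^{\top}\bm X_{j_\nu}$ is $O_p(\sqrt d)$ (its dominant piece is $\bm\xi_{j_1}^{\top}\bm\xi_{j_2}$, mean zero with variance $d$), hence $O_p(1/\sqrt m)=o_p(1)$ after normalization. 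The sub-leading pieces ($b_{j_\nu}h\xi_{j_\nu}^{(1)}$, the $O_p(\sqrt d)$ fluctuation of $\|\bm\xi_{j_\nu}\|^2$, the $O_p(\sqrt p)$ noise in $A_{-12}^{\top}\bm e_1$) all vanish after division, exactly as in Steps~2--3 of the proof of Lemma~\ref{lemma:bonus-limit-law}. Writing $\mu_k:=(c+kh^2\gamma)/D$, this gives
\[
S_{j_\nu}\;=\;\mu_{k_\nu}\;+\;G_\nu\;+\;o_p(1),\qquad \nu\in\{1,2\}.
\]

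Now, conditionally on $A_{-12}$, each $G_\nu=A_{-12}^{\top}\bm\xi_{j_\nu}/\|A_{-12}\|$ is $N(0,1)$, and since $\bm\xi_{j_1}\independent\bm\xi_{j_2}$ with both independent of $A_{-12}$, the pair $(G_1,G_2)$ is, conditionally on $A_{-12}$, a vector of two independent $N(0,1)$'s; as this conditional law does not depend on $A_{-12}$, unconditionally $(G_1,G_2)\sim N(0,1)\otimes N(0,1)$. Since the shifts $\mu_{k_1},\mu_{k_2}$ are $(\cH_0,\cH_1)$-measurable and the two remainders are jointly $o_p(1)$, Slutsky's theorem yields $(S_{j_1},S_{j_2}) \convd N(\mu_{k_1},1)\otimes N(\mu_{k_2},1)$ (for a.e.\ realization of the partition sequence, using $|\cH_1|/m\to\gamma$, as in Lemma~\ref{lemma:bonus-limit-law}). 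Because this limit law is absolutely continuous, evaluating the orthant probability at the continuity points $t_1,t_2$ gives $\P(S_{j_1}\ge t_1,S_{j_2}\ge t_2\mid(\cH_0,\cH_1))\to\bar\Phi(t_1-\mu_{k_1})\bar\Phi(t_2-\mu_{k_2})$, while the two marginal limits $\P(S_{j_\nu}\ge t_\nu\mid(\cH_0,\cH_1))\to\bar\Phi(t_\nu-\mu_{k_\nu})$ are precisely Lemma~\ref{lemma:bonus-limit-law}; subtracting proves the claim. The one genuinely new estimate relative to Lemma~\ref{lemma:bonus-limit-law} is the bound $\bm\xi_{j_1}^{\top}\bm\xi_{j_2}=O_p(\sqrt d)$ on the cross term, which is the main (and essentially only) obstacle; everything else is a two-index repetition of the scalar argument, and the conceptual point — that the sole asymptotically non-degenerate randomness in each score, $G_\nu$, is a functional of $(A_{-12},\bm\xi_{j_\nu})$ with the $\bm\xi_{j_\nu}$'s independent of one another — makes the pairwise factorization immediate once the expansion is in hand. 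The possible dependence among the $o_p(1)$ remainders (both involve $\bm\xi_{j_1}^{\top}\bm\xi_{j_2}$ and the coordinates $\xi_{j_\nu}^{(1)}$) is harmless since they vanish.
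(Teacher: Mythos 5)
Your proposal is correct and follows essentially the same route as the paper's proof: the same leave-two-out aggregate $A_{-12}$, the same norm estimate $\|A_{-12}+\bm X_{j_1}+\bm X_{j_2}\|=mD(1+o_p(1))$, the same three-term expansion of $S_{j_\nu}$ (projection onto $A_{-12}$, self-term, cross-term), the same $O_p(\sqrt d)$ bound on $\bm\xi_{j_1}^\top\bm\xi_{j_2}$, and the same observation that the only non-degenerate limit randomness, $G_\nu=A_{-12}^\top\bm\xi_{j_\nu}/\|A_{-12}\|$, is conditionally $N(0,1)$ and conditionally independent across $\nu$ given $A_{-12}$, hence unconditionally an independent standard Gaussian pair. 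The final remark that dependence among the $o_p(1)$ remainders is harmless is a good, explicit hygiene point that the paper leaves implicit.
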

 
 \begin{proof}
 	Let's assume $j_\nu \in \cH_{k_\nu}$ for $\nu =1,2$.
 	WLOG take $\bm v=\bm e_1$. Write each original feature as
 	$\bm X_j=b_j h\,\bm e_1+\bm\xi_j$ with $b_j\sim\mathrm{Ber}(\gamma)$ and
 	$\bm\xi_j\sim\mathcal N(\bm 0,I_d)$ i.i.d., independent of the synthetics.
 	Let $p:=m+\tilde m$ and define the “leave–two–out” aggregate
 	\[
 	A_{-12}\ :=\ \sum_{i\notin\{j_1,j_2\}}\bm X_i\ +\ \sum_{k=1}^{\tilde m}\tilde{\bm X}_k,
 	\qquad
 	\hat{\bm\mu}\;=\;\frac{1}{p}(A_{-12}+\bm X_{j_1}+\bm X_{j_2}),
 	\qquad
 	\hat{\bm v}_{\bonus}\;=\;\frac{\hat{\bm\mu}}{\|\hat{\bm\mu}\|}.
 	\]
 	Let $\cF_n:=\sigma(A_{-12})$.
 	Crucially, $\cF_n$ does not reveal $(\bm X_{j_1},\bm X_{j_2})$, and $(\bm X_{j_1},\bm X_{j_2})$ are independent of $A_{-12}$.
 	
 	\paragraph{Step 1: Stable scale.}
 	By the same calculation as in Lemma~\ref{lemma:bonus-limit-law} (Step~1) on a set $\mathcal{S}$ (with $\P(\mathcal{S}) = 1$),
 	\[
 	\|A_{-12}\|\ =\ mD\,(1+o_p(1)),
 	\qquad
 	\|A_{-12}+\bm X_{j_1}+\bm X_{j_2}\|\ =\ mD\,(1+o_p(1)),
 	\]
 	with $D=\sqrt{h^2\gamma^2+c/\pi_{\aug}}$. Hence
 	$\|A_{-12}\|/\|A_{-12}+\bm X_{j_1}+\bm X_{j_2}\|\convp 1$.
 	
 	\paragraph{Step 2: Decomposition.}
 	For $\nu\in\{1,2\}$,
 	\[
 	S_{j_\nu}
 	=\hat{\bm v}_{\bonus}^{\!\top}\bm X_{j_\nu}
 	=\frac{(A_{-12}+\bm X_{j_1}+\bm X_{j_2})^{\!\top}\bm X_{j_\nu}}
 	{\|A_{-12}+\bm X_{j_1}+\bm X_{j_2}\|}
 	=(I)_{j_\nu}+(II)_{j_\nu}+(III)_{j_\nu},
 	\]
 	where
 	\[
 	(I)_{j_\nu}:=\frac{A_{-12}^{\top}\bm X_{j_\nu}}{\|A_{-12}+\bm X_{j_1}+\bm X_{j_2}\|},\quad
 	(II)_{j_\nu}:=\frac{\bm X_{j_\nu}^{\!\top}\bm X_{j_\nu}}{\|A_{-12}+\bm X_{j_1}+\bm X_{j_2}\|},\quad
 	(III)_{j_\nu}:=\frac{\bm X_{j_{\nu'}}^{\!\top}\bm X_{j_\nu}}{\|A_{-12}+\bm X_{j_1}+\bm X_{j_2}\|}\ (\nu'\neq\nu).
 	\]
 	
 	\paragraph{$(I)_{j_\nu}$ (signal and noise).}
 	Decompose $\bm X_{j_\nu}=b_{j_\nu}h\,\bm e_1+\bm\xi_{j_\nu}$ with
 	$\bm\xi_{j_\nu}\independent A_{-12}$, $\bm\xi_{j_1}\independent\bm\xi_{j_2}$, and set
 	$r_n:=\|A_{-12}\|/\|A_{-12}+\bm X_{j_1}+\bm X_{j_2}\|\convp 1$. Then
 	\[
 	(I)_{j_\nu}
 	=r_n\Bigg[
 	b_{j_\nu}h\,\frac{A_{-12}^{\top}\bm e_1}{\|A_{-12}\|}
 	+\frac{A_{-12}^{\top}\bm\xi_{j_\nu}}{\|A_{-12}\|}
 	\Bigg].
 	\]
 	By Step~1, $A_{-12}^{\top}\bm e_1 = m\gamma h + o_p(m)$ and $\|A_{-12}\|=mD(1+o_p(1))$, hence
 	\[
 	b_{j_\nu}h\,\frac{A_{-12}^{\top}\bm e_1}{\|A_{-12}\|}
 	\;=\; b_{j_\nu}\,\frac{h^2\gamma}{D}+o_p(1).
 	\]
 	Moreover, conditionally on $A_{-12}$,
 	$\frac{A_{-12}^{\top}\bm\xi_{j_\nu}}{\|A_{-12}\|}\sim\mathcal N(0,1)$, and the two such
 	projections are conditionally independent for $\nu=1,2$. Therefore
 	\[
 	(I)_{j_\nu}
 	\;=\;
 	b_{j_\nu}\,\frac{h^2\gamma}{D}\;+\;Z_\nu\;+\;o_p(1),
 	\qquad
 	Z_\nu\ \big|\ A_{-12}\ \sim\ \mathcal N(0,1),\ \ Z_1\independent Z_2\ \big|\ A_{-12}.
 	\]
 	
 	\paragraph{$(II)_{j_\nu}$ (deterministic shift in the limit).}
 	Expand without taking expectations:
 	\[
 	\bm X_{j_\nu}^{\!\top}\bm X_{j_\nu}
 	= b_{j_\nu}^2 h^2 \;+\; 2 b_{j_\nu} h\,\bm e_1^{\!\top}\bm\xi_{j_\nu} \;+\; \|\bm\xi_{j_\nu}\|^2
 	= d + b_{j_\nu}h^2 + O_p(\sqrt d),
 	\]
 	(using $\|\bm\xi_{j_\nu}\|^2=d+O_p(\sqrt d)$ and $\bm e_1^{\!\top}\bm\xi_{j_\nu}=O_p(1)$).
 	With $\|A_{-12}+\bm X_{j_1}+\bm X_{j_2}\|=mD(1+o_p(1))$,
 	\[
 	(II)_{j_\nu}
 	= \frac{d + b_{j_\nu} h^2 + O_p(\sqrt d)}{mD(1+o_p(1))}
 	= \frac{d/m}{D} \;+\; \frac{b_{j_\nu} h^2}{mD} \;+\; \frac{O_p(\sqrt d)}{mD} \;+\; o_p(1).
 	\]
 	Since $d/m\to c$, $\sqrt d/m\to0$, and $b_{j_\nu}h^2/(mD)\to0$,
 	\[
 	(II)_{j_\nu} \;=\; \frac{c}{D} \;+\; o_p(1).
 	\]
 	
 	\paragraph{$(III)_{j_\nu}$ (negligible cross term).}
 	Expanding
 	\[
 	\bm X_{j_{\nu'}}^{\!\top}\bm X_{j_\nu}
 	=b_{j_{\nu'}}b_{j_\nu}h^2
 	+ b_{j_{\nu'}}h\,\bm e_1^{\!\top}\bm\xi_{j_\nu}
 	+ b_{j_\nu}h\,\bm e_1^{\!\top}\bm\xi_{j_{\nu'}}
 	+ \bm\xi_{j_{\nu'}}^{\!\top}\bm\xi_{j_\nu}
 	=O_p(\sqrt d),
 	\]
 	because the quadratic noise term is $O_p(\sqrt d)$ and the others are $O_p(1)$.
 	Therefore, with the same denominator,
 	\[
 	(III)_{j_\nu}
 	=\frac{O_p(\sqrt d)}{mD(1+o_p(1))}
 	=O_p\!\Big(\frac{1}{\sqrt m}\Big)
 	=o_p(1).
 	\]
 	(The $b_{j_{\nu'}}b_{j_\nu}h^2$ contribution scales only as $O(1/m)$, dominated by $O_p(1/\sqrt m)$.)
 	
 	\paragraph{Step 3: Joint limit and independence.}
 	Collecting the three pieces gives, for $\nu=1,2$,
 	\[
 	S_{j_\nu}
 	= \underbrace{\frac{c}{D}+b_{j_\nu}\frac{h^2\gamma}{D}}_{:=\ \mu_{k_\nu}}
 	\;+\; Z_\nu \;+\; o_p(1),
 	\qquad
 	Z_\nu\ \big|\ A_{-12}\ \sim\ \mathcal N(0,1),\ \ Z_1\independent Z_2\ \big|\ A_{-12},
 	\]
 	where $k_\nu:=b_{j_\nu}\in\{0,1\}$ and thus
 	$\mu_0=c/D$, $\mu_1=(c+h^2\gamma)/D$.
 	Because the conditional laws of $(Z_1,Z_2)$ do not depend on $A_{-12}$ (they are standard normals) and $Z_1\independent Z_2$ given $A_{-12}$, they are independent unconditionally as well. Therefore on $\mathcal{S}$ ($\P(\mathcal{S})= 1$),
 	\[
 	\big(S_{j_1},S_{j_2}\big)\ \convd\ \big(\mu_{k_1},\mu_{k_2}\big)\ +\ (Z_1,Z_2),
 	\qquad (Z_1,Z_2)\sim\mathcal N(\bm 0,I_2)\ \text{independent.}
 	\]
 	
 	\paragraph{Step 4: Factorization of tails.}
 	Fix $t_1,t_2\in\R$. From the joint convergence above,
 	\[
 	\P(S_{j_1}\ge t_1,\ S_{j_2}\ge t_2\mid(\cH_0,\cH_1))
 	\ \longrightarrow\
 	\bar\Phi(t_1-\mu_{k_1})\,\bar\Phi(t_2-\mu_{k_2}).
 	\]
 	Separately,
 	$\P(S_{j_\nu}\ge t_\nu\mid (\cH_0,\cH_1))\to \bar\Phi(t_\nu-\mu_{k_\nu})$ 
 	(cf.\ Lemma~\ref{lemma:bonus-limit-law}).
 	Hence the joint tail limit equals the product of the marginal tail limits, i.e.\ asymptotic pairwise independence holds.
 \end{proof}
 
 \section{Results Related to  subspace prior}
 
 Throughout the proofs we sometimes for the sake of brevity use the abbreviation BGN to refer the work of \cite{Benaych-Georges2011}.
    
 \subsection{Results related to quality of learned direction}
 \paragraph{Proof of Lemma \ref{lem:pca-learner-untransformed} and Proposition \ref{prop:pca-learner-masked}.} The proof of Lemma \ref{lem:pca-learner-untransformed} follows directly from Lemma \ref{lemma:pca-learner-split-subspace} with $\pi_{\spl} =1$ and proof of Proposition \ref{prop:pca-learner-masked} follows from Lemma \ref{lemma:pca-learner-split-subspace}  and Lemma \ref{lemma:pca-learner-bonus-subspace}.

 \begin{lemma}[Quality of PCA learner on split data under the subspace prior]
 	\label{lemma:pca-learner-split-subspace}
 	Assume the model \eqref{eq:data-generating}–\eqref{eq:two-groups} with the \emph{subspace prior}
 	\[
 	\Lambda = N(\bm 0,\,h^2 \bm v \bm v^\top),
 	\]
 	so that $\theta_j \sim N(\bm 0, h^2 \bm v \bm v^\top)$ for non-null features ($b_j=1$), and $\theta_j = 0$ otherwise.
 	Assume the asymptotic regime \eqref{eq:asymptotic-regime} where $d/m \to c \in [0,\infty)$.
 	Let the learning samples be obtained by data fission with proportion $\pi_{\spl}\in(0,1]$:
 	\[
 	\bm X_j^\learn
 	=\sqrt{\pi_{\spl}}\;\bm X_j
 	+\sqrt{1-\pi_{\spl}}\;\widetilde{\bm X}_j,
 	\qquad
 	\widetilde{\bm X}_j \iidsim N(\bm 0,I_d),
 	\]
 	and let the PCA learner be
 	\[
 	\hat{\bm v}_{\spl}
 	\in
 	\arg\max_{\|\bm u\|=1}\;
 	\bm u^\top
 	\!\Big(\frac1m\sum_{j=1}^m \bm X_j^\learn\bm X_j^{\learn\top}\Big)
 	\bm u.
 	\]
 	Then
 	\[
 	\big(\hat{\bm v}_{\spl}^{\top}\bm v\big)^2
 	\ \convp\
 	\tau_{\mathrm{PCA}}^2\!\big(\gamma,c,\sqrt{\pi_{\spl}}\,h\big)
 	\ =\
 	\frac{\Big(1-\dfrac{c}{\gamma^2\pi_{\spl}^2h^4}\Big)_+}{\,1+\dfrac{c}{\gamma\pi_{\spl}h^2}\,}.
 	\]
 \end{lemma}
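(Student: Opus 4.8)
The plan is to recognize the learning matrix as a rank-one additive perturbation of a Gaussian random matrix and invoke the Benaych-Georges--Nadakuditi (BGN) theory for the singular vectors of such perturbations. First I would rewrite the learning data: under the subspace prior, each non-null feature satisfies $\bm\theta_j = g_j\bm v$ with $g_j\iidsim N(0,h^2)$, so $\bm X_j = g_j\bm v + \bm\xi_j$ with $\bm\xi_j\iidsim N(\bm 0,I_d)$, and fissioning gives $\bm X_j^\learn = \sqrt{\pi_\spl}\,g_jb_j\bm v + \big(\sqrt{\pi_\spl}\bm\xi_j + \sqrt{1-\pi_\spl}\widetilde{\bm X}_j\big)$, where $b_j = \indicator(j\in\cH_1)$. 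Two observations are crucial: the fissioned noise term is exactly $N(\bm 0,I_d)$ (variances $\pi_\spl + (1-\pi_\spl) = 1$) regardless of $\pi_\spl$, and it is independent of the loadings $(g_j,b_j)_j$. Stacking rows, $\bm X^\learn = \bm a\bm v^\top + \bm\Omega$, where $\bm a = (\sqrt{\pi_\spl}g_jb_j)_{j\in[m]}\in\R^m$ and $\bm\Omega\in\R^{m\times d}$ has i.i.d.\ $N(0,1)$ entries, independent of $\bm a$; moreover $\hat{\bm v}_\spl$ is exactly the top right singular vector of $\bm X^\learn$, i.e.\ the top eigenvector of $\bm X^{\learn\top}\bm X^\learn$.

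Next I would normalize and set up the perturbation. Writing $\tfrac{1}{\sqrt m}\bm X^\learn = \tfrac{1}{\sqrt m}\bm\Omega + \theta_m\,\hat{\bm u}\bm v^\top$ with $\hat{\bm u} = \bm a/\|\bm a\|$ and $\theta_m = \|\bm a\|/\sqrt m$, the strong law gives $\theta_m^2 = \tfrac1m\sum_j \pi_\spl g_j^2 b_j \convas \pi_\spl\gamma h^2 =: \theta^2$. The noise part $\tfrac1{\sqrt m}\bm\Omega$ is an $m\times d$ Gaussian matrix whose empirical singular value distribution converges, under $d/m\to c$, to the Marchenko--Pastur-type law supported on $[\,|1-\sqrt c|,\,1+\sqrt c\,]$. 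Since $\hat{\bm u}$ is independent of $\bm\Omega$, the BGN theorem for rank-one additive perturbations applies to the top singular triple of $\tfrac1{\sqrt m}\bm X^\learn$: above the BBP threshold the squared alignment $(\hat{\bm v}_\spl^\top\bm v)^2$ converges almost surely to the value obtained by substituting the $D$-transform of the Marchenko--Pastur law, evaluated at $\theta$, into BGN's formula, and below threshold it converges to $0$.

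The remaining step is the bookkeeping identification of this limit. Carrying out the $D$-transform computation for the Marchenko--Pastur law --- equivalently, reading off the classical spiked--sample--covariance overlap with aspect ratio $c$ and effective spike $\ell = \theta^2 = \pi_\spl\gamma h^2$ --- yields the threshold $\theta^2 > \sqrt c$, i.e.\ $\gamma^2\pi_\spl^2 h^4 > c$, and limiting overlap $\dfrac{1 - c/\theta^4}{1 + c/\theta^2} = \dfrac{1 - c/(\gamma^2\pi_\spl^2 h^4)}{1 + c/(\gamma\pi_\spl h^2)}$. Together with the $(\cdot)_+$ from the subthreshold case, this is exactly $\tau^2_{\mathrm{PCA}}(\gamma,c,\sqrt{\pi_\spl}h)$; $\convp$ follows from $\convas$, and the randomness of $|\cH_1|$, of $\bm a$, and of the conditioning is absorbed by Slutsky. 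Specializing $\pi_\spl = 1$ then recovers Lemma~\ref{lem:pca-learner-untransformed}.

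I expect the main obstacle to be justifying the BGN step for our perturbation direction $\hat{\bm u}$: it has an atom-at-zero/Gaussian-mixture structure rather than being uniform on the sphere, so I would either appeal to the version of BGN that permits general perturbation vectors independent of the bulk, or run a short universality reduction (replace $\bm a$ by a rotationally invariant vector of the same asymptotic norm, using that only $\|\bm a\|/\sqrt m$ and independence from $\bm\Omega$ enter the limiting overlap). This reduction is also the conceptual heart of the result: the facts that only a $\gamma$-fraction of rows carry signal and that the magnitudes $g_j$ are random collapse into the single effective spike parameter $\theta^2 = \pi_\spl\gamma h^2$, which is why the answer takes the stated closed form.
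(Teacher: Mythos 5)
Your proof is correct, and it takes a genuinely different route from the paper's. The paper conditions on the non-null indicators $\bm b$, reads the \emph{conditional covariance} $\E[\bm X_j^\learn\bm X_j^{\learn\top}\mid b_j] = I_d + \pi_\spl b_j h^2 \bm v\bm v^\top$, and treats the averaged model as a rank-one \emph{spiked covariance} with spike $\ell = \pi_\spl h^2\hat\gamma_m$, then cites Paul (2007) / BGN for the eigenvector overlap $\frac{1 - c/\ell^2}{1 + c/\ell}$. You instead write the learning matrix as a rank-one \emph{additive} perturbation $\tfrac{1}{\sqrt m}\bm X^\learn = \tfrac{1}{\sqrt m}\bm\Omega + \theta_m\hat{\bm u}\bm v^\top$ with $\bm\Omega$ pure i.i.d.\ Gaussian, and cite BGN's singular-vector theorem for rectangular matrices, computing the overlap as $\frac{1 - c/\theta^4}{1 + c/\theta^2}$. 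The two formulas coincide under the standard dictionary $\ell = \theta^2 = \pi_\spl\gamma h^2$, and your bookkeeping (the SLLN for $\theta_m^2 = \tfrac{\pi_\spl}{m}\sum_j g_j^2 b_j$, the threshold $\theta^2 > \sqrt c \Leftrightarrow \gamma^2\pi_\spl^2 h^4 > c$, the final substitution) all checks out.

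Your route has an arguable rigor advantage that you partially notice. In the spiked-covariance framing, the rows of $\bm X^\learn$ are \emph{not} i.i.d.\ (conditional on $\bm b$, a $\hat\gamma_m$-fraction have covariance $I_d + \pi_\spl h^2 \bm v\bm v^\top$ and the rest have covariance $I_d$), so a direct appeal to Paul's i.i.d.\ spiked model implicitly uses a universality step. In your additive framing, the fissioned noise $\sqrt{\pi_\spl}\bm\xi_j + \sqrt{1-\pi_\spl}\widetilde{\bm X}_j$ is \emph{exactly} $N(\bm 0, I_d)$ for every row regardless of $b_j$, so the bulk is a clean i.i.d.\ Gaussian ensemble and the heterogeneity is entirely confined to the rank-one perturbation $\bm a\bm v^\top$. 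The concern you flag about the non-spherical law of $\hat{\bm u}$ is then handled exactly as you propose: condition on $\bm a$, use bi-orthogonal invariance of Gaussian $\bm\Omega$ to rotate $\hat{\bm u}$ to a fixed direction, apply BGN for deterministic perturbation vectors, and integrate, noting that the limit depends on $\bm a$ only through $\theta = \lim\|\bm a\|/\sqrt m$. This is the BGN case of deterministic perturbation with bi-orthogonally invariant noise, so no separate universality reduction is actually needed. Both approaches land in the same place, but yours isolates the Gaussianity more cleanly and makes the ``effective spike collapses the $(\gamma, g_j, b_j, \pi_\spl)$ structure into a single scalar'' point explicit.
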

 
 \begin{proof}
 	\textbf{Step 1 (Conditional covariance structure).}
 	Let $b_j\sim\mathrm{Ber}(\gamma)$ be the null/alt indicator, and write
 	\[
 	\bm X_j=\theta_j+\xi_j,
 	\quad
 	\theta_j = b_j\,h\,z_j\,\bm v,
 	\quad
 	z_j\sim N(0,1),
 	\quad
 	\xi_j\sim N(0,I_d),
 	\]
 	independently across $j$.  
 	Conditioning on $\bm b=(b_1,\dots,b_m)$, we have
 	\[
 	\bm X_j^\learn
 	=\sqrt{\pi_{\spl}}\;\bm X_j+\sqrt{1-\pi_{\spl}}\;\widetilde{\bm X}_j
 	=\sqrt{\pi_{\spl}}\;b_j\,h\,z_j\,\bm v+\bm\varepsilon_j,
 	\qquad
 	\bm\varepsilon_j:=\sqrt{\pi_{\spl}}\;\xi_j+\sqrt{1-\pi_{\spl}}\;\widetilde{\bm X}_j
 	\sim N(0,I_d).
 	\]
 	For each fixed $j$, conditional on $b_j$,
 	\[
 	\E[\bm X_j^\learn \bm X_j^{\learn\top}\mid b_j]
 	=I_d+\pi_{\spl}\,\E[\theta_j\theta_j^\top\mid b_j]
 	=I_d+\pi_{\spl}\,b_j\,h^2\,\bm v\bm v^\top.
 	\]
 	Averaging over $j$, the conditional population covariance across features is
 	\[
 	\E\!\Big[\frac1m\sum_{j=1}^m \bm X_j^\learn\bm X_j^{\learn\top}\,\Big|\,\bm b\Big]
 	=I_d+\pi_{\spl}\,h^2\,\Big(\frac1m\sum_{j=1}^m b_j\Big)\bm v\bm v^\top
 	=:I_d+\theta_m(\bm b)\,\bm v\bm v^\top,
 	\]
 	where $	\theta_m(\bm b):=\pi_{\spl}\,h^2\,\hat\gamma_m,
 	\ \ \hat\gamma_m=\frac1m\sum_{j=1}^m b_j.$
 	Thus, conditional on $\bm b$, the data matrix follows a rank-one spiked covariance model with aspect ratio $c=d/m$ and spike $\theta_m(\bm b)$.
 	
 	\medskip
 	\textbf{Step 2 (Asymptotic eigenvector overlap).}
 	By Theorem 4 of \cite{Paul2007} (see also \cite{Benaych-Georges2011}), for a rank-one spiked covariance model with spike $\theta>\sqrt c$,
 	\[
 	(\hat{\bm v}_{\spl}^{\top}\bm v)^2
 	\;\convp\;
 	\frac{1-\frac{c}{\theta^2}}{1+\frac{c}{\theta}},
 	\qquad
 	\text{and equals }0\text{ if }\theta\le\sqrt c.
 	\]
 	Applying this conditionally on $\bm b$ yields
 	\[
 	(\hat{\bm v}_{\spl}^{\top}\bm v)^2
 	\;\xrightarrow{P\mid\bm b}\;
 	\frac{1-\frac{c}{\theta_m(\bm b)^2}}{1+\frac{c}{\theta_m(\bm b)}}
 	\mathbf 1\{\theta_m(\bm b)>\sqrt c\}.
 	\]
 	
 	\medskip
 	\textbf{Step 3 (Spike convergence and unconditional limit).}
 	By the law of large numbers, $\hat\gamma_m\convp\gamma$, so
 	\(
 	\theta_m(\bm b)=\pi_{\spl}\,h^2\,\hat\gamma_m\convp\theta:=\pi_{\spl}\,h^2\,\gamma.
 	\)
 	By \cite[Remark~2.16]{Benaych-Georges2011}, the same deterministic limit holds unconditionally when the random spike converges to a constant. Hence
 	\[
 	(\hat{\bm v}_{\spl}^{\top}\bm v)^2
 	\ \convp\
 	\frac{\big(1-\frac{c}{\theta^2}\big)_+}{1+\frac{c}{\theta}}
 	=\frac{\Big(1-\frac{c}{\gamma^2\pi_{\spl}^2h^4}\Big)_+}{1+\frac{c}{\gamma\pi_{\spl}h^2}},
 	\]
 	as claimed.
 \end{proof}
 
 \begin{lemma}[Quality of PCA learner on BONuS data under the subspace prior]
 	\label{lemma:pca-learner-bonus-subspace}
 	Assume the model \eqref{eq:data-generating}–\eqref{eq:two-groups} with the \emph{subspace prior}
 	\(
 	\Lambda = N(\bm 0, h^2 \bm v\bm v^\top)
 	\)
 	and the asymptotic regime \eqref{eq:asymptotic-regime} where $d/m \to c \in [0,\infty)$.
 	Form the BONuS learning matrix by augmentation,
 	\[
 	\bm X^{\learn}=[\bm X,\tilde{\bm X}],
 	\qquad
 	\tilde{\bm X}_k\iidsim N(\bm 0,I_d),
 	\]
 	where $\pi_{\aug} := \lim m/(m+\tilde m)\in(0,1]$.
 	Let the PCA learner be
 	\[
 	\hat{\bm v}_{\bonus}
 	\ \in\
 	\arg\max_{\|\bm u\|=1}\;
 	\bm u^\top
 	\!\Big(\frac1{m+\tilde m}\sum_{j=1}^{m+\tilde m}\bm X_j^\learn\bm X_j^{\learn\top}\Big)\bm u.
 	\]
 	Then
 	\[
 	(\hat{\bm v}_{\bonus}^{\top}\bm v)^2
 	\ \convp\
 	\tau_{\mathrm{PCA}}^2(\pi_{\aug}\gamma,\pi_{\aug}c,h)
 	\ :=\
 	\frac{\Big(1-\dfrac{c}{\pi_{\aug}\gamma^2 h^4}\Big)_+}{\,1+\dfrac{c}{\gamma h^2}\,}.
 	\]
 \end{lemma}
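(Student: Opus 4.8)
The plan is to follow the template of Lemma~\ref{lemma:pca-learner-split-subspace}: condition on the configuration of null and non-null features, recognize the augmented learning matrix as a rank-one spiked sample covariance ensemble with a modified spike and aspect ratio, invoke the eigenvector-overlap result of \citet{Paul2007} (equivalently, the BGN framework \citep{Benaych-Georges2011}), and then integrate out the conditioning.

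First I would condition on $\bm b=(b_1,\dots,b_m)$, the vector of null/non-null indicators. Writing $\bm X_j=b_j h z_j\bm v+\bm\xi_j$ with $z_j\sim N(0,1)$ and $\bm\xi_j\iidsim N(\bm 0,I_d)$ for the $m$ original features, and $\tilde{\bm X}_k\iidsim N(\bm 0,I_d)$ for the $\tilde m$ augmented nulls, the $p:=m+\tilde m$ columns of $\bm X^{\learn}$ are conditionally independent with $\E[\bm X_j^{\learn}(\bm X_j^{\learn})^{\top}\mid\bm b]=I_d+b_j h^2\bm v\bm v^{\top}$ for an original feature and $I_d$ for an augmented null. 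Averaging over the $p$ columns, the conditional population covariance is $I_d+\theta_m(\bm b)\,\bm v\bm v^{\top}$ with spike $\theta_m(\bm b):=h^2|\cH_1|/p=h^2\hat\gamma_m\cdot m/p$, where $\hat\gamma_m:=m^{-1}\sum_{j\le m} b_j$. Thus, conditional on $\bm b$, $\bm X^{\learn}$ is a rank-one spiked covariance model with aspect ratio $\bar c_m:=d/p$ and spike $\theta_m(\bm b)$.

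Next I would take limits of the ensemble parameters: by the strong law of large numbers $\hat\gamma_m\to\gamma$ almost surely, and by the asymptotic regime~\eqref{eq:asymptotic-regime} $m/p\to\pi_{\aug}$ and $d/m\to c$, so $\bar c_m\to c\pi_{\aug}$ and $\theta_m(\bm b)\to h^2\gamma\pi_{\aug}=:\theta$ for a.e.\ realization of $\bm b$. Applying the spiked-covariance overlap result conditionally on $\bm b$ then gives
\[
(\hat{\bm v}_{\bonus}^{\top}\bm v)^2\ \xrightarrow{P\mid\bm b}\ \frac{\bigl(1-c\pi_{\aug}/\theta^{2}\bigr)_+}{1+c\pi_{\aug}/\theta}\;=\;\frac{\bigl(1-\tfrac{c}{\pi_{\aug}\gamma^{2}h^{4}}\bigr)_+}{1+\tfrac{c}{\gamma h^{2}}}\;=\;\tau_{\mathrm{PCA}}^2(\pi_{\aug}\gamma,\pi_{\aug}c,h),
\]
with the convention that the overlap is $0$ when $\theta\le\sqrt{c\pi_{\aug}}$ (absorbed into $(\cdot)_+$) and equals $1$ when $c\pi_{\aug}=0$. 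To remove the conditioning, for each $\epsilon>0$ I would use $\P\bigl(|(\hat{\bm v}_{\bonus}^{\top}\bm v)^2-\tau_{\mathrm{PCA}}^2(\pi_{\aug}\gamma,\pi_{\aug}c,h)|>\epsilon\bigr)=\E\bigl[\P(\,\cdot\mid\bm b)\bigr]$; the integrand is bounded by $1$ and tends to $0$ for a.e.\ $\bm b$ by the display above, so dominated convergence yields the claimed $\convp$. Alternatively, one may cite \citet[Remark~2.16]{Benaych-Georges2011} to pass directly from the random-but-convergent spike to its deterministic limit, exactly as in the proof of Lemma~\ref{lemma:pca-learner-split-subspace}.

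The main obstacle is the reduction in the second paragraph: the columns of $\bm X^{\learn}$ are not identically distributed --- only a $\pi_{\aug}\gamma$-fraction of them carry the rank-one spike --- so one must check that this mixture still falls within the scope of the BGN/Paul framework, with the effective spike equal to the column average $\theta_m(\bm b)$, and handle the boundary cases (spike near the phase-transition threshold $\sqrt{c\pi_{\aug}}$, and the degenerate regime $c=0$) cleanly. Everything else is bookkeeping once Lemma~\ref{lemma:pca-learner-split-subspace} is in hand: the only changes are that the aspect ratio becomes $d/(m+\tilde m)\to c\pi_{\aug}$ rather than $d/m\to c$, and the spike becomes $h^2\gamma\cdot m/(m+\tilde m)\to h^2\gamma\pi_{\aug}$ rather than $h^2\gamma$, which together realize the parameter substitution $(\gamma,c)\mapsto(\pi_{\aug}\gamma,\pi_{\aug}c)$ appearing in~\eqref{eq:masking-parameter-transformations}.
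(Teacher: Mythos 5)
Your proposal is correct and takes essentially the same route as the paper's proof: condition on $\bm b$, recognize the conditional population covariance of the augmented matrix as $I_d + \pi_{\aug,m}\hat\gamma_m h^2\,\bm v\bm v^\top$ with aspect ratio $d/(m+\tilde m)\to c\pi_{\aug}$, invoke the Paul/BGN eigenvector-overlap asymptotics conditionally, and pass to the unconditional limit (the paper cites \citep[Remark~2.16]{Benaych-Georges2011}, for which your dominated-convergence argument is a fine equivalent). The concern you flag about the columns not being i.i.d.\ is well taken but shared by the paper's own prose in this lemma; the clean resolution, which the paper makes explicit in the proof of Lemma~\ref{lem:bonus-sigma-eta}, is to view the scaled matrix as a rank-one \emph{additive} perturbation $S_n+W$ of a rectangular Gaussian matrix, which is precisely the BGN setup and sidesteps the mixture issue entirely.
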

 
 \begin{proof}
 	\textbf{Step 1 (Conditional covariance and spike structure).}
 	Let $b_j\sim \mathrm{Ber}(\gamma)$ be the null/alt indicator and, under the subspace prior,
 	\[
 	\theta_j \mid b_j =
 	\begin{cases}
 		\bm 0, & b_j=0,\\
 		h\,z_j\,\bm v, & b_j=1,\quad z_j\sim N(0,1),
 	\end{cases}
 	\]
 	independently across $j$.
 	Each observed feature is
 	\[
 	\bm X_j = \theta_j + \bm\xi_j, \qquad \bm\xi_j \sim N(\bm 0,I_d),
 	\]
 	and the augmented learning columns are
 	\[
 	\bm X_j^\learn =
 	\begin{cases}
 		\bm X_j, & j\le m,\\
 		\tilde{\bm X}_{j-m}, & j>m,
 	\end{cases}
 	\quad
 	\tilde{\bm X}_k \sim N(\bm 0,I_d).
 	\]
 	Conditioning on $\bm b=(b_1,\ldots,b_m)$, we have for $j\le m$:
 	\[
 	\E[\bm X_j^\learn\bm X_j^{\learn\top}\mid b_j]
 	= I_d + b_j\,h^2\,\bm v\bm v^\top,
 	\qquad
 	\E[\bm X_j^\learn\bm X_j^{\learn\top}\mid j>m] = I_d.
 	\]
 	Hence, the across-feature population covariance (conditional on $\bm b$) is
 	\[
 	\E\!\Big[\frac{1}{m+\tilde m}\sum_{j=1}^{m+\tilde m}\bm X_j^\learn\bm X_j^{\learn\top}\,\Big|\,\bm b\Big]
 	= I_d + \frac{m}{m+\tilde m}\Big(\frac{1}{m}\sum_{j=1}^m b_j\Big)h^2\,\bm v\bm v^\top
 	=: I_d + \theta_p(\bm b)\,\bm v\bm v^\top,
 	\]
 	where
 	\[
 	\theta_p(\bm b) := \pi_{\aug,m}\,\hat\gamma_m\,h^2,
 	\quad
 	\pi_{\aug,m} = \frac{m}{m+\tilde m}\to\pi_{\aug},
 	\quad
 	\hat\gamma_m = \frac{1}{m}\sum_{j=1}^m b_j.
 	\]
 	Thus, conditional on $\bm b$, the BONuS learner’s covariance matrix follows a rank-one spiked model with aspect ratio
 	\(
 	d/p \to c\,\pi_{\aug},
 	\)
 	where $p = m+\tilde m$.
 	
 	\medskip
 	\textbf{Step 2 (Asymptotic eigenvector overlap).}
 	By Theorem 4 of \cite{Paul2007} , for a rank-one spiked covariance model with spike $\theta>\sqrt{c'}$ (where $c'$ is the aspect ratio), the top empirical eigenvector satisfies
 	\[
 	(\hat{\bm v}_{\bonus}^{\top}\bm v)^2
 	\;\convp\;
 	\frac{\,1-\dfrac{c'}{\theta^2}\,}{\,1+\dfrac{c'}{\theta}\,},
 	\quad
 	\text{and }0\text{ if }\theta\le\sqrt{c'}.
 	\]
 	Conditionally on $\bm b$, this gives
 	\[
 	(\hat{\bm v}_{\bonus}^{\top}\bm v)^2
 	\;\xrightarrow{P\mid\bm b}\;
 	\frac{\,1-\dfrac{c\,\pi_{\aug}}{\theta_p(\bm b)^2}\,}{\,1+\dfrac{c\,\pi_{\aug}}{\theta_p(\bm b)}\,}
 	\mathbf 1\{\theta_p(\bm b)>\sqrt{c\,\pi_{\aug}}\}.
 	\]
 	
 	\medskip
 	\textbf{Step 3 (Remove conditioning via spike convergence).}
 	By the law of large numbers, $\hat\gamma_m \convp \gamma$, so
 	\[
 	\theta_p(\bm b) = \pi_{\aug,m}\,\hat\gamma_m\,h^2
 	\ \convp\
 	\theta := \pi_{\aug}\,\gamma\,h^2.
 	\]
 	\cite[Remark~2.16]{Benaych-Georges2011} ensures that this deterministic limit holds unconditionally when the random spike converges to a constant.
 	Hence,
 	\[
 	(\hat{\bm v}_{\bonus}^{\top}\bm v)^2
 	\ \convp\
 	\frac{1-\dfrac{c\,\pi_{\aug}}{\theta^2}}{\,1+\dfrac{c\,\pi_{\aug}}{\theta}\,}
 	\;=\;
 	\frac{1-\dfrac{c}{\pi_{\aug}\gamma^2 h^4}}{\,1+\dfrac{c}{\gamma h^2}\,},
 	\]
 	with truncation at zero when the numerator becomes negative. This matches the claimed limit.
 \end{proof}
 
 \subsection{Results related to power}
 
 \paragraph{Proof of Theorem \ref{thm:power-one-dimensional}} The proof follows directly from Lemma \ref{lem:power-subspace-split}, \ref{lem:power-subspace-bonus} and \ref{lem:power-subspace-insample}.
 
 \begin{lemma}[Asymptotic power for Split BH under the subspace prior with quadratic score]
 	\label{lem:power-subspace-split}
 	Assume the model \eqref{eq:data-generating}–\eqref{eq:two-groups} with the subspace prior
 	\(\Lambda=N(\bm 0,h^2\bm v\bm v^\top)\), \(\|\bm v\|_2=1\), and the asymptotic regime \eqref{eq:asymptotic-regime} with \(d/m\to c\in[0,\infty)\).
 	Let Split BH use splitting proportion \(\pi_{\spl}\in(0,1)\), learn \(\hat{\bm v}_{\spl}\) by PCA on \(\bm X^{\learn}\), and score held-out data via
 	\[
 	S_j\ :=\ \big(\hat{\bm v}_{\spl}^{\!\top}\bm X_j^{\score}\big)^2 .
 	\]
 	Let
 	\[
 	\tau_{\mathrm{PCA}}^2\ :=\ \tau_{\mathrm{PCA}}^2\!\big(\gamma,c,\sqrt{\pi_{\spl}}\,h\big)
 	\quad\text{as in Lemma \ref{lemma:pca-learner-split-subspace}},
 	\qquad
 	\sigma_{\spl}^2\ :=\ 1 + (1-\pi_{\spl})h^2\,\tau_{\mathrm{PCA}}^2 \ .
 	\]
 	Then, writing \(G_0\) for the law of \(\chi^2_1\) and \(G_1\) for the law of \(\sigma_{\spl}^2\chi^2_1\),
 	\[
 	\TPP(\widehat{\cR})\ \convp\ \TPR_{\mathrm{BH}}\!\big(G_0,G_1,\gamma\big).
 	\]
 \end{lemma}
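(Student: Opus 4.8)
The plan is to verify the hypotheses of the Split BH master theorem (Theorem~\ref{thm:master-split-BH}) with $G_0$ the $\chi^2_1$ law and $G_1$ the $\sigma_{\spl}^2\chi^2_1$ law, paralleling the proof of Lemma~\ref{lem:power-point-mass-split} but with scaled-$\chi^2_1$ limits replacing shifted Gaussians. Write $S_j=(\hat{\bm v}_{\spl}^{\!\top}\bm X_j^{\score})^2$ and let $F_0$ be the CDF of $\chi^2_1$. By the data-fission construction, $\bm X^{\learn}$ and $\bm X^{\score}$ are conditionally independent given $\{\bm\theta_j\}$, so $\hat{\bm v}_{\spl}$ is independent of each $\bm X_j^{\score}$ within its group. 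Since $\|\hat{\bm v}_{\spl}\|_2=1$, for $j\in\cH_0$ we get $\hat{\bm v}_{\spl}^{\!\top}\bm X_j^{\score}\mid(\hat{\bm v}_{\spl},j\in\cH_0)\sim N(0,1)$, hence $\P(S_j\ge t\mid\hat{\bm v}_{\spl},j\in\cH_0,(\cH_0,\cH_1))=\bar F_0(t)=:\bar G_0(t)$ deterministically. For $j\in\cH_1$, marginalizing over $\bm\theta_j\sim N(\bm 0,h^2\bm v\bm v^\top)$ gives $\bm X_j^{\score}\mid(j\in\cH_1)\sim N(\bm 0,\bm I_d+(1-\pi_{\spl})h^2\bm v\bm v^\top)$, so $\hat{\bm v}_{\spl}^{\!\top}\bm X_j^{\score}\mid(\hat{\bm v}_{\spl},j\in\cH_1)\sim N\!\big(0,\,1+(1-\pi_{\spl})h^2(\hat{\bm v}_{\spl}^{\!\top}\bm v)^2\big)$, and therefore
\[
\P(S_j\ge t\mid\hat{\bm v}_{\spl},j\in\cH_1,(\cH_0,\cH_1))=\bar F_0\!\Big(\tfrac{t}{1+(1-\pi_{\spl})h^2(\hat{\bm v}_{\spl}^{\!\top}\bm v)^2}\Big).
\]
Lemma~\ref{lemma:pca-learner-split-subspace} gives $(\hat{\bm v}_{\spl}^{\!\top}\bm v)^2\convp\tau_{\mathrm{PCA}}^2$, so by continuity of $\bar F_0$ and the continuous mapping theorem this converges in probability to $\bar F_0(t/\sigma_{\spl}^2)=:\bar G_1(t)$, which is Assumption~\ref{ass:pointwise-convg-score-tails}.

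Next I would check Assumption~\ref{ass:regularity-of-cdfs} and Assumption~\ref{ass:mlr}. Both $G_0$ and $G_1$ are continuous CDFs on $[0,\infty)$ with $\sup_{t\le C}G_k(t)<1$ for every $C$. Assuming $\tau_{\mathrm{PCA}}^2>0$ (the detectable regime; if $\tau_{\mathrm{PCA}}^2=0$ then $G_0=G_1$, $t_*=+\infty$, and $\TPR_{\mathrm{BH}}=0$, which matches the claimed formula and is dispatched by a one-line direct argument), we have $\sigma_{\spl}^2>1$, and $\bar G_1(t)/\bar G_0(t)=\bar F_0(t/\sigma_{\spl}^2)/\bar F_0(t)$. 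Using the $\chi^2_1$ tail $\bar F_0(s)\sim\sqrt{2/(\pi s)}\,e^{-s/2}$, this ratio is strictly increasing in $t$ and diverges as $t\to\infty$; I would record this as a short auxiliary lemma (a scaled-$\chi^2_1$ analogue of Lemma~\ref{lemma:mono_survival_gaussian}), proved by a log-derivative computation showing $\tfrac{d}{dt}\log\big(\bar F_0(t/\sigma^2)/\bar F_0(t)\big)>0$ for $\sigma>1$.

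With all hypotheses verified, Theorem~\ref{thm:master-split-BH} yields $\TPP(\widehat{\cR})\convp\TPR_{\mathrm{BH}}(G_0,G_1,\gamma)=\TPR_{\mathrm{BH}}(\chi^2_1,\sigma_{\spl}^2\chi^2_1,\gamma)$, as claimed. The main obstacle is the bookkeeping in the alternative case: correctly identifying the conditional law of $S_j$ as $\big(1+(1-\pi_{\spl})h^2(\hat{\bm v}_{\spl}^{\!\top}\bm v)^2\big)\chi^2_1$ — the variance inflation along $\bm v$ as seen through the learned direction — and pushing the convergence $(\hat{\bm v}_{\spl}^{\!\top}\bm v)^2\convp\tau_{\mathrm{PCA}}^2$ through the conditional tail probabilities. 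The monotone-survival-ratio lemma for scaled $\chi^2_1$ is routine, and the no-signal degenerate case needs only a brief separate treatment.
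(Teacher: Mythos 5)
Your proposal is correct and follows essentially the same route as the paper's proof: both verify the three hypotheses of the Split BH master theorem by (i) computing the exact conditional laws of $S_j$ given $\hat{\bm v}_{\spl}$ under null and alternative and pushing $(\hat{\bm v}_{\spl}^{\!\top}\bm v)^2\convp\tau_{\mathrm{PCA}}^2$ through the tail, (ii) noting continuity and boundedness away from one, and (iii) establishing strict monotonicity and divergence of $\bar G_1/\bar G_0$ via a Mills-ratio computation (the paper works with $\bar\Phi(u/\sigma)/\bar\Phi(u)$ after substituting $u=\sqrt t$, which is equivalent to your log-derivative argument), while both dispatch the degenerate $\tau_{\mathrm{PCA}}^2=0$ case upfront. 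The only cosmetic difference is notation: the paper writes the $\chi^2_1$ survival as $2\bar\Phi(\sqrt t)$ rather than $\bar F_0(t)$.
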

 
 \begin{proof}
 	Let
 	\[
 	\bar G_0(t):=\P(\chi_1^2\ge t),
 	\qquad
 	\bar G_1(t):=\P(\sigma_{\spl}^2\chi_1^2\ge t),
 	\]
 	where
 	\[
 	\sigma_{\spl}^2=1+(1-\pi_{\spl})h^2\tau_{\mathrm{PCA}}^2.
 	\]
 	If \(\tau_{\mathrm{PCA}}^2=0\), then \(\sigma_{\spl}^2=1\) and hence \(G_0=G_1\); in that degenerate case the conclusion is immediate. Thus assume \(\tau_{\mathrm{PCA}}^2>0\), so that \(\sigma_{\spl}>1\).
 	
 	By data splitting, \(\hat v_{\spl}\) is independent of the score sample \(\{X_j^{\score}\}_{j=1}^m\) conditional on \((\theta_j)_{j=1}^m\). Write
 	\[
 	X_j^{\score}=\theta_j^{\score}+\xi_j^{\score},
 	\qquad
 	\xi_j^{\score}\iidsim N(0,I_d),
 	\]
 	with \(\theta_j^{\score}=0\) under \(H_{0j}\), while under \(H_{1j}\),
 	\[
 	\theta_j^{\score}\sim N\!\big(0,(1-\pi_{\spl})h^2vv^\top\big).
 	\]
 	
 	\emph{(i) Null tail convergence conditional on \(\hat v_{\spl}\).}
 	If \(j\in\cH_0\), then conditional on \(\hat v_{\spl}, \{\theta_j\}_{j=1}^m\) and \((\cH_0,\cH_1)\),
 	\begin{equation}\label{eq:score-split-bh-null-subspace}
 	S_j=(\hat v_{\spl}^{\!\top}\xi_j^{\score})^2\stackrel{d}{=}\chi_1^2.
 	\end{equation}
 	Hence, for every fixed \(t\in\R\),
 	\[
 	\P(S_j\ge t \mid \hat v_{\spl},\, j\in\cH_0, \{\theta_j\}_{j=1}^m, (\cH_0,\cH_1))
 	=\bar G_0(t), \implies \P(S_j\ge t \mid \hat v_{\spl},\, j\in\cH_0,  (\cH_0,\cH_1))
 	=\bar G_0(t)
 	\]
 	so Assumption~\ref{ass:pointwise-convg-score-tails} holds exactly.
 	
%
 	\medskip
 	\noindent
 	\emph{(ii) Asymptotic pairwise independence.}
 	By Lemma~\ref{lemma:pca-learner-split-subspace},
 	\[
 	\langle \hat v_{\spl},v\rangle^2\convp \tau_{\mathrm{PCA}}^2,
 	\]
 	so Lemma~\ref{lem:joint-scores-split-subspace} applies. Therefore, for any distinct
 	\(i\neq j\), any \(k_1,k_2\in\{0,1\}\), and any fixed \(t_1,t_2\ge 0\),
 	\[
 	\P\!\big(S_i\ge t_1,\ S_j\ge t_2 \,\big|\, i\in\cH_{k_1},\,j\in\cH_{k_2},\,(\cH_0,\cH_1)\big)
 	\to
 	\bar G_{k_1}(t_1)\bar G_{k_2}(t_2).
 	\]
 	Since \(S_i,S_j\ge0\) almost surely, the same conclusion is trivial if one of the thresholds is negative. Thus Assumption~\ref{ass:splitBH-indep} holds.
 	
 	\medskip
 	\noindent
 	\emph{(iii) Regularity of \(G_0\) and \(G_1\).}
 	\(G_0\) and \(G_1\) are the distribution functions of \(\chi_1^2\) and
 	\(\sigma_{\spl}^2\chi_1^2\), hence are continuous on \(\R\). Moreover, for every
 	\(C\in\R\),
 	\[
 	\sup_{t\le C} G_0(t)<1,
 	\qquad
 	\sup_{t\le C} G_1(t)<1.
 	\]
 	So Assumption~\ref{ass:regularity-of-cdfs} holds.
 	
 	\medskip
 	\noindent
 	\emph{(iv) Monotone survival ratio and tail separation.}
 	Because \(S_j\ge0\) almost surely, only \(t\ge0\) is relevant for the BH threshold.
 	For \(t\ge0\),
 	\[
 	\frac{\bar G_1(t)}{\bar G_0(t)}
 	=
 	\frac{\bar\Phi(\sqrt t/\sigma_{\spl})}{\bar\Phi(\sqrt t)},
 	\qquad \sigma_{\spl}>1.
 	\]
 	Writing \(u=\sqrt t\), the same Mills-ratio argument as in the in-sample case shows
 	that \(u\mapsto \bar\Phi(u/\sigma_{\spl})/\bar\Phi(u)\) is strictly increasing on
 	\([0,\infty)\). Moreover, Gaussian tail asymptotics yield
 	\[
 	\frac{\bar G_1(t)}{\bar G_0(t)}
 	\sim
 	\sigma_{\spl}\exp\!\Big(\frac{t}{2}\Big(1-\frac{1}{\sigma_{\spl}^2}\Big)\Big)
 	\to\infty
 	\qquad (t\to\infty).
 	\]
 	Thus the monotone-ratio and tail-separation condition holds on the relevant threshold region.
 	
 	All assumptions of Theorem~\ref{thm:master-split-BH} are therefore satisfied, and hence
 	\[
 	\TPP(\widehat{\cR})\convp \TPR_{\mathrm{BH}}(G_0,G_1,\gamma).
 	\]
 \end{proof}
 
 \begin{lemma}[Power limit for BONuS with unnormalized scores under a subspace prior]\label{lem:power-subspace-bonus}
 	Consider BONuS with augmented nulls. The augmented matrix has $m+\tilde m$ rows, and
 	\[
 	\pi_{\aug}:=\frac{m}{m+\tilde m}\in(0,1),\qquad \bar\pi:=\pi_{\aug}\gamma,\qquad c:=\frac{d}{m}.
 	\]
 	Let the data follow the one–dimensional subspace prior
 	\[
 	X_j=\theta_j+\xi_j,\quad \xi_j\sim\mathcal N(0,I_d),\quad
 	\theta_j\mid b_j\sim (1-b_j)\,\delta_0+b_j\,\mathcal N(0,h^2\,vv^\top),
 	\]
 	with $b_j\stackrel{\mathrm{i.i.d.}}{\sim}\mathrm{Ber}(\gamma)$ on the $m$ real rows and $\|v\|_2=1$.
 	Define the unnormalized BONuS score for a real index $j\in[m]$ by
 	\[
 	S_j:=(\hat v^\top X_j)^2,
 	\]
 	where $\hat v=\hat v_1$ is the top right singular vector of $\big(1/\sqrt{\,m+\tilde m\,}\big)X$.
 	Assume $m,\tilde m,d\to\infty$ with $\pi_{\aug}\in(0,1)$ and $d/m\to c\in(0,\infty)$, and set $\delta^2:=h^2\bar\pi$.
 	Then
 	\[
 	\TPP(\widehat{\mathcal R})\ \convp\ 
 	\TPR_{\mathrm{BH}}\!\big(\chi^2_1,\ (1+\sigma^2_{\aug})\chi^2_1,\ \gamma\big),
 	\]
 	where
 	\[
 	\sigma^2_{\aug}
 	:=\frac{\eta^2}{\bar\pi(1-\eta^2)}
 	= h^2\,\frac{\Big(1-\dfrac{c}{\pi_{\aug}\gamma^2 h^4}\Big)_+}{1+\dfrac{c}{\gamma h^2}}
 	= h^2\,\tau_{\mathrm{PCA}}^2(\pi_{\aug}\gamma,\ \pi_{\aug}c,\ h),
 	\]
 	and the BGN/BBP overlap is
 	\[
 	\eta^2=\eta^2(\delta,\pi_{\aug}c)=\frac{\big(\delta^4-\pi_{\aug}c\big)_+}{\delta^2(\delta^2+1)}.
 	\]
 \end{lemma}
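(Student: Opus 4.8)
The plan is to derive the result from the BONuS master theorem (Theorem~\ref{thm:master-bonus}), in exactly the way Lemma~\ref{lem:power-point-mass-bonus} handles the point-mass prior, once the limiting law of the quadratic scores $S_j=(\hat v^\top X_j)^2$ under the subspace prior is pinned down. Two observations will organize the argument. First, writing $N:=m+\tilde m$ and $X=U\Sigma V^\top$ for the SVD of the $N\times d$ data matrix, we have $\hat v=v_1$ and the elementary identity $\hat v^\top X_j=\sigma_1\,u_{1,j}$, so $S_j=\sigma_1^2\,u_{1,j}^2$: the score is the top squared singular value times the squared $j$-th coordinate of the top \emph{left} singular vector. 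Second, it suffices to show that, conditionally on $(\cH_0,\cH_1)$ and on $j\in\cH_0$ (resp.\ $j\in\cH_1$), $S_j\convd\kappa_0\chi^2_1$ (resp.\ $\kappa_1\chi^2_1$) almost surely, with $\kappa_0,\kappa_1>0$ and $\kappa_1/\kappa_0=1+\sigma^2_{\aug}$, together with asymptotic pairwise independence of $(S_{j_1},S_{j_2})$; Theorem~\ref{thm:master-bonus} then applies with $G_0$ the law of $\kappa_0\chi^2_1$ and $G_1$ the law of $\kappa_1\chi^2_1$, and invariance of the functional $\TPR_{\mathrm{BH}}$ under a common positive rescaling of both arguments (rescale $t\mapsto t/\kappa_0$ in Definition~\ref{def:bhtpr}; the multiplicative analog of Lemma~\ref{lem:bh-translation-invariance}) converts the conclusion into $\TPR_{\mathrm{BH}}(\chi^2_1,(1+\sigma^2_{\aug})\chi^2_1,\gamma)$. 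I will handle the degenerate regime $\delta^4\le\pi_{\aug}c$ (equivalently $h^4\pi_{\aug}\gamma^2\le c$), where $\eta^2=\sigma^2_{\aug}=0$, separately: there the RMT input below gives $G_0=G_1=\kappa_0\chi^2_1$, so $R_m(t)$ and $V_m(t)$ share the same limit, $\phi_m(t)\convp1>q$ on $(-\infty,C]$, hence $\hat t\convp+\infty$, $|\widehat{\mathcal R}|=0$ with probability tending to one, and $\TPP(\widehat{\mathcal R})\convp0=\TPR_{\mathrm{BH}}(\chi^2_1,\chi^2_1,\gamma)$. Assume henceforth $\eta^2>0$.

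The technical core is an RMT limit-law lemma, the subspace analog of Lemma~\ref{lemma:bonus-limit-law}, which I would establish as follows. Write $X=h\,Zv^\top+\Xi=(h\|Z\|)(Z/\|Z\|)v^\top+\Xi$, where $\Xi$ has i.i.d.\ $N(0,1)$ entries and $Z\in\R^N$ has $Z_i=b_iz_i$ on the $m$ real rows and $Z_i=0$ on the $\tilde m$ synthetic rows, so $X$ is a rank-one perturbation of a Gaussian matrix with aspect ratio $d/N\to c\pi_{\aug}$ and perturbation strength satisfying $h^2\|Z\|^2/N\convp h^2\bar\pi=:\delta^2$ (using $\|Z\|^2=\sum_{i\le m}b_iz_i^2=m\gamma(1+o_p(1))$). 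Benaych-Georges--Nadakuditi / BBP theory (\cite{Benaych-Georges2011,Paul2007}; cf.\ Lemma~\ref{lemma:pca-learner-bonus-subspace}) supplies $\sigma_1^2/N\convp\lambda$ for an explicit $\lambda$, the right-vector overlap $\langle v_1,v\rangle^2\convp\tau^2_{\mathrm{PCA}}(\pi_{\aug}\gamma,\pi_{\aug}c,h)$, and the left-vector overlap $\langle u_1,Z/\|Z\|\rangle^2\convp\eta^2$ with $\eta^2=(\delta^4-\pi_{\aug}c)_+/(\delta^2(\delta^2+1))$. The remaining ingredient is a single-coordinate delocalization statement: decomposing $u_1=\eta\,Z/\|Z\|+\sqrt{1-\eta^2}\,w+o_p(1)$ with $w$ asymptotically uniform on the unit sphere orthogonal to $Z$, a fixed coordinate satisfies $\sqrt N\,u_{1,j}\convd N(0,\,1-\eta^2)$ when $j\in\cH_0$ (then $Z_j=0$) and $\sqrt N\,u_{1,j}\convd N(0,\,1-\eta^2+\eta^2/\bar\pi)$ when $j\in\cH_1$ (then $Z_j=z_j$, with $z_j$ asymptotically independent of $w_j$ and $N/\|Z\|^2\convp1/\bar\pi$). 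Multiplying by $\sigma_1^2/N\convp\lambda$ and squaring yields the claimed limit laws with $\kappa_0=\lambda(1-\eta^2)$ and $\kappa_1=\lambda(1-\eta^2+\eta^2/\bar\pi)$, so $\kappa_1/\kappa_0=1+\eta^2/(\bar\pi(1-\eta^2))=1+\sigma^2_{\aug}$; the identity $\sigma^2_{\aug}=h^2\tau^2_{\mathrm{PCA}}(\pi_{\aug}\gamma,\pi_{\aug}c,h)$ stated in the lemma is then a short algebraic simplification. A companion leave-two-rows-out argument (modeled on Lemma~\ref{lemma:bonus-pairwise-independence}) shows $(u_{1,j_1},u_{1,j_2})$ is asymptotically a pair of independent Gaussians, independent of $\sigma_1$, so $(S_{j_1},S_{j_2})$ factorizes in the limit, giving condition~(ii) of Theorem~\ref{thm:master-bonus}.

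It then remains to discharge the routine conditions of Theorem~\ref{thm:master-bonus} for $G_0=$ law of $\kappa_0\chi^2_1$ and $G_1=$ law of $\kappa_1\chi^2_1$: (i) is the limit law above; (iii) holds since $G_0,G_1$ are continuous with $\sup_{t\le C}G_k(t)<1$; and (iv)---strict monotonicity on $[0,\infty)$ of $t\mapsto\bar G_1(t)/\bar G_0(t)=\bar F_0(t/\kappa_1)/\bar F_0(t/\kappa_0)$, where $\bar F_0$ is the $\chi^2_1$ survival function, together with its divergence as $t\to\infty$---follows verbatim from the Mills-ratio calculus in step~(iii) of the proof of Lemma~\ref{lem:power-subspace-split}, on setting $u=\sqrt t$ and $\sigma^2=\kappa_1/\kappa_0>1$. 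Theorem~\ref{thm:master-bonus} then gives $\TPP(\widehat{\mathcal R})\convp\TPR_{\mathrm{BH}}(\kappa_0\chi^2_1,\kappa_1\chi^2_1,\gamma)=\TPR_{\mathrm{BH}}(\chi^2_1,(1+\sigma^2_{\aug})\chi^2_1,\gamma)$ by the rescaling invariance noted above, which is the claim.

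The main obstacle will be the single-coordinate delocalization (and joint independence of two coordinates) of the top left singular vector $u_1$: the first-order results of \cite{Benaych-Georges2011} control the macroscopic overlap $\langle u_1,Z/\|Z\|\rangle$ but not the asymptotic Gaussianity of an individual entry $u_{1,j}$, which is precisely what conditions~(i)--(ii) of the master theorem require, and which moreover must be carried out in two flavors (null vs.\ alternative $j$). Establishing it rigorously calls either for a leave-one-(two-)row-out resolvent expansion combined with the deterministic equivalents of \cite{Benaych-Georges2011}, or for an eigenvector-universality/delocalization input for spiked Wishart-type ensembles; care is needed because deleting a row perturbs the sample covariance by an operator-norm $O(1)$ (rank-one) amount, comparable to the spectral gap, so the leave-one-out direction only approximates $\hat v$ after projecting out the signal component---a complication absent in the point-mass case, where the mean learner has an explicit closed form.
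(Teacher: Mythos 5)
Your overall structure matches the paper's proof: apply the BONuS master theorem (Theorem~\ref{thm:master-bonus}), reduce to the limiting law of $S_j=\hat\sigma_1^2\,(\hat u)_j^2$ under the null/alternative mixture, check MLR for scaled $\chi^2_1$, and invoke scale-invariance of $\TPR_{\mathrm{BH}}$ to pass from $(\kappa_0\chi^2_1,\kappa_1\chi^2_1)$ to $(\chi^2_1,(1+\sigma^2_\aug)\chi^2_1)$. Your handling of the subcritical regime $\delta^4\le\pi_\aug c$ is also fine, though the paper simply sets it aside as the trivial zero-power case.

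Where you diverge from the paper is the technical core you flag as the ``main obstacle''---the coordinate-level CLT for the top left singular vector $u_1$---and there your proposed route (leave-row-out resolvent expansions, deterministic equivalents, or eigenvector universality for spiked Wishart ensembles) is substantially heavier than needed. The paper's Lemma~\ref{lem:joint-scores-in-sample} obtains this directly by a symmetry argument, not a perturbative one. Write $X=h\|b\odot Z\|\,u\,v^\top+E$ with $E$ standard Gaussian. For any orthogonal $R$ fixing $u$, $RE\overset{d}{=}E$, so $RX\overset{d}{=}X$ conditionally on $(u,\|b\odot Z\|)$; by SVD equivariance this forces the conditional law of $w:=(\hat u-\hat\eta_n u)/\sqrt{1-\hat\eta_n^2}$ given $(u,\hat\eta_n)$ to be \emph{exactly} Haar-uniform on the unit sphere in $u^\perp$---not just ``asymptotically uniform.'' From there a single-coordinate (and two-coordinate) CLT for a uniform spherical vector is elementary, and the signal-aligned piece $\hat\eta_n u_j$ is handled by the LLN plus $\hat\eta_n\convp\eta$; the leave-one/two-out operator-norm perturbation issue you worry about simply never arises. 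Your companion ``leave-two-rows-out'' lemma is likewise unnecessary: the joint two-coordinate version comes for free from the same Haar argument applied jointly. If you keep your current structure but replace the proposed delocalization input by this rotation-invariance reduction, the proof closes cleanly and matches the paper.

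One small bookkeeping note: the paper applies Lemma~\ref{lem:joint-scores-in-sample} to the augmented matrix by reinterpreting the non-null fraction across all $n=m+\tilde m$ rows as $\bar\pi=\pi_\aug\gamma$ (synthetic rows get $b_i\equiv0$), which is exactly the substitution you use, so that part is aligned; just make sure the aspect ratio in the overlap formula is $d/n\to\pi_\aug c$, as you have it.
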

 
 \begin{proof}
 	\textbf{(i)–(ii) Pointwise tails and pairwise independence.}
 	Apply Lemma~\ref{lem:joint-scores-in-sample} \emph{to the augmented matrix} (since the Lemma talks about limiting distribution which is conditioned on $(\cH_0,\cH_1)$ adding $\tilde m$ synthetic nulls does not alter the analysis )with the substitutions
 	\[
 	\pi\ \leadsto\ \bar\pi=\pi_{\aug}\gamma,\qquad
 	c\ \leadsto\ \pi_{\aug}c\ (=d/(m+\tilde m)).
 	\]
 	Equivalently, the left–vector CLT and its joint version hold with
 	\[
 	\tau_0^2=1-\eta^2,\qquad \tau_1^2=(1-\eta^2)+\frac{\eta^2}{\bar\pi},\qquad
 	\eta^2=\frac{(\delta^4-\pi_{\aug}c)_+}{\delta^2(\delta^2+1)},\ \ \delta^2=h^2\bar\pi.
 	\]
 	Thus, writing the scaled SVD as $\big(1/\sqrt{\,m+\tilde m\,}\big)X=\hat U\hat\Sigma\hat V^\top$,
 	\[
 	\left(\frac{S_i}{\hat\sigma_1^2},\frac{S_j}{\hat\sigma_1^2}\right) \mid b_i,b_j,(\cH_0,\cH_1)
 	\convd\big(\tau_{b_i}^2\chi^2_{1,i},\ \tau_{b_j}^2\chi^2_{1,j}\big) \quad \text{almost surely},
 	\]
 	with independence conditionally on $(b_i,b_j)$.
 	Since $\hat\sigma_1\to\sigma_\infty\in(0,\infty)$ in probability (Lemma \ref{lem:bonus-sigma-eta}), Slutsky yields, with probability going to one that
 	\begin{equation}\label{eq:score-bonus-subspace}
 	S_j \mid b_i,b_j,(\cH_0,\cH_1)\convd 
 	\begin{cases}
 		c_0\,\chi^2_1,& j\in\cH_0,\\
 		c_1\,\chi^2_1,& j\in\cH_1,
 	\end{cases}
 	\quad
 	c_0:=\sigma_\infty^2(1-\eta^2),\ \ 
 	c_1:=\sigma_\infty^2\!\Big((1-\eta^2)+\frac{\eta^2}{\bar\pi}\Big),
 	\end{equation}
 	and joint limits factor—verifying Assumptions (i)–(ii) of Theorem~\ref{thm:master-bonus} with $G_0=c_0\chi^2_1$, $G_1=c_1\chi^2_1$.
 	
 	\medskip
 	\noindent\textbf{(iii)–(iv) Regularity and tail separation.}
 	Scaled $\chi^2_1$ laws are continuous and possess MLR in $t$, hence $t\mapsto \bar G_1(t)/\bar G_0(t)$ is strictly increasing and diverges to $\infty$.
 	
 	\medskip
 	\noindent\textbf{Reduction to the canonical pair and identification of $\sigma^2_{\aug}$.}
 	Benjamini–Hochberg depends only on a common rescaling, so factor out $c_0$:
 	\[
 	\TPR_{\mathrm{BH}}(G_0,G_1,\gamma)
 	=\TPR_{\mathrm{BH}}\!\Big(\chi^2_1,\ \frac{c_1}{c_0}\chi^2_1,\ \gamma\Big),
 	\qquad
 	\frac{c_1}{c_0}
 	=\frac{\tau_1^2}{\tau_0^2}
 	=1+\frac{\eta^2}{\bar\pi(1-\eta^2)}
 	=:1+\sigma^2_{\aug}.
 	\]
 	Finally, plug in the BGN overlap with aspect ratio $\pi_{\aug}c$:
 	\[
 	\frac{\eta^2}{1-\eta^2}
 	=\frac{\delta^4-\pi_{\aug}c}{\delta^2+\pi_{\aug}c}
 	\ \convd\
 	\sigma^2_{\aug}
 	=\frac1{\bar\pi}\cdot\frac{\delta^4-\pi_{\aug}c}{\delta^2+\pi_{\aug}c}
 	= h^2\,\frac{1-\dfrac{c}{\pi_{\aug}\gamma^2 h^4}}{1+\dfrac{c}{\gamma h^2}}
 	= h^2\,\tau_{\mathrm{PCA}}^2(\pi_{\aug}\gamma,\pi_{\aug}c,h),
 	\]
 	with the understanding $\sigma^2_{\aug}=0$ when $\delta^4\le \pi_{\aug}c$ (BBP subcritical).
 	Applying Theorem~\ref{thm:master-bonus} completes the proof.
 \end{proof}
 
 \begin{lemma}[In-sample BH power under a subspace prior]\label{lem:power-subspace-insample}
 	Let $X\in\R^{m\times d}$ have rows $X_j=\theta_j+\xi_j$, where $\xi_j\sim\mathcal N(0,I_d)$ and
 	\[
 	\theta_j\mid b_j\sim (1-b_j)\,\delta_0+b_j\,\mathcal N(0,h^2\,vv^\top),\qquad
 	b_j\stackrel{\mathrm{i.i.d.}}{\sim}\mathrm{Ber}(\gamma),\quad \|v\|_2=1.
 	\]
 	Let $S_j=(\hat v^\top X_j)^2$ be the in-sample PCA score, with $\hat v=\hat v_1$ the top right singular vector of $(1/\sqrt{m})X$.
 	Assume $m,d\to\infty$ with $d/m\to c\in(0,\infty)$ and set
 	\[
 	\delta^2:=h^2\gamma,\qquad
 	\eta^2:=\frac{(\delta^4-c)_+}{\delta^2(\delta^2+1)},\qquad
 	\sigma_\star^2:=\frac{\eta^2}{\gamma(1-\eta^2)}
 	= h^2\,\frac{\Big(1-\dfrac{c}{h^4\gamma^2}\Big)_+}{1+\dfrac{c}{h^2\gamma}}.
 	\]
 	Then, for the in-sample BH procedure at FDR level $q\in(0,1)$ with non-null fraction $\gamma$,
 	\[
 	\TPP(\widehat{\mathcal R})\ \convp\ 
 	\TPR_{\mathrm{BH}}\!\big(\chi^2_1,\ (1+\sigma_\star^2)\chi^2_1,\ \gamma\big).
 	\]
 \end{lemma}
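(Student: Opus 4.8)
The plan is to apply the master theorem for in-sample procedures, Theorem~\ref{thm:master-in-sample}, with limiting null and alternative score laws $G_0=\chi^2_1$ and $G_1=(1+\sigma_\star^2)\chi^2_1$, so that the conclusion is immediate once the four hypotheses of that theorem are checked. This is the $\pi_{\aug}=1$ specialization of the argument in Lemma~\ref{lem:power-subspace-bonus}: with no augmentation the effective non-null mass is $\gamma$ and the relevant aspect ratio is $c=d/m$. Before invoking the master theorem I would dispose of the subcritical regime $\delta^4=h^4\gamma^2\le c$ separately: there the BBP threshold gives $\eta^2=0$, so $\hat v$ is asymptotically orthogonal to $v$ by Lemma~\ref{lem:pca-learner-untransformed}, the limiting null and alternative score distributions coincide, $\sigma_\star^2=0$, and both sides of the claimed identity equal $\TPR_{\mathrm{BH}}(\chi^2_1,\chi^2_1,\gamma)=0$; this case is absorbed into the computation below. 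So assume henceforth $\delta^4>c$, which also guarantees $0<\eta^2<1$ and hence $\sigma_\star^2\in(0,\infty)$.

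The technical heart is Lemma~\ref{lem:joint-scores-in-sample}, the joint CLT for in-sample PCA scores. Writing the scaled SVD $m^{-1/2}X=\hat U\hat\Sigma\hat V^{\top}$ with leading squared singular value $\hat\sigma_1^2$, this lemma gives that, conditionally on $(\cH_0,\cH_1)$ and on the labels $(b_i,b_j)$ of two distinct indices, the pair $(S_i/(m\hat\sigma_1^2),\,S_j/(m\hat\sigma_1^2))$ converges to independent variables $\tau_{b_i}^2\chi^2_1$ and $\tau_{b_j}^2\chi^2_1$ with $\tau_0^2=1-\eta^2$ and $\tau_1^2=(1-\eta^2)+\eta^2/\gamma$, where $\eta^2=(\delta^4-c)_+/(\delta^2(\delta^2+1))$. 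Combining this with $\hat\sigma_1^2\convp\sigma_\infty^2\in(0,\infty)$ (the BBP phase transition for the rank-one spiked model of Lemma~\ref{lem:pca-learner-untransformed}, exactly as used in Lemma~\ref{lem:power-subspace-bonus}) and Slutsky's theorem, I would deduce, for each $k\in\{0,1\}$ and each fixed $t\ge 0$, the pointwise convergence $\P(S_j\ge t\mid j\in\cH_k,(\cH_0,\cH_1))\convp \bar G_k^{(0)}(t):=\P(\sigma_\infty^2\tau_k^2\chi^2_1>t)$, and the asymptotic factorization of the pairwise joint tails, which are precisely Assumptions~\ref{ass:insample-tails} and~\ref{ass:insample-indep} with $G_0^{(0)}$ the law of $\sigma_\infty^2\tau_0^2\chi^2_1$ and $G_1^{(0)}$ the law of $\sigma_\infty^2\tau_1^2\chi^2_1$. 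Assumption~\ref{ass:insample-regularity} is immediate since both are scaled $\chi^2_1$ laws, and Assumption~\ref{ass:insample-mlr} follows from the strictly decreasing Mills-ratio calculus argument already carried out in the proof of Lemma~\ref{lem:power-subspace-split} (here $\tau_1^2>\tau_0^2$), which shows $t\mapsto \bar G_1^{(0)}(t)/\bar G_0^{(0)}(t)$ is strictly increasing on $[0,\infty)$ and diverges to $\infty$.

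Theorem~\ref{thm:master-in-sample} then yields $\TPP(\widehat{\mathcal R})\convp \TPR_{\mathrm{BH}}(G_0^{(0)},G_1^{(0)},\gamma)$, and I would finish by removing the common positive scale $\sigma_\infty^2\tau_0^2$ using the scale-invariance of the BH TPR functional of Definition~\ref{def:bhtpr} (substitute $t\mapsto t/(\sigma_\infty^2\tau_0^2)$ in the threshold equation and in $\bar G_1(t_*)$), which is the reduction concluding the proof of Lemma~\ref{lem:power-subspace-bonus}; this leaves $\TPR_{\mathrm{BH}}(\chi^2_1,(\tau_1^2/\tau_0^2)\chi^2_1,\gamma)$, and the elementary algebra $\tau_1^2/\tau_0^2=1+\eta^2/(\gamma(1-\eta^2))=1+\sigma_\star^2=1+h^2(1-c/(h^4\gamma^2))_+/(1+c/(h^2\gamma))$ (as in Lemma~\ref{lemma:pca-learner-bonus-subspace} at $\pi_{\aug}=1$) gives the stated form. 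The main obstacle is external to this proof — it is establishing Lemma~\ref{lem:joint-scores-in-sample} itself, which requires controlling the dependence between $\hat v$ and each of the rows $X_j$ that define it via a genuine random-matrix computation — but taking that lemma as given, the present argument is a routine assembly of the master theorem with its inputs, the only delicate bookkeeping being the cancellation of the scaling constants $\sigma_\infty^2,\tau_0^2,\tau_1^2$ inside the BH functional.
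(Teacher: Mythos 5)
Your proposal is correct and matches the paper's proof essentially step for step: both hinge on Lemma~\ref{lem:joint-scores-in-sample} for the joint tail limits and pairwise independence of the scores, feed these into Theorem~\ref{thm:master-in-sample}, cancel the common scale via BH scale-invariance, and reduce $\tau_1^2/\tau_0^2$ to $1+\sigma_\star^2$ via the BGN overlap formula. The only cosmetic difference is the verification of the monotone-survival-ratio hypothesis (you reuse the Mills-ratio argument from Lemma~\ref{lem:power-subspace-split}; the paper uses the explicit exponential likelihood ratio of two scaled $\chi^2_1$ densities), which is immaterial.
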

 
 \begin{proof}
 	We focus on the case $\sigma_*>0$ in the other case the power is exactly $0$.
 	
 	\textbf{Step 1 (Score representation).}
 	Let the scaled SVD be $(1/\sqrt{m})X=\hat U\hat\Sigma\hat V^\top$ with $\hat u:=\hat u_1$, $\hat v:=\hat v_1$, $\hat\sigma_1:=\hat\Sigma_{11}$. Then
 	\[
 	X\hat v=\sqrt{m}\,\hat\sigma_1\,\hat u \quad\Longrightarrow\quad
 	S_j=(\hat v^\top X_j)^2=m\,\hat\sigma_1^2\,(\hat u)_j^2.
 	\]

 \textbf{Step 2 (Pointwise conditional score tails under the null).}
 Fix $t\in\R$ and a fixed index $j$.  By Lemma~\ref{lem:null-tail-conditional},
 for the in-sample score $S_j=(\hat v^\top X_j)^2$ we have
 \[
 \P\!\big(S_j\ge t \,\big|\, j\in\cH_0,(\theta_k)_{k=1}^m\big)\ \convp\ \bar G_0(t),
 \]
 with $\bar G_0(t)=\P(c_0\chi^2_1\ge t)$ and
 $c_0=\sigma_\infty^2(\delta,c)\big(1-\eta^2(\delta,c)\big)$, $\delta=h\sqrt\gamma$, $c=d/m$.
 Thus Assumption~\ref{ass:insample-tails} holds (taking $T_{\hat\Lambda}(X_j)=S_j$).
 
%
 	\medskip
 	\noindent
 	\textbf{Step 3 (Pairwise asymptotic independence).}
 	Apply Lemma~\ref{lem:joint-scores-in-sample} with the mapping $(n,\pi,\gamma) \mapsto (m,\gamma,c)$: almost surely
 	\[
 	\sqrt{m}\,(\hat u)_j\ \mid (\cH_0,\cH_1)\convd\
 	\begin{cases}
 		\mathcal N\!\big(0,\ \tau_0^2\big), & j\in\cH_0,\\
 		\mathcal N\!\big(0,\ \tau_1^2\big), & j\in\cH_1,
 	\end{cases}
 	\qquad
 	\tau_0^2:=1-\eta^2,\qquad \tau_1^2:=(1-\eta^2)+\dfrac{\eta^2}{\gamma}.
 	\]
 	Hence almost surely $m(\hat u)_j^2\mid (\cH_0,\cH_1)\convd \tau_{b_j}^2\chi^2_1$. Spiked RMT ensures $\hat\sigma_1\to\sigma_\infty\in(0,\infty)$ in probability, so by Slutsky we have  with probability going to $1$
 	\[
 	S_j \mid (\cH_0,\cH_1) \convd 
 	\begin{cases}
 		c_0\,\chi^2_1, & j\in\cH_0,\\
 		c_1\,\chi^2_1, & j\in\cH_1,
 	\end{cases}
 	\qquad
 	c_0:=\sigma_\infty^2(1-\eta^2),\quad
 	c_1:=\sigma_\infty^2\Big((1-\eta^2)+\frac{\eta^2}{\gamma}\Big).
 	\]
 	
 	Lemma~\ref{lem:joint-scores-in-sample} further gives, for $i\neq j$ almost surely,
 	\[
 	\big(m(\hat u)_i^2,\ m(\hat u)_j^2\big)\ \mid (\cH_0,\cH_1)\convd\ \big(\tau_{b_i}^2\chi^2_{1,i},\ \tau_{b_j}^2\chi^2_{1,j}\big)
 	\quad\text{with independence, conditionally on }(b_i,b_j).
 	\]
 	Multiplication by the common random factor $\hat\sigma_1^2\to\sigma_\infty^2$ preserves the factorization in the limit. Thus Assumption (ii) of Theorem~\ref{thm:master-in-sample} holds with $G_0=c_0\chi^2_1$ and $G_1=c_1\chi^2_1$.

 	\medskip
 	\noindent
 	\textbf{Step 4 (Regularity and tail separation).}
 	Each $G_k$ is a scaled $\chi^2_1$, hence continuous, with densities
 	\(
 	g_c(t)=\frac{1}{\sqrt{2\pi c\,t}}e^{-t/(2c)}\ (t>0).
 	\)
 	For $c_1>c_0$, the likelihood ratio
 	\(
 	g_{c_1}(t)/g_{c_0}(t)=\sqrt{c_0/c_1}\;\exp\{\tfrac{t}{2}(1/c_0-1/c_1)\}
 	\)
 	is strictly increasing in $t$, so $t\mapsto \bar G_1(t)/\bar G_0(t)$ is strictly increasing and diverges to $\infty$ (by standard $\chi^2$ tail asymptotics). Thus (iii)–(iv) of Theorem~\ref{thm:master-in-sample} hold.
 	
 	\medskip
 	\noindent
 	\textbf{Step 5 (Reduction to the canonical pair and identification of $\sigma_\star^2$).}
 	BH is invariant to common scaling: factoring out $c_0$ gives
 	\[
 	\TPR_{\mathrm{BH}}(G_0,G_1,\gamma)
 	=\TPR_{\mathrm{BH}}\!\Big(\chi^2_1,\ \frac{c_1}{c_0}\chi^2_1,\ \gamma\Big),
 	\qquad
 	\frac{c_1}{c_0}=1+\frac{\eta^2}{\gamma(1-\eta^2)}=:1+\sigma_\star^2.
 	\]
 	Hence the limit claimed in the lemma.
 	
 	\medskip
 	\noindent
 	\textbf{Step 6 (BGN plug-in: explicit closed form).}
 	With $c=d/m$ and $\delta^2=h^2\gamma$, the BGN overlap is
 	\(
 	\eta^2=\frac{(\delta^4-c)_+}{\delta^2(\delta^2+1)}.
 	\)
 	Then
 	\[
 	\frac{\eta^2}{1-\eta^2}
 	=\frac{(\delta^4-c)_+}{\delta^2+\;c}
 	\quad\Longrightarrow\quad
 	\sigma_\star^2
 	=\frac{1}{\gamma}\cdot\frac{(\delta^4-c)_+}{\delta^2+c}
 	=\frac{(h^4\gamma^2-c)_+}{\gamma(h^2\gamma+c)}
 	= h^2\,\frac{\Big(1-\dfrac{c}{h^4\gamma^2}\Big)_+}{1+\dfrac{c}{h^2\gamma}}.
 	\]
 	Below the BBP threshold ($\delta^4\le c$), $\eta^2=0$ hence $\sigma_\star^2=0$; above threshold the displayed expression holds.
 	
 	\medskip
 	All assumptions of Theorem~\ref{thm:master-in-sample} are verified; the conclusion follows.
 \end{proof}
 \subsection{Auxiliary results}
\begin{lemma}[Joint asymptotic law of two split-sample scores under the Gaussian subspace prior]
	\label{lem:joint-scores-split-subspace}
	Assume the Gaussian subspace prior \(\Lambda=N(0,h^2vv^\top)\) with \(\|v\|_2=1\), and let \(b_j:=\mathbf 1\{j\in\cH_1\}\in\{0,1\}\). Conditional on \((\cH_0,\cH_1)\), write
	\[
	\theta_j=h\,b_jU_jv,\qquad U_j\iidsim N(0,1),
	\]
	with \((U_j)_{j=1}^m\) independent. Let
	\[
	X_j^{\score}=\sqrt{1-\pi_{\spl}}\,\theta_j+\xi_j^{\score},\qquad \xi_j^{\score}\iidsim N(0,I_d),
	\]
	where \((\xi_j^{\score})_{j=1}^m\) are independent of \((U_j)_{j=1}^m\). Let \(\hat v_{\spl}\) be the unit vector learned from the learning split, oriented so that \(\langle \hat v_{\spl},v\rangle\ge 0\), and define
	\[
	S_j:=(\hat v_{\spl}^{\!\top}X_j^{\score})^2,\qquad j=1,\dots,m.
	\]
	
	Fix distinct \(i\neq j\), and assume that, conditional on \((\cH_0,\cH_1)\),
	\[
	\langle \hat v_{\spl},v\rangle^2 \convp \tau_{\mathrm{PCA}}^2
	\qquad\text{for some }\tau_{\mathrm{PCA}}^2\in[0,1].
	\]
	Then, conditional on \(b_i,b_j,(\cH_0,\cH_1)\),
	\[
	(S_i,S_j)\convd \big(\sigma_{b_i}^2\chi^2_{1,i},\,\sigma_{b_j}^2\chi^2_{1,j}\big),
	\]
	where \(\chi^2_{1,i},\chi^2_{1,j}\) are independent \(\chi^2_1\) random variables and
	\[
	\sigma_0^2=1,\qquad \sigma_1^2=1+(1-\pi_{\spl})h^2\tau_{\mathrm{PCA}}^2.
	\]
	Equivalently, for every fixed \(t_1,t_2\ge 0\),
	\[
	\P\!\big(S_i\ge t_1,\ S_j\ge t_2 \mid b_i,b_j,(\cH_0,\cH_1)\big)
	\to \bar G_{b_i}(t_1)\bar G_{b_j}(t_2),
	\]
	where \(\bar G_b(t):=2\bar\Phi(\sqrt t/\sigma_b)\) for \(b\in\{0,1\}\). In particular, the limit factorizes, so the limiting pair is independent.
\end{lemma}
 
 \begin{proof}
 	Write
 	\[
 	a_{\spl}:=\sqrt{1-\pi_{\spl}},
 	\qquad
 	\rho_n:=\langle \hat v_{\spl},v\rangle \ge 0.
 	\]
 	Since \(\rho_n^2\convp\tau_{\mathrm{PCA}}^2\) and \(\rho_n\ge0\), it follows that
 	\[
 	\rho_n \convp \tau_{\mathrm{PCA}}
 	\qquad\text{conditional on }(\cH_0,\cH_1),
 	\]
 	where \(\tau_{\mathrm{PCA}}:=\sqrt{\tau_{\mathrm{PCA}}^2}\in[0,1]\).
 	
 	For \(\ell\in\{i,j\}\),
 	\[
 	\hat v_{\spl}^{\!\top}X_\ell^{\score}
 	=
 	a_{\spl}\,\hat v_{\spl}^{\!\top}\theta_\ell
 	+
 	\hat v_{\spl}^{\!\top}\xi_\ell^{\score}
 	=
 	a_{\spl}\,h\,b_\ell U_\ell\,\rho_n + Z_{\ell,n},
 	\]
 	where
 	\[
 	Z_{\ell,n}:=\hat v_{\spl}^{\!\top}\xi_\ell^{\score}.
 	\]
 	Conditional on \((\hat v_{\spl},U_i,U_j,\cH_0,\cH_1)\), the pair
 	\[
 	(Z_{i,n},Z_{j,n})
 	\]
 	consists of independent \(N(0,1)\) random variables, because
 	\((\xi_i^{\score},\xi_j^{\score})\) are independent standard Gaussian vectors and are independent of the learning split.
 	
 	Define the projected pair
 	\[
 	Y_n
 	:=
 	\big(\hat v_{\spl}^{\!\top}X_i^{\score},\ \hat v_{\spl}^{\!\top}X_j^{\score}\big)
 	=
 	a_{\spl}h\rho_n\,(b_iU_i,\ b_jU_j) + (Z_{i,n},Z_{j,n}).
 	\]
 	We claim that, conditional on \(b_i,b_j,(\cH_0,\cH_1)\),
 	\[
 	a_{\spl}h\rho_n\,(b_iU_i,b_jU_j)
 	-
 	a_{\spl}h\tau_{\mathrm{PCA}}\,(b_iU_i,b_jU_j)
 	\ \convp\ 0.
 	\]
 	Indeed,
 	\[
 	\left\|
 	a_{\spl}h(\rho_n-\tau_{\mathrm{PCA}})(b_iU_i,b_jU_j)
 	\right\|_2
 	\le
 	a_{\spl}h\,|\rho_n-\tau_{\mathrm{PCA}}|\,
 	\sqrt{U_i^2+U_j^2},
 	\]
 	and conditional on \((\cH_0,\cH_1)\), the Gaussian pair \((U_i,U_j)\) is \(O_{\P}(1)\), while
 	\(\rho_n-\tau_{\mathrm{PCA}}\convp0\). Hence the claim follows.
 	
 	Therefore, by Slutsky's theorem,
 	\[
 	Y_n
 	\ \convd\
 	\big(a_{\spl}h\tau_{\mathrm{PCA}}b_iU_i + Z_i,\ 
 	a_{\spl}h\tau_{\mathrm{PCA}}b_jU_j + Z_j\big)
 	\qquad
 	\text{conditional on }b_i,b_j,(\cH_0,\cH_1),
 	\]
 	where \(Z_i,Z_j\) are independent \(N(0,1)\), independent of \(U_i,U_j\).
 	
 	Now \(U_i,U_j,Z_i,Z_j\) are mutually independent conditional on \((\cH_0,\cH_1)\), so the two limiting coordinates are independent centered Gaussians with variances
 	\[
 	\Var(a_{\spl}h\tau_{\mathrm{PCA}}b_\ell U_\ell + Z_\ell \mid b_\ell)
 	=
 	1 + a_{\spl}^2 h^2\tau_{\mathrm{PCA}}^2 b_\ell
 	=
 	1 + (1-\pi_{\spl})h^2\tau_{\mathrm{PCA}}^2 b_\ell
 	=
 	\sigma_{b_\ell}^2.
 	\]
 	Hence
 	\[
 	Y_n
 	\ \convd\
 	\big(\sigma_{b_i}W_i,\ \sigma_{b_j}W_j\big)
 	\qquad
 	\text{conditional on }b_i,b_j,(\cH_0,\cH_1),
 	\]
 	where \(W_i,W_j\) are independent \(N(0,1)\).
 	
 	Finally, since
 	\[
 	(S_i,S_j)=(Y_{n,1}^2,Y_{n,2}^2),
 	\]
 	the continuous mapping theorem yields
 	\[
 	(S_i,S_j)
 	\ \convd\
 	\big(\sigma_{b_i}^2W_i^2,\ \sigma_{b_j}^2W_j^2\big)
 	=
 	\big(\sigma_{b_i}^2\chi^2_{1,i},\ \sigma_{b_j}^2\chi^2_{1,j}\big),
 	\]
 	conditional on \(b_i,b_j,(\cH_0,\cH_1)\), as claimed.
 	
 	The tail-factorization statement follows immediately from the product form of the limiting law.
 \end{proof}

 \begin{lemma}[Joint asymptotic law of two in-sample scores]\label{lem:joint-scores-in-sample}
 	Let $X\in\R^{n\times d}$ have rows
 	\[
 	X_i \;=\; h\,b_i Z_i\,v \;+\; \varepsilon_i,\qquad
 	\varepsilon_i\sim\mathcal N(0,I_d),\ \ Z_i\sim\mathcal N(0,1),\ \ b_i\sim\mathrm{Ber}(\pi),
 	\]
 	with all variables independent and $\|v\|_2=1$. Let
 	\[
 	\frac{1}{\sqrt{n}}X=\hat U\hat\Sigma\hat V^\top,\qquad
 	\hat u:=\hat u_1,\ \hat v:=\hat v_1,\ \hat\sigma_1:=\hat\Sigma_{11},
 	\]
 	where the top vectors are oriented so that  $\langle\hat u,u\rangle\ge 0$ with
 	\[
 	u:=\frac{b\odot Z}{\|b\odot Z\|}\in\R^n,\qquad (b\odot Z)_k=b_k Z_k.
 	\]
 	For two distinct indices $i\neq j$, define the in-sample scores
 	\[
 	S_\ell:=(\hat v^\top X_\ell)^2 \;=\; n\,\hat\sigma_1^2\,(\hat u)_\ell^2,\qquad \ell\in\{i,j\}.
 	\]
 	Assume $n,d\to\infty$ with $d/n\to\gamma\in(0,\infty)$ and that
 	\[
 	\hat\eta_n^2:=\langle \hat u,u\rangle^2\ \convp\ \eta^2\in[0,1].
 	\]
 	Then almost surely
 	\[
 	\Big(\,n(\hat u)_i^2,\ n(\hat u)_j^2\,\Big)\ \mid b_i,b_j,(\cH_0,\cH_1) \convd\ \big(\tau_{b_i}^2\,\chi^2_{1,\,i},\ \tau_{b_j}^2\,\chi^2_{1,\,j}\big),
 	\qquad
 	\tau_0^2:=1-\eta^2,\ \ \tau_1^2:=(1-\eta^2)+\eta^2/\pi,
 	\]
 	where $\chi^2_{1,\,i},\chi^2_{1,\,j}$ are independent $\chi^2_1$ and independent of $(b_i,b_j)$.
 	Consequently, almost surely the normalized scores factorize in the limit:
 	\[
 	\left(\frac{S_i}{\hat\sigma_1^2},\ \frac{S_j}{\hat\sigma_1^2}\right)\ \mid b_i,b_j,(\cH_0,\cH_1) \convd\
 	\big(\tau_{b_i}^2\,\chi^2_{1,\,i},\ \tau_{b_j}^2\,\chi^2_{1,\,j}\big).
 	\]
 	If, in addition, $\hat\sigma_1\to\sigma_\infty\in(0,\infty)$ in probability, then with  probability going to $1$
 	\[
 	(S_i,S_j) \mid b_i,b_j,(\cH_0,\cH_1) \convd\ \sigma_\infty^2\,
 	\big(\tau_{b_i}^2\,\chi^2_{1,\,i},\ \tau_{b_j}^2\,\chi^2_{1,\,j}\big).
 	\]
 \end{lemma}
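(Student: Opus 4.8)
The plan is to reduce everything to a joint delocalization statement for the top left singular vector $\hat u$, obtained from the rotational invariance of the spiked model, and then square and rescale. Write $X = a\,(hv)^\top + E$ with $a := b\odot Z\in\R^n$ and $E$ having i.i.d.\ $N(0,1)$ entries; since $OE\stackrel d= E$ for any orthogonal $O$, the conditional law of $X$ given $a$ is invariant under left multiplication by orthogonal maps fixing $u := a/\|a\|$. Because the top singular vectors are equivariant under such maps (and the conventions $\langle\hat v,v\rangle,\langle\hat u,u\rangle\ge0$ are preserved), $\hat u$ decomposes as $\hat u = \hat\eta_n u + \sqrt{1-\hat\eta_n^2}\,w$, where, conditionally on $a$, $w$ is Haar-uniform on the unit sphere of $u^\perp$ (on the event $\hat\eta_n<1$, which holds with probability tending to one). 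I would restrict attention to the probability-one event on which $|\cH_1|/n\to\pi$, $\|a\|^2/n = n^{-1}\sum_{k\in\cH_1}Z_k^2 \to \pi$ by the law of large numbers, and $\hat\eta_n^2\to\eta^2$ (assumed, and holding a.s.\ in the applications); taking nonnegative roots, $\hat\eta_n\to\eta$.

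Next I would obtain the joint limit of $(\sqrt n\,u_i,\sqrt n\,u_j,\sqrt n\,w_i,\sqrt n\,w_j)$ conditionally on the partition, working conditionally on $a$ so that the signal coordinates become deterministic. There $\sqrt n\,u_\ell = b_\ell Z_\ell/(\|a\|/\sqrt n)$ equals $0$ if $b_\ell=0$ and converges to $Z_\ell/\sqrt\pi$ if $b_\ell=1$; and the two-coordinate delocalization CLT for a Haar vector on $S(u^\perp)$ — via $w\stackrel d= Pg/\|Pg\|$ with $P = I-uu^\top$, $g\sim N(0,I_n)$, using $\|Pg\|^2/n\to1$ and $(Pg)_\ell = g_\ell + O_p(n^{-1/2})$ — gives $(\sqrt n\,w_i,\sqrt n\,w_j)\mid a \convd N(0,I_2)$, a limit that does not depend on $a$ (the $a$-dependence, entering only through $u_\ell^2 = O(n^{-1})$ and $u^\top g$, is negligible). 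Writing out the conditional characteristic function of the four-vector, it factors into the signal exponential (deterministic given $a$) times $\E[\exp\{\mathrm i(t_1\sqrt n\,w_i + t_2\sqrt n\,w_j)\}\mid a]$; letting $n\to\infty$ and then integrating over $a$ by bounded convergence (using that $Z_i,Z_j$ are i.i.d.\ $N(0,1)$ given the partition) shows the four quantities converge jointly to mutually independent Gaussians $b_iN_i'/\sqrt\pi$, $b_jN_j'/\sqrt\pi$, $G_i$, $G_j$ with $N_i',N_j',G_i,G_j$ i.i.d.\ standard normal. Since $\hat\eta_n\to\eta$, Slutsky gives $\sqrt n(\hat u)_\ell = \eta\sqrt n\,u_\ell + \sqrt{1-\eta^2}\,\sqrt n\,w_\ell + o_p(1)$ for $\ell\in\{i,j\}$, hence $(\sqrt n(\hat u)_i,\sqrt n(\hat u)_j)\mid\cH_0,\cH_1 \convd (N_i,N_j)$ with $N_i\independent N_j$, $N_\ell\sim N(0,\tau_{b_\ell}^2)$, $\tau_0^2 = 1-\eta^2$, $\tau_1^2 = 1-\eta^2+\eta^2/\pi$.

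The rest is routine. The continuous mapping theorem with $(x,y)\mapsto(x^2,y^2)$ gives $(n(\hat u)_i^2,n(\hat u)_j^2)\convd(\tau_{b_i}^2\chi^2_{1,i},\tau_{b_j}^2\chi^2_{1,j})$ with independent $\chi^2_1$ factors. From the scaled SVD identity $X\hat v = \sqrt n\,\hat\sigma_1\,\hat u$ one has $S_\ell = (\hat v^\top X_\ell)^2 = n\,\hat\sigma_1^2(\hat u)_\ell^2$, so $S_\ell/\hat\sigma_1^2 = n(\hat u)_\ell^2$ and the normalized-score limit follows immediately; and if in addition $\hat\sigma_1\convp\sigma_\infty\in(0,\infty)$, one more application of Slutsky yields $(S_i,S_j)\mid\cH_0,\cH_1\convd\sigma_\infty^2(\tau_{b_i}^2\chi^2_{1,i},\tau_{b_j}^2\chi^2_{1,j})$. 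I expect the main obstacle to be the second paragraph: one must arrange the conditioning on $a$ so that the Haar-coordinate CLT on $S(u^\perp)$ is \emph{stable} with respect to $\sigma(a)$, i.e.\ its limit law is free of $a$, since this is precisely what forces the signal weights $Z_i,Z_j$ and the fluctuation limits $G_i,G_j$ to emerge jointly independent; the remaining ingredients (the law of large numbers for $\|a\|^2/n$, the single-coordinate sphere CLT, Slutsky, and continuous mapping) are standard.
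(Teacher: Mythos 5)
Your proposal follows essentially the same route as the paper's proof: the decomposition $\hat u=\hat\eta_n u+\sqrt{1-\hat\eta_n^2}\,w$, the Haar-uniformity of $w$ on $S(u^\perp)$ via rotation invariance of the Gaussian noise, the stable coordinate CLT for the sphere term, the LLN for $\|b\odot Z\|^2/n$ to extract the signal piece, Slutsky to combine, and continuous mapping plus the SVD identity $X\hat v=\sqrt n\,\hat\sigma_1\hat u$ to conclude. The only cosmetic differences are that you realize the Haar vector via the projection $Pg/\|Pg\|$ with $g\sim N(0,I_n)$ rather than via an explicit isometry from $\R^{n-1}$, and you phrase the joint factorization through characteristic functions, both of which are equivalent to the paper's Steps 3--6.
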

 
 \begin{proof}
 	
 	Note that by strong law of large number on a set $\mathcal{S}$ with $\P(\mathcal{S}) =1$ we have that  $\frac{1}{n}\sum_{i=1}^n b_i \to \pi$. We would work on the event $\mathcal{S}$ throughout the rest of the proof.
 	
 \textbf{Step 1 (Decomposition and uniformity on $u^\perp$).}
 Define $\hat\eta_n:=\langle \hat u,u\rangle\in[0,1]$ and
 \[
 w:=\frac{\hat u-\hat\eta_n u}{\sqrt{1-\hat\eta_n^2}}\in u^\perp,\qquad \|w\|_2=1.
 \]
 Write the signal part of $X$ as
 \[
 X^{\mathrm{sig}} := h\,(b\odot Z)\,v^\top
 = h\,\|b\odot Z\|\,u\,v^\top,
 \]
 so conditional on $(u,\|b\odot Z\|)$ the left singular direction of the signal is exactly $u$.
 Let $E$ denote the noise matrix with i.i.d.\ $\mathcal N(0,1)$ entries, so $X=X^{\mathrm{sig}}+E$.
 For any orthogonal $R\in O_n$ satisfying $Ru=u$, we have
 \[
 RX = R X^{\mathrm{sig}} + R E
 = h\,\|b\odot Z\|\,u\,v^\top + R E
 \stackrel{d}{=} h\,\|b\odot Z\|\,u\,v^\top + E = X
 \]
 conditionally on $(u,\|b\odot Z\|)$, since $R E\stackrel{d}{=}E$ by rotational invariance of the Gaussian noise.
 By equivariance of the SVD, this implies that $(\hat u,\hat\eta_n)$ has the same conditional law as $(R\hat u,\hat\eta_n)$ for all such $R$.
 Hence the conditional law of $\hat u$ given $(u,\hat\eta_n)$ is invariant under all rotations that fix $u$, so the component
 \[
 w = \frac{\hat u-\hat\eta_n u}{\sqrt{1-\hat\eta_n^2}}\in u^\perp
 \]
 must be rotationally invariant in $u^\perp$ and have unit norm.
 Therefore, conditional on $(u,\hat\eta_n)$,
 \begin{equation}\label{eq:haar}
 	\mathcal L\big(w\,\big|\,u,\hat\eta_n\big)=\mathrm{Unif}\!\big(S^{n-2}\subset u^\perp\big).
 \end{equation}

 	\medskip
 \noindent\textbf{Step 2 (LLN for $\|b\odot Z\|$ and entry size of $u$).}
 Recall that
 \[
 \mathcal S \;=\;\Bigl\{\frac{1}{n}\sum_{k=1}^n b_k \to \pi\Bigr\},
 \]
 which depends only on $(b_k)_{k\ge1}$ and has $\P(\mathcal S)=1$.
 On $\mathcal S$, let
 \[
 m_n := |\cH_1| = \sum_{k=1}^n b_k \sim \pi n \;\to\; \infty.
 \]
 
 Conditionally on $(\cH_0,\cH_1)$ (equivalently on $(b_k)_{k\ge1}$), the variables $\{Z_k : k\in\cH_1\}$ are i.i.d.\ $\mathcal N(0,1)$.
 By the strong law of large numbers applied to this subsequence,
 \[
 \frac{1}{m_n}\sum_{k\in\cH_1} Z_k^2 \;\to\; 1
 \quad\text{a.s.\ (w.r.t.\ the $Z_k$),}
 \]
 and hence on $\mathcal S$,
 \[
 \frac{1}{n}\|b\odot Z\|^2
 = \frac{1}{n}\sum_{k=1}^n b_k Z_k^2
 = \frac{m_n}{n}\cdot \frac{1}{m_n}\sum_{k\in\cH_1} Z_k^2
 \;\to\; \pi.
 \]
 Thus $\|b\odot Z\| = \sqrt{n\pi}\,(1+o(1))$, and for any fixed index $\ell$,
 \begin{equation}\label{eq:u-entry}
 	u_\ell
 	= \frac{b_\ell Z_\ell}{\|b\odot Z\|}
 	= \frac{b_\ell Z_\ell}{\sqrt{n\pi}}\,(1+o(1))
 	= \frac{b_\ell Z_\ell}{\sqrt{n\,\pi}} + o_P(n^{-1/2}),
 	\qquad \ell\in\{i,j\},
 \end{equation}
 where the $o_P(n^{-1/2})$ is with respect to the randomness of $(Z_k)$, conditional on $(\cH_0,\cH_1)$.
 
 	\medskip
 	\noindent\textbf{Step 3 (Moments for a Haar coordinate in $u^\perp$).}
 	Fix $u$ and choose an orthogonal $R$ with $R^\top u=e_n$. Let
 	$A:=R\begin{bmatrix}I_{n-1}\\ 0\end{bmatrix}$ map $\R^{n-1}$ isometrically onto $u^\perp$.
 	If $W'\sim\mathrm{Unif}(S^{n-2})$ in $\R^{n-1}$ and $\widetilde w:=A W'$, then \eqref{eq:haar} implies $\widetilde w\stackrel{d}{=}w$ given $(u,\hat\eta_n)$. For any coordinate,
 	\[
 	\E(\widetilde w_\ell\mid u)=0,\qquad
 	\Var(\widetilde w_\ell\mid u)=\frac{\|A^\top e_\ell\|_2^2}{n-1}=\frac{1-u_\ell^2}{n-1}.
 	\]
 	
 	\medskip
 	\noindent\textbf{Step 4 (Stable coordinate CLT on the sphere and joint version).}
 	Using the Gaussian representation $W'=G'/\|G'\|$ with $G'\sim\mathcal N(0,I_{n-1})$,
 	\[
 	\sqrt{n-1}\,\widetilde w_\ell \ \convd\ \mathcal N(0,1-u_\ell^2)\quad\text{conditionally on }u.
 	\]
 	Since $u_\ell^2=o_P(1)$ by \eqref{eq:u-entry}, a conditional characteristic–function argument yields the \emph{stable} limit
 	\begin{equation}\label{eq:stable-w1}
 		\sqrt{n}\,w_\ell \ \convd\ \mathcal N(0,1)\quad\text{w.r.t.\ }\sigma(u),\qquad \ell\in\{i,j\}.
 	\end{equation}
 	Jointly,
 	\[
 	\sqrt{n-1}\,(w_i,w_j)\ \convd\ \mathcal N\!\Big(0,\ \begin{bmatrix}
 		1-u_i^2 & -u_i u_j\\
 		-u_i u_j & 1-u_j^2
 	\end{bmatrix}\Big)\ \ \text{cond.\ on }u,
 	\]
 	and since $u_i,u_j=O_P(n^{-1/2})$, Slutsky gives
 	\begin{equation}\label{eq:joint-w}
 		\sqrt{n}\,(w_i,w_j)\ \convd\ (G_i,G_j),\quad G_i,G_j\ \text{i.i.d. }\mathcal N(0,1),
 	\end{equation}
 	independent of $\sigma(u)$.
 	
 	\medskip
 	\noindent\textbf{Step 5 (Signal-aligned piece).}
 	By \eqref{eq:u-entry} and $\hat\eta_n\to\eta$,
 	\[
 	\sqrt{n}\,\hat\eta_n\,u_\ell
 	= \hat\eta_n \cdot \frac{b_\ell Z_\ell}{\|b\odot Z\|/\sqrt{n}}
 	\ \convd\ \eta\cdot \frac{b_\ell Z_\ell}{\sqrt{\pi}},\qquad \ell\in\{i,j\},
 	\]
 	so conditionally on $b_\ell$ this limit is $0$ if $b_\ell=0$ and $\mathcal N(0,\eta^2/\pi)$ if $b_\ell=1$.
 	
 	\medskip
 	\noindent\textbf{Step 6 (Combine the two pieces for the coordinates).}
 	Write
 	\[
 	\sqrt{n}\,(\hat u)_\ell
 	= \underbrace{\sqrt{n}\,\hat\eta_n\,u_\ell}_{\text{signal-aligned}}
 	+ \underbrace{\sqrt{n}\,\sqrt{1-\hat\eta_n^2}\,w_\ell}_{\text{sphere term}},\qquad \ell\in\{i,j\}.
 	\]
 	Since $\hat\eta_n\to\eta$, we have $\sqrt{1-\hat\eta_n^2}\to\sqrt{1-\eta^2}$. By \eqref{eq:joint-w}, the sphere terms converge jointly to independent $\mathcal N(0,1-\eta^2)$ and are asymptotically independent of the signal-aligned limits. Therefore, conditional on $(b_i,b_j)$,
 	\[
 	\big(\sqrt{n}\,(\hat u)_i,\ \sqrt{n}\,(\hat u)_j\big)\ \convd\ (Y_i,Y_j),
 	\]
 	with $Y_i,Y_j$ independent mean-zero Gaussians and
 	\[
 	\Var(Y_\ell\mid b_\ell)=
 	\begin{cases}
 		1-\eta^2, & b_\ell=0,\\[4pt]
 		(1-\eta^2)+\eta^2/\pi, & b_\ell=1.
 	\end{cases}
 	\]
 	
 	\medskip
 	\noindent\textbf{Step 7 (Square to obtain the normalized scores).}
 	By the continuous mapping theorem and independence, we have on $\mathcal{S}$,
 	\[
 	\Big(n(\hat u)_i^2,\ n(\hat u)_j^2\Big)\ \mid b_i,b_j,(\cH_0,\cH_1) \convd\
 	\big(\tau_{b_i}^2\,\chi^2_{1,\,i},\ \tau_{b_j}^2\,\chi^2_{1,\,j}\big),
 	\]
 	with $\chi^2_{1,\,i},\chi^2_{1,\,j}$ independent.
 	
 	\medskip
 	\noindent\textbf{Step 8 (scaling by the top singular value).}
 	Since $X\hat v=\sqrt{n}\,\hat\sigma_1\,\hat u$, we have $S_\ell=n\hat\sigma_1^2(\hat u)_\ell^2$. Thus on $\mathcal{S}$ 
 	\[
 	\left(\frac{S_i}{\hat\sigma_1^2},\ \frac{S_j}{\hat\sigma_1^2}\right)\ \mid b_i,b_j,(\cH_0,\cH_1)\convd\
 	\big(\tau_{b_i}^2\,\chi^2_{1,\,i},\ \tau_{b_j}^2\,\chi^2_{1,\,j}\big).
 	\]
 	If additionally $\hat\sigma_1\to \sigma_\infty$, Slutsky yields  (on the event $\mathcal{S}$)
 	\(
 	(S_i,S_j)\mid b_i,b_j,(\cH_0,\cH_1)\convd \sigma_\infty^2\big(\tau_{b_i}^2\,\chi^2_{1,\,i},\ \tau_{b_j}^2\,\chi^2_{1,\,j}\big).
 	\) 
 	with probability going to $1$.
 \end{proof}
 
 \begin{lemma}[Limits of $\hat\sigma_1$ and $\hat\eta_n$ under BONuS]\label{lem:bonus-sigma-eta}
 	Let $X\in\R^{n\times d}$ with $n=m+\tilde m$ be the BONuS-augmented panel,
 	\[
 	X_i=\begin{cases}
 		h\,b_i Z_i\, v + \varepsilon_i, & 1\le i\le m,\\
 		\varepsilon_i, & m<i\le n,
 	\end{cases}
 	\qquad
 	\varepsilon_i\stackrel{\mathrm{i.i.d.}}{\sim}\mathcal N(0,I_d),\ \ 
 	Z_i\stackrel{\mathrm{i.i.d.}}{\sim}\mathcal N(0,1),
 	\]
 	where $b_i\stackrel{\mathrm{i.i.d.}}{\sim}\mathrm{Ber}(\pi)$ for $i\le m$ and $b_i\equiv 0$ for $i>m$, and $\|v\|_2=1$.
 	Let the scaled SVD be
 	\[
 	\frac{1}{\sqrt{n}}\,X=\hat U\hat\Sigma\hat V^\top,\qquad
 	\hat\sigma_1:=\hat\Sigma_{11},\ \hat u:=\hat u_1,\ \hat v:=\hat v_1,
 	\]
 	and define
 	\[
 	u:=\frac{b\odot Z}{\|b\odot Z\|}\in\R^n,\qquad
 	\hat\eta_n:=\langle \hat u,u\rangle\in[0,1].
 	\]
 	Assume $d,m,\tilde m\to\infty$ with
 	\[
 	\frac{d}{m}\to c\in(0,\infty),\qquad \frac{d}{n}\to\gamma'=\pi_{\aug}c\in(0,\infty),\qquad \pi_{\aug}:=\frac{m}{n}\to\pi_{\aug}\in(0,1).
 	\]
 	Let the effective signal fraction be $\bar\pi:=\pi_{\aug}\pi$ and set the (population) spike size
 	\[
 	\delta_n\ :=\ \frac{h}{\sqrt{n}}\|b\odot Z\|\ \convp\ \delta\ :=\ h\sqrt{\bar\pi}.
 	\]
 	Then the following limits hold (almost surely):
 	
 	\smallskip
 	\noindent\textbf{(a) Overlap (BBP transition and closed form).}
 	\[
 	\hat\eta_n^2\ \mid (\cH_0,\cH_1)\convp\ \eta^2(\delta,\gamma')\ :=\
 	\begin{cases}
 		0, & \delta^4\le \gamma',\\[6pt]
 		\dfrac{\delta^4-\gamma'}{\delta^2(\delta^2+1)}, & \delta^4>\gamma'.
 	\end{cases}
 	\]
 	
 	\smallskip
 	\noindent\textbf{(b) Top singular value.}
 	\[
 	\hat\sigma_1\ \mid (\cH_0,\cH_1)\convp\ \sigma_\infty(\delta,\gamma')\ :=\
 	\begin{cases}
 		1+\sqrt{\gamma'}, & \delta^4\le \gamma',\\[8pt]
 		\sqrt{(1+\delta^2)\Big(1+\dfrac{\gamma'}{\delta^2}\Big)}, & \delta^4>\gamma'.
 	\end{cases}
 	\]
 \end{lemma}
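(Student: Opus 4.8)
The plan is to recognize $\tfrac1{\sqrt n}X$, after an appropriate conditioning, as a \emph{deterministic} rank-one additive perturbation of a white Gaussian matrix, and then to read off both limits from the Benaych-Georges--Nadakuditi theory of finite-rank perturbations of large rectangular random matrices \citep{Benaych-Georges2011}, together with the BBP/\citet{Paul2007} spiked-covariance formulas already invoked in Lemma~\ref{lemma:pca-learner-bonus-subspace}. First I would condition on $(b,Z)$, which is independent of the noise matrix $E\in\R^{n\times d}$ (with i.i.d.\ $\mathcal N(0,1)$ entries and rows $\varepsilon_i^\top$). Writing $\rho_n:=\|b\odot Z\|$ and $u:=(b\odot Z)/\rho_n$,
\[
\tfrac{1}{\sqrt n}X \;=\; \delta_n\,u\,v^\top \;+\; \tfrac{1}{\sqrt n}E,\qquad \delta_n=\tfrac{h}{\sqrt n}\,\rho_n,
\]
and conditionally $u,v,\delta_n$ are deterministic. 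By orthogonal invariance of the Gaussian noise, $\tfrac1{\sqrt n}E$ is still a white Gaussian matrix with aspect ratio $d/n\to\gamma'$; classically, its empirical singular spectrum converges to the Marchenko--Pastur-type law and its largest singular value to the right edge $1+\sqrt{\gamma'}$. This places us in the hypotheses of \citet{Benaych-Georges2011} with a deterministic rank-one perturbation $\delta_n uv^\top$, which is exactly what makes the left-vector overlap tractable.

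Next I would pin down the spike strength. On the full-probability event $\mathcal S$ of Lemma~\ref{lem:joint-scores-in-sample}, $\tfrac1n\sum_{i\le n}b_i=\tfrac mn\cdot\tfrac1m\sum_{i\le m}b_i\to\pi_{\aug}\pi=\bar\pi$; moreover, conditionally on $(\cH_0,\cH_1)$ the $|\cH_1|\sim\bar\pi n$ variables $\{Z_i:b_i=1\}$ are i.i.d.\ $\mathcal N(0,1)$, so $\tfrac1n\rho_n^2=\tfrac{|\cH_1|}{n}\cdot\tfrac1{|\cH_1|}\sum_{b_i=1}Z_i^2\to\bar\pi$, giving $\delta_n\to\delta=h\sqrt{\bar\pi}$ almost surely. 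Since the perturbation strength drifts with $n$, I would pass from the fixed-$\delta$ limit theorems to the converging-$\delta_n$ regime using \cite[Remark~2.16]{Benaych-Georges2011}, exactly as in the proof of Lemma~\ref{lemma:pca-learner-bonus-subspace}.

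Then I would invoke the BGN/BBP dichotomy. In singular-value coordinates the phase transition for a rank-one spike sits at $\delta^2=\sqrt{\gamma'}$, i.e.\ $\delta^4=\gamma'$. Subcritically ($\delta^4\le\gamma'$) the top singular value sticks to the edge, $\hat\sigma_1\convp 1+\sqrt{\gamma'}$, and the left overlap collapses, $\hat\eta_n^2=\langle\hat u,u\rangle^2\convp 0$. Supercritically ($\delta^4>\gamma'$), specializing the BGN outlier equation to the Marchenko--Pastur transforms gives the outlier at $\rho=\sqrt{(1+\delta^2)(1+\gamma'/\delta^2)}$ — equivalently $\hat\sigma_1^2\convp(1+\delta^2)(1+\gamma'/\delta^2)$, which is the \citet{Paul2007} top sample-covariance eigenvalue for spike $\delta^2$ and aspect ratio $\gamma'$ — and the BGN left-overlap formula yields $\hat\eta_n^2\convp\eta^2(\delta,\gamma')=\dfrac{\delta^4-\gamma'}{\delta^2(\delta^2+1)}$. (Internal consistency check: $1-\eta^2=\dfrac{\delta^2+\gamma'}{\delta^2(\delta^2+1)}$, hence $\dfrac{\eta^2}{1-\eta^2}=\dfrac{\delta^4-\gamma'}{\delta^2+\gamma'}$, which is precisely what produces $\sigma^2_{\aug}$ in Lemma~\ref{lem:power-subspace-bonus}.) Because these limits depend only on $(\delta,\gamma')$ and not on the realized direction $u$, they are unchanged after integrating out the conditioning on $(b,Z)$ (a conditional-convergence-to-a-constant argument); combined with $\P(\mathcal S)=1$ this gives the claimed $(\cH_0,\cH_1)$-conditional, hence unconditional, convergences.

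The hard part will be item (a), the \emph{left} singular-vector overlap: the spike's left direction $u=(b\odot Z)/\|b\odot Z\|$ is random and $n$-dimensional, so no off-the-shelf ``deterministic spike'' theorem applies verbatim; the conditioning-plus-rotation-invariance device of the first step is what converts $u$ into a fixed unit vector for which \citet{Benaych-Georges2011} is directly applicable, and one must still verify that the $n$-dependent strength $\delta_n$ is handled uniformly (Remark~2.16 of BGN). The knife-edge case $\delta^4=\gamma'$ is not asserted by the lemma and needs no separate treatment.
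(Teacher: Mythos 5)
Your proposal is correct and follows essentially the same route as the paper's own proof: both identify $\tfrac{1}{\sqrt n}X$ (or its transpose) as a BGN rank-one additive perturbation of a white Gaussian matrix, establish $\delta_n\to\delta$ via the law of large numbers conditionally on the null/alternative labels, and read off the overlap and outlier singular value from \citet{Benaych-Georges2011} together with Remark~2.16 for the converging random spike. The one cosmetic difference is that you explicitly condition on $(b,Z)$ to render $u$ deterministic and then integrate out (justified since the limit is a constant), whereas the paper invokes BGN directly on the random spike; this is a matter of bookkeeping rather than a substantive divergence, and your observation that the left direction $u$ needs care is the same concern the paper addresses implicitly by appealing to BGN's framework for perturbations independent of the noise.
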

 
 \begin{proof}
 	Note that by strong law of large number on a set $\mathcal{S}$ with $\P(\mathcal{S}) =1$ we have that  $\frac{1}{m}\sum_{i=1}^m b_i \to \pi$. We would work on the event $\mathcal{S}$ throughout the rest of the proof.

 	Write the scaled matrix $Y:=(1/\sqrt{n})X^\top\in\R^{d\times n}$. Then
 	\[
 	Y\ =\ S_n\ +\ W,\qquad
 	S_n\ :=\ \frac{h}{\sqrt{n}}\,v\,(b\odot Z)^\top,\qquad
 	W\ :=\ \frac{1}{\sqrt{n}}\,\big[\varepsilon_1,\ldots,\varepsilon_n\big]^\top,
 	\]
 	where $W$ has i.i.d.\ $\mathcal N(0,1/n)$ entries. Thus $Y$ is the rank–one \emph{additive} spiked rectangular model considered by \cite{Benaych-Georges2011} (BGN): the signal part has singular value
 	\[
 	\|S_n\|\ =\ \frac{h}{\sqrt{n}}\|v\|\,\|b\odot Z\|\ =\ \delta_n\ \convp\ \delta,
 	\]
 	and the left/right spike directions are $v$ and $u=(b\odot Z)/\|b\odot Z\|$.
 	
 	\medskip
 	\noindent\textit{Convergence of the spike size.}
 	Because only the first $m$ rows can carry signal, a similar argument as Lemma \ref{lem:joint-scores-in-sample} with law of large numbers gives
 	\[
 	\frac{1}{n}\|b\odot Z\|^2=\frac{m}{n}\cdot\frac{1}{m}\sum_{i=1}^m b_i Z_i^2\ \xrightarrow{a.s.}\ \pi_{\aug}\pi\ =\ \bar\pi,
 	\]
 	almost surely conditional on $b_i's$,
 	hence $\delta_n\to\delta=h\sqrt{\bar\pi}$ in probability on $\mathcal{S}$.
 	
 	\medskip
 	\noindent\textit{Apply BGN (deterministic spike case) and continuity in the spike.}
 	For a fixed spike \(\delta>0\), BGN’s formulas give:
 	(i) a BBP threshold at \(\delta^4=\gamma'\);
 	(ii) the outlier singular value limit \(\sqrt{(1+\delta^2)(1+\gamma'/\delta^2)}\) above threshold and the noise-edge limit \(1+\sqrt{\gamma'}\) below;
 	(iii) the squared cosine (overlap) between the empirical and population singular vectors equals
 	\(
 	\frac{\delta^4-\gamma'}{\delta^2(\delta^2+1)}
 	\)
 	when \(\delta^4>\gamma'\) and vanishes otherwise.
 	Moreover, BGN’s continuity remark for random spikes implies that the same limits hold when the spike strength \(\delta_n\) is random but \(\delta_n\to\delta\) in probability (which happens in our case on the event $\mathcal{S}$).
 	
 	\medskip
 	\noindent\textit{Map to the present notation.}
 	The right singular vector of \(Y\) equals the left singular vector \(\hat u\) of \(X\) (and its population counterpart is \(u\)); the top singular value of \(Y\) is \(\hat\sigma_1\). Substituting \(\delta=h\sqrt{\bar\pi}\) and \(\gamma'=d/n\) into the BGN formulas yields the claims in (a)–(b).
 \end{proof}
 
 \begin{lemma}[Conditional asymptotic law of a single in-sample score (quenched in $b,Z$)]
 	\label{lem:single-score-conditional-Zb}
 	Let $X\in\R^{n\times d}$ have rows
 	\[
 	X_k \;=\; h\,b_k Z_k\,v \;+\; \varepsilon_k,\qquad
 	\varepsilon_k\stackrel{\mathrm{i.i.d.}}{\sim}\mathcal N(0,I_d),
 	\]
 	where $\|v\|_2=1$. For each $n$, condition on the realized sequences $(b_1,\dots,b_n)$ and $(Z_1,\dots,Z_n)$.
 	Let
 	\[
 	\widehat\Sigma=\frac{1}{n}X^\top X,\qquad 
 	\frac{1}{\sqrt{n}}X=\hat U\hat\Sigma\hat V^\top,\qquad
 	\hat u:=\hat u_1,\ \hat v:=\hat v_1,\ \hat\sigma_1:=\hat\Sigma_{11},
 	\]
 	with the usual orientation $\langle \hat v,v\rangle\ge 0$ and $\langle \hat u,u\rangle\ge 0$, where
 	\[
 	u:=\frac{b\odot Z}{\|b\odot Z\|}\in\R^n.
 	\]
 	Fix an index $i$ and define the in-sample score
 	\[
 	S_i:=(\hat v^\top X_i)^2 \;=\; n\,\hat\sigma_1^2\,(\hat u)_i^2.
 	\]
 	
 	Assume $n,d\to\infty$ with $d/n\to\gamma\in(0,\infty)$ and that the following \emph{sequence conditions}
 	hold for the realized $(b,Z)$:
 	\begin{align}
 		\label{eq:seq-B}
 		\frac{1}{n}\sum_{k=1}^n b_k \to \pi\in(0,1),
 		\qquad m_n:=\sum_{k=1}^n b_k \to\infty;\\
 		\label{eq:seq-Z2}
 		\frac{1}{n}\sum_{k=1}^n b_k Z_k^2 \to \pi
 		\qquad\Big(\text{equivalently } \frac{1}{n}\|b\odot Z\|^2\to\pi\Big);\\
 		\label{eq:seq-deloc}
 		\max_{1\le k\le n}\frac{|b_k Z_k|}{\|b\odot Z\|}\to 0.
 	\end{align}
 	Assume also the (conditional) overlap convergence
 	\[
 	\hat\eta_n^2:=\langle \hat u,u\rangle^2\ \xrightarrow{P(\cdot\,|\,b,Z)}\ \eta^2\in[0,1].
 	\]
 	Then, conditional on $(b,Z)$,
 	\[
 	\sqrt n\,(\hat u)_i\ \xRightarrow{\ \ \ \ \ }\ Y_i,
 	\qquad
 	Y_i\ \sim\ \mathcal N\!\Big(\mu_i,\ 1-\eta^2\Big),
 	\qquad
 	\mu_i\ :=\frac{b_i Z_i}{\sqrt \pi}.
 	\]
 	Consequently, conditional on $(b,Z)$,
 	\[
 	n(\hat u)_i^2\ \xRightarrow{\ \ \ \ \ }\ (1-\eta^2)\,\chi^2_{1}(\lambda_i),
 	\qquad
 	\lambda_i\ :=\ \frac{\mu_i^2}{1-\eta^2}
 	\ =\ \frac{\eta^2}{1-\eta^2}\cdot \frac{b_i Z_i^2}{\pi},
 	\]
 	where $\chi^2_{1}(\lambda_i)$ denotes a noncentral chi-square with one degree of freedom and noncentrality
 	$\lambda_i$ (and we used \eqref{eq:seq-Z2} in the last equality).
 	Moreover,
 	\[
 	\frac{S_i}{\hat\sigma_1^2}\ \Big|\ (b,Z)
 	\ \xRightarrow{\ \ \ \ \ }\ (1-\eta^2)\,\chi^2_{1}(\lambda_i).
 	\]
 	If, in addition, $\hat\sigma_1\to\sigma_\infty\in(0,\infty)$ in $P(\cdot\,|\,b,Z)$-probability, then
 	\[
 	S_i\ \Big|\ (b,Z)\ \xRightarrow{\ \ \ \ \ }\
 	\sigma_\infty^2\,(1-\eta^2)\,\chi^2_{1}(\lambda_i).
 	\]
 \end{lemma}
 \begin{proof}
 	Throughout the proof we work \emph{conditionally on the realized sequences} $(b_1,\dots,b_n)$ and $(Z_1,\dots,Z_n)$, and assume the sequence conditions
 	\eqref{eq:seq-B}--\eqref{eq:seq-deloc} hold. Under this conditioning, the only randomness is from the Gaussian noise
 	$E\in\R^{n\times d}$ with i.i.d.\ $\mathcal N(0,1)$ entries, where
 	\[
 	X \;=\; X^{\mathrm{sig}}+E,
 	\qquad
 	X^{\mathrm{sig}} \;:=\; h\,(b\odot Z)\,v^\top
 	\;=\; h\,\|b\odot Z\|\,u\,v^\top,
 	\qquad
 	u=\frac{b\odot Z}{\|b\odot Z\|}.
 	\]
 	
 	\medskip
 	\noindent\textbf{Step 1 (Decomposition and Haar-uniformity on $u^\perp$).}
 	Define $\hat\eta_n:=\langle \hat u,u\rangle\in[0,1]$ and
 	\[
 	w\;:=\;\frac{\hat u-\hat\eta_n u}{\sqrt{1-\hat\eta_n^2}}
 	\in u^\perp,
 	\qquad
 	\|w\|_2=1,
 	\]
 	so that
 	\begin{equation}\label{eq:decomp-hatu-single}
 		\hat u \;=\; \hat\eta_n u + \sqrt{1-\hat\eta_n^2}\,w.
 	\end{equation}
 	Let $R\in O_n$ be any orthogonal matrix satisfying $Ru=u$. Then, conditionally on $(b,Z)$,
 	\[
 	RX
 	=R X^{\mathrm{sig}} + R E
 	= h\,\|b\odot Z\|\,u\,v^\top + R E
 	\stackrel{d}{=} h\,\|b\odot Z\|\,u\,v^\top + E
 	= X,
 	\]
 	since $RE\stackrel{d}{=}E$ by rotational invariance of i.i.d.\ Gaussian noise.
 	By SVD equivariance, $(\hat u,\hat\eta_n)$ has the same conditional law as $(R\hat u,\hat\eta_n)$ for all such $R$.
 	Hence, conditional on $(b,Z)$ and on $(u,\hat\eta_n)$, the law of $\hat u$ is invariant under all rotations fixing $u$.
 	Equivalently, conditional on $(u,\hat\eta_n)$,
 	\begin{equation}\label{eq:haar-single}
 		\mathcal L\big(w\,\big|\,u,\hat\eta_n\big)
 		=\mathrm{Unif}\!\big(S^{n-2}\subset u^\perp\big).
 	\end{equation}
 	
 	\medskip
 	\noindent\textbf{Step 2 (Consequences of the sequence conditions for $\|b\odot Z\|$ and $u_i$).}
 	By \eqref{eq:seq-Z2},
 	\[
 	\frac{1}{n}\|b\odot Z\|^2 \to \pi
 	\qquad\Longrightarrow\qquad
 	\frac{1}{\sqrt n}\|b\odot Z\| \to \sqrt{\pi}.
 	\]
 	Therefore, for the fixed index $i$,
 	\begin{equation}\label{eq:ui-asymp-single}
 		\sqrt n\,u_i
 		=\frac{b_i Z_i}{\|b\odot Z\|/\sqrt n}
 		\longrightarrow
 		\frac{b_i Z_i}{\sqrt\pi}.
 	\end{equation}
 	Moreover, \eqref{eq:seq-deloc} implies $\|u\|_\infty\to 0$, hence in particular
 	\begin{equation}\label{eq:ui-small-single}
 		u_i^2 \to 0.
 	\end{equation}
 	
 	\medskip
 	\noindent\textbf{Step 3 (Conditional CLT for one Haar coordinate in $u^\perp$).}
 	Fix $u$ and let $w\sim \mathrm{Unif}(S^{n-2}\subset u^\perp)$. A standard representation yields
 	\[
 	w \;=\; \frac{\Pi_{u^\perp}G}{\|\Pi_{u^\perp}G\|},
 	\qquad G\sim\mathcal N(0,I_n),
 	\]
 	where $\Pi_{u^\perp}=I-uu^\top$.
 	Then, conditionally on $u$, the scalar $\langle e_i,w\rangle=w_i$ has mean $0$ and variance
 	\[
 	\Var(w_i\mid u)=\frac{1-u_i^2}{n-1}.
 	\]
 	Moreover, the usual spherical coordinate CLT (equivalently, $w$ as a normalized Gaussian in $u^\perp$) gives, conditionally on $u$,
 	\[
 	\sqrt{n-1}\,w_i \ \convd\ \mathcal N(0,1-u_i^2).
 	\]
 	Using \eqref{eq:ui-small-single} and Slutsky,
 	\begin{equation}\label{eq:clt-wi-single}
 		\sqrt n\,w_i \ \convd\ \mathcal N(0,1),
 		\qquad\text{under }P(\cdot\,|\,b,Z).
 	\end{equation}
 	
 	\medskip
 	\noindent\textbf{Step 4 (Limit of $\sqrt n\,\hat u_i$).}
 	From the decomposition \eqref{eq:decomp-hatu-single},
 	\[
 	\sqrt n\,\hat u_i
 	=\underbrace{\sqrt n\,\hat\eta_n u_i}_{A_n}
 	+\underbrace{\sqrt n\,\sqrt{1-\hat\eta_n^2}\,w_i}_{B_n}.
 	\]
 	For the aligned term, by \eqref{eq:ui-asymp-single} and the assumption
 	$\hat\eta_n^2\to\eta^2$ in $P(\cdot\,|\,b,Z)$-probability,
 	\[
 	A_n
 	=\hat\eta_n\cdot \sqrt n\,u_i
 	\ \xrightarrow{P(\cdot\,|\,b,Z)}\
 	\eta\cdot \frac{b_i Z_i}{\sqrt\pi}
 	=:\mu_i.
 	\]
 	For the spherical term, since $w_i$ is (conditionally on $(u,\hat\eta_n)$) Haar as in \eqref{eq:haar-single},
 	its conditional distribution does not depend on $\hat\eta_n$, and \eqref{eq:clt-wi-single} yields
 	\[
 	\sqrt n\,w_i \ \convd\ G,\qquad G\sim\mathcal N(0,1),
 	\]
 	under $P(\cdot\,|\,b,Z)$. Combining this with $\sqrt{1-\hat\eta_n^2}\to\sqrt{1-\eta^2}$
 	in $P(\cdot\,|\,b,Z)$-probability and Slutsky gives
 	\[
 	B_n \ \convd\ \sqrt{1-\eta^2}\,G \ \sim\ \mathcal N(0,1-\eta^2),
 	\qquad\text{under }P(\cdot\,|\,b,Z).
 	\]
 	Since $A_n\to\mu_i$ in probability and $B_n\Rightarrow \mathcal N(0,1-\eta^2)$, we conclude
 	\[
 	\sqrt n\,\hat u_i \ \convd\ \mathcal N(\mu_i,1-\eta^2)
 	\qquad\text{under }P(\cdot\,|\,b,Z),
 	\]
 	which is the claimed limit for $\sqrt n(\hat u)_i$.
 	
 	\medskip
 	\noindent\textbf{Step 5 (Square to obtain the noncentral chi-square limit).}
 	Let $Y_i\sim \mathcal N(\mu_i,1-\eta^2)$. Then
 	\[
 	\frac{Y_i^2}{1-\eta^2}\ \sim\ \chi^2_1\!\left(\lambda_i\right),
 	\qquad
 	\lambda_i:=\frac{\mu_i^2}{1-\eta^2}.
 	\]
 	By the continuous mapping theorem applied to the convergence of $\sqrt n\,\hat u_i$,
 	\[
 	n(\hat u)_i^2 \ \convd\ (1-\eta^2)\,\chi^2_1(\lambda_i),
 	\qquad\text{under }P(\cdot\,|\,b,Z).
 	\]
 	Finally, using $\mu_i=\eta\,\frac{b_i Z_i}{\sqrt\pi}$ from Step 4 and \eqref{eq:seq-Z2},
 	\[
 	\lambda_i
 	=\frac{\eta^2}{1-\eta^2}\cdot \frac{b_i Z_i^2}{\pi},
 	\]
 	as stated.
 	
 	\medskip
 	\noindent\textbf{Step 6 (Scaling by the top singular value).}
 	Since $X\hat v=\sqrt n\,\hat\sigma_1\,\hat u$, we have
 	$S_i=n\hat\sigma_1^2(\hat u)_i^2$, hence $S_i/\hat\sigma_1^2=n(\hat u)_i^2$.
 	This proves
 	\[
 	\frac{S_i}{\hat\sigma_1^2}\ \Big|\ (b,Z)
 	\ \convd\ (1-\eta^2)\,\chi^2_1(\lambda_i).
 	\]
 	If additionally $\hat\sigma_1\to\sigma_\infty$ in $P(\cdot\,|\,b,Z)$-probability, then Slutsky yields
 	\[
 	S_i\ \Big|\ (b,Z)\ \convd\ \sigma_\infty^2\,(1-\eta^2)\,\chi^2_1(\lambda_i).
 	\]
 \end{proof}
 
 \begin{lemma}[Conditional limits of $\hat\sigma_1$ and $\hat\eta_n$ (in-sample; quenched in $b,Z$)]
 	\label{lem:insample-sigma-eta-conditional-Zb}
 	Let $X\in\R^{n\times d}$ have rows
 	\[
 	X_k \;=\; h\,b_k Z_k\,v \;+\; \varepsilon_k,\qquad
 	\varepsilon_k\stackrel{\mathrm{i.i.d.}}{\sim}\mathcal N(0,I_d),
 	\qquad \|v\|_2=1,
 	\]
 	and for each $n$ condition on the realized sequences $(b_1,\dots,b_n)$ and $(Z_1,\dots,Z_n)$.
 	Let the scaled SVD be
 	\[
 	\frac{1}{\sqrt{n}}\,X=\hat U\hat\Sigma\hat V^\top,\qquad
 	\hat\sigma_1:=\hat\Sigma_{11},\ \hat u:=\hat u_1,\ \hat v:=\hat v_1,
 	\]
 	with the usual orientation $\langle \hat v,v\rangle\ge 0$ and $\langle \hat u,u\rangle\ge 0$, and define
 	\[
 	u:=\frac{b\odot Z}{\|b\odot Z\|}\in\R^n,\qquad
 	\hat\eta_n:=\langle \hat u,u\rangle\in[0,1].
 	\]
 	Assume $n,d\to\infty$ with $d/n\to\kappa\in(0,\infty)$ and assume the sequence conditions
 	\eqref{eq:seq-B}--\eqref{eq:seq-deloc} from Lemma~\ref{lem:single-score-conditional-Zb} hold for the realized $(b,Z)$.
 	Define the (deterministic, given $b,Z$) spike size
 	\[
 	\delta_n\ :=\ \frac{h}{\sqrt{n}}\|b\odot Z\|,
 	\qquad
 	\delta\ :=\ h\sqrt{\pi}.
 	\]
 	Then, conditional on $(b,Z)$,
 	\[
 	\hat\eta_n^2\ \xrightarrow{P(\cdot\,|\,b,Z)}\ \eta^2(\delta,\kappa),
 	\qquad
 	\hat\sigma_1\ \xrightarrow{P(\cdot\,|\,b,Z)}\ \sigma_\infty(\delta,\kappa),
 	\]
 	where (BBP transition and closed forms)
 	\[
 	\eta^2(\delta,\kappa)=
 	\begin{cases}
 		0, & \delta^4\le \kappa,\\[6pt]
 		\dfrac{\delta^4-\kappa}{\delta^2(\delta^2+1)}, & \delta^4>\kappa,
 	\end{cases}
 	\qquad
 	\sigma_\infty(\delta,\kappa)=
 	\begin{cases}
 		1+\sqrt{\kappa}, & \delta^4\le \kappa,\\[8pt]
 		\sqrt{(1+\delta^2)\Big(1+\dfrac{\kappa}{\delta^2}\Big)}, & \delta^4>\kappa.
 	\end{cases}
 	\]
 \end{lemma}
 
 \begin{proof}
 	Fix $n$ and work throughout \emph{conditionally on} the realized $(b_1,\dots,b_n)$ and $(Z_1,\dots,Z_n)$.
 	Under this conditioning, the only randomness is from the i.i.d.\ Gaussian noise vectors $(\varepsilon_k)_{k\le n}$.
 	
 	\medskip
 	\noindent\textbf{Step 1 (Map to the additive spiked rectangular model).}
 	Let
 	\[
 	Y\;:=\;\frac{1}{\sqrt n}X^\top\in\R^{d\times n}.
 	\]
 	Then $Y=S_n+W$ where
 	\[
 	S_n\ :=\ \frac{h}{\sqrt n}\,v\,(b\odot Z)^\top
 	\ =\ \delta_n\, v\,u^\top,
 	\qquad
 	W\ :=\ \frac{1}{\sqrt n}\,[\varepsilon_1,\dots,\varepsilon_n].
 	\]
 	Conditionally on $(b,Z)$, the matrix $W$ has i.i.d.\ $\mathcal N(0,1/n)$ entries. Thus, conditional on $(b,Z)$,
 	$Y$ is a rank-one additive perturbation of an i.i.d.\ Gaussian matrix with left spike $v$, right spike $u$, and
 	spike strength $\delta_n$.
 	
 	\medskip
 	\noindent\textbf{Step 2 (Deterministic convergence of the spike size).}
 	By the sequence condition \eqref{eq:seq-Z2},
 	\[
 	\frac{1}{n}\|b\odot Z\|^2 \to \pi,
 	\qquad\text{hence}\qquad
 	\delta_n=\frac{h}{\sqrt n}\|b\odot Z\| \to h\sqrt{\pi}=\delta,
 	\]
 	deterministically given the realized $(b,Z)$ satisfying \eqref{eq:seq-Z2}.
 	
 	\medskip
 	\noindent\textbf{Step 3 (Invoke Benaych-Georges--Nadakuditi and plug in $\delta$).}
 	Since $d/n\to\kappa$ and $Y=\delta_n vu^\top+W$ with i.i.d.\ Gaussian noise, we may apply the
 	Benaych-Georges--Nadakuditi (2011) asymptotics for the rank-one spiked rectangular model.
 	The delocalization condition \eqref{eq:seq-deloc} gives $\|u\|_\infty\to 0$, placing us in the standard regime for the spike vector.
 	Therefore, conditional on $(b,Z)$,
 	the top singular value of $Y$ (equivalently of $(1/\sqrt n)X$) and the squared overlap between the empirical and population
 	right singular vectors converge in $P(\cdot\,|\,b,Z)$-probability to the deterministic limits given by the BGN formulas
 	at spike strength $\delta$ and aspect ratio $\kappa$:
 	\[
 	\hat\sigma_1 \ \xrightarrow{P(\cdot\,|\,b,Z)}\ \sigma_\infty(\delta,\kappa),
 	\qquad
 	\hat\eta_n^2=\langle \hat u,u\rangle^2 \ \xrightarrow{P(\cdot\,|\,b,Z)}\ \eta^2(\delta,\kappa).
 	\]
 	The piecewise definitions above encode the BBP transition at $\delta^4=\kappa$.
 	
 	\medskip
 	\noindent\textbf{Step 4 (Translate back to the notation for $X$).}
 	The nonzero singular values of $(1/\sqrt n)X$ and $Y=(1/\sqrt n)X^\top$ coincide, so the top singular value is $\hat\sigma_1$.
 	Moreover, the right singular vector of $Y$ equals the left singular vector $\hat u$ of $X$, and the population right spike direction
 	is $u=(b\odot Z)/\|b\odot Z\|$. Hence $\hat\eta_n=\langle \hat u,u\rangle$, completing the proof.
 \end{proof}
 
 \begin{lemma}[A.s.\ sequence regularity and conditional null tail limits]\label{lem:null-tail-conditional}
 	Let $b_k\stackrel{\mathrm{i.i.d.}}{\sim}\mathrm{Ber}(\pi)$ and $Z_k\stackrel{\mathrm{i.i.d.}}{\sim}N(0,1)$ be independent, with
 	$\pi\in(0,1)$.
 	Then \eqref{eq:seq-B}--\eqref{eq:seq-deloc} sequence conditions hold almost surely. Let $\mathcal E$ denote the probability-one event on which \eqref{eq:seq-B}--\eqref{eq:seq-deloc} all hold.
 	Now fix $h>0$, $\|v\|_2=1$, and consider the in-sample Gaussian model
 	\[
 	\theta_k:=h\,b_k Z_k\,v,\qquad X_k=\theta_k+\varepsilon_k,\qquad
 	\varepsilon_k\stackrel{\mathrm{i.i.d.}}{\sim}N(0,I_d),\qquad k=1,\dots,m,
 	\]
 	with $d/m\to c\in(0,\infty)$. Let $(1/\sqrt m)X=\hat U\hat\Sigma\hat V^\top$ and define
 	\[
 	S_j:=(\hat v^\top X_j)^2=m\,\hat\sigma_1^2\,(\hat u)_j^2.
 	\]
 	Then on $\mathcal E$, for every fixed $t\in\R$ and every fixed index $j$,
 	\begin{equation}\label{eq:cond-tail-nulltail}
 		\P\!\big(S_j\ge t \,\big|\, j\in\cH_0,(\theta_k)_{k=1}^m\big)\ \longrightarrow\ \bar G_0(t),
 	\end{equation}
 	where
 	\[
 	\bar G_0(t):=\P\!\big(c_0\,\chi^2_1\ge t\big),\qquad
 	c_0:=\sigma_\infty^2(\delta,c)\big(1-\eta^2(\delta,c)\big),\qquad
 	\delta:=h\sqrt{\pi},
 	\]
 	and $\eta^2(\delta,c)$ and $\sigma_\infty(\delta,c)$ are the BGN closed forms from
 	Lemma~\ref{lem:insample-sigma-eta-conditional-Zb}. In particular
 	\[
 	\P\!\big(S_j\ge t \,\big|\, j\in\cH_0,(\theta_k)_{k=1}^m\big)\ \convp\ \bar G_0(t).
 	\]
 \end{lemma}
 
\begin{proof}
	\textbf{Part I: almost sure validity of the sequence conditions.}
	Let $b_k\stackrel{\mathrm{i.i.d.}}{\sim}\mathrm{Ber}(\pi)$ and $Z_k\stackrel{\mathrm{i.i.d.}}{\sim}N(0,1)$ be independent.
	
	\smallskip
	\noindent\emph{Condition \eqref{eq:seq-B}.}
	By the strong law of large numbers (SLLN) applied to $(b_k)$,
	\[
	\frac{1}{m}\sum_{k=1}^m b_k \to \E[b_1]=\pi
	\qquad\text{a.s.}
	\]
	In particular $\sum_{k=1}^m b_k\to\infty$ almost surely since $\pi>0$.
	
	\smallskip
	\noindent\emph{Condition \eqref{eq:seq-Z2}.}
	Apply SLLN to the i.i.d.\ sequence $(b_k Z_k^2)_{k\ge1}$. Since
	\[
	\E[b_1 Z_1^2]=\E[b_1]\E[Z_1^2]=\pi,
	\qquad
	\E[(b_1 Z_1^2)^2]=\E[b_1 Z_1^4]=\pi\,\E[Z_1^4]<\infty,
	\]
	SLLN yields
	\[
	\frac{1}{m}\sum_{k=1}^m b_k Z_k^2 \to \pi
	\qquad\text{a.s.}
	\]
	Equivalently, $\|b\odot Z\|^2/m\to\pi$ almost surely.
	
	\smallskip
	\noindent\emph{Condition \eqref{eq:seq-deloc}.}
	By a union bound and the Gaussian tail inequality,
	\[
	\P\!\left(\max_{1\le k\le m}|Z_k|\ge 3\sqrt{\log m}\right)
	\le 2m\exp\!\left(-\frac{(3\sqrt{\log m})^2}{2}\right)
	=2m^{-7/2}.
	\]
	Hence
	\[
	\sum_{m\ge1}\P\!\left(\max_{1\le k\le m}|Z_k|\ge 3\sqrt{\log m}\right)<\infty,
	\]
	and by Borel-Cantelli,
	\begin{equation}\label{eq:maxZ-growth}
		\max_{1\le k\le m}|Z_k|=O(\sqrt{\log m})
		\qquad\text{a.s.}
	\end{equation}
	On the other hand, by \eqref{eq:seq-Z2},
	\begin{equation}\label{eq:normbZ-growth}
		\|b\odot Z\|^2
		=\sum_{k=1}^m b_k Z_k^2
		= m\cdot\Big(\frac{1}{m}\sum_{k=1}^m b_k Z_k^2\Big)
		= \pi m\,(1+o(1))
		\qquad\text{a.s.,}
	\end{equation}
	and therefore $\|b\odot Z\|=\sqrt{\pi m}\,(1+o(1))$ almost surely. Combining
	\eqref{eq:maxZ-growth}--\eqref{eq:normbZ-growth},
	\[
	\max_{1\le k\le m}\frac{|b_k Z_k|}{\|b\odot Z\|}
	\le \frac{\max_{k\le m}|Z_k|}{\|b\odot Z\|}
	=O\!\left(\frac{\sqrt{\log m}}{\sqrt m}\right)\to 0
	\qquad\text{a.s.}
	\]
	This proves \eqref{eq:seq-deloc}. Let $\mathcal E$ be the (probability-one) event on which
	\eqref{eq:seq-B}--\eqref{eq:seq-deloc} all hold.
	
	\medskip
	\textbf{Part II: conditional null tail limit on $\mathcal E$.}
	Fix $t\in\R$ and a fixed index $j$. Consider
	\[
	\theta_k=h\,b_k Z_k v,\qquad X_k=\theta_k+\varepsilon_k,
	\qquad \varepsilon_k\stackrel{\mathrm{i.i.d.}}{\sim}N(0,I_d),
	\]
	with $d/m\to c\in(0,\infty)$, and define
	$S_j=(\hat v^\top X_j)^2=m\hat\sigma_1^2(\hat u)_j^2$ from the scaled SVD
	$(1/\sqrt m)X=\hat U\hat\Sigma\hat V^\top$.
	
	\smallskip
	\noindent\emph{Conditioning.}
	Conditioning on the realized panel $(\theta_k)_{k=1}^m$ is stronger than conditioning on the pair $(b,Z)$, since
	$\theta_k$ determines $b_k=\mathbf 1\{\theta_k\neq 0\}$ and the product $b_kZ_k=(v^\top\theta_k)/h$, but does not determine $Z_k$ itself on the event $\{b_k=0\}$.
	However, this distinction is immaterial for the argument below, because the conditional law of the data depends on $(b,Z)$ only through the products $(b_kZ_k)_{k=1}^m$:
	\[
	X_k = h\,b_k Z_k\,v + \varepsilon_k .
	\]
	Hence the conditional law of the data, and therefore of all score quantities constructed from the data, is the same whether we condition on the realized panel $(\theta_k)_{k=1}^m$ or on the realized values of $(b_kZ_k)_{k=1}^m$ (equivalently, on $(b,Z)$ modulo the irrelevant values of $Z_k$ when $b_k=0$).
	Accordingly, below, conditional statements given $\{\theta_k\}$ may be read as conditional statements given the effective signal coordinates $(b_kZ_k)_{k=1}^m$.
	
	\smallskip
	\noindent\emph{Deterministic spike size.}
	Work on $\mathcal E$. By \eqref{eq:seq-Z2} on $\mathcal E$,
	\[
	\delta_m:=\frac{h}{\sqrt m}\|b\odot Z\|
	\ \longrightarrow\ h\sqrt{\pi}=:\delta.
	\]
	
	\smallskip
	\noindent\emph{Limits of $\hat\sigma_1$ and $\hat\eta_m$.}
	By Lemma~\ref{lem:insample-sigma-eta-conditional-Zb}, conditional on $(\theta_k)_{k=1}^m$ (equivalently on $(b,Z)$),
	\[
	\hat\sigma_1 \xrightarrow{P(\cdot\,|\,\{\theta_k\})} \sigma_\infty(\delta,c),
	\qquad
	\hat\eta_m^2:=\langle \hat u,u\rangle^2 \xrightarrow{P(\cdot\,|\,\{\theta_k\})} \eta^2(\delta,c),
	\]
	where $u=(b\odot Z)/\|b\odot Z\|$.
	
	\smallskip
	\noindent\emph{Null score limit.}
	Now suppose $j\in\cH_0$, i.e.\ $\theta_j=0$ (equivalently $b_j=0$).
	By Lemma~\ref{lem:single-score-conditional-Zb} (with $n=m$),
	conditional on $(\theta_k)_{k=1}^m$,
	\[
	\frac{S_j}{\hat\sigma_1^2}=m(\hat u)_j^2 \Rightarrow (1-\eta^2(\delta,c))\,\chi^2_1,
	\]
	since under $b_j=0$ the noncentrality parameter is $0$. Slutsky then yields
	\[
	S_j \Rightarrow c_0\,\chi^2_1,
	\qquad
	c_0:=\sigma_\infty^2(\delta,c)\big(1-\eta^2(\delta,c)\big),
	\]
	conditionally on $(j\in\cH_0,(\theta_k)_{k=1}^m)$.
	
	\smallskip
	\noindent\emph{Tail convergence.}
	Because $c_0\chi^2_1$ has a continuous distribution, the portmanteau theorem gives, on $\mathcal E$,
	\[
	\P\!\big(S_j\ge t \,\big|\, j\in\cH_0,(\theta_k)_{k=1}^m\big)
	\ \longrightarrow\ \P(c_0\chi^2_1\ge t)=:\bar G_0(t),
	\]
	which is \eqref{eq:cond-tail-nulltail}.
	
	\smallskip
	\noindent\emph{Upgrade to $\convp$.}
	Since $\P(\mathcal E)=1$ and the limit $\bar G_0(t)$ is deterministic, convergence on $\mathcal E$
	implies
	\[
	\P\!\big(S_j\ge t \,\big|\, j\in\cH_0,(\theta_k)_{k=1}^m\big)\ \convp\ \bar G_0(t).
	\]
\end{proof}
 
  \subsection{Results on scores}
 \paragraph{Proof of Proposition \ref{prop:subspace-prior-score-distributions}}
 \begin{enumerate}
 	\item Split BH: We start at~\eqref{eq:score-split-bh-null-subspace} and use Lemma \ref{lemma:conditional-to-unconditional-convg} with $X_n =S^\spl_j  $ and $\cF_n = \{\hat{\bm v}_{\spl},\cH_0,\cH_1\}$ to obtain that $S^\spl_j \mid j \in \mathcal H_0 \convd \chi^2_1$, similarly we can use Lemma \ref{lem:joint-scores-split-subspace} along with Lemma \ref{lemma:conditional-to-unconditional-convg} to obtain $S^\spl_j \mid j \in \mathcal H_1 \convd (1 + \mu^2_\spl)\chi^2_1$.
 	
 	\item BONuS: We start at~\eqref{eq:score-bonus-subspace} and use Lemma \ref{lemma:conditional-to-unconditional-convg} with $X_n =S^\bonus_j  $ and $\cF_n = \{\cH_0,\cH_1\}$ to obtain that $S^\bonus_j \mid j \in \mathcal H_0 \convd  K_\bonus \chi^2_1$, similarly we can use~\eqref{eq:score-bonus-subspace} along with Lemma \ref{lemma:conditional-to-unconditional-convg} to obtain $S^\bonus_j \mid j \in \mathcal H_1 \convd K_\bonus (1 + \mu^2_\bonus)\chi^2_1$.
 	
 	\item In-Sample BH: The proof directly follows from BONuS score result with $\pi_{\aug} = 1$. 
 \end{enumerate}

 \section{Miscellaneous Lemmas}
 
 \subsection{Probability Lemmas}
 
 \begin{lemma}\label{lemma:convg-cond-mean-zero}
 	If $X_n$ is a random variable such that $\E(X_n \mid \cF_n)= 0$, then 
 	\[
 	\E \V(X_n \mid \cF_n) \to 0  \implies X_n \overset{p}{\to} 0.
 	\]
 \end{lemma}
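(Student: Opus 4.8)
The plan is to reduce convergence in probability to $L^2$ convergence via the law of total variance, and then invoke Chebyshev's inequality. The key observation is that the hypothesis $\E(X_n \mid \cF_n) = 0$ makes the conditional-mean component of the variance decomposition vanish, so that the unconditional second moment of $X_n$ is controlled entirely by $\E\V(X_n \mid \cF_n)$.

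First I would note that, by the tower property, $\E(X_n) = \E\big(\E(X_n \mid \cF_n)\big) = 0$, so $\V(X_n) = \E(X_n^2)$. Next I would apply the law of total variance,
\[
\V(X_n) \;=\; \E\big(\V(X_n \mid \cF_n)\big) \;+\; \V\big(\E(X_n \mid \cF_n)\big),
\]
and observe that the second term is $\V(0) = 0$ by hypothesis. Hence $\E(X_n^2) = \E\V(X_n \mid \cF_n) \to 0$, i.e.\ $X_n \to 0$ in $L^2$. Finally, for any $\varepsilon > 0$, Chebyshev's inequality gives $\P(|X_n| \geq \varepsilon) \leq \E(X_n^2)/\varepsilon^2 \to 0$, which is exactly $X_n \convp 0$.

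There is essentially no obstacle here: the only points requiring care are (i) implicitly assuming the relevant second moments are finite so that the variance decomposition is valid (which is harmless, since if $\E\V(X_n \mid \cF_n)$ is finite then $\E(X_n^2)$ is finite, and if it is infinite the conclusion is vacuous for that $n$), and (ii) stating the law of total variance in the conditional-on-$\sigma$-field form rather than the conditional-on-a-random-variable form. The argument is two lines once these bookkeeping matters are dispatched.
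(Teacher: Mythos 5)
Your argument is correct and essentially the same as the paper's: the paper applies the tower property directly to $\E[X_n^2]$ and uses $\E[X_n \mid \cF_n]=0$ to identify $\E[X_n^2 \mid \cF_n]$ with $\V(X_n \mid \cF_n)$, arriving at the identical identity $\E[X_n^2] = \E\,\V(X_n \mid \cF_n)$. Your law-of-total-variance phrasing is just a repackaging of that step, and the Chebyshev/$L^2$ finish you spell out is left implicit in the paper.
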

 
 \begin{proof}
 	$\E X_n^2 = \E \E X_n^2 \mid \cF_n$ by tower property. Since $\E X_n \mid \cF_n =0$ we have $\E \E X_n^2 \mid \cF_n = \E \V(X_n \mid \cF_n)$.
 \end{proof}
 \begin{lemma}[Weak Law of Large Numbers for Bounded Arrays]
 	\label{lemma:wlln-bounded-rv}
 	Let $\{X_{ni}\}_{i=1}^n$ be an array of random variables such that 
 	$|X_{ni}| \le B$ almost surely for all $i,n$, for some constant $B > 0$. 
 	Suppose that $\{X_{ni}\}_{i=1}^n$ are conditionally exchangeable given $\mathcal F_n$ and for every $i \neq j$, the conditional covariance given a filtration 
 	$\cF_n$ vanishes asymptotically:
 	\[
 	\Cov(X_{ni}, X_{nj} \mid \cF_n) \;\convp\; 0.
 	\]
 	Then,
 	\[
 	\frac{1}{n}\sum_{i=1}^n X_{ni} \;-\; 
 	\E\!\left[\frac{1}{n}\sum_{i=1}^n X_{ni} \,\middle|\, \cF_n\right]
 	\;\convp\; 0.
 	\]
 \end{lemma}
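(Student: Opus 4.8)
The plan is to reduce the statement to a bound on conditional second moments and then invoke Lemma~\ref{lemma:convg-cond-mean-zero}. Write $\bar X_n := \tfrac1n\sum_{i=1}^n X_{ni}$ and $\mu_n := \E[\bar X_n \mid \cF_n]$, and set $Y_n := \bar X_n - \mu_n$, so that $\E[Y_n \mid \cF_n] = 0$ by construction. By Lemma~\ref{lemma:convg-cond-mean-zero}, it then suffices to prove $\E[\V(Y_n \mid \cF_n)] \to 0$, which is the same as $\E[\V(\bar X_n \mid \cF_n)] \to 0$ since $\mu_n$ is $\cF_n$-measurable.

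Next I would expand the conditional variance using conditional exchangeability. Because $\{X_{ni}\}_{i=1}^n$ is conditionally exchangeable given $\cF_n$, the conditional variances $\V(X_{ni}\mid\cF_n)$ all equal a common value $v_n$, and the conditional covariances $\Cov(X_{ni},X_{nj}\mid\cF_n)$ for $i\ne j$ all equal a common value $c_n$. Hence
\[
\V(\bar X_n \mid \cF_n) \;=\; \frac{1}{n^2}\Big(n\,v_n + n(n-1)\,c_n\Big) \;=\; \frac{v_n}{n} + \frac{n-1}{n}\, c_n.
\]
The almost-sure bound $|X_{ni}|\le B$ yields the deterministic estimates $0\le v_n \le B^2$ (since $\V(X_{ni}\mid\cF_n)\le \E[X_{ni}^2\mid\cF_n]\le B^2$) and, via the conditional Cauchy--Schwarz inequality, $|c_n|\le B^2$.

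The two terms are then handled separately: $\E[v_n/n]\le B^2/n \to 0$, while the hypothesis $\Cov(X_{ni},X_{nj}\mid\cF_n)=c_n\convp 0$ together with the uniform bound $\big|\tfrac{n-1}{n}c_n\big|\le B^2$ gives $\E\big|\tfrac{n-1}{n}c_n\big| \to 0$ by the bounded convergence theorem. Adding these, $\E[\V(\bar X_n\mid\cF_n)]\to 0$, and Lemma~\ref{lemma:convg-cond-mean-zero} delivers $Y_n\convp 0$, as claimed. The only step needing any care is the upgrade of $c_n\convp 0$ to $L^1$-convergence of $\tfrac{n-1}{n}c_n$; this is precisely where the a.s.\ boundedness of the array is used, through the conditional Cauchy--Schwarz bound $|c_n|\le B^2$ and the bounded convergence theorem. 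Everything else is a routine second-moment computation.
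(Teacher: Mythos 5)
Your proof is correct and follows essentially the same route as the paper's: both reduce to bounding $\E[\V(\bar X_n \mid \cF_n)]$ via Lemma~\ref{lemma:convg-cond-mean-zero}, split the conditional variance into a $O(B^2/n)$ diagonal term and an off-diagonal covariance term, and dispatch the latter with the bounded convergence theorem using the $|c_n|\le B^2$ estimate. The only cosmetic difference is that you invoke conditional exchangeability at the outset to collapse the sums into $v_n/n + \tfrac{n-1}{n}c_n$, whereas the paper keeps the sums explicit and applies exchangeability only at the final step; the content is identical.
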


 \begin{proof}
 	We apply Lemma~\ref{lemma:convg-cond-mean-zero}, which states that convergence in probability to the conditional expectation holds if 
 	\[
 	\E\left[ \V\left( \frac{1}{n} \sum_{i=1}^n X_{ni} \,\middle|\, \mathcal{F}_n \right) \right] \to 0.
 	\]	
 	Let $S_n = \sum_{i=1}^n X_{ni}$. By the formula for conditional variance, we have:
 	\begin{align*}
 		\V\left( \frac{1}{n} S_n \mid \mathcal{F}_n \right)
 		&= \frac{1}{n^2} \sum_{i=1}^n \V(X_{in} \mid \mathcal{F}_n)
 		+ \frac{1}{n^2} \sum_{i \ne j} \mathrm{Cov}(X_{ni}, X_{nj} \mid \mathcal{F}_n).
 	\end{align*}	
 	To conclude, we verify that both terms vanish in expectation:	
 	Since $|X_{in}| \leq B$ almost surely, $\V(X_{ni} \mid \mathcal{F}_n) \leq B^2$ uniformly, so
 	\[
 	\frac{1}{n^2} \sum_{i=1}^n \E[\V(X_{ni} \mid \mathcal{F}_n)] \leq \frac{B^2}{n} \to 0.
 	\]	
 	By assumption, $\mathrm{Cov}(X_{ni}, X_{nj} \mid \mathcal{F}_n) \overset{p}{\to} 0$ for $i \ne j$, and since $|\mathrm{Cov}(X_{ni}, X_{nj} \mid \mathcal{F}_n)| \leq B^2$, we can apply the bounded convergence theorem:
 	\[
 	\E[\mathrm{Cov}(X_{ni}, X_{nj} \mid \mathcal{F}_n)] \to 0.
 	\]
 	this implies that 
 	\[
 	\E\left[\frac{1}{n^2} \sum_{i\neq j}\mathrm{Cov}(X_{ni}, X_{nj} \mid \mathcal{F}_n)\right] = \frac{n(n-1)}{n^2}	\E[\mathrm{Cov}(X_{n1}, X_{n2} \mid \mathcal{F}_n)] \to 0
 	\]
 	where we used the fact that $X_{ni}$'s for $i=1,\ldots,n$ are identically distributed conditional on $\cF_n$.
 	Combining both, we obtain
 	\[
 	\E\left[ \V\left( \frac{1}{n} S_n \,\middle|\, \mathcal{F}_n \right) \right] \to 0,
 	\]
 	which completes the proof.
 \end{proof}
 
\begin{lemma}\label{lemma:conditional-to-unconditional-convg}
	Let $(X_n)$ be a sequence of real--valued random variables and $(\mathcal F_n)$ a sequence of $\sigma$--fields. 
	Assume that for every continuity point $t$ of a distribution function $F$,
	\[
	\mathbb{P}(X_n \ge t \mid \mathcal F_n) \xrightarrow{p} \bar F(t),
	\qquad \text{where } \bar F(t) := 1 - F(t).
	\]
	Then 
	\[
	X_n \xrightarrow{d} F .
	\]
\end{lemma}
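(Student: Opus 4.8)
The plan is to pass from the conditional tail convergence to unconditional tail convergence via the tower property and a bounded-convergence argument, and then to convert convergence of tail probabilities at continuity points into convergence of distribution functions by a standard sandwich. Since the limiting laws $F$ arising in our applications (shifted Gaussians, scaled $\chi^2_1$) are in fact continuous, the sandwich step simplifies, but I would carry it out in general.

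First I would fix a continuity point $t$ of $F$ and use the tower property to write $\mathbb{P}(X_n \ge t) = \mathbb{E}\big[\mathbb{P}(X_n \ge t \mid \mathcal{F}_n)\big]$. The random variables $\mathbb{P}(X_n \ge t \mid \mathcal{F}_n)$ lie in $[0,1]$ and, by hypothesis, converge in probability to the constant $\bar F(t)$. Convergence in probability together with the uniform bound $0 \le \mathbb{P}(X_n \ge t \mid \mathcal{F}_n) \le 1$ implies convergence in $L^1$ (equivalently, the bounded convergence theorem applies along any subsequence, and one then uses the subsequence characterization of convergence in probability of the means), so $\mathbb{E}\big[\mathbb{P}(X_n \ge t \mid \mathcal{F}_n)\big] \to \bar F(t)$. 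Hence $\mathbb{P}(X_n \ge t) \to \bar F(t)$, and taking complements, $\mathbb{P}(X_n < t) = 1 - \mathbb{P}(X_n \ge t) \to F(t)$, for every continuity point $t$ of $F$.

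Next I would upgrade this to $F_n(t) := \mathbb{P}(X_n \le t) \to F(t)$ at every continuity point $t$ of $F$. The lower bound is immediate: $F_n(t) \ge \mathbb{P}(X_n < t) \to F(t)$, so $\liminf_n F_n(t) \ge F(t)$. For the upper bound, I would choose continuity points $t' \downarrow t$ of $F$ — possible since the discontinuity set of $F$ is at most countable — and note $F_n(t) \le \mathbb{P}(X_n < t') \to F(t')$, so $\limsup_n F_n(t) \le F(t')$; letting $t' \downarrow t$ and using right-continuity of $F$ gives $\limsup_n F_n(t) \le F(t)$. Combining the two bounds yields $F_n(t) \to F(t)$ at every continuity point of $F$, which is precisely $X_n \xrightarrow{d} F$.

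I do not expect a genuine obstacle; this is essentially the ``upper distribution function'' reformulation of weak convergence. The two points that require a little care are (i) justifying the exchange of limit and expectation from convergence in probability rather than almost-sure convergence, which is handled purely by uniform boundedness; and (ii) the continuity-point bookkeeping in the sandwich, which is needed because $X_n$ may place atoms at $t$, so $\mathbb{P}(X_n \le t)$ and $\mathbb{P}(X_n < t)$ need not coincide, and one recovers the gap by approaching $t$ from the right through continuity points of $F$ and invoking right-continuity of $F$.
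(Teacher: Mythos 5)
Your proof is correct and follows essentially the same route as the paper's: pass from conditional to unconditional tail convergence via the tower property and bounded convergence, then conclude weak convergence from pointwise convergence at continuity points. One refinement you add — and it is a genuine improvement — is the sandwich step handling possible atoms of $X_n$ at $t$: the paper's proof writes $\P(X_n \le t) = 1 - \P(X_n \ge t)$, which silently assumes $\P(X_n = t) = 0$, whereas you correctly work with $\P(X_n < t) = 1 - \P(X_n \ge t)$ and then approach $t$ from the right through continuity points of $F$ to recover $F_n(t) \to F(t)$. This makes your argument valid at full generality; the paper's shortcut happens to be harmless in its applications because the scores there are continuous random variables, but your version is the more careful one.
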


\begin{proof}
	Fix a continuity point $t$ of $F$. 
	By assumption,
	\[
	\mathbb{P}(X_n \ge t \mid \mathcal F_n) \xrightarrow{p} \bar F(t).
	\]
	Taking expectations on both sides, and using bounded convergence theorem (since $\mathbb{P}(X_n \ge t \mid \mathcal F_n)$ is uniformly bounded by $1$)
	\[
	\mathbb{P}(X_n \ge t)
	= \mathbb{E}\!\left[\mathbb{P}(X_n \ge t \mid \mathcal F_n)\right]
	\;\longrightarrow\; \bar F(t).
	\]
	Hence
	\[
	\mathbb{P}(X_n \le t)
	= 1 - \mathbb{P}(X_n \ge t)
	\;\longrightarrow\; F(t).
	\]
	
	Thus the distribution functions $F_n(t) := \mathbb{P}(X_n \le t)$ converge pointwise to $F(t)$ at all continuity points of $F$. 
	By the classical characterization of weak convergence (Helly--Bray or the Portmanteau theorem), this implies
	\[
	X_n \xrightarrow{d} F .
	\]
\end{proof}

 \subsection{Analysis Lemmas}
 
 \begin{lemma}\label{lemma:unif-convg-monotone}
 	Let $\hat H_n:\R\to\R$ be random monotonically decreasing functions, and let
 	$H:\R\to\R$ be bounded, continuous, and monotonically decreasing.
 	Assume that for every fixed $t\in\R$,
 	\[
 	\hat H_n(t)\convp H(t).
 	\]
 	Also define
 	\(
 	\hat H_n(-\infty):=\lim_{t\to-\infty}\hat H_n(t),
 	\,
 	\hat H_n(+\infty):=\lim_{t\to\infty}\hat H_n(t),
 	\)
 	and similarly
 	\(
 	H(-\infty):=\lim_{t\to-\infty}H(t),
 \
 	H(+\infty):=\lim_{t\to\infty}H(t).
 	\)
 	Assume in addition that
 	\[
 	\hat H_n(-\infty)\convp H(-\infty),
 	\qquad
 	\hat H_n(+\infty)\convp H(+\infty).
 	\]
 	Then
 	\[
 	\|\hat H_n-H\|_\infty \convp 0.
 	\]
 \end{lemma}
 
 \begin{proof}
 	Fix $\varepsilon>0$, and let $\delta:=\varepsilon/3$.
 	
 	Since $H$ is bounded and decreasing, the limits $H(-\infty)$ and $H(+\infty)$
 	exist and are finite. Choose $M>0$ such that
 	\[
 	H(-\infty)-H(-M)<\delta,
 	\qquad
 	H(M)-H(+\infty)<\delta.
 	\]
 	
 	Since $H$ is continuous on the compact interval $[-M,M]$, it is uniformly
 	continuous there. Hence we may choose a finite grid
 	\[
 	-M=x_0<x_1<\cdots<x_r=M
 	\]
 	such that for every $j=1,\dots,r$,
 	\[
 	H(x_{j-1})-H(x_j)<\delta.
 	\]
 	
 	Now define the event
 	\[
 	E_n:=
 	\Bigl\{
 	|\hat H_n(-\infty)-H(-\infty)|\le \delta,\ 
 	|\hat H_n(+\infty)-H(+\infty)|\le \delta,\ 
 	\max_{0\le j\le r} |\hat H_n(x_j)-H(x_j)|\le \delta
 	\Bigr\}.
 	\]
 	By the assumed pointwise convergence at the finitely many gridpoints and the
 	assumed convergence of the endpoint limits, we have
 	\[
 	\Pr(E_n)\to 1.
 	\]
 	
 	We now show that on $E_n$,
 	\[
 	\sup_{t\in\R} |\hat H_n(t)-H(t)| \le 2\delta < \varepsilon.
 	\]
 	
 	First consider $t\in[x_{j-1},x_j]$ for some $j$.
 	Since both $\hat H_n$ and $H$ are decreasing,
 	\[
 	\hat H_n(x_{j-1}) \ge \hat H_n(t)\ge \hat H_n(x_j),
 	\qquad
 	H(x_{j-1}) \ge H(t)\ge H(x_j).
 	\]
 	Hence on $E_n$,
 	\[
 	\hat H_n(t)
 	\le \hat H_n(x_{j-1})
 	\le H(x_{j-1})+\delta
 	\le H(t)+2\delta,
 	\]
 	because $H(x_{j-1})-H(t)\le H(x_{j-1})-H(x_j)<\delta$.
 	Similarly,
 	\[
 	\hat H_n(t)
 	\ge \hat H_n(x_j)
 	\ge H(x_j)-\delta
 	\ge H(t)-2\delta.
 	\]
 	Thus
 	\[
 	|\hat H_n(t)-H(t)|\le 2\delta
 	\qquad\text{for all }t\in[-M,M].
 	\]
 	
 	Next consider $t\le -M$.
 	By monotonicity,
 	\[
 	\hat H_n(-M)\le \hat H_n(t)\le \hat H_n(-\infty),
 	\qquad
 	H(-M)\le H(t)\le H(-\infty).
 	\]
 	On $E_n$,
 	\[
 	\hat H_n(t)\le \hat H_n(-\infty)\le H(-\infty)+\delta \le H(t)+2\delta,
 	\]
 	since $H(-\infty)-H(t)\le H(-\infty)-H(-M)<\delta$.
 	Also,
 	\[
 	\hat H_n(t)\ge \hat H_n(-M)\ge H(-M)-\delta \ge H(t)-2\delta,
 	\]
 	since $H(t)-H(-M)\le H(-\infty)-H(-M)<\delta$.
 	Hence
 	\[
 	|\hat H_n(t)-H(t)|\le 2\delta
 	\qquad\text{for all }t\le -M.
 	\]
 	
 	The case $t\ge M$ is analogous:
 	\[
 	\hat H_n(+\infty)\le \hat H_n(t)\le \hat H_n(M),
 	\qquad
 	H(+\infty)\le H(t)\le H(M),
 	\]
 	and again on $E_n$ we get
 	\[
 	|\hat H_n(t)-H(t)|\le 2\delta
 	\qquad\text{for all }t\ge M.
 	\]
 	
 	Therefore, on $E_n$,
 	\[
 	\|\hat H_n-H\|_\infty \le 2\delta < \varepsilon.
 	\]
 	So
 	\[
 	\Pr\bigl(\|\hat H_n-H\|_\infty>\varepsilon\bigr)
 	\le \Pr(E_n^c)\to 0.
 	\]
 	This proves $\|\hat H_n-H\|_\infty\convp 0$.
 \end{proof}
 
 \begin{lemma}\label{lemma:unif-convg-ratios}
	Assume that $f$ and $g$ are bounded continuous functions on $\R$, and that
	$f$ is \emph{monotonically decreasing} and satisfies $f(t) > 0$ for all $t\in\R$.
	Let $\{f_n\}_{n\ge 1}$ and $\{g_n\}_{n\ge 1}$ be random functions on $\R$ such that
	for every $C\in\R$,
	\[
	\sup_{t < C} |f_n(t) - f(t)| \;\convp\; 0,
	\qquad
	\sup_{t < C} |g_n(t) - g(t)| \;\convp\; 0.
	\]
	Then for every $C\in\R$,
	\[
	\sup_{t < C}\left|\frac{g_n(t)}{f_n(t)} - \frac{g(t)}{f(t)}\right|
	\;\convp\; 0 .
	\]
\end{lemma}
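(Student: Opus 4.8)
The plan is to exploit two facts: that monotonicity together with positivity of $f$ forces a \emph{uniform} lower bound on the half-line $(-\infty,C)$, and that, once both denominators are bounded away from $0$, the difference of the two ratios is controlled by the standard algebraic identity.

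First I would record the deterministic observation that $f$ is bounded away from zero on $(-\infty,C)$: writing $m_C := f(C)$, the hypothesis gives $m_C>0$, and since $f$ is decreasing, $t<C$ implies $f(t)\ge f(C)=m_C$. (It is worth noting that pointwise positivity of $f$ alone would not suffice here — one could have $\inf_{t<C}f(t)=0$ — so the argument genuinely uses the decreasing assumption.) Next I would transfer this to $f_n$ on a high-probability event: on $E_n := \{\sup_{t<C}|f_n(t)-f(t)| < m_C/2\}$ we have $f_n(t)\ge f(t)-m_C/2\ge m_C/2$ for all $t<C$, and $\P(E_n)\to 1$ by the uniform-convergence hypothesis for $f_n$.

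The main step is then the estimate, valid on $E_n$ for every $t<C$,
\[
\left|\frac{g_n(t)}{f_n(t)}-\frac{g(t)}{f(t)}\right|
=\left|\frac{\bigl(g_n(t)-g(t)\bigr)f(t)+g(t)\bigl(f(t)-f_n(t)\bigr)}{f_n(t)\,f(t)}\right|
\le \frac{\|f\|_\infty\,\sup_{s<C}|g_n(s)-g(s)|+\|g\|_\infty\,\sup_{s<C}|f_n(s)-f(s)|}{(m_C/2)\,m_C},
\]
where $\|f\|_\infty$ and $\|g\|_\infty$ are finite by the assumed boundedness of $f$ and $g$. Taking the supremum over $t<C$, the numerator on the right tends to $0$ in probability by the two uniform-convergence hypotheses, while the denominator is a fixed positive constant; hence the right-hand side is $o_p(1)$. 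Combining this with $\P(E_n)\to 1$ (and noting that outside $E_n$ we simply bound the probability by $\P(E_n^c)\to 0$) yields $\sup_{t<C}\bigl|g_n/f_n-g/f\bigr|\convp 0$, which is the claim.

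I do not anticipate a real obstacle: the only point requiring care is the uniform lower bound on $f$, which is what lets the denominator $f_nf$ be controlled from below on the \emph{entire} half-line rather than just on compacts, and this is exactly what the monotonicity of $f$ delivers. The rest is routine $\varepsilon$-bookkeeping using the continuous mapping/Slutsky-type reasoning already invoked repeatedly in the preceding lemmas.
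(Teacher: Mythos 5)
Your proof is correct and follows essentially the same route as the paper's: both establish the uniform lower bound $f(t)\ge f(C)>0$ on $(-\infty,C)$ via monotonicity, pass to a high-probability event on which $f_n\ge f(C)/2$, and then bound the ratio difference by the same common-denominator algebra before taking suprema. The only cosmetic difference is that the paper splits the bound into two additive terms $\frac{|g_n-g|}{|f_n|}+\frac{|g|\,|f_n-f|}{|f||f_n|}$ while you keep a single fraction, which is the same inequality arranged slightly differently.
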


\begin{proof}
	Fix $C\in\R$. Since $f$ is continuous, strictly positive, and monotonically
	decreasing on $\R$, it attains its infimum on $(-\infty,C]$ at $t=C$, and
	\[
	m \;:=\; \inf_{t < C} f(t) \;=\; f(C) \;>\; 0.
	\]
	
	\vspace{2mm}
	\noindent
	\textbf{Step 1 – control of the denominators.}
	Define
	\[
	\Delta_{f,n}^{(C)} := \sup_{t < C} |f_n(t) - f(t)|,
	\qquad
	\Delta_{g,n}^{(C)} := \sup_{t < C} |g_n(t) - g(t)|.
	\]
	By hypothesis,
	\(
	\Delta_{f,n}^{(C)} \convp 0
	\)
	and
	\(
	\Delta_{g,n}^{(C)} \convp 0.
	\)
	Let
	\[
	\mathcal E_n^{(C)} := \{\Delta_{f,n}^{(C)} \le m/2\}.
	\]
	Then $\Pr(\mathcal E_n^{(C)}) \to 1$. On $\mathcal E_n^{(C)}$, for all $t<C$,
	\begin{equation}\label{eq:min-fn-C}
		f_n(t)
		\;\ge\;
		f(t) - \Delta_{f,n}^{(C)}
		\;\ge\;
		m - m/2
		\;=\;
		m/2
		> 0.
	\end{equation}
	Hence all denominators $f_n(t)$ are uniformly bounded away from $0$ on $(-\infty,C)$.
	
	\vspace{2mm}
	\noindent
	\textbf{Step 2 – pointwise bound for the ratio.}
	Fix $t<C$ and $\omega\in\mathcal E_n^{(C)}$. Then
	\[
	\Bigl|\frac{g_n(t)}{f_n(t)} - \frac{g(t)}{f(t)}\Bigr|
	=
	\Bigl|\frac{g_n(t)f(t) - g(t)f_n(t)}{f_n(t)f(t)}\Bigr|
	\le
	\frac{|g_n(t)-g(t)|}{|f_n(t)|}
	+
	\frac{|g(t)|}{|f(t)|\,|f_n(t)|}\,|f_n(t)-f(t)|.
	\]
	Since $g$ is bounded continuous on $(-\infty,C]$, it is bounded there; set
	\[
	M := \sup_{t < C} |g(t)| < \infty.
	\]
	Also $f(t)\ge m$ for all $t<C$. Using these bounds together with
	\eqref{eq:min-fn-C}, we obtain
	\[
	\Bigl|\frac{g_n(t)}{f_n(t)} - \frac{g(t)}{f(t)}\Bigr|
	\le
	\frac{2}{m}\,\Delta_{g,n}^{(C)}
	+
	\frac{2M}{m^2}\,\Delta_{f,n}^{(C)}.
	\]
	Taking the supremum over $t<C$ yields, on $\mathcal E_n^{(C)}$,
	\begin{equation}\label{eq:bounding-error-of-ratios-C}
		\sup_{t < C}
		\Bigl|\frac{g_n(t)}{f_n(t)} - \frac{g(t)}{f(t)}\Bigr|
		\le
		\frac{2}{m}\,\Delta_{g,n}^{(C)}
		+
		\frac{2M}{m^2}\,\Delta_{f,n}^{(C)}.
	\end{equation}
	
	\vspace{2mm}
	\noindent
	\textbf{Step 3 – convergence in probability.}
	Let $\varepsilon>0$. Then
	\[
	\Pr\!\Bigl(
	\sup_{t < C}
	\Bigl|\frac{g_n(t)}{f_n(t)} - \frac{g(t)}{f(t)}\Bigr|
	> \varepsilon
	\Bigr)
	\le
	\Pr\bigl((\mathcal E_n^{(C)})^{\mathrm c}\bigr)
	+
	\Pr\!\Bigl(
	\mathcal E_n^{(C)} \cap
	\Bigl\{
	\tfrac{2}{m}\,\Delta_{g,n}^{(C)}
	+\tfrac{2M}{m^2}\,\Delta_{f,n}^{(C)}
	> \varepsilon
	\Bigr\}
	\Bigr).
	\]
	The first term converges to $0$ since
	$\Pr(\mathcal E_n^{(C)})\to1$.
	On $\mathcal E_n^{(C)}$, the right–hand side of
	\eqref{eq:bounding-error-of-ratios-C} is a linear combination of
	$\Delta_{f,n}^{(C)}$ and $\Delta_{g,n}^{(C)}$, which both converge in probability
	to $0$. Hence the second term also converges to $0$. Therefore
	\[
	\sup_{t < C}
	\Bigl|\frac{g_n(t)}{f_n(t)} - \frac{g(t)}{f(t)}\Bigr|
	\;\convp\; 0,
	\]
	as claimed.
\end{proof}

 \begin{lemma}[Uniform convergence of a generalized left inverse under \emph{monotonicity}]
 	\label{lemma:uniform-convg-of-inverse}
 	Let $\{\phi_n\}_{n\ge 1}$ be random functions $\R\to\R$ and let
 	$\phi:\R\to\R$ be deterministic.
 	Assume the following two conditions:
 	\begin{enumerate}
 		\item[\textnormal{(UC)}] For every $C\in\R$,
 		\[
 		\sup_{x< C}\bigl|\phi_n(x)-\phi(x)\bigr|
 		\;\convp\;0 .
 		\]
 		\item[\textnormal{(MON)}] The limit function $\phi$ is \emph{continuous and strictly decreasing}.
 	\end{enumerate}
 	Fix a level $t$ that is attained by~$\phi$, and define the (generalised) left inverse
 	\[
 	\phi^{-}(t)\;:=\;\inf\{x\in\R:\,\phi(x)\le t\}\;=\:\theta<\infty.
 	\]
 	
 	Then
 	\[
 	\phi_n^{-}(t)\;\convp\;\phi^{-}(t)=\theta.
 	\]
 \end{lemma}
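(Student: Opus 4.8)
The plan is to reduce the statement to two one-sided deviation bounds, exploiting that although the random functions $\phi_n$ are not assumed monotone, the \emph{limit} $\phi$ is continuous and strictly decreasing, so uniform closeness of $\phi_n$ to $\phi$ on half-lines can be converted into control of the sublevel set $\{x:\phi_n(x)\le t\}$ and hence of its infimum $\phi_n^-(t)$.

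First I would unpack (MON): since $\phi$ is continuous and strictly decreasing and the level $t$ is attained, there is a unique $\theta\in\R$ with $\phi(\theta)=t$; moreover $\phi(x)>t$ for $x<\theta$ and $\phi(x)<t$ for $x>\theta$, so $\{x:\phi(x)\le t\}=[\theta,\infty)$ and therefore $\phi^-(t)=\theta$. It then suffices to prove $\phi_n^-(t)\convp\theta$.

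Fix $\varepsilon>0$. For the \textbf{lower bound} I would set $\delta_1:=\phi(\theta-\varepsilon)-t$, which is strictly positive by strict monotonicity. Applying (UC) with $C=\theta-\varepsilon/2$, on the event $\{\sup_{x<\theta-\varepsilon/2}|\phi_n(x)-\phi(x)|<\delta_1/2\}$, whose probability tends to $1$, every $x\le\theta-\varepsilon$ satisfies $\phi_n(x)>\phi(x)-\delta_1/2\ge t+\delta_1/2>t$, so $\{x:\phi_n(x)\le t\}\subseteq(\theta-\varepsilon,\infty)$ and thus $\phi_n^-(t)\ge\theta-\varepsilon$. For the \textbf{upper bound} I would set $\delta_2:=t-\phi(\theta+\varepsilon/2)>0$; applying (UC) with $C=\theta+\varepsilon$ gives in particular $\phi_n(\theta+\varepsilon/2)\convp\phi(\theta+\varepsilon/2)$, so on the event $\{|\phi_n(\theta+\varepsilon/2)-\phi(\theta+\varepsilon/2)|<\delta_2\}$, again of probability tending to $1$, we have $\phi_n(\theta+\varepsilon/2)<t$, hence $\theta+\varepsilon/2$ lies in the sublevel set and $\phi_n^-(t)\le\theta+\varepsilon/2<\theta+\varepsilon$. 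Intersecting the two events yields $|\phi_n^-(t)-\theta|\le\varepsilon$ with probability tending to $1$; since $\varepsilon>0$ is arbitrary, the conclusion follows.

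I do not expect a genuine obstacle here, as the argument is elementary. The one point requiring care is that $\phi_n$ is not assumed monotone, so one cannot invert it directly; the strict monotonicity of the \emph{limit} supplies the gaps $\delta_1,\delta_2$ that make the conversion from sup-closeness to sublevel-set inclusion go through. It is also worth noting that the lower bound uses only (UC) on a left half-line, whereas the upper bound needs convergence at the single point $\theta+\varepsilon/2$ lying to the right of $\theta$, which is still available precisely because the parameter $C$ in (UC) ranges over all of $\R$.
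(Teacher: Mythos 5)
Your proposal is correct and follows essentially the same route as the paper: exploit continuity and strict decrease of the limit to produce a positive gap around the level $t$, transfer that gap to $\phi_n$ via (UC) on a left half-line, and thereby trap $\phi_n^{-}(t)$ in $(\theta-\varepsilon,\theta+\varepsilon)$. The only cosmetic difference is that you use two separate tolerances $\delta_1,\delta_2$ for the two one-sided bounds and invoke (UC) with two different cutoffs $C$, whereas the paper picks a single $\eta$ and a single $C>\theta+\delta$; the underlying logic is identical.
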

 
 \begin{proof}
 	Because $\phi$ is strictly decreasing and continuous, the value
 	$\theta=\phi^{-}(t)$ is unique and
 	\begin{equation}\label{eq:signs}
 		\phi(x)>t\quad\text{for }x<\theta,
 		\qquad
 		\phi(x)<t\quad\text{for }x>\theta .
 	\end{equation}
 	
 	\paragraph{Step 1 – pick a buffer around $\theta$.}
 	For a fixed $\delta>0$ there exist $\eta>0$ such that
 	\begin{equation}\label{eq:buffer}
 		\phi(\theta-\delta)\ge t+\eta,
 		\qquad
 		\phi(\theta+\delta)\le t-\eta.	
 	\end{equation}
 	Let $K:=[\theta-\delta,\theta+\delta]$ and note that $K\subset(-\infty,C)$ for any $C>\theta+\delta$.
 	
 	\paragraph{Step 2 – transfer the buffer to $\phi_n$.}
 	Fix such a $C>\theta+\delta$ and apply (UC) with this $C$ and tolerance $\eta/2$:
 	\begin{equation}\label{eq:unif_n-full}
 		\sup_{x< C}\bigl|\phi_n(x)-\phi(x)\bigr|
 		<\tfrac{\eta}{2}
 		\quad\text{with probability }1-o(1).
 	\end{equation}
 	Since $K\subset(-\infty,C)$, we also have
 	\begin{equation}\label{eq:unif_n}
 		\sup_{x\in K}\bigl|\phi_n(x)-\phi(x)\bigr|
 		<\tfrac{\eta}{2}
 		\quad\text{with probability }1-o(1).
 	\end{equation}
 	On the event \eqref{eq:unif_n},
 	\eqref{eq:buffer} implies
 	\begin{equation}\label{eq:sign_n}
 		\phi_n(\theta-\delta)>t+\tfrac{\eta}{2}>t,
 		\qquad
 		\phi_n(\theta+\delta)<t-\tfrac{\eta}{2}<t .
 	\end{equation}
 	\paragraph{Step 3 – locate the inverse of $\phi_n$.}
 	On the event \eqref{eq:unif_n-full}, for any $x\le \theta-\delta$ we have
 	$\phi(x)\ge \phi(\theta-\delta)\ge t+\eta$, hence
 	$\phi_n(x)\ge \phi(x)-\eta/2 \ge t+\eta/2>t$.
 	Thus no $x\le \theta-\delta$ belongs to $\{x:\phi_n(x)\le t\}$.
 	Also $\phi_n(\theta+\delta)\le \phi(\theta+\delta)+\eta/2 \le t-\eta/2<t$,
 	so $x=\theta+\delta$ belongs to the set.
 	Therefore 
 	$$\theta-\delta<\phi_n^{-}(t)\le \theta+\delta$$ on \eqref{eq:unif_n-full}.
 	
 	Thus
 	\(
 	|\phi_n^{-}(t)-\theta|<\delta
 	\)
 	with probability $1-o(1)$.
 	Because $\delta>0$ is arbitrary, this is precisely
 	$\phi_n^{-}(t)\convp\theta$.
 \end{proof}

 \begin{lemma}[Monotonicity and divergence of a Gaussian survival ratio]
 	\label{lemma:mono_survival_gaussian}
 	Fix $\mu_1>\mu_0$ and define, for $t\in\R$,
 	\[
 	R(t)\;:=\;\frac{\bar\Phi(t-\mu_1)}{\bar\Phi(t-\mu_0)}\,,
 	\qquad
 	\bar\Phi(x)=1-\Phi(x),\ \ \phi=\Phi'.
 	\]
 	Then $R$ is \emph{strictly increasing} on $\R$, and
 	\[
 	\lim_{t\to\infty} R(t)\;=\;\infty.
 	\]
 \end{lemma}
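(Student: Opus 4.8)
The plan is to treat the monotonicity via the standard normal hazard rate and the divergence via Mills-ratio tail asymptotics; both reduce to classical facts once set up.

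First I would pass to logarithms: since $\bar\Phi>0$ everywhere, $R$ is strictly increasing iff $\log R$ is, and
\[
(\log R)'(t)
\;=\;\frac{-\phi(t-\mu_1)}{\bar\Phi(t-\mu_1)}+\frac{\phi(t-\mu_0)}{\bar\Phi(t-\mu_0)}
\;=\;\lambda(t-\mu_0)-\lambda(t-\mu_1),
\]
where $\lambda(x):=\phi(x)/\bar\Phi(x)$ is the hazard rate of $N(0,1)$. The crux is that $\lambda$ is strictly increasing on $\R$. I would prove this directly: from $\phi'(x)=-x\phi(x)$ one gets $\lambda'(x)=\lambda(x)\bigl(\lambda(x)-x\bigr)$, and $\lambda(x)>x$ holds for every $x$ — trivially for $x\le 0$ since $\lambda>0$, and for $x>0$ from the Mills upper bound $\bar\Phi(x)<\phi(x)/x$, itself obtained from $\bar\Phi(x)=\int_x^\infty\phi(u)\,du<\int_x^\infty (u/x)\phi(u)\,du=\phi(x)/x$. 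Hence $\lambda'>0$. Since $\mu_1>\mu_0$ gives $t-\mu_1<t-\mu_0$ for every $t$, we conclude $(\log R)'(t)=\lambda(t-\mu_0)-\lambda(t-\mu_1)>0$, so $R$ is strictly increasing on $\R$.

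For the limit, strict monotonicity already guarantees that $\lim_{t\to\infty}R(t)$ exists in $(0,\infty]$, and I would identify it using the Mills asymptotic $\bar\Phi(x)\sim\phi(x)/x$ as $x\to\infty$. This yields
\[
R(t)\;=\;\frac{\bar\Phi(t-\mu_1)}{\bar\Phi(t-\mu_0)}
\;\sim\;\frac{t-\mu_0}{t-\mu_1}\cdot\frac{\phi(t-\mu_1)}{\phi(t-\mu_0)}
\;=\;\frac{t-\mu_0}{t-\mu_1}\,\exp\!\Big(\tfrac12\bigl[(t-\mu_0)^2-(t-\mu_1)^2\bigr]\Big),
\]
and since $(t-\mu_0)^2-(t-\mu_1)^2=(\mu_1-\mu_0)(2t-\mu_0-\mu_1)\to+\infty$ while $(t-\mu_0)/(t-\mu_1)\to 1$, we obtain $R(t)\to\infty$. (If one prefers to avoid the asymptotic equivalence, the same bound follows from the two-sided Mills inequalities $\tfrac{x}{x^2+1}\phi(x)<\bar\Phi(x)<\tfrac{\phi(x)}{x}$, valid for $x>0$, applied to numerator and denominator respectively for $t$ large.)

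The only genuine obstacle is the rigorous justification that the Gaussian hazard rate is increasing — equivalently, that $\bar\Phi$ is log-concave. This is standard, but since the argument is one line I would include it as above to keep the lemma self-contained rather than cite it.
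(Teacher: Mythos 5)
Your proof is correct and takes essentially the same route as the paper: both reduce strict monotonicity to the monotonicity of the Mills ratio $m(x)=\bar\Phi(x)/\phi(x)$ (you work with its reciprocal, the hazard rate $\lambda=1/m$, and with $\log R$ rather than $R$, but the key differential identity and the inequality $x\,m(x)<1$ are the same), and both establish divergence via the Mills asymptotic $\bar\Phi(x)\sim\phi(x)/x$. No substantive differences.
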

 
 \begin{proof}
 	We split the argument into steps.
 	
 	\paragraph{Step 1 — Differentiate $R$.}
 	Let
 	\[
 	A(t):=\bar\Phi(t-\mu_1),\qquad
 	B(t):=\bar\Phi(t-\mu_0),
 	\]
 	so that $R=A/B$. Since $A'(t)=-\phi(t-\mu_1)$ and $B'(t)=-\phi(t-\mu_0)$, the quotient rule gives
 	\begin{equation}\label{eq:R-derivative}
 		R'(t)=\frac{A'B-AB'}{B^2}
 		=\frac{-\phi(t-\mu_1)\,\bar\Phi(t-\mu_0)\;+\;\bar\Phi(t-\mu_1)\,\phi(t-\mu_0)}{B(t)^2}
 		=\frac{N(t)}{B(t)^2}.
 	\end{equation}
 	Because $B(t)>0$, the sign of $R'(t)$ is the sign of the numerator
 	\[
 	N(t):=\bar\Phi(t-\mu_1)\,\phi(t-\mu_0)-\phi(t-\mu_1)\,\bar\Phi(t-\mu_0).
 	\]
 	
 	\paragraph{Step 2 — Factor via the (upper) Mills ratio.}
 	Let $x_1:=t-\mu_1$ and $x_0:=t-\mu_0$.
 	Define the (upper) Mills ratio
 	\[
 	m(x):=\frac{\bar\Phi(x)}{\phi(x)}.
 	\]
 	Then $\bar\Phi(x_k)=m(x_k)\phi(x_k)$, $k\in\{0,1\}$, and hence
 	\begin{equation}\label{eq:N-factor}
 		N(t)=\phi(x_0)\phi(x_1)\,\bigl(m(x_1)-m(x_0)\bigr).
 	\end{equation}
 	Since $\phi(x_0)\phi(x_1)>0$, the sign of $N(t)$ is the sign of $m(x_1)-m(x_0)$.
 	
 	\paragraph{Step 3 — $m$ is strictly decreasing on $\R$.}
 	Differentiate $m$:
 	\[
 	m'(x)=\frac{-\phi(x)\,\phi(x)-\bar\Phi(x)\,\phi'(x)}{\phi(x)^2}
 	=\frac{-\phi(x)^2 + x\,\bar\Phi(x)\,\phi(x)}{\phi(x)^2}
 	= -1 + x\,m(x).
 	\]
 	We claim $x\,m(x)<1$ for all $x\in\R$, which implies $m'(x)<0$ everywhere.
 	For $x\le 0$, $x\,m(x)\le 0<1$. For $x>0$, the classical Mills inequality
 	$\bar\Phi(x)<\phi(x)/x$ yields $m(x)<1/x$, hence $x\,m(x)<1$. Therefore $m'(x)<0$ for all $x$, i.e. $m$ is strictly decreasing.
 	
 	\paragraph{Step 4 — Compare arguments and conclude the sign.}
 	Because $\mu_1>\mu_0$, we have $x_1=t-\mu_1 < x_0=t-\mu_0$ for every $t$.
 	Since $m$ is strictly decreasing, $m(x_1)>m(x_0)$, so $m(x_1)-m(x_0)>0$.
 	By \eqref{eq:N-factor} this implies $N(t)>0$, and then by \eqref{eq:R-derivative} we have $R'(t)>0$ for all $t$. Thus $R$ is strictly increasing.
 	
 	\paragraph{Step 5 — Divergence as $t\to\infty$.}
 	Using the Mills asymptotic $\bar\Phi(x)\sim \phi(x)/x$ as $x\to\infty$,
 	with $x_0=t-\mu_0$ and $x_1=t-\mu_1$ (both $\to\infty$ and $x_1<x_0$), we obtain
 	\[
 	R(t)
 	=\frac{\bar\Phi(x_1)}{\bar\Phi(x_0)}
 	\;\sim\;
 	\frac{\phi(x_1)/x_1}{\phi(x_0)/x_0}
 	=\frac{x_0}{x_1}\exp\!\Big(\tfrac12(x_0^2-x_1^2)\Big)
 	=\frac{x_0}{x_1}\exp\!\Big((\mu_1-\mu_0)\,t-\tfrac12(\mu_1^2-\mu_0^2)\Big)
 	\;\xrightarrow[t\to\infty]{}\;\infty,
 	\]
 	since $\mu_1-\mu_0>0$.
 \end{proof}
 
 \begin{lemma}[Translation invariance of the BH TPR]
 	\label{lem:bh-translation-invariance}
 	Let $G_0,G_1$ be distribution functions on $\R$ with survival functions
 	$\bar G_0,\bar G_1$, and let $\gamma\in[0,1]$ and $q\in(0,1)$.
 	For any $a\in\R$, define the shifted laws
 	\[
 	G_0^{(a)}(t):=G_0(t-a),\qquad G_1^{(a)}(t):=G_1(t-a).
 	\]
 	Then
 	\[
 	\TPR_{\mathrm{BH}}\big(G_0^{(a)},G_1^{(a)},\gamma\big)
 	\;=\;
 	\TPR_{\mathrm{BH}}(G_0,G_1,\gamma).
 	\]
 \end{lemma}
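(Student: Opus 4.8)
The plan is a direct change of variables in the definition of $\TPR_{\mathrm{BH}}$ (Definition~\ref{def:bhtpr}). First I would record that shifting commutes with taking survival functions: for any $a\in\R$ and $k\in\{0,1\}$, $\bar G_k^{(a)}(t)=1-G_k^{(a)}(t)=1-G_k(t-a)=\bar G_k(t-a)$. Hence the BH ratio for the shifted pair,
\[
\phi^{(a)}(t):=\frac{\bar G_0^{(a)}(t)}{(1-\gamma)\bar G_0^{(a)}(t)+\gamma\,\bar G_1^{(a)}(t)}
=\frac{\bar G_0(t-a)}{(1-\gamma)\bar G_0(t-a)+\gamma\,\bar G_1(t-a)}
=\phi(t-a),
\]
where $\phi$ is the corresponding ratio for $(G_0,G_1)$. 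So $\phi^{(a)}$ is just $\phi$ translated by $a$.

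Next I would translate the level set. Since $\phi^{(a)}(t)\le q \iff \phi(t-a)\le q$, the substitution $s=t-a$ gives $\{t:\phi^{(a)}(t)\le q\}=\{s+a:\phi(s)\le q\}=a+\{s:\phi(s)\le q\}$. Because $\inf(a+A)=a+\inf A$ for any nonempty $A\subseteq\R$, the shifted threshold is
\[
t_*^{(a)}:=\inf\{t:\phi^{(a)}(t)\le q\}=a+\inf\{s:\phi(s)\le q\}=a+t_*,
\]
with the usual convention $\inf\varnothing=\infty$ (in which case $t_*^{(a)}=\infty=a+t_*$ as well, and $\bar G_1(\infty)=0$ on both sides, so the identity still holds trivially).

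Finally I would evaluate the power: $\TPR_{\mathrm{BH}}(G_0^{(a)},G_1^{(a)},\gamma)=\bar G_1^{(a)}(t_*^{(a)})=\bar G_1(t_*^{(a)}-a)=\bar G_1(t_*)=\TPR_{\mathrm{BH}}(G_0,G_1,\gamma)$, which is the claim. I do not anticipate a genuine obstacle here; the only point requiring a word of care is checking that the defining infimum of the shifted problem is \emph{exactly} the translate of the original one (so that no boundary/attainment subtlety creeps in), which follows because $\phi^{(a)}$ is a pure translate of $\phi$ and the infimum of a translated subset of $\R$ is the translated infimum; the degenerate case $t_*=\infty$ is handled separately as noted above.
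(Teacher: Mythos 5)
Your proof is correct and follows exactly the same route as the paper's: shift the survival functions, observe $\phi^{(a)}(t)=\phi(t-a)$, translate the threshold to $t_*^{(a)}=a+t_*$, and evaluate $\bar G_1^{(a)}$ at the shifted threshold. The extra remark about the degenerate $t_*=\infty$ case is a harmless addition that the paper leaves implicit.
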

 
 \begin{proof}
 	Let $\phi(t):=\dfrac{\bar G_0(t)}{(1-\gamma)\bar G_0(t)+\gamma \bar G_1(t)}$ and
 	$t_*:=\inf\{t:\phi(t)\le q\}$.
 	For the shifted pair, $\bar G_k^{(a)}(t)=\bar G_k(t-a)$, so
 	\[
 	\phi^{(a)}(t)
 	:=\frac{\bar G_0^{(a)}(t)}{(1-\gamma)\bar G_0^{(a)}(t)+\gamma \bar G_1^{(a)}(t)}
 	=\frac{\bar G_0(t-a)}{(1-\gamma)\bar G_0(t-a)+\gamma \bar G_1(t-a)}
 	=\phi(t-a).
 	\]
 	Therefore $t_*^{(a)}=\inf\{t:\phi^{(a)}(t)\le q\}
 	=\inf\{t:\phi(t-a)\le q\}=a+t_*$, and
 	\[
 	\TPR_{\mathrm{BH}}(G_0^{(a)},G_1^{(a)},\gamma)
 	=\bar G_1^{(a)}(t_*^{(a)})
 	=\bar G_1(t_*^{(a)}-a)
 	=\bar G_1(t_*)
 	=\TPR_{\mathrm{BH}}(G_0,G_1,\gamma).
 	\]
 \end{proof}
 
 \section{Finite Sample BONuS Proofs}
 \subsection{Small $\tilde m$ Behavior of \textsc{BONuS} (Case 1: point prior)}
 
 \begin{lemma}\label{lemma:delta-negative}
 	Let $\mu>0$ and define
 	\[
 	A(t):=\bar\Phi(t),\qquad
 	B(t):=\bar\Phi(t-\mu),\qquad
 	\phi(t):=\frac{1}{\sqrt{2\pi}}\,e^{-t^{2}/2}.
 	\]
 	Then for all $t\in\R$,
 	\[
 	\Delta(t):=A(t)\,\phi(t-\mu)-B(t)\,\phi(t)<0.
 	\]
 \end{lemma}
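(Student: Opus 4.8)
The plan is to recognize $\Delta(t)<0$ as nothing more than the strict monotonicity of the Gaussian Mills ratio $m(x):=\bar\Phi(x)/\phi(x)$, which has already been established inside the proof of Lemma~\ref{lemma:mono_survival_gaussian}. Concretely, since $\phi(t)>0$ and $\phi(t-\mu)>0$ for every $t$, dividing the defining expression of $\Delta(t)$ by the positive quantity $\phi(t)\,\phi(t-\mu)$ shows that
\[
\Delta(t)<0 \iff \frac{\bar\Phi(t)}{\phi(t)} < \frac{\bar\Phi(t-\mu)}{\phi(t-\mu)} \iff m(t) < m(t-\mu).
\]
So it suffices to verify $m(t)<m(t-\mu)$, and because $\mu>0$ gives $t>t-\mu$, this follows immediately once we know $m$ is strictly decreasing on $\R$.

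For the monotonicity of $m$, I would either cite Step~3 of the proof of Lemma~\ref{lemma:mono_survival_gaussian} verbatim, or re-derive it in one line: differentiating $m(x)=\bar\Phi(x)/\phi(x)$ and using $\phi'(x)=-x\phi(x)$ gives $m'(x)=-1+x\,m(x)$; the classical Mills inequality $\bar\Phi(x)<\phi(x)/x$ for $x>0$ yields $x\,m(x)<1$ there, while for $x\le 0$ one has $x\,m(x)\le 0<1$ trivially, so $m'(x)<0$ for all $x\in\R$. Hence $m$ is strictly decreasing, $m(t)<m(t-\mu)$, and therefore $\Delta(t)<0$ for every $t\in\R$, as claimed.

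There is essentially no obstacle here: the only points requiring (minor) care are noting that the Gaussian density is strictly positive so the division is legitimate and the sign of $\Delta(t)$ is exactly that of $m(t)-m(t-\mu)$, and correctly orienting the inequality $t>t-\mu$ coming from $\mu>0$. Everything else is a direct appeal to the Mills-ratio monotonicity already present in the excerpt.
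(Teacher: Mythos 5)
Your proposal is correct and matches the paper's own proof essentially verbatim: both factor $\Delta(t)=\phi(t)\,\phi(t-\mu)\,\bigl(M(t)-M(t-\mu)\bigr)$ and reduce to strict monotonicity of the Mills ratio, which is then established via $M'(x)=x\,M(x)-1$ and the classical Mills inequality $\bar\Phi(x)<\phi(x)/x$ for $x>0$ (with the $x\le 0$ case being trivial). No gaps.
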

 
 \begin{proof}
 	Let $M(t):=\dfrac{\bar\Phi(t)}{\phi(t)}$ be the (upper) Mills ratio.
 	A direct differentiation using $\bar\Phi'(t)=-\phi(t)$ and
 	$\phi'(t)=-t\phi(t)$ gives
 	\[
 	M'(t)
 	=\frac{-\phi(t)\phi(t)-\bar\Phi(t)\,(-t\phi(t))}{\phi(t)^{2}}
 	=\frac{t\bar\Phi(t)-\phi(t)}{\phi(t)}
 	=t\,M(t)-1.
 	\]
 	It is classical (Mills inequality) that for $t>0$,
 	$\bar\Phi(t)<\phi(t)/t$, i.e.\ $t\,M(t)<1$, hence $M'(t)<0$.
 	For $t\le 0$ we have $t\,M(t)\le 0<1$, so again $M'(t)<0$.
 	Therefore $M$ is strictly decreasing on $\R$ and, for any $\mu>0$,
 	\[
 	M(t)<M(t-\mu)\qquad\text{for all }t\in\R.
 	\]
 	
 	Now write
 	\[
 	\Delta(t)
 	=A(t)\,\phi(t-\mu)-B(t)\,\phi(t)
 	=\phi(t)\,\phi(t-\mu)\,\Bigl(M(t)-M(t-\mu)\Bigr).
 	\]
 	Since $\phi(t)\phi(t-\mu)>0$ and $M(t)-M(t-\mu)<0$, it follows that
 	$\Delta(t)<0$ for all $t\in\R$.
 \end{proof}
 
 \begin{lemma}[Uniqueness of the proxy root for finite $\tilde m$]
 	\label{lemma:uniqueness-of-root-of-F-new}
	Fix $C>0$. Define the deterministic approximation
	\[
	\widehat{\textnormal{FDR}}'(t;\tilde m)
	:=
	\frac{\frac{1}{1+\tilde m} + \frac{\tilde m}{1+\tilde m}\,\bar\Phi(t)}
	{(1-\gamma)\bar\Phi(t) + \gamma\,\bar\Phi\!\bigl(t - \mu'(\tilde m)\bigr)},
	\qquad
	F(t;\tilde m) := \widehat{\textnormal{FDR}}'(t;\tilde m) - q,
	\]
	where $\mu'(\tilde m) > 0$ is the finite-\(\tilde m\) mean-shift quantity
	(from the approximation in equation~\eqref{eq:approximate-mean-shift}).
 	Then there exists $\tilde m^{*}\in\mathbb{N}$ such that for all $\tilde m>\tilde m^{*}$,
 	the equation $F(t;\tilde m)=0$ has at most one solution on $(-\infty,C]$.
 \end{lemma}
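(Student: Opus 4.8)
The plan is to show that for $\tilde m$ sufficiently large the map $t\mapsto \FDP^{\bonus}_{\mathrm{proxy}}(t;\tilde m)$ is \emph{strictly decreasing} on $(-\infty,C]$; since a strictly monotone function meets the horizontal level $q$ at most once, this yields the claim. Write $a:=\tfrac{1}{1+\tilde m}$, $\mu:=\mu_{\bonus}(\tilde m)>0$ (so $0<\mu\le\mu_{\insamp}$, as $\mu_{\bonus}(\cdot)$ is decreasing with $\mu_{\bonus}(0)=\mu_{\insamp}$), and $\FDP^{\bonus}_{\mathrm{proxy}}=N/D$ with $N(t)=a+(1-a)\bar\Phi(t)$ and $D(t)=(1-\gamma)\bar\Phi(t)+\gamma\bar\Phi(t-\mu)>0$. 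A direct differentiation, using $\bar\Phi'=-\phi$ and cancelling the $(1-a)(1-\gamma)$ cross terms, shows that the sign of $\tfrac{d}{dt}(N/D)$ equals the sign of
\[
(1-a)\gamma\,\Delta(t)\;+\;a\bigl[(1-\gamma)\phi(t)+\gamma\phi(t-\mu)\bigr],
\qquad
\Delta(t):=\bar\Phi(t)\phi(t-\mu)-\bar\Phi(t-\mu)\phi(t),
\]
and $\Delta(t)<0$ for all $t$ by Lemma~\ref{lemma:delta-negative}. Hence it suffices to prove that for every $t\le C$,
\[
(1-a)\gamma\,|\Delta(t)|\;>\;a\bigl[(1-\gamma)\phi(t)+\gamma\phi(t-\mu)\bigr].
\]

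To verify this, I would divide both sides by $\phi(t)>0$. On the right, $\phi(t-\mu)/\phi(t)=e^{\mu t-\mu^2/2}\le e^{\mu_{\insamp}C}$ for $t\le C$, so the right-hand side is at most $aK$ with $K:=(1-\gamma)+\gamma e^{\mu_{\insamp}C}$, a constant independent of $\tilde m$. On the left, set $g(t):=|\Delta(t)|/\phi(t)=\phi(t-\mu)\bigl(M(t-\mu)-M(t)\bigr)$, where $M=\bar\Phi/\phi$ is the Mills ratio; note $g>0$ since $M$ is strictly decreasing (established within the proof of Lemma~\ref{lemma:delta-negative}). The key estimate is a uniform lower bound $\inf_{t\le C}g(t)\ge c_0\,\mu$ for some constant $c_0>0$ depending only on $(C,\gamma,h,c)$. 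I would obtain this by splitting at the fixed point $t=-1$: for $t\in[-1,C]$, bound $\phi(t-\mu)$ below by its minimum over the compact range $[-1-\mu_{\insamp},C]$ and write $M(t-\mu)-M(t)=\int_{t-\mu}^{t}(1-sM(s))\,ds\ge\mu\min_{[-1-\mu_{\insamp},C]}(1-sM(s))$, the minimand being strictly positive since $1-sM(s)=-M'(s)>0$ everywhere; for $t\le-1$, use the identity $g(t)=\bar\Phi(t)\bigl(1-e^{\mu t-\mu^2/2}\bigr)+\int_{t-\mu}^{t}\phi(s)\,ds$, so $g(t)\ge\bar\Phi(-1)\bigl(1-e^{-\mu}\bigr)\ge\bar\Phi(-1)\tfrac{1-e^{-\mu_{\insamp}}}{\mu_{\insamp}}\,\mu$ by concavity of $x\mapsto1-e^{-x}$ on $[0,\mu_{\insamp}]$.

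Combining the two bounds, the desired inequality holds as soon as $(1-a)\gamma c_0\,\mu>aK$, i.e. $\mu_{\bonus}(\tilde m)>\tfrac{K}{\gamma c_0}\cdot\tfrac{1}{\tilde m}$ (using $a/(1-a)=1/\tilde m$). Since
\[
\mu_{\bonus}(\tilde m)=\frac{h^2\gamma}{\sqrt{h^2\gamma^2+c(1+\tilde m/m)}}
\ \ge\ \frac{h^2\gamma}{\sqrt{h^2\gamma^2+c}}\,\sqrt{\frac{m}{m+\tilde m}}
\ \ge\ \frac{h^2\gamma}{\sqrt{h^2\gamma^2+c}}\cdot\frac{1}{\sqrt{1+\tilde m}}
\]
for every $m\ge1$, the right-hand side $\asymp\tilde m^{-1}$ is eventually dominated, so there exists $\tilde m^{*}$ (depending only on $C,q,\gamma,h,c$, and not on $m$) such that the inequality, hence the strict monotonicity and the at-most-one-root conclusion, holds for all $\tilde m>\tilde m^{*}$.

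The main obstacle I anticipate is precisely the uniform lower bound $\inf_{t\le C}g(t)\gtrsim\mu_{\bonus}(\tilde m)$: the domain $(-\infty,C]$ is non-compact (so one cannot simply invoke continuity plus positivity of $g$), $g$ tends to $1$ as $t\to-\infty$ but degrades near the right endpoint as $\mu\to0$, and $\mu$ itself moves with $\tilde m$. Securing the \emph{linear} rate in $\mu$ (rather than a spurious $\mu^{2}$ near $t=0$) is what makes the final comparison $a=\tfrac{1}{1+\tilde m}\ll\mu_{\bonus}(\tilde m)$ go through, and it relies on using the integral representation of $M(t-\mu)-M(t)$ on $[-1,C]$ rather than a cruder Taylor bound.
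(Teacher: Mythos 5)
Your approach is essentially the same as the paper's: both compute $F'(t;\tilde m)$, factor the numerator into a $\tilde m$-proportional negative piece (involving $\Delta(t):=\bar\Phi(t)\phi(t-\mu)-\bar\Phi(t-\mu)\phi(t)<0$ via the Mills ratio) plus an $O(1)$ positive piece, and then argue that the negative piece dominates on $(-\infty,C]$ for $\tilde m$ large. The paper packages the ratio as $H(t):=-\gamma\Delta(t)/R_\gamma(t)$, shows $H(t)\to\gamma/(1-\gamma)$ as $t\to-\infty$, sets $H_*:=\inf_{t\le C}H(t)>0$, and picks $\tilde m^*>1/H_*$.

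The meaningful difference is that you track the dependence of $\mu=\mu_\bonus(\tilde m)$ on $\tilde m$, and this fills a genuine gap in the paper's argument. Because $\Delta$ and hence $H$ depend on $\mu$, the paper's $H_*$ is actually $H_*(\tilde m)$; when $m$ is held fixed and $\tilde m$ grows, $\mu_\bonus(\tilde m)\to 0$ and $H_*(\tilde m)\to 0$, so the prescription ``choose $\tilde m^*>1/H_*$'' is circular. Your quantitative lower bound $\inf_{t\le C}|\Delta(t)|/\phi(t)\ge c_0\mu$, obtained by splitting at $t=-1$ and using the exact integral representation of the Mills-ratio increment to secure a \emph{linear} rate in $\mu$, combined with $\mu_\bonus(\tilde m)\gtrsim(1+\tilde m)^{-1/2}$ uniformly in $m$, gives $\tilde m H_*(\tilde m)\gtrsim\sqrt{\tilde m}\to\infty$ and thereby closes the argument, with the bonus that your $\tilde m^*$ depends only on $(C,q,\gamma,h,c)$ and not on $m$. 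The verification steps you outline (the identity $g(t)=\bar\Phi(t)(1-e^{\mu t-\mu^2/2})+\int_{t-\mu}^{t}\phi$, the concavity bound, the compact-interval estimate via $-M'(s)=1-sM(s)>0$) are all correct. In short: same structure, but your version is the one that is actually airtight as $\mu_\bonus(\tilde m)$ varies with $\tilde m$; the paper's proof as written is only valid in regimes where $\mu_\bonus(\tilde m)$ stays bounded away from zero (e.g.\ $\tilde m/m\to 0$), which is the application regime but is not stated in the lemma.
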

 
 \begin{proof}
 	Let
 	\[
 	A(t):=\bar\Phi(t),\qquad
 	B(t):=\bar\Phi(t-\mu),\qquad
 	\phi(t):=\frac{1}{\sqrt{2\pi}}e^{-t^2/2},
 	\]
	where $\mu:=\mu'(\tilde m)>0$.
 	Define
 	\[
 	N_{\tilde m}(t)
 	:=\frac{1+\tilde m\,A(t)}{1+\tilde m},
 	\qquad
 	D(t):=(1-\gamma)A(t)+\gamma B(t),
 	\]
 	so that $F(t;\tilde m)=N_{\tilde m}(t)/D(t)-q$.
 	Since $A(t),B(t)\in(0,1)$, we have $D(t)>0$ for all $t$.
 	
 	\paragraph{Step 1: Derivative of $F(t;\tilde m)$.}
 	Using $A'(t)=-\phi(t)$ and $B'(t)=-\phi(t-\mu)$,
 	\[
 	N'_{\tilde m}(t)
 	=-\frac{\tilde m}{1+\tilde m}\phi(t),
 	\qquad
 	D'(t)=-(1-\gamma)\phi(t)-\gamma\phi(t-\mu).
 	\]
 	Hence
 	\[
 	F'(t;\tilde m)
 	=\frac{N'_{\tilde m}(t)D(t)-N_{\tilde m}(t)D'(t)}{D(t)^2}.
 	\]
 	
 	A direct expansion shows
 	\[
 	N'_{\tilde m}(t)D(t) - N_{\tilde m}(t)D'(t)
 	=\frac{1}{1+\tilde m}\Bigl(\gamma\tilde m\,\Delta(t) + R_{\gamma}(t)\Bigr),
 	\]
 	where
 	\[
 	\Delta(t):=A(t)\phi(t-\mu)-B(t)\phi(t),
 	\qquad
 	R_{\gamma}(t):=\gamma\phi(t-\mu)+(1-\gamma)\phi(t).
 	\]
 	Thus
 	\[
 	\boxed{
 		F'(t;\tilde m)
 		=\frac{\gamma\tilde m\,\Delta(t)+R_{\gamma}(t)}{(1+\tilde m)\,D(t)^2}.
 	}
 	\]
 	
 	\paragraph{Step 2: Factorization.}
 	Since $\Delta(t)<0$ and $R_{\gamma}(t)>0$ (Lemma~\ref{lemma:delta-negative}),
 	define
 	\[
 	H(t):=-\frac{\gamma\,\Delta(t)}{R_{\gamma}(t)} > 0.
 	\]
 	Then
 	\[
 	\gamma\tilde m\,\Delta(t)+R_{\gamma}(t)
 	=R_{\gamma}(t)\bigl(1-\tilde m\,H(t)\bigr),
 	\]
 	and therefore
 	\[
 	\boxed{
 		F'(t;\tilde m)
 		=\frac{R_{\gamma}(t)}{(1+\tilde m)\,D(t)^2}\,\bigl(1-\tilde m\,H(t)\bigr).
 	}
 	\]
 	
 	\paragraph{Step 3: $H(t)$ is bounded away from $0$ on $(-\infty,C]$.}
 	As $t\to-\infty$, $A(t),B(t)\to1$ and $\phi(t-\mu)/\phi(t)\to0$ (since $\mu>0$).
 	Thus
 	\[
 	\Delta(t)\sim-\phi(t),
 	\qquad
 	R_{\gamma}(t)\sim(1-\gamma)\phi(t),
 	\]
 	and hence
 	\[
 	\lim_{t\to-\infty}H(t)=\frac{\gamma}{1-\gamma}>0.
 	\]
 	By continuity, 
 	\[
 	H_*:=\inf_{t\in(-\infty,C]}H(t)>0.
 	\]
 	
 	\paragraph{Step 4: Monotonicity for $\tilde m>\tilde m^{*}$.}
 	Choose $\tilde m^{*}$ such that
 	\[
 	\tilde m^{*}>\frac{1}{H_*}.
 	\]
 	Then for any $\tilde m>\tilde m^{*}$ and all $t\in(-\infty,C]$,
 	\[
 	1-\tilde m H(t)\le 1-\tilde m H_*<0.
 	\]
 	Because $R_{\gamma}(t)>0$ and $D(t)>0$, we conclude
 	\[
 	F'(t;\tilde m)<0
 	\qquad\text{for all }t\in(-\infty,C],\ \tilde m>\tilde m^{*}.
 	\]
 	
 	Thus $F(\cdot;\tilde m)$ is strictly decreasing on $(-\infty,C]$, and hence
 	it can have at most one root on this interval.
 \end{proof}

	\begin{lemma}[Differentiability of the proxy threshold and partial derivatives]
	\label{lemma:differentiability-tau-new}
	Let
	\[
	\mu'(r)
	:= \frac{h^2 \gamma}{\sqrt{\,h^2 \gamma^2 + c(1+r)\,}},
	\]
	and define the deterministic approximation
	\[
	\widehat{\textnormal{FDR}}'(t;\delta,r)
	:=
	\frac{\delta+(1-\delta)\,\bar\Phi(t)}
	{(1-\gamma)\bar\Phi(t)+\gamma\,\bar\Phi\!\bigl(t-\mu'(r)\bigr)},
	\qquad
	F(t,\delta,r):=\widehat{\textnormal{FDR}}'(t;\delta,r)-q.
	\]
 	Let $\tau(\delta,r)$ denote the left–most solution of $F(t,\delta,r)=0$.
 	In a neighbourhood of $(\delta,r)=(0,0)$, the map $(\delta,r)\mapsto\tau(\delta,r)$
 	is differentiable, and its partial derivatives are
	\[
	\frac{\partial \tau}{\partial \delta}
	=\frac{\Phi\!\bigl(\tau(\delta,r)\bigr)}
	{(1-\delta)\,\phi\!\bigl(\tau(\delta,r)\bigr)
		- q\Bigl[\gamma\,\phi\!\bigl(\tau(\delta,r)-\mu'(r)\bigr)
		+(1-\gamma)\,\phi\!\bigl(\tau(\delta,r)\bigr)\Bigr]},
	\]
	\[
	\frac{\partial \tau}{\partial r}
	=-\,\frac{q\,\gamma\,\phi\!\bigl(\tau(\delta,r)-\mu'(r)\bigr)\;
		\partial_{r}\mu'(r)}
	{(1-\delta)\,\phi\!\bigl(\tau(\delta,r)\bigr)
		- q\Bigl[\gamma\,\phi\!\bigl(\tau(\delta,r)-\mu'(r)\bigr)
		+(1-\gamma)\,\phi\!\bigl(\tau(\delta,r)\bigr)\Bigr]}.
	\]
 \end{lemma}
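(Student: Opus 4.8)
The strategy is a direct application of the implicit function theorem to the $C^1$ (in fact $C^\infty$) function $F(t,\delta,r)=\FDP^{\bonus}_{\mathrm{proxy}}(t;\delta,r)-q$ at the base point $(t_*,0,0)$, where $t_*:=\tau(0,0)$, followed by reading off the partials from the IFT gradient formula. First I would record that $F$ is smooth near $(t_*,0,0)$: the numerator $N(t,\delta):=\delta+(1-\delta)\bar\Phi(t)$ is smooth, the map $r\mapsto \mu_{\bonus}(\delta,r)=h^2\gamma\,(h^2\gamma^2+c(1+r))^{-1/2}$ is smooth for $r$ near $0$ in the nondegenerate case $h^2\gamma^2+c>0$, and the denominator $D(t,\delta,r):=(1-\gamma)\bar\Phi(t)+\gamma\bar\Phi(t-\mu_{\bonus}(\delta,r))$ is strictly positive everywhere because $\bar\Phi>0$; moreover $F$ extends smoothly to a full (two-sided) neighbourhood of $(0,0)$ in the $(\delta,r)$-plane. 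I would also flag at this stage that $\mu_{\bonus}$ does not depend on $\delta$, which is exactly why the eventual formula for $\partial_\delta\tau$ carries no $\mu$-derivative term.

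Next I would verify the two inputs to the IFT. \emph{(i) The base point exists and is nondegenerate.} At $(\delta,r)=(0,0)$ one has $\mu_{\bonus}(0,0)=\mu_{\insamp}$, so $F(t,0,0)=\bar\Phi(t)\big/\bigl((1-\gamma)\bar\Phi(t)+\gamma\bar\Phi(t-\mu_{\insamp})\bigr)-q$; by Lemma~\ref{lemma:mono_survival_gaussian} the Gaussian survival ratio $\bar\Phi(t-\mu_{\insamp})/\bar\Phi(t)$ is strictly increasing and diverges, so $F(\cdot,0,0)$ is strictly decreasing with $F(\cdot,0,0)\to 1-q>0$ as $t\to-\infty$ and $F(\cdot,0,0)\to -q<0$ as $t\to+\infty$. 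Hence (since $q\in(0,1)$) there is a unique root $t_*$, which lies in the interior of $(-\infty,C]$ after enlarging $C$ if necessary, and $\partial_t F(t_*,0,0)<0$; by continuity $\partial_t F<0$ on a neighbourhood of $(t_*,0,0)$. The IFT then yields a $C^1$ branch $g(\delta,r)$ near $(0,0)$ with $g(0,0)=t_*$ and $F(g(\delta,r),\delta,r)\equiv 0$. \emph{(ii) The branch is the leftmost root.} Here I would invoke the monotonicity estimate behind Lemma~\ref{lemma:uniqueness-of-root-of-F-new} — the proof of which only uses $\lim_{t\to-\infty}H(t)=\gamma/(1-\gamma)>0$, uniformly in $\mu>0$ — so that, with $\tilde m$ replaced by the continuous parameter $1/\delta$ and uniformly as $\mu_{\bonus}$ ranges over a compact neighbourhood of $\mu_{\insamp}$, $F(\cdot,\delta,r)$ is strictly decreasing on $(-\infty,C]$ for all $(\delta,r)$ near $(0,0)$. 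Together with the sign change near $t_*$ (from continuity of $F$), this shows $F(\cdot,\delta,r)$ has a unique root on $(-\infty,C]$, equal to $g(\delta,r)$; thus $\tau=g$ is $C^1$ near $(0,0)$.

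Finally I would differentiate the identity $F(\tau(\delta,r),\delta,r)=0$, giving $\partial_\delta\tau=-\partial_\delta F/\partial_t F$ and $\partial_r\tau=-\partial_r F/\partial_t F$ evaluated at $(t,\delta,r)=(\tau(\delta,r),\delta,r)$. Using $\partial_\delta N=\Phi(t)$, $\partial_\delta D=0$, $\partial_r N=0$, $\partial_r D=\gamma\phi(t-\mu_{\bonus})\,\partial_r\mu_{\bonus}$, $\partial_t N=-(1-\delta)\phi(t)$, and $\partial_t D=-\bigl[(1-\gamma)\phi(t)+\gamma\phi(t-\mu_{\bonus})\bigr]$, and substituting $N=qD$ at the root, one gets $\partial_\delta F=\Phi(t)/D$, $\partial_r F=-q\gamma\phi(t-\mu_{\bonus})\,\partial_r\mu_{\bonus}/D$, and $\partial_t F=\bigl[-(1-\delta)\phi(t)+q(\gamma\phi(t-\mu_{\bonus})+(1-\gamma)\phi(t))\bigr]/D$; forming the two ratios (the common factor $1/D$ cancels) yields precisely the displayed expressions for $\partial_\delta\tau$ and $\partial_r\tau$. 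The only genuinely delicate point is step (ii) — matching the IFT branch to the \emph{leftmost} root, which leans on the uniform-in-$\mu$ version of Lemma~\ref{lemma:uniqueness-of-root-of-F-new}; the rest is routine calculus.
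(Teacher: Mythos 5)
Your proposal is correct and follows essentially the same route as the paper: an implicit-function-theorem argument at $(t_0,0,0)$ (with $\partial_t F(t_0,0,0)<0$ established via the monotonicity of the Gaussian survival ratio), followed by identification of the IFT branch with the leftmost root through the uniqueness argument of Lemma~\ref{lemma:uniqueness-of-root-of-F-new}, and a direct calculation of $\partial_\delta F$, $\partial_r F$, $\partial_t F$ simplified at the root using $N_\delta(\tau)=qD(\tau)$. Your explicit remark that the uniqueness estimate must be extended from the integer parameter $\tilde m$ to the continuous $(\delta,r)$ and made uniform over $\mu_{\bonus}$ near $\mu_{\insamp}$ tidies a point the paper's Step~2 cites somewhat loosely, but it is a refinement of the same argument rather than a different one.
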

 
 \begin{proof}
 	The proof proceeds in two main parts. First, we use the Implicit Function Theorem to guarantee the existence of a smooth function $t^*(\delta, r)$ which is a root of $F$. Second, we show that this smooth root $t^*$ coincides with the left–most root $\tau$ in a neighborhood of $(0,0)$, which implies $\tau$ is differentiable and allows for the calculation of its derivatives.
 	
 	\paragraph{Step 1: Existence of a Smooth Root via the Implicit Function Theorem.}
 	We first check the conditions for the Implicit Function Theorem at the base point $(\delta, r)=(0,0)$. 
 	At $\delta=0, r=0$, the function is
	\[
	F(t,0,0)
	=\frac{\bar\Phi(t)}
	{(1-\gamma)\bar\Phi(t)+\gamma\,\bar\Phi(t-\mu_0)}-q,
	\]
	where $\mu_0=\mu'(0)$.
 	As established previously using Lemma~\ref{lemma:mono_survival_gaussian}, this function is \textbf{strictly monotonic (decreasing)} in $t$. 
 	Therefore, the equation $F(t,0,0)=0$ has a unique root, which we denote $t_0$. 
 	At this root, the derivative is necessarily negative (Lemma~\ref{lemma:partial-derivative-of-F-wrt-t-is-negative}):
 	\[
 	\frac{\partial F}{\partial t}\Big|_{(t_0,0,0)} < 0.
 	\]
 	The function $F(t,\delta,r)$ is composed of additions, multiplications, and the smooth function $\Phi(t)$, so all partial derivatives exist and are continuous wherever the denominator is positive (which it is, since it consists of positive tail probabilities). 
 	Thus, by the \textbf{Implicit Function Theorem}, there exists a unique, continuously differentiable function $t^*(\delta,r)$ in a neighborhood of $(\delta,r)=(0,0)$ such that $F(t^*(\delta,r),\delta,r)=0$.
 	
 	\paragraph{Step 2: Showing $t^*(\delta,r)$ Coincides with $\tau(\delta,r)$.}
 	By definition, $\tau(\delta,r)$ is the left–most root of $F=0$ . 
 	By Lemma~\ref{lemma:uniqueness-of-root-of-F-new}, there exists $\delta^*>0$ and $\epsilon_r$ such that for all $\delta\le\delta^*$ and $r \leq \epsilon_r$, $F$ has a unique root in $(0,C)$, and hence $\tau(\delta,r)$ necessarily coincides with $t^*(\delta,r)$ since both satisfy $F=0$.
 	
 	\paragraph{Step 3: Derivatives.}
 	Since $\tau(\delta,r)$ coincides with the smooth function $t^*(\delta,r)$ in a neighborhood of $(0,0)$, $\tau$ itself is differentiable there.
 	
 	We can now compute its derivatives using the formula from the Implicit Function Theorem.
 	Write
	\[
	A(t):=\bar\Phi(t),\quad
	B(t):=\bar\Phi\!\bigl(t-\mu'(r)\bigr),\quad
	D(t,\delta,r):=(1-\gamma)A(t)+\gamma B(t),\quad
	N_{\delta}(t):=\delta+(1-\delta)A(t),
	\]
	so that $F=N_{\delta}/D-q$.
	Differentiate, and use $A'(t)=-\phi(t)$,
	$B'(t)=-\phi(t-\mu'(r))$,
	and $\partial_{r}B(t)=\phi(t-\mu'(r))\,\partial_{r}\mu'(r)$:
	\[
	\frac{\partial F}{\partial \delta}=\frac{1-A(t)}{D(t,\delta,r)},\qquad
	\frac{\partial F}{\partial r}
	=-\,\frac{N_{\delta}(t)}{D(t,\delta,r)^{2}}\,
	\gamma\,\phi\!\bigl(t-\mu'(r)\bigr)\,\partial_{r}\mu'(r),
	\]
	\[
	\frac{\partial F}{\partial t}
	=\frac{-(1-\delta)\phi(t)\,D(t,\delta,r)
		+N_{\delta}(t)\Bigl[\gamma\,\phi\!\bigl(t-\mu'(r)\bigr)
		+(1-\gamma)\phi(t)\Bigr]}{D(t,\delta,r)^{2}}.
	\]
 	Evaluating at the root $t=\tau(\delta,r)$ (where $N_{\delta}(\tau)=q\,D(\tau,\delta,r)$)
 	gives
	\[
	\frac{\partial F}{\partial t}\Big|_{t=\tau}
	=\frac{-(1-\delta)\phi(\tau)
		+q\Bigl[\gamma\,\phi\!\bigl(\tau-\mu'(r)\bigr)
		+(1-\gamma)\phi(\tau)\Bigr]}{D(\tau,\delta,r)}.
	\]
 	The implicit function theorem then yields
 	\[
	\frac{\partial \tau}{\partial \delta}
	=-\,\frac{\partial_{\delta}F}{\partial_{t}F}
	=\frac{\Phi(\tau)}
	{(1-\delta)\phi(\tau)
		- q\Bigl[\gamma\,\phi\!\bigl(\tau-\mu'(r)\bigr)
		+(1-\gamma)\phi(\tau)\Bigr]},
	\]
	\[
	\frac{\partial \tau}{\partial r}
	=-\,\frac{\partial_{r}F}{\partial_{t}F}
	=-\,\frac{q\,\gamma\,\phi\!\bigl(\tau-\mu'(r)\bigr)\,\partial_{r}\mu'(r)}
	{(1-\delta)\phi(\tau)
		- q\Bigl[\gamma\,\phi\!\bigl(\tau-\mu'(r)\bigr)
		+(1-\gamma)\phi(\tau)\Bigr]},
	\]
	as claimed.
 \end{proof}
 
\begin{lemma}[Negativity of the $t$–derivative at the baseline threshold]
	\label{lemma:partial-derivative-of-F-wrt-t-is-negative}
	Fix constants $q\in(0,1)$, $\gamma\in(0,1)$, $h>0$, and $c\ge 0$.
	Let $F(t,\delta,r)$ be defined as in Lemma~\ref{lemma:differentiability-tau-new}, and let 
	\(
	t_0 := \tau(0,0)
	\)
	denote the baseline threshold solving $F(t_0,0,0)=0$.
	Then
	\[
	\partial_t F(t_0,0,0) < 0.
	\]
\end{lemma}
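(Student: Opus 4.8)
The plan is to compute $\partial_t F(t_0,0,0)$ in closed form, reduce the sign of this derivative to a single inequality about the Gaussian Mills ratio, and then close the argument by invoking the strict monotonicity of that ratio already established in the proof of Lemma~\ref{lemma:delta-negative} (equivalently Lemma~\ref{lemma:mono_survival_gaussian}).

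First I would specialize the formula for $\partial_t F$ obtained in the proof of Lemma~\ref{lemma:differentiability-tau-new}. Writing $\mu_0 := \mu_{\bonus}(0,0) = h^2\gamma/\sqrt{h^2\gamma^2+c}$, which is strictly positive since $h,\gamma>0$, and $D_0 := (1-\gamma)\bar\Phi(t_0) + \gamma\bar\Phi(t_0-\mu_0) > 0$, evaluation at $\delta=0$, $r=0$, $t=t_0=\tau(0,0)$ gives
\[
\partial_t F(t_0,0,0) \;=\; \frac{-\phi(t_0) + q\bigl[\gamma\,\phi(t_0-\mu_0) + (1-\gamma)\,\phi(t_0)\bigr]}{D_0},
\]
so the sign of $\partial_t F(t_0,0,0)$ is the sign of the numerator $\mathcal N := -\bigl(1-q(1-\gamma)\bigr)\phi(t_0) + q\gamma\,\phi(t_0-\mu_0)$.

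Next I would use the defining relation $F(t_0,0,0)=0$, i.e. $\bar\Phi(t_0) = q\bigl[(1-\gamma)\bar\Phi(t_0)+\gamma\bar\Phi(t_0-\mu_0)\bigr]$, which rearranges to $1-q(1-\gamma) = q\gamma\,\bar\Phi(t_0-\mu_0)/\bar\Phi(t_0)$. Substituting this into $\mathcal N$ and dividing through by $q\gamma>0$ (legitimate since $q,\gamma\in(0,1)$), the desired inequality $\mathcal N<0$ is seen to be equivalent to
\[
\frac{\phi(t_0-\mu_0)}{\bar\Phi(t_0-\mu_0)} \;<\; \frac{\phi(t_0)}{\bar\Phi(t_0)},
\qquad\text{i.e.}\qquad
M(t_0-\mu_0) \;>\; M(t_0),
\]
where $M(x):=\bar\Phi(x)/\phi(x)$ is the upper Mills ratio. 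Since $\mu_0>0$ we have $t_0-\mu_0<t_0$, and $M$ is strictly decreasing on $\R$ (proved in Lemma~\ref{lemma:delta-negative}, cf.\ Lemma~\ref{lemma:mono_survival_gaussian}), so this inequality holds and hence $\partial_t F(t_0,0,0)<0$.

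I do not expect a genuine obstacle here. The only points requiring care are purely bookkeeping: that $t_0$ is finite and well-defined (which follows from the strict monotonicity and the boundary values $F(-\infty,0,0)=1-q>0$, $F(+\infty,0,0)=-q<0$ noted in the proof of Lemma~\ref{lemma:differentiability-tau-new}), that $D_0>0$ (immediate, being a positive combination of survival functions), and that $\gamma,q>0$ so the final division is valid. All the substantive content is the strict monotonicity of the Mills ratio, which is already available.
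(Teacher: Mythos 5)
Your proof is correct and takes essentially the same route as the paper's: after specializing the $t$--derivative at $(t_0,0,0)$ and substituting the threshold equation $F(t_0,0,0)=0$, the sign of the numerator is reduced to a single comparison of the Gaussian Mills ratio at $t_0-\mu_0$ versus $t_0$, settled by its strict monotonicity. The paper packages the same computation through the ratios $R_A=\bar\Phi(t_0-\mu_0)/\bar\Phi(t_0)$ and $R_\phi=\phi(t_0-\mu_0)/\phi(t_0)$ and compares them, whereas you divide through by $q\gamma$ and read off the Mills inequality directly, but the two manipulations are cosmetically distinct forms of the same argument.
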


\begin{proof}
	We proceed in four steps.
	
	\paragraph{Step 1: Notation.}
	Define
	\[
	A(t):=1-\Phi(t),\quad
	B(t,r):=1-\Phi\!\bigl(t-\mu'(r)\bigr),\quad
	\mu'(r):=\frac{h^{2}\gamma}{\sqrt{\,h^{2}\gamma^{2}+c(1+r)\,}},
	\]
	and
	\[
	N(t,\delta):=\delta+(1-\delta)A(t),\qquad
	D(t,r):=\gamma B(t,r)+(1-\gamma)A(t).
	\]
	For fixed $(\delta,r)$, recall
	\[
	F(t,\delta,r)
	=\frac{N(t,\delta)}{D(t,r)}-q.
	\]
	
	\paragraph{Step 2: Derivative of $F$ with respect to $t$.}
	Using $A'(t)=-\phi(t)$ and
	\[
	D'(t,r)
	= (1-\gamma)A'(t) + \gamma B'(t,r)
	= -\bigl\{\gamma\phi\!\bigl(t-\mu'(r)\bigr)
	+(1-\gamma)\phi(t)\bigr\}
	=:-S(t,r),
	\]
	the quotient rule gives
	\[
	\partial_{t}F(t,\delta,r)
	=\frac{N'(t,\delta)D(t,r)-N(t,\delta)D'(t,r)}{D^{2}(t,r)}.
	\]
	Since $N'(t,\delta)=(1-\delta)A'(t)=-(1-\delta)\phi(t)$, we obtain
	\[
	\partial_{t}F(t,\delta,r)
	=\frac{-(1-\delta)\phi(t)D(t,r)+N(t,\delta)S(t,r)}{D^{2}(t,r)}.
	\]
	
	\paragraph{Step 3: Evaluate at $(\delta,r)=(0,0)$ and use the threshold equation.}
	Set
	\[
	S_{0}(t):=S(t,0)
	=\gamma\phi\!\bigl(t-\mu'(0)\bigr)+(1-\gamma)\phi(t),
	\qquad
	D_{0}(t):=D(t,0).
	\]
	At $\delta=0$ we have $N(t,0)=A(t)$, so
	\[
	\partial_{t}F(t,0,0)
	=\frac{-\phi(t)D_{0}(t)+A(t)S_{0}(t)}{D_{0}^{2}(t)}.
	\]
	By definition of $t_{0}=\tau(0,0)$,
	\[
	F(t_{0},0,0)
	=\frac{A(t_{0})}{D_{0}(t_{0})}-q=0
	\quad\Longrightarrow\quad
	\frac{A(t_{0})}{D_{0}(t_{0})}=q.
	\]
	Equivalently,
	\[
	\frac{1}{q}
	=\frac{D_{0}(t_{0})}{A(t_{0})}
	=(1-\gamma)+\gamma\frac{A(t_{0}-\mu'(0))}{A(t_{0})}.
	\]
	Substituting $A(t_{0})=qD_{0}(t_{0})$ into the derivative gives
	\[
	\partial_{t}F(t_{0},0,0)
	=\frac{-\phi(t_{0})D_{0}(t_{0})+qD_{0}(t_{0})S_{0}(t_{0})}{D_{0}^{2}(t_{0})}
	=\frac{-\phi(t_{0})+qS_{0}(t_{0})}{D_{0}(t_{0})}.
	\]
	Since $D_{0}(t_{0})>0$, the sign of $\partial_{t}F(t_{0},0,0)$ is determined by
	\[
	-\phi(t_{0})+qS_{0}(t_{0}).
	\]
	
	\paragraph{Step 4: Show that $-\phi(t_{0})+qS_{0}(t_{0})<0$ via the Mills ratio.}
	Write
	\[
	S_{0}(t_{0})
	=\gamma\phi\!\bigl(t_{0}-\mu'(0)\bigr)+(1-\gamma)\phi(t_{0})
	=\phi(t_{0})\Bigl[(1-\gamma)+\gamma R_{\phi}\Bigr],
	\]
	where
	\[
	R_{\phi}
	:=\frac{\phi\!\bigl(t_{0}-\mu'(0)\bigr)}{\phi(t_{0})}.
	\]
	Similarly, from the expression for $D_{0}(t_{0})/A(t_{0})$ we have
	\[
	\frac{1}{q}
	=(1-\gamma)+\gamma R_{A},
	\qquad
	R_{A}
	:=\frac{A\bigl(t_{0}-\mu'(0)\bigr)}{A(t_{0})},
	\]
	where $A(t)=1-\Phi(t)$.
	
	Define the (Gaussian) Mills ratio
	\(
	M(t):=\frac{A(t)}{\phi(t)}=\frac{1-\Phi(t)}{\phi(t)}.
	\)
	It is a standard fact that $M(t)$ is strictly decreasing in $t$.
	Since $\mu'(0)>0$, we have $t_{0}-\mu'(0)<t_{0}$, and therefore
	\(
	M\bigl(t_{0}-\mu'(0)\bigr)
	>
	M(t_{0}).
	\)
	In terms of $A$ and $\phi$, this is
	\[
	\frac{A\bigl(t_{0}-\mu'(0)\bigr)}
	{\phi\bigl(t_{0}-\mu'(0)\bigr)}
	>
	\frac{A(t_{0})}{\phi(t_{0})},
	\]
	which can be rearranged as
	\[
	\frac{A\bigl(t_{0}-\mu'(0)\bigr)}{A(t_{0})}
	>
	\frac{\phi\bigl(t_{0}-\mu'(0)\bigr)}{\phi(t_{0})},
	\qquad\text{i.e.}\qquad
	R_{A}>R_{\phi}.
	\]
	Since $\gamma>0$, this implies
	\[
	(1-\gamma)+\gamma R_{\phi}
	<
	(1-\gamma)+\gamma R_{A}
	=\frac{1}{q}.
	\]
	Multiplying both sides by $\phi(t_{0})$ yields
	\[
	S_{0}(t_{0})
	=\phi(t_{0})\Bigl[(1-\gamma)+\gamma R_{\phi}\Bigr]
	<
	\frac{\phi(t_{0})}{q},
	\]
	that is,
	\(
	qS_{0}(t_{0})<\phi(t_{0}).
	\)
	Hence
	\(
	-\phi(t_{0})+qS_{0}(t_{0})<0,
	\)
	and therefore
	\[
	\partial_{t}F(t_{0},0,0)
	=\frac{-\phi(t_{0})+qS_{0}(t_{0})}{D_{0}(t_{0})}<0.
	\]
	This completes the proof.
\end{proof}
 
\begin{theorem}
	We have
	\[
	\TPR'_\bonus(\tilde m)
	=
	\TPR_{\insamp}
	-\eta_1\,\frac{1}{\tilde m}
	-\eta_2\,\frac{\tilde m}{m}
	+o\!\left(\frac{1}{\tilde m}+\frac{\tilde m}{m}\right),
 	\]
 	where $\eta_1,\eta_2>0$ depend only on $(\gamma,c,h,q)$.
 \end{theorem}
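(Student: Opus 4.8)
The plan is to reparametrize the two distinct sources of finite-$\tilde m$ degradation and then perform a first-order Taylor expansion of the proxy power around the in-sample limit. Write $\delta \equiv 1/(1+\tilde m)$ for the pseudocount weight and $r \equiv \tilde m/m$ for the learner-quality parameter, so that the regime $\tilde m \to \infty$, $\tilde m/m \to 0$ is exactly $(\delta,r)\to(0,0)$, and moreover $\delta = \Theta(1/\tilde m)$, so $\delta + r \asymp 1/\tilde m + \tilde m/m$. In this notation the finite-sample mean shift $\mu_\bonus(\tilde m)$ in~\eqref{eq:mu-bonus-finite} is a function $\mu_\bonus(r) = h^2\gamma/\sqrt{h^2\gamma^2 + c(1+r)}$ of $r$ alone, and $\widetilde{\text{FDP}}_\bonus(t;\tilde m) = \bigl(\delta + (1-\delta)\bar\Phi(t)\bigr)/\bigl((1-\gamma)\bar\Phi(t) + \gamma\bar\Phi(t - \mu_\bonus(r))\bigr)$. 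By Lemma~\ref{lemma:uniqueness-of-root-of-F-new}, for all sufficiently large $\tilde m$ this curve is strictly decreasing on $(-\infty,C]$; since it tends to $1 > q$ as $t\to-\infty$ and (by taking $C$ large, as the construction permits) lies below $q$ at $t=C$, the infimum $t_*(\tilde m)$ defining the proxy power is attained at the unique root, which I denote $\tau(\delta,r)$. Hence $\widetilde{\TPR}_\bonus(\tilde m) = \bar\Phi\bigl(\tau(\delta,r) - \mu_\bonus(r)\bigr) =: g(\delta,r)$.

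Next I would identify the base point. At $(\delta,r)=(0,0)$ the equation $\widetilde{\text{FDP}}_\bonus(t;\cdot)=q$ reduces to $\bar\Phi(t)/\bigl((1-\gamma)\bar\Phi(t) + \gamma\bar\Phi(t-\mu_{\insamp})\bigr)=q$ (using $\mu_\bonus(0)=\mu_{\insamp}$), which is precisely the BH fixed-point equation of Definition~\ref{def:bhtpr} with $G_0=N(0,1)$, $G_1=N(\mu_{\insamp},1)$. Writing $t_0\equiv\tau(0,0)$ for its left-most solution, we get $g(0,0)=\bar\Phi(t_0-\mu_{\insamp})=\TPR_{\textnormal{BH}}(N(0,1),N(\mu_{\insamp},1),\gamma)=\TPR_{\insamp}$, matching the claimed leading term.

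The core computation is then the expansion $g(\delta,r)=g(0,0)+\partial_\delta g(0,0)\,\delta + \partial_r g(0,0)\,r + o(\delta+r)$, valid because $\tau$ is differentiable near $(0,0)$ by Lemma~\ref{lemma:differentiability-tau-new} and $\mu_\bonus$ is smooth. Since $\mu_\bonus$ does not depend on $\delta$, the chain rule gives $\partial_\delta g(0,0)=-\phi(t_0-\mu_{\insamp})\,\partial_\delta\tau(0,0)$ and $\partial_r g(0,0)=-\phi(t_0-\mu_{\insamp})\bigl(\partial_r\tau(0,0)-\mu_\bonus'(0)\bigr)$. Plugging in the derivative formulas of Lemma~\ref{lemma:differentiability-tau-new} and writing $\mathcal D\equiv\phi(t_0)-q\bigl(\gamma\phi(t_0-\mu_{\insamp})+(1-\gamma)\phi(t_0)\bigr)$ for their common denominator, I get $\partial_\delta\tau(0,0)=\Phi(t_0)/\mathcal D$ and $\partial_r\tau(0,0)=-q\gamma\phi(t_0-\mu_{\insamp})\,\mu_\bonus'(0)/\mathcal D$, where $\mu_\bonus'(0)=-ch^2\gamma/\bigl(2(h^2\gamma^2+c)^{3/2}\bigr)<0$. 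By Lemma~\ref{lemma:partial-derivative-of-F-wrt-t-is-negative}, $\partial_t F(t_0,0,0)<0$, which is exactly the statement $\mathcal D>0$; hence $\partial_\delta\tau(0,0)>0$, and since $\mu_\bonus'(0)<0$ also $\partial_r\tau(0,0)>0$ and $\partial_r\tau(0,0)-\mu_\bonus'(0)>0$. Therefore
\[
\eta_1 := -\partial_\delta g(0,0) = \frac{\phi(t_0-\mu_{\insamp})\,\Phi(t_0)}{\mathcal D} > 0,
\qquad
\eta_2 := -\partial_r g(0,0) = \phi(t_0-\mu_{\insamp})\bigl(\partial_r\tau(0,0)-\mu_\bonus'(0)\bigr) > 0,
\]
both depending only on $(\gamma,c,h,q)$ through $t_0$ and $\mu_{\insamp}$. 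Finally, using $\delta = 1/(1+\tilde m) = 1/\tilde m + O(1/\tilde m^2) = 1/\tilde m + o(1/\tilde m)$ and $\delta + r \asymp 1/\tilde m + \tilde m/m$, the expansion rewrites as $\widetilde{\TPR}_\bonus(\tilde m) = \TPR_{\insamp} - \eta_1/\tilde m - \eta_2\,\tilde m/m + o(1/\tilde m + \tilde m/m)$.

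I expect the main obstacle to be the bookkeeping that justifies identifying $t_*(\tilde m)$ with the smooth implicit-function root $\tau(\delta,r)$ uniformly as $(\delta,r)\to(0,0)$: this means combining the monotonicity-on-$(-\infty,C]$ statement of Lemma~\ref{lemma:uniqueness-of-root-of-F-new} with the boundary behavior at $\pm\infty$ and at $t=C$, and checking that the technical constant $C$ can be chosen once and for all so that $t_0<C$ and the curve stays below $q$ at $t=C$ for all large $\tilde m$. Everything downstream is a mechanical application of the chain rule together with the two derivative lemmas already in hand.
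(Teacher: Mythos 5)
Your proof follows essentially the same approach as the paper's: reparametrize to $(\delta,r)=(1/(1+\tilde m),\tilde m/m)$, observe that the base point $(0,0)$ recovers $\TPR_\insamp$, apply the chain rule to $g(\delta,r)=\bar\Phi(\tau(\delta,r)-\mu_\bonus(r))$ using the derivatives from Lemma~\ref{lemma:differentiability-tau-new} and the sign information from Lemma~\ref{lemma:partial-derivative-of-F-wrt-t-is-negative}, and then substitute back. Your explicit formulas for $\partial_\delta\tau(0,0)$, $\partial_r\tau(0,0)$, $\mu_\bonus'(0)$, and the common denominator $\mathcal D$ agree with the paper's Lemma~\ref{lemma:differentiability-tau-new}, and your sign arguments ($\mathcal D>0$ from Lemma~\ref{lemma:partial-derivative-of-F-wrt-t-is-negative}, $\mu_\bonus'(0)<0$ by direct differentiation) are exactly what the paper invokes when asserting $\eta_1,\eta_2>0$ — indeed, you spell out the positivity slightly more explicitly than the theorem's proof does, which simply defers to "the computations reproduced under Lemma~\ref{lemma:differentiability-tau-new}." The one concern you flag — identifying $t_*(\tilde m)$ with the smooth implicit-function root $\tau(\delta,r)$ uniformly near $(0,0)$ — is precisely what Step~2 of Lemma~\ref{lemma:differentiability-tau-new} is for (it uses Lemma~\ref{lemma:uniqueness-of-root-of-F-new} to pin down the left-most root as the IFT root in a neighborhood of $(0,0)$), so it is not an open gap in your plan so long as you lean on that lemma as stated.
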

 
\begin{proof}Let's first recall the notations.
	\paragraph{Step 0: Set-up and notation.}
	Let
	\[
	\delta(\tilde m):=\frac{1}{1+\tilde m},\qquad r(\tilde m):=\frac{\tilde m}{m}.
	\]
	Recall (from the deterministic approximation in Section~\ref{sec:small-tilde-m}) that
	\[
	\TPR'_\bonus(\tilde m)
	=\bar\Phi\!\Big(\tau(\delta(\tilde m),r(\tilde m)) - \mu'(r(\tilde m))\Big),
	\]
	where
	\[
	\mu'(r)
	=\frac{h^2\gamma}{\sqrt{h^2\gamma^2+c(1+r)}},
	\qquad
	\tau(\delta,r)\ \text{solves }\
	\widehat{\textnormal{FDR}}'(\tau;\delta,r)=q,
	\]
	with
	\[
	\widehat{\textnormal{FDR}}'(t;\delta,r)
	=
	\frac{\delta+(1-\delta)\,\bar\Phi(t)}
	{(1-\gamma)\bar\Phi(t)+\gamma\,\bar\Phi\!\bigl(t-\mu'(r)\bigr)}.
	\]
	Denote the baseline point $(\delta,r)=(0,0)$ by
	\[
	t_0:=\tau(0,0),\qquad
	\mu_0:=\mu'(0)=\frac{h^2\gamma}{\sqrt{h^2\gamma^2+c}},
	\qquad
	\phi_{b0}:=\phi(t_0-\mu_0),\qquad
	\TPR_{\insamp}:=\bar\Phi(t_0 - \mu_0).
	\]
 	
 	\paragraph{Step 1: First-order expansion in $(\delta,r)$ around $(0,0)$.}
	Define $\TPR(\delta,r):=\bar\Phi\!\bigl(\tau(\delta,r)-\mu'(r)\bigr)$.
	By Lemma~\ref{lemma:differentiability-tau-new} (differentiability of $\tau$ in $(\delta,r)$) and Lemma~\ref{lemma:partial-derivative-of-F-wrt-t-is-negative} (sign of $\partial_tF$ at the baseline), $\TPR$ is differentiable at $(0,0)$. By the chain rule,
	\[
	\partial_\delta \TPR(0,0)=-\phi_{b0}\,\partial_\delta \tau(0,0),
	\qquad
	\partial_r \TPR(0,0)=-\phi_{b0}\Big(\partial_r \tau(0,0)-\partial_r \mu'(0)\Big).
	\]
	Hence the first-order expansion reads
	\begin{equation}\label{eq:2var-expansion}
		\TPR(\delta,r)
		=
		\TPR_{\insamp}
		-\eta_1\,\delta
		-\eta_2\,r
		+o(\delta+r),
 	\end{equation}
 	where
	\[
	\eta_1:=\phi_{b0}\,\kappa_1,\quad \kappa_1:=\partial_\delta \tau(0,0)>0,
	\qquad
	\eta_2:=\phi_{b0}\,(\kappa_2+\kappa_3)>0,\quad
	\kappa_2:=\partial_r \tau(0,0),\ \kappa_3:=-\partial_r \mu'(0).
	\]
 	(The positivity $\kappa_1>0$ and $\kappa_2+\kappa_3>0$ follows exactly as in the old proof via the implicit-function-theorem formulas for $\partial_\delta\tau$ and $\partial_r\tau$ together with $\partial_tF(t_0,0,0)<0$; cf. the computations reproduced under Lemma~\ref{lemma:differentiability-tau-new}.)
 	
 	\paragraph{Step 2: From $(\delta,r)$ to $\tilde m$.}
 	By definition,
 	\[
 	\delta(\tilde m)=\frac{1}{1+\tilde m}
 	=\frac{1}{\tilde m}\Big(1+O(\tfrac{1}{\tilde m})\Big),
 	\qquad
 	r(\tilde m)=\frac{\tilde m}{m}.
 	\]
 	Substituting $\delta=\delta(\tilde m)$ and $r=r(\tilde m)$ into \eqref{eq:2var-expansion} yields
	\[
	\TPR\big(\delta(\tilde m),r(\tilde m)\big)
	=
	\TPR_{\insamp}
	-\eta_1\,\frac{1}{\tilde m}
	-\eta_2\,\frac{\tilde m}{m}
	+o\!\left(\frac{1}{\tilde m}+\frac{\tilde m}{m}\right),
	\]
	because $o(\delta+r)=o\!\big(\frac{1}{\tilde m}+\frac{\tilde m}{m}\big)$ as $\tilde m\to\infty$ and $m$  grows so that $\tilde m/m\to 0$ in the small-$\tilde m$ regime considered.
	Finally, by construction $\TPR\big(\delta(\tilde m),r(\tilde m)\big)=\TPR'_\bonus(\tilde m)$, which proves the claim.
\end{proof}

 \subsection{Small $\tilde m$ Behavior of \textsc{BONuS} (Case 2: one-dimensional subspace prior)}\label{sec:small-tilde-m-subspace-prior-proof}
 
 \paragraph{Set-up (tails, densities, proxy objects).}
 Let $F_0$ and $f_0$ denote the cdf and pdf of $\chi^2_1$. Define the null and alternative survival functions
 \[
 A(t):=\bar F_0(t)=1-F_0(t),\qquad
 B(t;r):=\bar F_0\!\Bigl(\frac{t}{1+\sigma^2(r)}\Bigr),
 \]
 where the alternative scale $1+\sigma^2(r)$ comes from the one-dimensional subspace prior with
 \[
 \sigma^2(r)
 := h^2 \cdot \frac{\Bigl(1-\frac{c}{\pi_\aug \gamma^2 h^4}\Bigr)_+}{1+\frac{c}{\gamma h^2}},
 \qquad \pi_\aug=\frac{1}{1+r}.
 \]
 Write the corresponding densities
 \[
 a(t):=f_0(t),\qquad
 b(t;r):=\frac{1}{1+\sigma^2(r)}\,f_0\!\Bigl(\frac{t}{1+\sigma^2(r)}\Bigr).
 \]
 For $(\delta,r)\in[0,1)\times[0,\infty)$ define
 \[
 N_\delta(t):=\delta+(1-\delta)A(t),\qquad
 D(t,r):=(1-\gamma)A(t)+\gamma B(t;r),
 \]
and the proxy FDP, threshold, and (proxy) power
\[
\widehat{\textnormal{FDR}}'(t;\delta,r):=\frac{N_\delta(t)}{D(t,r)},\qquad
t'_*(\delta,r):=\inf\{\,t\le C:\ \widehat{\textnormal{FDR}}'(t;\delta,r)\le q\,\},
\]
\[
\TPR'_\bonus(\delta,r):=\bar F_{(1+\sigma^2(r))\chi^2_1}\bigl(t'_*(\delta,r)\bigr)
=\bar F_0\!\Bigl(\frac{t'_*(\delta,r)}{1+\sigma^2(r)}\Bigr).
\]
 Baseline quantities at $(\delta,r)=(0,0)$:
 \[
 t_0:=\tau(0,0),\quad
 N_0:=A(t_0),\quad
 D_0:=D(t_0,0)=(1-\gamma)A(t_0)+\gamma B(t_0;0),\quad
 a_0:=a(t_0),\quad
 b_0:=b(t_0;0).
 \]
 
 \paragraph{Technical assumption on $\sigma^2(r)$.}
 Recall
 \[
 \sigma^2(r)
 = h^2 \cdot \frac{\Bigl(1 - \frac{c}{\pi_\aug \gamma^2 h^4}\Bigr)_+}{1 + \frac{c}{\gamma h^2}},
 \qquad
 \pi_\aug=\frac{1}{1+r}.
 \]
 Then $r\mapsto\sigma^2(r)$ is nonincreasing and continuous, with a potential \emph{kink}
 (nondifferentiability) at the threshold $r_\star$ where
 \[
 1 - \frac{c}{\pi_\aug \gamma^2 h^4}=0
 \;\Longleftrightarrow\;
 \pi_\aug=\frac{c}{\gamma^2 h^4}
 \;\Longleftrightarrow\;
 r_\star=\frac{\gamma^2 h^4}{c}-1\quad(c>0).
 \]
 In particular,
 \[
 \sigma^2(0)
 = h^2 \cdot \frac{\Bigl(1 - \frac{c}{\gamma^2 h^4}\Bigr)_+}{1 + \frac{c}{\gamma h^2}}.
 \]
 To apply the implicit-function and first-order expansions around $(\delta,r)=(0,0)$,
 we assume the \emph{strong-signal} regime
 \[
 \gamma^2 h^4>c \quad\Longleftrightarrow\quad \sigma^2(0)>0,
 \]
 which guarantees that there exists $\epsilon>0$ such that $\sigma^2(r)>0$ for all $r\in[0,\epsilon)$
 and $\sigma^2$ is differentiable at $r=0$ with
 \[
 \bigl(\sigma^2\bigr)'(0)
 = -\,\frac{h^2}{1 + \frac{c}{\gamma h^2}}\,
 \frac{c}{\gamma^2 h^4}\;<\;0,
 \qquad
 \partial_r \log\bigl(1+\sigma^2(r)\bigr)\Big|_{r=0}
 = \frac{\bigl(\sigma^2\bigr)'(0)}{1+\sigma^2(0)}\;<\;0.
 \]

 \begin{lemma}[Uniqueness of the proxy root ]
 	\label{lem:uniq-root-case2}
 	Fix $C>0$. Assume the strong-signal regime $\sigma^2(0)>0$.
 	Then there exist $\epsilon_r>0$ and $\delta^*=\delta^*(C,\epsilon_r)\in(0,1)$ such that for all
 	$r\in[0,\epsilon_r]$ and all $0<\delta<\delta^*$, the equation $F(t,\delta,r)=0$ has at most one solution on $[0,C]$.
 \end{lemma}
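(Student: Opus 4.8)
The plan is to mirror the proof of Lemma~\ref{lemma:uniqueness-of-root-of-F-new} (the point‑prior case): I will show that for all sufficiently small $\delta$ and $r$ the map $t\mapsto F(t,\delta,r)$ is strictly decreasing on $[0,C]$, which forces at most one root there. On $t\le 0$ there is nothing to check, since $A\equiv B\equiv 1$ gives $F(t,\delta,r)=1-q\neq 0$; in particular $F$ is continuous at $t=0$ with value $1-q>0$, so it suffices to control $\partial_t F$ on $(0,C]$.

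\emph{Step 1 (derivative formula).} Using $A'=-a$, $\partial_t B(\cdot;r)=-b(\cdot;r)$, the quotient rule, and expanding $N_\delta'D-N_\delta D'$ while cancelling the common term $(1-\delta)(1-\gamma)aA$ (exactly as in Step~1 of Lemma~\ref{lemma:uniqueness-of-root-of-F-new}), I obtain
\[
\partial_t F(t,\delta,r)=\frac{(1-\delta)\,\gamma\,\Delta(t;r)+\delta\,S(t,r)}{D(t,r)^2},\qquad \Delta(t;r):=A(t)b(t;r)-a(t)B(t;r),\quad S(t,r):=(1-\gamma)a(t)+\gamma b(t;r)>0.
\]
\emph{Step 2 (sign of $\Delta$).} I will prove $\Delta(t;r)<0$ for every $t>0$ whenever $\sigma^2(r)>0$. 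Since $\Delta(t;r)=-A(t)^2\,\tfrac{d}{dt}\{B(t;r)/A(t)\}$, the claim is equivalent to strict monotonicity of the survival ratio $t\mapsto \bar F_0\!\bigl(t/(1+\sigma^2(r))\bigr)/\bar F_0(t)$; writing $\bar F_0(t)=2\bar\Phi(\sqrt t)$ and $\kappa:=\sqrt{1+\sigma^2(r)}>1$, this reduces to strict monotonicity of $u\mapsto\bar\Phi(u/\kappa)/\bar\Phi(u)$, which is the Mills‑ratio computation already carried out in part (iii) of Lemma~\ref{lem:power-subspace-split} (the $\chi^2_1$ counterpart of Lemma~\ref{lemma:delta-negative}). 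Because $\sigma^2(0)>0$ and $r\mapsto\sigma^2(r)$ is continuous, I can fix $\epsilon_r>0$ with $\inf_{r\in[0,\epsilon_r]}\sigma^2(r)>0$, so Step~2 holds uniformly in $r\in[0,\epsilon_r]$.

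\emph{Step 3 (factorization and uniform bound).} Put $H(t,r):=-\gamma\Delta(t;r)/S(t,r)>0$, so that
\[
\partial_t F(t,\delta,r)=\frac{S(t,r)\,\bigl[\delta-(1-\delta)H(t,r)\bigr]}{D(t,r)^2}.
\]
I will check that $H$ extends continuously to $t=0$ with a positive value: as $t\downarrow 0$, $A,B\to 1$ while $a(t)\sim(2\pi t)^{-1/2}$ and $b(t;r)\sim(1+\sigma^2(r))^{-1/2}(2\pi t)^{-1/2}$, giving
\[
H(0^+,r)=\frac{\gamma\bigl(1-(1+\sigma^2(r))^{-1/2}\bigr)}{(1-\gamma)+\gamma(1+\sigma^2(r))^{-1/2}}>0.
\]
Hence $H$ is continuous and strictly positive on the compact set $[0,C]\times[0,\epsilon_r]$, so $H_*:=\min_{[0,C]\times[0,\epsilon_r]}H>0$. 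Then I take $\delta^*:=H_*/(1+H_*)\in(0,1)$; for $0<\delta<\delta^*$ and $r\in[0,\epsilon_r]$, since $u\mapsto\delta-(1-\delta)u$ is decreasing and $H(t,r)\ge H_*$,
\[
\delta-(1-\delta)H(t,r)\ \le\ \delta-(1-\delta)H_*\ <\ \delta^*-(1-\delta^*)H_*\ =\ 0,
\]
so $\partial_t F(t,\delta,r)<0$ on $(0,C]$; combined with continuity at $0$ this makes $F(\cdot,\delta,r)$ strictly decreasing on $[0,C]$, hence with at most one zero.

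\emph{Main obstacle.} The only non‑routine step is Step~2 — the sign of $\Delta$, i.e.\ that the $\chi^2_1$ scale family has strictly increasing survival‑function ratio. This ultimately rests on the strict monotonicity of the Gaussian Mills ratio, which is already available, so the real care is in reducing the $\chi^2_1$ statement to the Gaussian one and in keeping the bound uniform in $r$ near $0$; the density blow‑up of $\chi^2_1$ at the origin is harmless because $H$ has a clean finite positive limit there.
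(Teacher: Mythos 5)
Your proof is correct, and it takes a genuinely cleaner route than the paper's.

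\paragraph{What the paper does.} The paper's proof of Lemma~\ref{lem:uniq-root-case2} uses the decomposition $\partial_tF=[G_0(t,r)+\delta H(t,r)]/D^2$ with $G_0=-aD+AS$ and $H=aD+(1-A)S$. Because this $H$ inherits the $(2\pi t)^{-1/2}$ singularity of the $\chi^2_1$ density at $t=0$, it diverges as $t\downarrow 0$, so the paper cannot appeal to compactness on all of $[0,C]$. It instead splits $[0,C]$ into a small interval $(0,\varepsilon]$ --- handled by a coefficient comparison using the exact ratio identity $b/a=s^{-1/2}\exp((s-1)t/(2s))$ to get $b\le\rho a$ with $\rho<1$ --- and the compact set $[\varepsilon,C]\times[0,\epsilon_r]$, where both $-G_0$ has a positive minimum and $H$ a finite maximum. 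The two pieces give two separate smallness thresholds $\delta_1,\delta_2$, and $\delta^*=\min\{\delta_1,\delta_2,1/2\}$.

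\paragraph{What you do instead.} You use the normalization $H_{\mathrm{stud}}:=-\gamma\Delta/S$, which is exactly the normalization the paper itself uses in the \emph{point-prior} uniqueness lemma (Lemma~\ref{lemma:uniqueness-of-root-of-F-new}). The key observation --- absent from the paper's subspace-prior proof --- is that the singularities of $a$ and $b$ at $t=0$ \emph{cancel} in the ratio $H_{\mathrm{stud}}$, giving the clean finite limit
\[
H_{\mathrm{stud}}(0^+,r)=\frac{\gamma\bigl(1-(1+\sigma^2(r))^{-1/2}\bigr)}{(1-\gamma)+\gamma(1+\sigma^2(r))^{-1/2}}>0,
\]
which is jointly continuous in $(t,r)$. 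This lets you run a single compactness argument on $[0,C]\times[0,\epsilon_r]$, eliminating the two-case split entirely and producing a single threshold $\delta^*=H_*/(1+H_*)$. Your reduction of $\Delta<0$ to the strict monotonicity of $u\mapsto\bar\Phi(u/\kappa)/\bar\Phi(u)$ via the identity $\Delta=-A^2\,\frac{d}{dt}(B/A)$ is correct, and the cited Mills-ratio computation (in the proof of Lemma~\ref{lem:power-subspace-split}, part (iii)) does exactly the needed work. You also correctly note that $F(0,\delta,r)=1-q>0$ and close the argument from $\partial_tF<0$ on $(0,C]$ plus continuity at $0$. The net effect is that your proof is shorter, matches the structure of the paper's own point-prior Lemma~\ref{lemma:uniqueness-of-root-of-F-new}, and handles the uniformity in $r$ more transparently.
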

 
 \begin{proof}
 	We work on the natural domain $t\ge 0$ (since $\chi^2_1$ survival functions are defined on $[0,\infty)$).
 	Recall
 	\[
 	F(t,\delta,r)=\frac{N_\delta(t)}{D(t,r)}-q,\qquad
 	N_\delta(t)=\delta+(1-\delta)A(t),\qquad
 	D(t,r)=(1-\gamma)A(t)+\gamma B(t;r),
 	\]
 	where $A(t)=\bar F_0(t)$, $B(t;r)=\bar F_0(t/s)$ with $s=s(r):=1+\sigma^2(r)>1$ in the strong-signal regime,
 	and
 	\[
 	a(t)=f_0(t),\qquad
 	b(t;r)=\frac{1}{s}\,f_0\!\Bigl(\frac{t}{s}\Bigr).
 	\]
 	Differentiating in $t$ gives
 	\[
 	\partial_tF(t,\delta,r)
 	=\frac{-(1-\delta)a(t)D(t,r)+N_\delta(t)\bigl[(1-\gamma)a(t)+\gamma b(t;r)\bigr]}{D(t,r)^2}.
 	\]
 	Set $S(t,r):=(1-\gamma)a(t)+\gamma b(t;r)$ and decompose the numerator exactly in $\delta$:
 	\begin{align*}
 		G(t,\delta,r)
 		&:=-(1-\delta)aD+N_\delta S \\
 		&=\underbrace{\bigl[-aD+A S\bigr]}_{=:G_0(t,r)}
 		\;+\;
 		\delta\,\underbrace{\bigl[aD+(1-A)S\bigr]}_{=:H(t,r)}.
 	\end{align*}
 	Hence
 	\begin{equation}\label{eq:dF-G-split-fixed}
 		\partial_tF(t,\delta,r)=\frac{G_0(t,r)+\delta H(t,r)}{D(t,r)^2}.
 	\end{equation}
 	
 	\paragraph{Step 1: $G_0(t,r)<0$ for all $t>0$ (Mills ratio).}
 	Using $D=(1-\gamma)A+\gamma B$ and $S=(1-\gamma)a+\gamma b$,
 	\[
 	G_0(t,r)=-a\bigl[(1-\gamma)A+\gamma B\bigr]+A\bigl[(1-\gamma)a+\gamma b\bigr]
 	=\gamma\,[A(t)b(t;r)-B(t;r)a(t)].
 	\]
 	Write $t=x^2$ with $x>0$, and set $z=x/\sqrt{s}<x$. Then
 	\[
 	A(t)=2\bar\Phi(x),\quad B(t;r)=2\bar\Phi(z),\quad
 	a(t)=\frac{\phi(x)}{x},\quad b(t;r)=\frac{\phi(z)}{\sqrt{s}\,x}.
 	\]
 	Therefore
 	\[
 	\frac{A\,b}{B\,a}=\frac{1}{\sqrt{s}}\cdot
 	\frac{\bar\Phi(x)/\phi(x)}{\bar\Phi(z)/\phi(z)}.
 	\]
 	The Gaussian Mills ratio $M(u):=\bar\Phi(u)/\phi(u)$ is strictly decreasing on $(0,\infty)$, and since $z<x$,
 	$M(x)<M(z)$. Hence
 	\[
 	\frac{A\,b}{B\,a}<\frac{1}{\sqrt{s}}<1
 	\quad\Longrightarrow\quad
 	A\,b<B\,a
 	\quad\Longrightarrow\quad
 	G_0(t,r)<0
 	\qquad(t>0).
 	\]
 	Thus $-G_0(t,r)>0$ for all $t>0$.
 	
 	\paragraph{Step 2: control on a small interval $(0,\varepsilon]$ by coefficient comparison.}
 	Fix $C>0$.
 	Since $s(r)=1+\sigma^2(r)$ is continuous and $s(0)>1$, choose $\epsilon_r\in(0,1)$ such that
 	$s(r)\ge s_{\min}>1$ for all $r\in[0,\epsilon_r]$ (here $s_{\min}$ depends only on the model parameters).
 	
 	For $t>0$ and any $s\ge s_{\min}$ we have the exact ratio identity (using the explicit $\chi^2_1$ density)
 	\[
 	\frac{b(t;r)}{a(t)}
 	=\frac{1}{\sqrt{s}}\exp\!\Bigl(\frac{s-1}{2s}\,t\Bigr).
 	\]
 	Choose $\varepsilon\in(0,C]$ small enough that
 	\[
 	\rho:=\frac{1}{\sqrt{s_{\min}}}\exp\!\Bigl(\frac{\varepsilon}{2}\Bigr)<1.
 	\]
 	Then for all $t\in(0,\varepsilon]$ and all $r\in[0,\epsilon_r]$ we have $b(t;r)\le \rho\,a(t)$.
 	
 	Also, $A(t)\to 1$ and $B(t;r)\to 1$ as $t\downarrow 0$, uniformly over $r\in[0,\epsilon_r]$ because
 	$t/s(r)\le t/s_{\min}$. Hence we may further shrink $\varepsilon$ (if needed) so that for all
 	$t\in[0,\varepsilon]$ and all $r\in[0,\epsilon_r]$,
 	\[
 	A(t)\ge 1-\alpha,\qquad B(t;r)\ge 1-\alpha,
 	\qquad\text{where }\alpha:=\frac{1-\rho}{4}.
 	\]
 	Then for $t\in(0,\varepsilon]$ and $r\in[0,\epsilon_r]$,
 	\begin{align*}
 		B\,a-A\,b
 		&\ge (1-\alpha)a - 1\cdot (\rho a)
 		= (1-\alpha-\rho)\,a
 		\;\ge\;\frac{1-\rho}{2}\,a,
 	\end{align*}
 	so
 	\begin{equation}\label{eq:G0-small-t}
 		-G_0(t,r)=\gamma(Ba-Ab)\ge \gamma\,\frac{1-\rho}{2}\,a(t).
 	\end{equation}
 	
 	Next we upper bound $H$ on $(0,\varepsilon]$ in terms of $a(t)$ (which is allowed to diverge):
 	since $D(t,r)\le 1$ and
 	\[
 	S(t,r)=(1-\gamma)a(t)+\gamma b(t;r)\le (1-\gamma+\gamma\rho)\,a(t)\le a(t),
 	\]
 	and $1-A(t)\le \alpha$ on $[0,\varepsilon]$, we get
 	\begin{equation}\label{eq:H-small-t}
 		H(t,r)=aD+(1-A)S\le a(t)+\alpha a(t)=(1+\alpha)a(t).
 	\end{equation}
 	Combining \eqref{eq:G0-small-t}--\eqref{eq:H-small-t}, for $t\in(0,\varepsilon]$,
 	\[
 	G_0(t,r)+\delta H(t,r)
 	\le -\gamma\frac{1-\rho}{2}\,a(t)+\delta(1+\alpha)a(t)
 	=a(t)\Bigl[-\gamma\frac{1-\rho}{2}+\delta(1+\alpha)\Bigr].
 	\]
 	Hence if
 	\[
 	0<\delta<\delta_1:=\frac{\gamma(1-\rho)}{4(1+\alpha)},
 	\]
 	then $G_0(t,r)+\delta H(t,r)<0$ for all $t\in(0,\varepsilon]$ and all $r\in[0,\epsilon_r]$.
 	
 	\paragraph{Step 3: control on $[\varepsilon,C]$ by compactness.}
 	On the compact set $[\varepsilon,C]\times[0,\epsilon_r]$, the functions
 	$G_0(t,r)$ and $H(t,r)$ are continuous and finite. Moreover $G_0(t,r)<0$ for all $t>0$ by Step~1.
 	Therefore
 	\[
 	m_2:=\min_{t\in[\varepsilon,C],\,r\in[0,\epsilon_r]}(-G_0(t,r))>0,
 	\qquad
 	M_2:=\max_{t\in[\varepsilon,C],\,r\in[0,\epsilon_r]}H(t,r)<\infty.
 	\]
 	If $0<\delta<\delta_2:=m_2/(2M_2)$ then
 	\[
 	G_0(t,r)+\delta H(t,r)\le -m_2+\delta M_2<-\frac{m_2}{2}<0
 	\quad\text{for all }t\in[\varepsilon,C],\ r\in[0,\epsilon_r].
 	\]
 	
 	\paragraph{Step 4: monotonicity and uniqueness.}
 	Let
 	\[
 	\delta^*:=\min\{\delta_1,\delta_2,1/2\}>0.
 	\]
 	Then for all $0<\delta<\delta^*$, all $r\in[0,\epsilon_r]$, and all $t\in(0,C]$,
 	we have $G_0(t,r)+\delta H(t,r)<0$. Since $D(t,r)>0$, \eqref{eq:dF-G-split-fixed} yields
 	$\partial_tF(t,\delta,r)<0$ on $(0,C]$, so $F(\cdot,\delta,r)$ is strictly decreasing on $(0,C]$.
 	
 	Finally, note $A(0)=B(0;r)=1$, hence $D(0,r)=1$ and $N_\delta(0)=1$, so
 	\[
 	F(0,\delta,r)=1-q>0 \qquad (q\in(0,1)),
 	\]
 	and therefore $t=0$ is not a root. Since $F$ is continuous on $[0,C]$ and strictly decreasing on $(0,C]$,
 	the equation $F(t,\delta,r)=0$ has at most one solution on $[0,C]$.
 	
 	This proves the claim.
 \end{proof}
 
 \begin{lemma}[Negativity of the $t$–derivative at the baseline threshold]
 	Let $t_0=\tau(0,0)$. Then
 	\[
 	\partial_t F(t_0,0,0)
 	=\frac{-a_0 D_0+N_0\bigl[\gamma b_0+(1-\gamma)a_0\bigr]}{D_0^2}
 	=\frac{\gamma\,[\,N_0 b_0-a_0\,B(t_0;0)\,]}{D_0^2}<0.
 	\]
 \end{lemma}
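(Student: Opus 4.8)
The claimed identity is a plain quotient-rule computation, so the plan is to verify it first and then reduce the sign statement to a single Mills-ratio monotonicity fact. Differentiating $F(t,\delta,r)=N_\delta(t)/D(t,r)-q$ in $t$ and setting $\delta=0$, where $N_0(t)=A(t)$ has derivative $A'(t)=-a(t)$ and $D(\cdot,0)$ has derivative $-(1-\gamma)a(t)-\gamma b(t;0)$, yields
\[
\partial_t F(t,0,0)=\frac{-a(t)\,D(t,0)+A(t)\bigl[(1-\gamma)a(t)+\gamma b(t;0)\bigr]}{D(t,0)^2}.
\]
Evaluating at $t=t_0$ gives the first displayed expression with $N_0=A(t_0)$. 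Substituting $D_0=(1-\gamma)A(t_0)+\gamma B(t_0;0)$ into the numerator cancels the $(1-\gamma)a_0A(t_0)$ terms and collapses it to $\gamma\bigl[A(t_0)b_0-a_0B(t_0;0)\bigr]=\gamma\bigl[N_0b_0-a_0B(t_0;0)\bigr]$, the second displayed expression. (This is exactly the $\delta=0$ case of equation~\eqref{eq:dF-G-split-fixed}, whose numerator is $G_0(t_0,0)$.) Note that, unlike the point-prior analogue, this step does not use the threshold equation $F(t_0,0,0)=0$.

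It then remains to show $A(t_0)b_0-a_0B(t_0;0)<0$. First I would record that $t_0>0$: since $A(0)=B(0;0)=1$ one has $F(0,0,0)=1-q>0$, so by continuity $F(\cdot,0,0)>0$ near $0$ and the root $t_0=\tau(0,0)$ satisfies $t_0>0$. In the standing strong-signal regime $\gamma^2h^4>c$ we have $s:=1+\sigma^2(0)>1$, so with $x:=\sqrt{t_0}>0$ and $z:=x/\sqrt{s}<x$ the elementary $\chi^2_1$–Gaussian identities give $A(t_0)=2\bar\Phi(x)$, $B(t_0;0)=2\bar\Phi(z)$, $a_0=\phi(x)/x$, $b_0=\phi(z)/(\sqrt{s}\,x)$, whence
\[
\frac{A(t_0)\,b_0}{a_0\,B(t_0;0)}
=\frac{1}{\sqrt{s}}\cdot\frac{\bar\Phi(x)/\phi(x)}{\bar\Phi(z)/\phi(z)}
=\frac{1}{\sqrt{s}}\cdot\frac{M(x)}{M(z)},
\]
where $M(u)=\bar\Phi(u)/\phi(u)$ is the Gaussian Mills ratio. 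Since $M$ is strictly decreasing on $(0,\infty)$ (as in the proof of Lemma~\ref{lemma:mono_survival_gaussian}) and $z<x$, we get $M(x)/M(z)<1$, so the ratio is $<1/\sqrt{s}<1$, i.e. $A(t_0)b_0<a_0B(t_0;0)$. As $\gamma>0$ and $D_0>0$, this gives $\partial_t F(t_0,0,0)<0$.

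I do not expect any genuine obstacle here: the argument is short bookkeeping plus one classical inequality. The only point demanding care is the role of the strong-signal hypothesis $\gamma^2h^4>c$ (equivalently $\sigma^2(0)>0$): it is precisely what makes $s>1$ and hence $z<x$, which is the sole source of the strict inequality; if $\sigma^2(0)=0$ then $G_0\equiv 0$ and the derivative vanishes, so the hypothesis cannot be dropped. One must also keep in mind that $A,B,a,b$ are only evaluated on $(0,\infty)$, which is why establishing $t_0>0$ up front is needed before invoking the Mills-ratio monotonicity.
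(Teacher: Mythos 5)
Your proof is correct and takes essentially the same route as the paper: a direct quotient-rule computation to obtain the identity, followed by the Mills-ratio monotonicity argument from the proof of Lemma~\ref{lem:uniq-root-case2} to get the strict sign. You usefully fill in two details the paper leaves implicit --- that $t_0>0$ (needed to evaluate the $\chi^2_1$ quantities and to invoke strict monotonicity of $M$ on $(0,\infty)$) and that the threshold equation $A(t_0)=qD_0$ is not actually used in deriving the displayed identity --- but the underlying argument is the same.
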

 
 \begin{proof}
 	At $(\delta,r)=(0,0)$, $F(t,0,0)=A(t)/D(t,0)-q$ and $t_0$ solves $A(t_0)=qD_0$. The displayed identity for $\partial_t F$ follows by direct algebra. The sign is negative because $A(t)b(t;0)-B(t;0)a(t)<0$ for the $\chi^2_1$ family when the alternative has larger scale (heavier tail) than the null (similar to the proof of Lemma \ref{lem:uniq-root-case2}), evaluated at $t=t_0$.
 \end{proof}
 
 \begin{lemma}[Differentiability of the proxy threshold and partial derivatives]
 	\label{lem:differentiability-tau-case2}
 	Let $F(t,\delta,r)=\frac{N_\delta(t)}{D(t,r)}-q$ with
 	\[
 	N_\delta(t)=\delta+(1-\delta)A(t),\qquad
 	D(t,r)=(1-\gamma)A(t)+\gamma B(t;r),
 	\]
 	\[
 	A(t)=\bar F_0(t),\qquad
 	B(t;r)=\bar F_0\!\Bigl(\frac{t}{1+\sigma^2(r)}\Bigr),
 	\]
 	where $F_0$ is the cdf of $\chi^2_1$. Let $t_0:=\tau(0,0)$ be the baseline root of $F(t,0,0)=0$.
 	Then, in a neighborhood of $(\delta,r)=(0,0)$, the map $(\delta,r)\mapsto \tau(\delta,r)$ is differentiable and
 	\[
 	\partial_\delta \tau(0,0)
 	=\frac{\partial_\delta F}{-\partial_t F}\Big|_{(t_0,0,0)}
 	=\frac{F_0(t_0)\,D_0}{\gamma\bigl[a_0\,B(t_0;0)-N_0\,b_0\bigr]},
 	\]
 	\[
 	\partial_r \tau(0,0)
 	=\frac{\partial_r F}{-\partial_t F}\Big|_{(t_0,0,0)}
 	=-\frac{N_0\,\partial_r B(t_0;0)}{a_0\,B(t_0;0)-N_0\,b_0},
 	\]
 	where $N_0:=A(t_0)$, $D_0:=D(t_0,0)$, $a_0:=f_0(t_0)$, $b_0:=\frac{1}{1+\sigma^2(0)}\,f_0\!\bigl(t_0/(1+\sigma^2(0))\bigr)$ and
 	\[
 	\partial_r B(t_0;0)=b_0\cdot \frac{t_0}{\,1+\sigma^2(0)\,}\,\sigma^2{}'(0).
 	\]
 \end{lemma}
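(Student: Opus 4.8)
The plan is to reproduce the argument of Lemma~\ref{lemma:differentiability-tau-new} (the point-prior analogue), with Gaussian tails replaced by $\chi^2_1$ tails, so that the result again follows from the Implicit Function Theorem combined with the uniqueness-of-root Lemma~\ref{lem:uniq-root-case2}. First I would fix notation as in the set-up: $A(t)=\bar F_0(t)$, $B(t;r)=\bar F_0(t/(1+\sigma^2(r)))$, $N_\delta(t)=\delta+(1-\delta)A(t)$, $D(t,r)=(1-\gamma)A(t)+\gamma B(t;r)$, $F=N_\delta/D-q$. Since $D$ is a positive combination of survival functions it is bounded away from $0$ on any compact $t$-range, so $F$ is as smooth in $(t,\delta,r)$ as $B$ is. Here the strong-signal assumption $\gamma^2 h^4>c$, equivalently $\sigma^2(0)>0$, is essential: it guarantees that $r\mapsto\sigma^2(r)$ is $C^1$ in a neighborhood of $r=0$ with $(\sigma^2)'(0)<0$ as recorded in the set-up, hence $(t,r)\mapsto B(t;r)$ is $C^1$ near $(t_0,0)$. (Without it $\sigma^2$ can have a kink at $r_\star$ and the expansion around $(0,0)$ fails.)

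Next I would verify the hypotheses of the Implicit Function Theorem at $(\delta,r)=(0,0)$. There $F(t,0,0)=A(t)/D(t,0)-q$, and the survival ratio $B(t;0)/A(t)$ is strictly increasing by the $\chi^2_1$ Mills-ratio computation (the same one used in Step~1 of Lemma~\ref{lem:uniq-root-case2} and in the verification of Assumption~\ref{ass:mlr} in Lemma~\ref{lem:power-subspace-split}); hence $t\mapsto F(t,0,0)$ is strictly decreasing, so it has a unique root $t_0$ on $[0,C]$, and by the preceding lemma on negativity of the $t$-derivative, $\partial_t F(t_0,0,0)<0\neq 0$. The IFT then yields a $C^1$ map $t^*(\delta,r)$ on a neighborhood of $(0,0)$ with $F(t^*(\delta,r),\delta,r)=0$ and $t^*(0,0)=t_0$.

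Then I would identify $t^*$ with the leftmost root $\tau$. By Lemma~\ref{lem:uniq-root-case2} there are $\epsilon_r,\delta^*>0$ so that for $0<\delta<\delta^*$ and $0\le r\le\epsilon_r$ the equation $F(\cdot,\delta,r)=0$ has at most one root on $[0,C]$; since both $\tau(\delta,r)$ and $t^*(\delta,r)$ are such roots for $(\delta,r)$ small, they coincide, so $\tau$ inherits the $C^1$ regularity of $t^*$ near $(0,0)$. Finally I would compute the partial derivatives from the IFT formula $\partial_\bullet\tau=-\partial_\bullet F/\partial_t F$, using $\partial_\delta F=(1-A(t))/D=F_0(t)/D$, $\partial_r F=-\gamma N_\delta(t)\,\partial_r B(t;r)/D^2$, the baseline identity $N_0=qD_0$, the expression $\partial_t F(t_0,0,0)=\gamma[N_0 b_0-a_0 B(t_0;0)]/D_0^2$ from the preceding lemma, and $\partial_r B(t_0;0)=b_0\, t_0\,(1+\sigma^2(0))^{-1}(\sigma^2)'(0)$; simplification gives exactly the stated formulas.

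The main obstacle is not the differentiation itself, which is mechanical, but the matching step: ensuring that the smooth local root produced by the IFT is in fact the \emph{leftmost} root defining $\tau$, uniformly on a full neighborhood of $(0,0)$. This is precisely what Lemma~\ref{lem:uniq-root-case2} delivers, and its proof rests on the $\chi^2_1$ Mills-ratio monotonicity together with a split of the numerator of $\partial_t F$ into its $\delta$-free part (strictly negative for $t>0$) and a bounded $\delta$-linear correction, so the real work has already been done there. A secondary point requiring attention is the regularity of $\sigma^2(r)$ at $r=0$, handled by the strong-signal hypothesis.
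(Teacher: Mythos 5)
Your argument follows the paper's own proof step for step: apply the IFT at $(t_0,0,0)$ using the negativity of $\partial_t F$ from the preceding lemma, identify the smooth local root with the leftmost root $\tau$ via the uniqueness lemma (Lemma~\ref{lem:uniq-root-case2}), and then read off the partials from $\partial_\bullet\tau=-\partial_\bullet F/\partial_t F$. Your explicit flag that the strong-signal assumption $\sigma^2(0)>0$ is what makes $r\mapsto\sigma^2(r)$ differentiable at $0$ is a correct and useful observation that the paper only states in the set-up; one small note is that the displayed formula for $\partial_t F(t_0,0,0)$ is obtained by pure algebra from $D_0=(1-\gamma)A(t_0)+\gamma B(t_0;0)$ without invoking the root identity $N_0=qD_0$, so listing that identity among the ingredients is harmless but unnecessary.
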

 
 \begin{proof}
 	\textit{Step 1: Existence of a smooth root $t^*(\delta,r)$ via the IFT at $(0,0)$.}
 	By construction $F(t_0,0,0)=0$. The previous lemma (“Negativity of the $t$–derivative at the baseline threshold”) shows
 	\[
 	\partial_t F(t_0,0,0)
 	=\frac{-a_0 D_0+N_0\bigl[\gamma b_0+(1-\gamma)a_0\bigr]}{D_0^2}
 	=\frac{\gamma\,[\,N_0 b_0-a_0 B(t_0;0)\,]}{D_0^2}<0.
 	\]
 	Since $F$ is $C^1$ in $(t,\delta,r)$ near $(t_0,0,0)$ and $\partial_tF(t_0,0,0)\ne 0$, the Implicit Function Theorem yields a unique $C^1$ map $(\delta,r)\mapsto t^*(\delta,r)$ in a neighborhood of $(0,0)$ such that $F(t^*(\delta,r),\delta,r)=0$ and $t^*(0,0)=t_0$.
 	
 	\smallskip
 	\textit{Step 2: Identification $t^*(\delta,r)\equiv \tau(\delta,r)$ (the left–most root).}
 	By the Uniqueness Lemma (Lemma~\ref{lem:uniq-root-case2}), there exists $\delta^*>0$ so that for all $0<\delta<\delta^*$ and all $r$ in a neighborhood of $0$, the equation $F(t,\delta,r)=0$ has at most one solution on $(-\infty,C]$. Hence, in that neighborhood, the (unique) root coincides with the left–most root by definition; therefore
 	\[
 	\tau(\delta,r)=t^*(\delta,r)\quad\text{for $(\delta,r)$ near $(0,0)$,}
 	\]
 	and $\tau$ is $C^1$ there.
 	
 	\smallskip
 	\textit{Step 3: Partial derivatives at $(0,0)$.}
 	Differentiate $F(t,\delta,r)=0$ at $t=\tau(\delta,r)$:
 	\[
 	\partial_\delta \tau(\delta,r)=-\frac{\partial_\delta F}{\partial_t F}\Big|_{(t=\tau(\delta,r),\delta,r)},
 	\qquad
 	\partial_r \tau(\delta,r)=-\frac{\partial_r F}{\partial_t F}\Big|_{(t=\tau(\delta,r),\delta,r)}.
 	\]
 	We now compute the needed partials (recall $N_\delta(t)=\delta+(1-\delta)A(t)$ and $D=(1-\gamma)A+\gamma B$):
 	\[
 	\partial_\delta F(t,\delta,r)=\frac{\partial_\delta N_\delta(t)}{D(t,r)}=\frac{1-A(t)}{D(t,r)}=\frac{F_0(t)}{D(t,r)},
 	\]
 	\[
 	\partial_t F(t,\delta,r)=\frac{-(1-\delta)a(t)D(t,r)+N_\delta(t)\bigl[(1-\gamma)a(t)+\gamma b(t;r)\bigr]}{D(t,r)^2},
 	\]
 	and, since $B(t;r)=\bar F_0\!\bigl(t/s(r)\bigr)$ with $s(r):=1+\sigma^2(r)$,
 	\[
 	\partial_r D(t,r)=\gamma\,\partial_r B(t;r)
 	=\gamma\,f_0\!\Bigl(\frac{t}{s(r)}\Bigr)\,\frac{t}{s(r)^2}\,s'(r)
 	=\gamma\,b(t;r)\,\frac{t}{s(r)}\,s'(r),
 	\]
 	so
 	\[
 	\partial_r F(t,\delta,r)=-\,\frac{N_\delta(t)}{D(t,r)^2}\,\partial_r D(t,r)
 	=-\,\frac{N_\delta(t)}{D(t,r)^2}\,\gamma\,b(t;r)\,\frac{t}{s(r)}\,s'(r).
 	\]
 	Evaluating all quantities at $(t,\delta,r)=(t_0,0,0)$ and using $N_0=A(t_0)$ and $D_0=D(t_0,0)$ yields
 	\[
 	\partial_\delta F(t_0,0,0)=\frac{F_0(t_0)}{D_0},\qquad
 	\partial_t F(t_0,0,0)=\frac{-a_0 D_0+N_0\bigl[\gamma b_0+(1-\gamma)a_0\bigr]}{D_0^2}<0,
 	\]
 	\[
 	\partial_r F(t_0,0,0)
 	=-\,\frac{N_0}{D_0^2}\,\gamma\,b_0\,\frac{t_0}{1+\sigma^2(0)}\,\sigma^2{}'(0).
 	\]
 	Therefore,
 	\[
 	\partial_\delta \tau(0,0)
 	=-\frac{\partial_\delta F}{\partial_t F}\Big|_{(t_0,0,0)}
 	=\frac{F_0(t_0)\,D_0}{\gamma\bigl[a_0\,B(t_0;0)-N_0\,b_0\bigr]},
 	\]
 	\[
 	\partial_r \tau(0,0)
 	=-\frac{\partial_r F}{\partial_t F}\Big|_{(t_0,0,0)}
 	=-\frac{N_0\,\partial_r B(t_0;0)}{a_0\,B(t_0;0)-N_0\,b_0},
 	\]
 	with
 	\[
 	\partial_r B(t_0;0)
 	=b_0\cdot \frac{t_0}{\,1+\sigma^2(0)\,}\,\sigma^2{}'(0).
 	\]
 	This proves the claim.
 \end{proof}
 
\begin{theorem}[First–order expansion in $(\delta,r)$]
	Let
	\[
	\psi_{b0}:=b_0=\frac{1}{1+\sigma^2(0)}\,f_0\!\Bigl(\frac{t_0}{1+\sigma^2(0)}\Bigr),
	\qquad
	\TPR_{\insamp}:=\bar F_{(1+\sigma^2(0))\chi^2_1}(t_0).
	\]
	Then, as $(\delta,r)\to(0,0)$,
	\[
	\TPR'_\bonus(\delta,r)
	=
	\TPR_{\insamp}
	-\eta_1\,\delta-\eta_2\,r+o(\delta+r),
	\]
 	with
 	\[
 	\eta_1=\psi_{b0}\,\kappa_1>0,\qquad
 	\eta_2=\psi_{b0}\,(\kappa_2+\kappa_3)>0,
 	\]
 	\[
 	\kappa_1=\partial_\delta \tau(0,0)
 	=\frac{F_0(t_0)\,D_0}{\gamma\bigl[a_0\,B(t_0;0)-N_0\,b_0\bigr]},
 	\qquad
 	\kappa_2=\partial_r \tau(0,0)
 	=-\frac{N_0\,\partial_r B(t_0;0)}{a_0\,B(t_0;0)-N_0\,b_0},
 	\]
 	\[
 	\kappa_3
 	=-t_0\,\partial_r\!\bigl[\log(1+\sigma^2(r))\bigr]\Big|_{r=0}.
 	\]
 \end{theorem}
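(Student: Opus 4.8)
The plan is to write $\TPR(\delta,r)$ as a composition, differentiate it via the chain rule using the differentiability of the proxy threshold already established, and then invoke the first-order Taylor formula; the constants $\eta_1,\eta_2$ will fall out directly, and their positivity will follow from sign information proved in the auxiliary lemmas. Concretely, I would first set $g(\delta,r):=\tau(\delta,r)/(1+\sigma^2(r))$, so that $\TPR(\delta,r)=\bar F_0(g(\delta,r))$. The strong-signal assumption $\gamma^2 h^4>c$ is exactly what makes $\sigma^2$ differentiable at $r=0$ with $(\sigma^2)'(0)<0$ (it rules out the kink at the BBP threshold $r_\star$), and Lemma~\ref{lem:differentiability-tau-case2} gives that $(\delta,r)\mapsto\tau(\delta,r)$ is $C^1$ near $(0,0)$; hence $g$, and therefore $\TPR$, is $C^1$ near $(0,0)$. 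Differentiating, $\partial_\delta\TPR=-f_0(g)\,\partial_\delta\tau/(1+\sigma^2(r))$ and $\partial_r\TPR=-f_0(g)\,(1+\sigma^2(r))^{-1}\big(\partial_r\tau-\tau\,\partial_r\log(1+\sigma^2(r))\big)$.

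Next I would evaluate these at $(\delta,r)=(0,0)$. Using $g(0,0)=t_0/(1+\sigma^2(0))$ together with the identity $b_0=(1+\sigma^2(0))^{-1}f_0(t_0/(1+\sigma^2(0)))$ gives $f_0(g(0,0))=(1+\sigma^2(0))\,b_0=(1+\sigma^2(0))\,\psi_{b0}$. Substituting the explicit partials of $\tau$ from Lemma~\ref{lem:differentiability-tau-case2} cancels the factor $1+\sigma^2(0)$, yielding $\partial_\delta\TPR(0,0)=-\psi_{b0}\kappa_1=-\eta_1$ and $\partial_r\TPR(0,0)=-\psi_{b0}(\kappa_2+\kappa_3)=-\eta_2$, where $\kappa_3=-t_0\,\partial_r\log(1+\sigma^2(r))\big|_{r=0}$. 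The first-order Taylor formula $\TPR(\delta,r)=\TPR(0,0)+\partial_\delta\TPR(0,0)\,\delta+\partial_r\TPR(0,0)\,r+o(\delta+r)$, valid since $\TPR$ is $C^1$ at $(0,0)$, together with $\TPR(0,0)=\bar F_{(1+\sigma^2(0))\chi^2_1}(t_0)=\TPR_{\textnormal{in-sample}}$, then gives the stated expansion.

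Finally I would check positivity of the constants. The lemma on negativity of the $t$-derivative at the baseline gives $\partial_tF(t_0,0,0)<0$, which is equivalent to $a_0B(t_0;0)-N_0b_0>0$; since $F_0(t_0)>0$ and $D_0>0$, this forces $\kappa_1=F_0(t_0)D_0/(\gamma[a_0B(t_0;0)-N_0b_0])>0$. In the strong-signal regime $(\sigma^2)'(0)<0$, hence $\partial_rB(t_0;0)=b_0\,t_0(1+\sigma^2(0))^{-1}(\sigma^2)'(0)<0$ and $\partial_r\log(1+\sigma^2(r))\big|_{r=0}<0$; combined with $t_0>0$, $N_0>0$ and the same positive denominator, both $\kappa_2>0$ and $\kappa_3>0$, so $\eta_1=\psi_{b0}\kappa_1>0$ and $\eta_2=\psi_{b0}(\kappa_2+\kappa_3)>0$. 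All ingredients are functions of $t_0,\sigma^2(0),N_0,D_0,a_0,b_0$, which in turn depend only on $(\gamma,c,h,q)$, so the same is true of $\eta_1,\eta_2$. The main obstacle is not internal to this theorem: the genuinely delicate analytic work — uniqueness of the proxy root on $[0,C]$ for small $\delta$, the implicit-function-theorem argument for differentiability of $\tau$, and the Mills-ratio computation pinning down the sign of $\partial_tF$ — has been front-loaded into the preceding lemmas. Here the only care required is to track the $(1+\sigma^2(r))$ factors correctly through the chain rule and to confirm that the strong-signal hypothesis is precisely what licenses differentiating $\sigma^2$ at $r=0$.
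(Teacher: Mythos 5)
Your proposal is correct and takes essentially the same route as the paper: the same composition $\TPR(\delta,r)=\bar F_0\!\bigl(\tau(\delta,r)/(1+\sigma^2(r))\bigr)$, the same chain-rule computation with cancellation of the $1+\sigma^2(0)$ factor via the identity defining $b_0$, the same appeal to the differentiability lemma for $\partial_\delta\tau,\partial_r\tau$ and to the baseline $t$-derivative lemma for the sign $a_0 B(t_0;0)-N_0 b_0>0$, and the same sign bookkeeping (with $s'(0)<0$ from the strong-signal hypothesis and $t_0>0$) to conclude $\eta_1,\eta_2>0$.
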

 
 \begin{proof}
	\textbf{1) Differentiability and chain rule.}
	By Lemma~\ref{lem:differentiability-tau-case2}, $\tau(\delta,r)$ is $C^1$ near $(0,0)$, and $\partial_tF(t_0,0,0)<0$.
	Write
	\[
	\TPR'_\bonus(\delta,r)
	=\bar F_{(1+\sigma^2(r))\chi^2_1}\!\bigl(\tau(\delta,r)\bigr)
	=\bar F_0\!\Bigl(\frac{\tau(\delta,r)}{s(r)}\Bigr),\qquad s(r):=1+\sigma^2(r).
	\]
 	Set $u(\delta,r):=\tau(\delta,r)/s(r)$. Since $\bar F_0$ is $C^1$ with derivative $-(f_0)$ and $u$ is $C^1$, the chain rule gives
	\[
	\partial_\delta \TPR'_\bonus(\delta,r)=-\,f_0\!\bigl(u(\delta,r)\bigr)\,\partial_\delta u(\delta,r),
	\qquad
	\partial_r \TPR'_\bonus(\delta,r)=-\,f_0\!\bigl(u(\delta,r)\bigr)\,\partial_r u(\delta,r).
	\]
 	Evaluate at $(\delta,r)=(0,0)$, where $u(0,0)=t_0/s(0)$ and
 	\[
 	\psi_{b0}=b_0=\frac{1}{s(0)}\,f_0\!\Bigl(\frac{t_0}{s(0)}\Bigr)>0.
 	\]
 	
 	\smallskip
 	\textbf{2) Computing $\partial_\delta u$ and $\partial_r u$ at $(0,0)$.}
 	We have
 	\[
 	\partial_\delta u=\frac{\partial_\delta\tau}{s(r)},\qquad
 	\partial_r u=\frac{\partial_r\tau}{s(r)}-\frac{\tau(\delta,r)}{s(r)}\,\frac{s'(r)}{s(r)}.
 	\]
 	Thus, at $(0,0)$,
 	\begin{equation}\label{eq:partials-u}
 		\partial_\delta u(0,0)=\frac{\partial_\delta\tau(0,0)}{s(0)},\qquad
 		\partial_r u(0,0)=\frac{\partial_r\tau(0,0)}{s(0)}-\frac{t_0}{s(0)}\,\frac{s'(0)}{s(0)}.
 	\end{equation}
 	
 	\smallskip
	\textbf{3) Plugging into the \(\TPR'_\bonus\) derivatives.}
	Using $\psi_{b0}=(1/s(0))\,f_0(t_0/s(0))$ and \eqref{eq:partials-u},
	\[
	\partial_\delta \TPR'_\bonus(0,0)
	=-\,\psi_{b0}\,\partial_\delta \tau(0,0),
	\qquad
	\partial_r \TPR'_\bonus(0,0)
	=-\,\psi_{b0}\Bigl(\partial_r \tau(0,0)-t_0\,\frac{s'(0)}{s(0)}\Bigr).
	\]
 	For compactness we set
 	\[
 	\kappa_1:=\partial_\delta \tau(0,0),\qquad
 	\kappa_2:=\partial_r \tau(0,0),\qquad
 	\kappa_3:=-t_0\,\partial_r\log s(r)\Big|_{r=0}=-t_0\,\frac{s'(0)}{s(0)}.
 	\]
 	Hence
	\[
	\partial_\delta \TPR'_\bonus(0,0)=-\,\psi_{b0}\,\kappa_1,\qquad
	\partial_r \TPR'_\bonus(0,0)=-\,\psi_{b0}\,\bigl(\kappa_2+\kappa_3\bigr).
	\]
 	
 	\smallskip
 	\textbf{4) Linear expansion and identification of coefficients.}
	A first-order Taylor expansion of $\TPR'_\bonus(\delta,r)$ about $(0,0)$ gives
	\[
	\TPR'_\bonus(\delta,r)=\TPR_{\insamp}
	-\psi_{b0}\,\kappa_1\,\delta
	-\psi_{b0}\,(\kappa_2+\kappa_3)\,r
	+o(\delta+r).
 	\]
 	Setting $\eta_1:=\psi_{b0}\,\kappa_1$ and $\eta_2:=\psi_{b0}\,(\kappa_2+\kappa_3)$ yields the claimed form.
 	
 	\smallskip
 	\textbf{5) Formulas for $\kappa_1$ and $\kappa_2$ (IFT at $(t_0,0,0)$).}
 	By Lemma~\ref{lem:differentiability-tau-case2},
 	\[
 	\kappa_1=\partial_\delta \tau(0,0)
 	=\frac{\partial_\delta F}{-\partial_t F}\Big|_{(t_0,0,0)}
 	=\frac{F_0(t_0)\,D_0}{\gamma\bigl[a_0\,B(t_0;0)-N_0\,b_0\bigr]},
 	\]
 	and
 	\[
 	\kappa_2=\partial_r \tau(0,0)
 	=-\frac{\partial_r F}{\partial_t F}\Big|_{(t_0,0,0)}
 	=\frac{N_0\,\partial_r B(t_0;0)}{N_0\,b_0-a_0\,B(t_0;0)}
 	=-\,\frac{N_0\,\partial_r B(t_0;0)}{a_0\,B(t_0;0)-N_0\,b_0}\,.
 	\]
 	(The last equality is the same identity written with a positive denominator; both forms are useful for sign-checking.)
 	Here
 	\[
 	\partial_r B(t_0;0)
 	=b_0\cdot \frac{t_0}{\,s(0)\,}\,s'(0),\qquad s(0)=1+\sigma^2(0).
 	\]
 	
 	\smallskip
 	\textbf{6) Positivity of $\eta_1$ and $\eta_2$ (step-by-step sign check).}
 	\emph{(i) A key inequality already shown earlier.)}
 	We have proved the strict inequality
 	\[
 	a_0\,B(t_0;0)-N_0\,b_0>0
 	\]
 	\emph{in two places}: (a) in the proof of Lemma~\ref{lem:uniq-root-case2}, where $G_0(t,r)=\gamma\,[A\,b-B\,a]<0$ for all $t>0$ implies, at $(t_0,0)$, $A(t_0)b_0<B(t_0;0)a_0$; and (b) in the baseline derivative lemma, which computes
 	\[
 	\partial_t F(t_0,0,0)=\frac{\gamma\,[\,N_0 b_0-a_0 B(t_0;0)\,]}{D_0^2}<0
 	\ \Longleftrightarrow\ 
 	a_0 B(t_0;0)-N_0 b_0>0.
 	\]
 	
 	\emph{(ii) $\kappa_1>0$.}
 	From the formula in Step 5,
 	\[
 	\kappa_1=\frac{F_0(t_0)\,D_0}{\gamma\,[\,a_0\,B(t_0;0)-N_0\,b_0\,]}>0,
 	\]
 	because $F_0(t_0)\in(0,1)$, $D_0>0$, $\gamma>0$, and the denominator is positive by (i).
 	
 	\emph{(iii) $\kappa_2>0$.}
 	Using the equivalent form with the \emph{negative} denominator,
 	\[
 	\kappa_2=\frac{N_0\,\partial_r B(t_0;0)}{N_0 b_0-a_0 B(t_0;0)}.
 	\]
 	Under the strong-signal assumption, $s'(0)=\sigma^{2\,\prime}(0)<0$, hence
 	\[
 	\partial_r B(t_0;0)=b_0\,\frac{t_0}{s(0)}\,s'(0)\;<\;0,
 	\]
 	while $N_0 b_0-a_0 B(t_0;0)<0$ by (i). Therefore \(\kappa_2>0\).
 	
 	\emph{(iv) $\kappa_3>0$.}
 	By definition
 	\[
 	\kappa_3=-t_0\,\partial_r\log s(r)\Big|_{0}=-t_0\,\frac{s'(0)}{s(0)}\,,
 	\]
 	and since $s(0)>0$ and $s'(0)<0$, we get $\kappa_3>0$.
 	
 	\emph{(v) Conclusion for $\eta_1,\eta_2$.}
 	We have $\psi_{b0}>0$ and $\kappa_1>0$, hence $\eta_1=\psi_{b0}\,\kappa_1>0$.
 	Also $\kappa_2>0$ and $\kappa_3>0$, so $\eta_2=\psi_{b0}\,(\kappa_2+\kappa_3)>0$.
 	
 	\smallskip
 	Combining Steps 1–6 yields the claimed first-order expansion with $\eta_1,\eta_2>0$.
 \end{proof}

 \begin{corollary}[Small $\tilde m$ expansion and optimizer]
 	With
 	\[
 	\delta=\frac{1}{1+\tilde m}
 	=\frac{1}{\tilde m}+o\!\Bigl(\frac{1}{\tilde m}\Bigr),
 	\qquad
 	r=\frac{\tilde m}{m},
 	\]
 	we obtain
	\[
	\TPR'_\bonus(\tilde m)
	=\TPR_{\insamp}
	-\eta_1\,\frac{1}{\tilde m}
	-\eta_2\,\frac{\tilde m}{m}
	+o\!\left(\frac{1}{\tilde m}+\frac{\tilde m}{m}\right).
 	\]
 	Balancing the leading terms suggests
 	\(
 	\tilde m_{\mathrm{opt}}\asymp \sqrt{\tfrac{\eta_1}{\eta_2}}\,\sqrt{m},
 	\)
 	i.e.\ $\tilde m=\Theta(\sqrt{m})$.
 \end{corollary}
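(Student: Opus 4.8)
The plan is to deduce the stated expansion --- which is exactly the content of Theorem~\ref{thm:subspace-small-m} --- directly from the two-variable first-order expansion $\TPR(\delta,r)=\TPR_{\textnormal{in-sample}}-\eta_1\,\delta-\eta_2\,r+o(\delta+r)$ established in the theorem immediately above, by evaluating it at the $\tilde m$-dependent arguments $\delta(\tilde m):=1/(1+\tilde m)$ and $r(\tilde m):=\tilde m/m$, and then reading off the optimizing scale of $\tilde m$ by balancing the two leading correction terms. The first thing I would do is justify that this substitution is admissible: in the regime $\tilde m\to\infty$ with $\tilde m/m\to 0$ one has $\delta(\tilde m)\to 0$ and $r(\tilde m)\to 0$, so $(\delta(\tilde m),r(\tilde m))\to(0,0)$ and the expansion applies there, while the strong-signal assumption $\gamma^2 h^4>c$ guarantees $\sigma^2(0)>0$, which is precisely the hypothesis under which $(\delta,r)\mapsto\tau(\delta,r)$ was shown to be differentiable at the base point, so $\eta_1,\eta_2>0$ are well defined. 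I would also record the identification $\TPR_\bonus(\tilde m)=\TPR(\delta(\tilde m),r(\tilde m))$ coming straight from the definitions: the finite-$\tilde m$ scale $\mu_\bonus(\tilde m)$ of~\eqref{eq:sigma2-finite-tilde-m} equals $\sigma^2(r(\tilde m))$ once one notes $\pi_\aug=1/(1+r(\tilde m))=m/(m+\tilde m)$, and the pseudocount weight $1/(1+\tilde m)$ equals $\delta(\tilde m)$, so the proxy FDP curve and proxy power defining $\TPR_\bonus(\tilde m)$ are literally $F(\cdot,\delta(\tilde m),r(\tilde m))$ and $\TPR(\delta(\tilde m),r(\tilde m))$.

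With this in hand the second step is routine bookkeeping of the error terms. Since $\delta(\tilde m)=1/(1+\tilde m)=1/\tilde m+O(1/\tilde m^2)=1/\tilde m+o(1/\tilde m)$, the linear term contributes $-\eta_1\,\delta(\tilde m)=-\eta_1/\tilde m+o(1/\tilde m)$, while $-\eta_2\,r(\tilde m)=-\eta_2\,\tilde m/m$ exactly. The remainder is handled by observing $\delta(\tilde m)+r(\tilde m)\asymp 1/\tilde m+\tilde m/m$, so $o(\delta(\tilde m)+r(\tilde m))=o(1/\tilde m+\tilde m/m)$. Summing these pieces and using the identification $\TPR_\bonus(\tilde m)=\TPR(\delta(\tilde m),r(\tilde m))$ yields $\TPR_\bonus(\tilde m)=\TPR_{\textnormal{in-sample}}-\eta_1/\tilde m-\eta_2\,\tilde m/m+o(1/\tilde m+\tilde m/m)$, as claimed.

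For the final (deliberately heuristic) assertion, the leading-order gap $\TPR_{\textnormal{in-sample}}-\TPR_\bonus(\tilde m)$ is, up to lower-order terms, $g(\tilde m):=\eta_1/\tilde m+\eta_2\,\tilde m/m$ with $\eta_1,\eta_2>0$; treating $\tilde m$ as a positive real, the arithmetic--geometric mean inequality gives $g(\tilde m)\ge 2\sqrt{\eta_1\eta_2/m}$ with equality at $\tilde m=\sqrt{\eta_1/\eta_2}\,\sqrt{m}$, so to leading order the proxy power is maximized at $\tilde m=\Theta(\sqrt m)$. I would phrase this as suggestive only: it neglects both the $o(\cdot)$ remainder in the expansion and the variance inflation of $\widehat V(t)$ at small $\tilde m$ that the proxy does not capture, which is why the analogous statement for BONuS itself is left as Conjecture~\ref{conj:optimal-tilde-m-subspace} rather than proved. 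In terms of difficulty, there is no real obstacle here: all the work --- the implicit-function-theorem argument for differentiability of $\tau$ near $(0,0)$, the uniqueness-of-root lemma, and the Mills-ratio sign computations underlying $\eta_1,\eta_2>0$ --- was already carried out in establishing the $(\delta,r)$-expansion; the only point demanding attention is keeping the regime $\tilde m\to\infty,\ \tilde m/m\to 0$ consistent throughout so that $(\delta(\tilde m),r(\tilde m))$ genuinely tends to $(0,0)$, and checking the alignment of the proxy-power definition with the $(\delta,r)$-parametrization.
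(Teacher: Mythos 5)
Your proposal is correct and takes essentially the same route the paper intends: the corollary is a direct consequence of the preceding $(\delta,r)$-expansion theorem, obtained by substituting $\delta=1/(1+\tilde m)=1/\tilde m+o(1/\tilde m)$ and $r=\tilde m/m$, verifying $(\delta,r)\to(0,0)$ in the stated regime, and then balancing the two correction terms by AM-GM to read off the $\Theta(\sqrt m)$ scale; this mirrors ``Step 2: From $(\delta,r)$ to $\tilde m$'' in the proof of the analogous point-mass result. Your added care in checking the identification $\TPR_\bonus(\tilde m)=\TPR(\delta(\tilde m),r(\tilde m))$ and in flagging the heuristic status of the optimizer is a faithful reading of the paper's intent.
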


\bibliographystyle{biometrika}
\bibliography{multivariate-testing}

\end{document}